\documentclass{amsart}
\usepackage[margin=1.2in]{geometry}
\setlength\parindent{0pt}
\setcounter{MaxMatrixCols}{20}

\usepackage{mathtools, amssymb, amsthm}
\usepackage[usenames,dvipsnames]{xcolor}
\definecolor{orange}{HTML}{FFE194}
\definecolor{darkblue}{HTML}{B8DFD8}
\definecolor{lightblue}{HTML}{E8F6EF}
\usepackage{tikz-cd}
\usetikzlibrary{decorations.pathreplacing}
\usepackage{stackengine} 
\usepackage{fancyvrb} 
\usepackage{enumitem}
\usepackage[noend]{algpseudocode}
\usepackage[colorlinks = true, linkcolor = blue, citecolor=blue]{hyperref}

\setlist[enumerate]{label=(\alph*)}

\newtheorem{thm}{Theorem}[section]
\newtheorem{theorem}[thm]{Theorem}
\newtheorem{lm}[thm]{Lemma}
\newtheorem{lemma}[thm]{Lemma}
\newtheorem{cor}[thm]{Corollary}

\newtheorem{prop}[thm]{Proposition}
\newtheorem{proposition}[thm]{Proposition}

\newtheorem{conjecture}[thm]{Conjecture}

\newtheorem*{thm*}{Theorem}
\newtheorem*{conj*}{Conjecture}

\theoremstyle{definition}
\newtheorem{de}[thm]{Definition}
\newtheorem{definition}[thm]{Definition}
\newtheorem{ex}[thm]{Example}
\newtheorem{example}[thm]{Example}
\newtheorem{re}[thm]{Remark}
\newtheorem{remark}[thm]{Remark}

\DeclareMathOperator{\NN}{\mathbb{N}}
\DeclareMathOperator{\ZZ}{\mathbb{Z}}
\DeclareMathOperator{\QQ}{\mathbb{Q}}
\DeclareMathOperator{\RR}{\mathbb{R}}

\DeclareMathOperator{\supp}{supp}
\DeclareMathOperator{\sign}{sign}

\DeclareMathOperator{\sgn}{sign}
\DeclareMathOperator{\contr}{contr}
\DeclareMathOperator{\relcoord}{relcoord}
\DeclareMathOperator{\relset}{relset}

\DeclareMathOperator{\Prob}{Prob}

\newcommand{\mybreak}{\medskip}
\newcommand{\newpar}{}
\newcommand{\itembreak}{\medskip}

\newcommand{\indep}{\perp \!\!\! \perp}
\newcommand{\floor}[1]{\lfloor #1 \rfloor}

\newcommand{\rev}[1]{#1}
\newcommand{\revml}[1]{#1}
\newcommand{\rrev}[1]{#1}
\newcommand{\rrevml}[1]{#1}

\newcommand{\ratmodel}{rational one-dimensional model}
\newcommand{\ratmodels}{rational one-dimensional models}
\newcommand{\Ratmodels}{Rational one-dimensional models}
\newcommand{\redratmodel}{R1d model}
\newcommand{\redratmodels}{R1d models}

\newcommand{\irredundant}{irredundant}

\newcommand{\moved}{}

\title[Classifying one-dimensional discrete models with maximum likelihood degree one]{Classifying one-dimensional discrete models\\ with maximum likelihood degree one}

\author{Arthur Bik}
\address[Arthur Bik]{New York, NY}
\urladdr{\url{http://arthurbik.nl}}

\author{Orlando Marigliano}
\address[Orlando Marigliano]{University of Genoa\\Via Dodecaneso 35\\16146 Genova GE\\Italy}
\urladdr{\url{https://orlandomarigliano.com}}
\email{orlando.marigliano@edu.unige.it}

\begin{document}
\begin{abstract}
We propose a classification of all one-dimensional discrete statistical models with maximum likelihood degree one based on their rational parametrization. We show how all such models can be constructed from members of a smaller class of `fundamental models' using a finite number of simple operations.
We introduce `chipsplitting games', a class of combinatorial games on a grid which we use to represent fundamental models.
This combinatorial perspective enables us to show that there are only finitely many fundamental models in the probability simplex $\Delta_n$ for $n\leq 4$.
\end{abstract}
\maketitle
\section{Introduction}\label{sec:intro}

A \emph{discrete statistical model} is a subset of the simplex $\Delta_n := \{p\in \rev{\mathbb R^{n+1}_{\geq 0}} \mid \sum_\nu p_\nu = 1\}$ of probability distributions on $n+1$ events for some $n\in \mathbb N$. In algebraic statistics, we are interested in models which are \emph{algebraic}, meaning that the model is the intersection of $\Delta_n$ and some semialgebraic set in $\mathbb R^{n+1}$. Here, models with \emph{maximum likelihood  degree one} are of special interest because for these, the maximum likelihood (ML) estimation problem is a rational function in the entries of the observed data, and therefore algebraically simplest.

\begin{example}\label{expl:indep} 
Consider the set $\Delta_2$ of probability distributions on three events and of this, the subset $\mathcal M_{\indep}$ that models throwing a biased coin twice and recording the number of times it shows heads.
An empirical observation is then represented by a triple $u = (u_0, u_1, u_2)$ of numbers indicating the number of times we observed the result of no heads, one head, and two heads, respectively. From this data, the most reasonable guess for the probability \rrev{$\theta$} that the coin will show heads is
\[
\theta = \frac{2u_2 + u_1}{2(u_2 + u_1 + u_0)}.
\]
\rrevml{More precisely, this expression maximizes the likelihood
\[
\Prob(u=(u_0, u_1, u_2)\mid \theta) =
\binom{u_0,u_1,u_2}{u_0+u_1+u_2}
(1-\theta)^{2u_0}
\theta^{u_1}
(1-\theta)^{u_1}
\theta^{2u_2}
\]
of observing the empirical distribution $u$ given the parameter $\theta$. For this reason, the above expression for $\theta$ is called the \emph{maximum likelihood (ML) estimate} of $\mathcal M_{\indep}$ for the data $u$. }Since this expression is a \emph{rational} expression in the entries of $u$, the model $\mathcal M_{\indep}$ has ML degree one.
\end{example}

\mybreak
\rrev{In general, the ML degree of an algebraic model can be higher than one. In this case, the ML estimate of the model for generic data $u$ is obtained by taking a finite field extension of $\mathbb{C}(u)$ of correspondingly high degree. Thus, the ML degree is a measure of the algebraic complexity of the ML estimate. An ML degree of one indicates the simplest case where no field extension is taken.}

\mybreak
Algebraic statistical models with ML degree one have been a recurring object of study in recent years. The problems considered range from identifying ML degree one members of a given family of models to studying properties, parametrizations, and normal forms of ML degree one models in general. Articles have been written both on the discrete~\cite{ToricInvariant, ToricFano,  QuasiIndependence, BMS, Huh} and Gaussian~\cite{GaussianGraphical, DifferentialEquations, KroneckerCovariance, MultivariateGaussians} case.

\mybreak
In particular, two articles \cite{BMS, Huh} study discrete models of ML degree one in general. These works explain which form these models may take and provide systematic parametrizations. However, a general classification seems out of reach. More specifically, we would like to divide the set of all discrete algebraic models with ML degree one contained in the simplex $\Delta_n$ into finitely many easy to understand families. But at the time~\cite{BMS} was published, there was no way to do so even for the simplest case $n=2$.

\mybreak
In this paper we provide such a classification for $n=2$, and extend this to $n=3,4$ in the case where the models in question are \emph{one-dimensional} as algebraic varieties. Since one-dimensional discrete algebraic models with maximum likelihood degree one are the focus of this paper, \rev{we will refer to these models often. We call them ``\ratmodels'' for short, sometimes shortening this further to ``R1d models''.} 

\mybreak
We start by stratifying the set of \ratmodels{} in $\Delta_n$ by their \emph{degree} as algebraic curves. We find that for a fixed $d$, there are essentially finitely many ways to construct \ratmodels{} of degree $\leq d$. We make this precise by introducing the notion of \emph{fundamental models}, from which all other \ratmodels{} can be constructed.
Since there are finitely many fundamental models of degree $\leq d$, we are satisfied with our classification if we can find an upper bound for $\deg(\mathcal M)$, where $\mathcal M$ ranges over all \ratmodels{} in $\Delta_n$. This would imply that there are finitely many fundamental models in $\Delta_n$.

\mybreak
Our main theorem gives such an upper bound for \ratmodels{} contained in $\Delta_2$, $\Delta_3$, and $\Delta_4$.

\begin{thm}\label{thm:main_models}
Let $n\leq 4$ and let $\mathcal M\subseteq \Delta_n$ be a one-dim.\ discrete model with ML degree one. Then \[\deg(\mathcal M)\leq 2n-1.\]
\end{thm}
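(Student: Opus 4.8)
The first thing I would do is make precise the notion of a one-dimensional discrete model $\mathcal M\subseteq\Delta_n$ with ML degree one in terms of its rational parametrization. By the structure theory recalled from \cite{BMS,Huh}, such a model is the image of a map $\mathbb P^1\dashrightarrow\Delta_n$ whose coordinates $p_\nu$ are (up to scaling) products of linear forms in two homogeneous variables raised to integer powers, i.e.\ $p_\nu = c_\nu\prod_j \ell_j^{u_{\nu j}}$ where the $\ell_j$ are fixed linear forms and $\sum_\nu u_{\nu j}=0$ for each $j$, together with the crucial constraint $\sum_\nu p_\nu \equiv 1$. Clearing denominators, the degree $d$ of the model as a plane curve equals the degree of the parametrizing polynomials after cancellation, and the condition $\sum_\nu p_\nu=1$ becomes a single polynomial identity among the $n+1$ numerator polynomials. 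My plan is to translate this identity, together with the sign/support constraints coming from positivity in $\Delta_n$, into the combinatorial language of chipsplitting games on a grid that the introduction promises, and then to bound $d$ by a combinatorial argument on that grid.

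**The combinatorial core.** Concretely, after reducing to a fundamental model, each coordinate polynomial $p_\nu$ of degree $d$ is recorded by a configuration of chips on the triangular grid $\{(a,b)\in\mathbb Z_{\ge 0}^2 : a+b\le d\}$ indexed by monomials $x^a y^b$ of degree $d$ (in suitable affine coordinates), with signs attached. The identity $\sum_\nu p_\nu=1$ says that when we superimpose all these signed configurations, everything cancels except for a single chip at the ``constant'' corner. The chipsplitting moves — presumably a local move that replaces a chip at one lattice point by chips at neighbouring points, mirroring multiplication of a linear form — generate all the ways such a cancelling configuration can arise from the trivial one. The key step is to show that for $n\le 4$, i.e.\ with at most $5$ coordinates / colours available, any such winning configuration for the chipsplitting game must fit inside a grid of size controlled by $2n-1$: a configuration supported on a triangle of size $d$ that collapses to a point using only $n+1$ colours forces $d\le 2n-1$. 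I would prove this by an extremal/counting argument: track how many chips of each colour can survive along the ``outer'' diagonal $a+b=d$, use the cancellation requirement there to get linear relations, and show that with only $n+1$ colours the outer diagonal cannot support a nontrivial alternating-sign pattern once $d\ge 2n$. A degree / Bézout-type bound, or a dimension count on the space of linear forms involved, likely does the bookkeeping: $n+1$ coordinates give at most $n$ ``independent'' linear forms in the parametrization, each contributing roughly $2$ to the degree budget along two ends of the curve.

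**Case analysis by $n$.** Since the bound $2n-1$ is small, I would not shy away from splitting into the three cases $n=2,3,4$ and handling each by a finite, if increasingly involved, analysis of the possible chipsplitting games. For $n=2$ (three colours, claimed bound $3$) this should be essentially a hands-on classification: enumerate the few shapes a winning configuration on a size-$\le 3$ triangle can take with three colours, and rule out size $\ge 4$ directly. For $n=3$ and $n=4$ I expect to need the combinatorial finiteness engine — presumably an earlier lemma in the paper bounding the number of chips, or a monotonicity invariant that strictly decreases under ``unsplit'' moves — to reduce to checking finitely many reduced configurations, then verify $d\le 2n-1$ on each. The uniform statement of the bound as $2n-1$ strongly suggests there is one conceptual reason behind all three cases, so I would look for an invariant argument first and fall back on case analysis only where the clean argument stalls.

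**Main obstacle.** The hard part will be the passage from ``positivity in $\Delta_n$'' to a genuinely \emph{finite} combinatorial constraint: a priori the linear forms $\ell_j$ and the exponents $u_{\nu j}$ could both grow, so one must extract, from the single identity $\sum p_\nu=1$ plus the requirement that each $p_\nu>0$ on an interval, enough rigidity to bound the exponents — equivalently, to bound the total number of chips and hence the grid size. Showing that the chipsplitting game has \emph{no} winning configurations of degree $2n$ (rather than merely that known examples have small degree) is where the real content lies; I expect this to rest on an inequality of the form ``(number of sign changes forced along the boundary) $>$ (number of colours available)'' once $d\ge 2n$, and getting that inequality tight for $n=4$ is likely the most delicate computation in the paper.
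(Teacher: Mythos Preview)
Your plan points in the right direction but misses the single structural fact that makes the problem tractable, and without it the combinatorics you sketch never gets off the ground. Maximum likelihood degree one does \emph{not} merely bound the number of linear forms by something like $n$; it forces there to be exactly \emph{two} distinct linear factors among all the $p_\nu$ (this is Lemma~\ref{complex-factors}: $\mld(\mathcal M)=\deg(f)-1$ where $f$ is the product of distinct linear factors). After an affine change this gives the concrete parametrization $p_\nu(t)=w_\nu\,t^{i_\nu}(1-t)^{j_\nu}$ (Proposition~\ref{first-step}). So the grid really is indexed by monomials in two fixed forms $t,1-t$, and the entire model is encoded by a \emph{single} chip configuration: the value $w_\nu$ at each $(i_\nu,j_\nu)$ together with $-1$ at the origin. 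There is no ``colouring'' by coordinates; the only data that remembers $n$ is that the positive support has size $n+1$. Your heuristic ``$n+1$ coordinates give at most $n$ independent linear forms, each contributing roughly $2$ to the degree budget'' is therefore not the mechanism behind the bound $2n-1$.

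Once the translation is done correctly, the combinatorial problem is: a nonzero outcome of the chipsplitting game (equivalently, a vector annihilated by all Pascal equations) with negative support $\{(0,0)\}$ and positive support of size $n+1$ must have degree $\le 2n-1$. Your proposed tool---an inequality of the type ``sign changes forced along the outer diagonal exceed the colours available''---is too coarse to close this. The paper instead develops three successively stronger criteria: an \emph{Invertibility Criterion} (build, from a candidate support $S$, an explicit square matrix of Pascal-equation coefficients and show it is invertible, so no outcome can have support $S$); a \emph{Hyperfield Criterion} over the sign hyperfield (push the Pascal equations to $\{+,0,-\}$ and rule out sign patterns, which for $n=3,4$ is contracted to a fixed finite index set independent of $d$ and checked by computer); and a \emph{Hexagon Criterion} (a block-restriction argument showing that if the support avoids a certain hexagonal region then the degree collapses). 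The case $n\le 2$ is handled entirely by the first criterion; $n=3$ needs the second with computer search over the contracted hyperfield solutions; $n=4$ needs all three plus substantial computer verification over thousands of residual cases. None of this is captured by a boundary sign-change count, and in particular there is no uniform ``one conceptual reason'' proof known for all $n\le 4$---the escalation in tools from $n=2$ to $n=4$ is real, and you should expect it rather than hope to avoid it.
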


To prove Theorem~\ref{thm:main_models} we use a strategy inspired by the literature on chip-firing~\cite{Klivans}, which motivates the formulation of an equivalent combinatorial problem. \rev{In Proposition~\ref{first-step}}, we observe that \ratmodels{} admit a parametrization
\[
p\colon [0,1]\to \Delta_n,\quad t \mapsto (w_\nu t^{i_\nu}(1-t)^{j_\nu})_{\nu=0}^n
\]
which enables us to represent \rev{these models} as sets of integers on a grid.

\begin{example}[\rrev{Example~\ref{expl:indep} continued}]\label{expl:indep2}
The model \rev{$\mathcal M_{\indep} \subseteq \Delta_2$  is} parametrized by the function $p(t) = (t^2, 2t(1-t), (1-t)^2)$ \rev{and} can be represented by the following picture.

\medskip
\begin{center}
\begin{BVerbatim}
 1
 · 2
-1 · 1
\end{BVerbatim}
\end{center}
\smallskip
In such a picture, the grid point with coordinates $(i,j)$ represents the monomial $t^i(1-t)^j$. The integer entry at that point represents the coefficient of that monomial in the parametrization, where a dot represents the entry 0. The entry $-1$ at the point $(0,0)$ indicates that the coordinates of the parametrization add up to $1$.
\revml{More precisely, it is the coefficient of the constant term of the polynomial
\[
1\cdot t^2 + 2\cdot ts + 1\cdot s^2 - 1
\]
which becomes zero after the substitution $s\mapsto (1-t)$.}
We think of these \rev{integer} entries as `chips' on the grid,  allowing for negative chips. Thus we call such a representation a \emph{chip configuration}.
\end{example}

\mybreak
Any chip on the grid can be split into two further chips, which are then placed directly to the north and to the east of the original chip. We can `split a chip' where there are none by adding a negative chip. Finally, we can unsplit a chip by performing a splitting move in reverse. Starting from the zero configuration, these chipsplitting moves can be used to produce models. For instance, we get the model \rev{$\mathcal M_{\indep}$} by performing chipsplitting moves at $(0,0)$, $(1,0)$, and $(0,1)$, as visualized below.
\medskip
\begin{center}
\begin{BVerbatim}
·          ·           ·          1        
· ·        1 ·         1 1        · 2      
0 · ·     -1 1 ·      -1 · 1     -1 · 1 
\end{BVerbatim}
\end{center}
\smallskip
In this view, Theorem~\ref{thm:main_models} becomes a combinatorial statement about the possible outcomes of these sequences of chipsplitting moves, which we call \emph{chipsplitting games}.

\subsection*{Outline}
\rev{In the following Preliminaries section we introduce chipsplitting games and formulate the combinatorial equivalent of Theorem~\ref{thm:main_models}.} In Section~\ref{sec:models} we explain how to use Theorem~\ref{thm:main_models} to classify all \ratmodels{} in $\Delta_n$ for $n\leq 4$. In Section~\ref{sec:chipsplitting} we introduce \rev{combinatorial tools for proving our main result}. In Section~\ref{sec:model2chipsplitting} we explain the connection between \ratmodels{} and chipsplitting games. In Sections~\ref{sec:supp+<=3}–\ref{sec:supp+==5} we prove Theorem~\ref{thm:main_models} in the language of chipsplitting games for $n\leq2$, $n=3$, and $n=4$, respectively. In Section~\ref{sec:computations} we discuss examples, computations, and future directions.

\subsection*{Code} We use the computer algebra system Sage~\cite{sage} to assist us in our proofs, especially in Section~\ref{sec:supp+==5}, and to implement our algorithm for finding fundamental models in Section~\ref{sec:computations}. The code is available on MathRepo at \url{https://mathrepo.mis.mpg.de/ChipsplittingModels/}.

\subsection*{Acknowledgements}
We thank Bernd Sturmfels and Caroline Klivans for helpful discussions at the early stages of this project. \rev{We thank the anonymous referee for helping us clarify and improve our manuscript.}
The first author was partially supported by Postdoc.Mobility Fellowship P400P2\_199196 from the Swiss National Science Foundation. The second author was supported by Brummer \& Partners MathDataLab and the European Union (101061315–MIAS–HORIZON-MSCA-2021-PF-01).

\section*{Preliminaries}

\rev{
For indices $i$ and $j$, the Kronecker delta symbol $\delta_{ij}$ equals $1$ if $i=j$ and $0$ otherwise. For sets $A$ and $B$ we denote the set of functions $A\to B$ by $B^A$. We often write elements of $B^A$ as $A$-indexed collections of elements of $B$. The set of subsets of $A$ is denoted by $2^A$. The cardinality of $A$ is $\# A$.}

\mybreak
Let $(V,E)$ be a directed graph without loops.

\begin{de}\label{de:chipsplitting}
Let $V'\subseteq V$ be the subset of \rev{non-sinks}.
\begin{enumerate}
\item[(a)] A {\em chip configuration} is a vector $w=(w_v)_{v\in V}\in \ZZ^V$ such that $\#\{v\in V\mid w_v\neq0\}<\infty$. 
\item[(b)] The {\em initial configuration} is the zero vector $0\in\ZZ^V$.
\item[(c)] A {\em splitting move} at $p\in V$ maps a chip configuration $w=(w_v)_{v\in V}$ to the chip configuration $\widetilde{w}=(\widetilde{w}_v)_{v\in V}$ defined by
\[
\widetilde{w}_v:=\left\{\begin{array}{cl}w_v-1&\mbox{if $v=p$,}\\w_v+1&\mbox{if $(p,v)\in E$, i.e., $E$ contains an edge from $p$ to $v$,}\\w_v&\mbox{otherwise.}\end{array}\right.
\]
An {\em unsplitting move} at $p$ maps $\widetilde{w}$ back to $w$.
\item[(d)] A {\em chipsplitting game} $f$ is a finite sequence of splitting and unsplitting moves. The {\em outcome} of~$f$ is the chip configuration obtained from the initial configuration after executing all the moves in~$f$.
\item[(e)] A {\em (chipsplitting) outcome} is the outcome of any chipsplitting game.
\end{enumerate}
\end{de}

Note that the moves in our game are all reversible and commute with each other. In particular, the order of the moves in a game does not matter. \rev{Furthermore, every chipsplitting outcome can be obtained as the outcome of a chipsplitting game} such that \rrev{there is no vertex in $V$ where} both a splitting and an unsplitting move occur. We call games that have this property {\em \irredundant{}}. We usually assume chipsplitting games are \irredundant{}. \rev{Moreover, we consider two games $f$, $g$ equivalent ($f\sim g$) if they are equal up to reordering.} \rev{Given an irredundant chipsplitting game $f$, we count the number of moves in $f$ at each non-sink vertex $p$ of $V$, counting unsplitting moves negatively. We obtain the bijection}
\begin{eqnarray*}
\{\mbox{\irredundant{} chipsplitting games on }(V,E)\}/\sim&\to&\{g\colon V'\to\ZZ\mid\#\{p\in V'\mid g(p)\neq0\}<\infty\}\\
f&\mapsto&\left(p\mapsto \mbox{number of moves at }p\mbox{ in }f\right).
\end{eqnarray*}
\rev{Thus,} we identify an \irredundant{} chipsplitting game $f$ with its corresponding function $V'\to\ZZ$. The outcome $w=(w_v)_{v\in V}$ of $f$ now satisfies
\[
w_v=-f(v)+\sum_{\substack{p\in V'\\(p,v)\in E}}f(v),
\]
where we write $f(v)=0$ when $v\not\in V'$.

\begin{re}\label{a-valued}
Let $A$ be an abelian group. The definitions above naturally extend from $\ZZ$ to $A$, i.e., to the setting where the number of chips at a \rev{vertex} and number of times a move is repeated are both allowed to be any element of $A$. Here (resp. when $A=\QQ,\RR$), we say that the chip configurations, chipsplitting games and outcomes are {\em $A$-valued} (resp. {\em rational}, {\em real}).
\end{re}

We now define the directed graphs $(V_d,E_d)$
we consider in this paper. For $d\in\NN\cup\{\infty\}$, write
\begin{eqnarray*}
V_d&:=&\{(i,j)\in\ZZ_{\geq0}^2\mid \rev{i+j}\leq d\},\\
E_d&:=&\{(v,v+e)\mid v\in V_{d-1}, e\in\{(1,0),(0,1)\}\}.
\end{eqnarray*}

\rev{We think of $V_d$ as the integer points of the plane triangle delimited by $(0,0)$, $(d,0)$, and $(0,d)$. We consider the hypothenuse as the $d$th diagonal of this figure, and similarly we think of the vertex $(i,j)$ as lying in the $(i+j)$-th diagonal.
To emphasize this we define $\deg(i,j):=i+j$, the {\em degree} of $(i,j)\in\ZZ_{\geq0}^2$. Next, we define some notions about chip configurations on $V_d$ that will be used throughout the paper.}

\begin{de}\label{chipsplitting-notions} \moved{}
Let $w=(w_{i,j})_{(i,j)\in V_d}$ be a chip configuration.
\begin{enumerate}
\item The {\em positive support} of $w$ is $\supp^+(w):=\{(i,j)\in V_d\mid w_{i,j}>0\}$.
\item The {\em negative support} of $w$ is $\supp^-(w):=\{(i,j)\in V_d\mid w_{i,j}<0\}$.
\item The {\em support} of $w$ is $\supp(w):=\{(i,j)\in V_d\mid w_{i,j}\neq0\}=\supp^+(w)\cup\supp^-(w)$.
\item The {\em degree} of $w$ is $\deg(w):=\max\{\deg(i,j)\mid (i,j)\in\supp(w)\}$.
\item We say that $w$ is {\em valid} when $\supp^-(w)\subseteq\{(0,0)\}$.
\item We say that $w$ is {\em weakly valid} when for all $(i,j)\in\supp^-(w)$ one of the following holds:
\begin{samepage}
\begin{enumerate}
\item[(i)] $0\leq i,j\leq 3$,
\item[(ii)] $0\leq i\leq3$ and $\deg(i,j)\geq d-3$, or 
\item[(iii)] $0\leq j\leq3$ and $\deg(i,j)\geq d-3$.
\end{enumerate}
\end{samepage}
\end{enumerate}
\end{de}

\medskip

Figure~\ref{fig:weakly-valid} illustrates the notion of a weakly valid outcome, which will first be used in Section~\ref{subsec:contraction-hyperfield}.

{
\definecolor{cornercolor}{HTML}{FFE194}
\newcommand{\cornercolor}{orange}
\begin{figure}[h]
\begin{tikzpicture}[scale=0.25]
\node (a) at (0,0) {};
\node (b) at (4,0) {};
\node (c) at (12.5,0) {};
\node (d) at (17,0) {};
\node (e) at (13,4) {};
\node (f) at (4,13) {};
\node (g) at (0,17) {};
\node (h) at (0,12.5) {};
\node (i) at (0,4) {};
\node (j) at (4,4) {};
\node (k) at (8.5,4) {};
\node (l) at (4,8.5) {};
\draw[fill=cornercolor]
  (a.center)--(b.center)--(j.center)--(i.center)--cycle;
\draw[fill=cornercolor]
  (c.center)--(d.center)--(e.center)--(k.center)--cycle;
\draw[fill=cornercolor]
  (g.center)--(h.center)--(l.center)--(f.center)--cycle;
\draw
  (b.center)--(c.center)--(k.center)--(j.center)--cycle;
\draw
  (l.center)--(k.center)--(e.center)--(f.center)--cycle;
\draw
  (i.center)--(j.center)--(l.center)--(h.center)--cycle;
\draw[decorate, decoration = {brace, mirror, raise = 2pt}, semithick] (a.center)--(b.center);
\draw[decorate, decoration = {brace, mirror, raise = 2pt}, semithick] (i.center)--(a.center);
\draw[decorate, decoration = {brace, mirror, raise = 2pt}, semithick] (c.center)--(d.center);
\draw[decorate, decoration = {brace, mirror, raise = 2pt}, semithick] (g.center)--(h.center);
\node at (2,-1.5) {$4$};
\node at (-1.5,2) {$4$};
\node at (14.75,-1.5) {$4$};
\node at (-1.5,14.75) {$4$};
\end{tikzpicture}
\caption{The corners of the four-entries wide outer ring of the triangle $V_d$. A chip configuration is weakly valid if its negative support is contained in the \cornercolor{} area.}\label{fig:weakly-valid}
\end{figure}
}

\begin{ex}
We depict a chip configuration $w=(w_{i,j})_{(i,j)\in V_d}\in\ZZ^{V_d}$ as a triangle of numbers with~$w_{i,j}$ being the number in the $i$th column from the left and $j$th row from the bottom.

\medskip
\begin{center}
\begin{BVerbatim}
·           ·            ·           ·           1           1           1
· ·         · ·          · ·         1 ·         · 1         · 1         · ·
· · ·       1 · ·        1 1 ·       · 2 ·       · 2 ·       · 2 1       · 3 ·
0 · · ·    -1 1 · ·     -1 · 1 ·    -1 · 1 ·    -1 · 1 ·    -1 · · 1    -1 · · 1
\end{BVerbatim}
\end{center}
\medskip
When $w_{i,j}=0$, we usually write \verb|·| at position $(i,j)$ instead of \verb|0|. In the examples above, we have $d=3$. The leftmost configuration is the initial configuration. From left to right, we obtain the next five configurations by successively executing splitting moves at $(0,0)$, $(1,0)$, $(0,1)$, $(0,2)$, and $(2,0)$, respectively. Finally, we obtain the rightmost configuration $w$ by applying an unsplitting move at $(1,1)$. \rev{The positive support of $w$ is \{(0,3), (1,1), (3,0)\}. Its negative support is \{(0,0)\}. Its support is the union of the previous two sets. The degree of $w$ is 3, since the furthermost diagonal that supports $w$ is the third one. All configurations shown in this example are valid and therefore weakly valid.}
\end{ex}

\rev{The notion of a valid outcome is essential for establishing the connection between chipsplitting games and \ratmodels{}: Proposition~\ref{prop:reduced_models2valid_outcomes} shows that valid real chipsplitting outcomes correspond precisely to reduced \redratmodels{}. For instance, the model $\mathcal M_{\indep}$ from Section~\ref{sec:intro} corresponds to the middle configuration in the above sequence.}

\begin{re}
The notion of chipsplitting games is inspired by that of chipfiring games. For a thorough treatment of the latter, see~\cite{Klivans}. In fact, by using powers of two one can prove that our chipsplitting games are equivalent to certain chipfiring games, provided the latter allow `unfiring', or reversing a firing move. All notions of Definiton~\ref{de:chipsplitting} have chipfiring equivalents. In this paper, we use chipsplitting games as they relate more directly to the statistical models of Section~\ref{sec:models}.
\end{re}

\rev{We can now state} our main result in the language of valid outcomes.

\begin{thm}\label{thm:main_outcomes}
Let $n\leq 4$ and let $w$ be a valid outcome with a positive support of size $n+1$. Then \[\deg(w)\leq 2n-1.\]
\end{thm}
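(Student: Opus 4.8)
The plan is to translate the degree bound into a statement about the linear-algebraic constraints that a valid outcome must satisfy, and then argue case by case on $n \in \{0,1,2,3,4\}$, equivalently on the size $n+1$ of the positive support. The cases $n\le 2$ (positive support of size $\le 3$) should be handled directly — this is what Section~\ref{sec:supp+<=3} is evidently for — while $n=3$ and $n=4$ are the substantial cases treated in Sections~\ref{sec:supp+==4} and~\ref{sec:supp+==5}. The key structural fact to exploit is that the outcome $w$ of a reduced chipsplitting game on $(V_d,E_d)$ satisfies, for every point $(i,j)$, the balance relation $w_{i,j} = -f(i,j) + f(i-1,j) + f(i,j-1)$ coming from the identification of reduced games with functions $f\colon V'\to\ZZ$. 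Dually, a chip configuration $w$ is an outcome precisely when it is orthogonal to a suitable space of "harmonic" test functions on the grid; the relevant test functions here are evaluations $(i,j)\mapsto a^i b^j$ and their derivatives, reflecting the connection to the parametrization $t\mapsto(w_\nu t^{i_\nu}(1-t)^{j_\nu})$ from Section~\ref{sec:models}. Concretely, I would first establish (or invoke from Section~\ref{sec:model2chipsplitting}) that $w$ being an outcome forces $\sum_{(i,j)} w_{i,j}\, t^i(1-t)^j = 0$ as a polynomial identity in $t$, together with the normalization recorded by the $-1$ at the origin.

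Granting that, the proof proceeds as follows. Suppose $w$ is a valid outcome with $\supp^+(w) = \{(i_0,j_0),\dots,(i_n,j_n)\}$ and, for contradiction, $\deg(w) \ge 2n$. Since $w$ is valid, $\supp^-(w)\subseteq\{(0,0)\}$, so the polynomial identity reads $c\cdot 1 = \sum_{\nu=0}^{n} w_\nu\, t^{i_\nu}(1-t)^{j_\nu}$ with all $w_\nu > 0$ and $c = -w_{0,0}$ (which is $1$ after normalization, or $0$ if the origin is not in the support, but validity and the structure of the game pin this down). The right-hand side is a nonnegative combination of $n+1$ distinct monomials $t^{i}(1-t)^{j}$; I want to show that if one of the exponent pairs has degree $\ge 2n$, then such an identity cannot hold — i.e., a sum of $n+1$ positively-weighted, distinct Bernstein-type monomials, one of large degree, cannot collapse to a constant. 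The mechanism is a dimension/sign count: writing everything in the monomial basis $1,t,t^2,\dots$, the identity imposes $\deg(w)$ linear equations (vanishing of coefficients of $t^1,\dots,t^{\deg(w)}$) on the $n+1$ positive unknowns $w_\nu$, and the claim is that for $\deg(w)\ge 2n$ the only solution in the relevant cone is degenerate, contradicting $w_\nu>0$ and distinctness of the supports. For small $n$ this is a finite check; for $n=4$ one organizes the possible support shapes (using that $\supp^+(w)\subseteq V_d$ and the interplay of the two coordinates) and rules them out, which is exactly where the paper says it uses Sage.

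I expect the main obstacle to be the case $n=4$: there the number of combinatorially distinct configurations of a 5-element positive support inside the triangle $V_d$ with $d = \deg(w)$ possibly as large as $2n = 8$ is large, and the naive sign/dimension argument is not by itself conclusive — one genuinely needs the finer structure of chipsplitting outcomes (not merely the polynomial identity), such as monotonicity or "no large negative entries" propagated from validity, to prune the cases. The supporting tools I anticipate needing, and would develop before the case analysis, are: (i) a reduction, via Remark~\ref{re:fundamental_models_suffice} and Proposition~\ref{fundamental-to-reduced}, to the case where $w$ corresponds to a fundamental model, so that the $w_\nu$ are \emph{uniquely} determined by the exponents — turning each candidate support into a single linear system to solve rather than a cone to analyze; (ii) a symmetry reduction using the involution $(i,j)\mapsto(j,i)$ that swaps the roles of $t$ and $1-t$; and (iii) the notion of weakly valid outcomes and the contraction operation hinted at in Section~\ref{subsec:contraction-hyperfield}, which presumably lets one cut $d$ down by contracting the outer ring and induct. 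With these in hand, the cases $n\le 3$ should fall out cleanly, and $n=4$ becomes a bounded enumeration that the computer can certify.
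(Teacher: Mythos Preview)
Your proposal correctly identifies the overall architecture --- a case split on $n$, the polynomial identity $\sum_{(i,j)} w_{i,j}\,t^i(1-t)^j=0$, the $(i,j)\leftrightarrow(j,i)$ symmetry, and computer assistance for $n=4$ --- but the mechanism you propose for actually ruling out large degree is not the one the paper uses, and in the form you state it, it does not work. Your central step is a ``dimension/sign count'': the identity imposes $\deg(w)$ linear conditions on $n+1$ positive weights, so for $\deg(w)\ge 2n$ the system should be infeasible. But an overdetermined linear system can perfectly well have a one-dimensional solution space lying in the positive orthant; nothing in the count forces a contradiction. More seriously, your claim that ``for small $n$ this is a finite check'' is not justified: the degree $d$ is a priori unbounded, so even for $n=2$ there are infinitely many candidate supports $\{(0,0),(i_0,j_0),(i_1,j_1),(i_2,j_2)\}$ to rule out, and you give no uniform argument that handles them all.

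The paper closes exactly this gap with three concrete tools you do not mention or only gesture at. First, the \emph{Invertibility Criterion}: rather than a dimension count, one selects a specific set $E\subseteq\{0,\dots,d\}$ of Pascal equations $\varphi_{a,d-a}$ and proves the pairing matrix $A^{(d)}_{E,S}$ is invertible for all $d$ simultaneously, via a block-triangular ``divide and conquer'' reduction (Propositions~\ref{prop:divide}--\ref{prop:conquer}). This is what makes the $n\le 2$ case genuinely finite. Second, the \emph{Hyperfield Criterion}: one passes to the sign hyperfield, where the contraction $\contr_d$ of Section~\ref{subsec:contraction-hyperfield} collapses the $d$-dependence of the outer ring of $V_d$ to a fixed finite index set $\Xi$; this is not an induction on $d$ as you suggest, but a one-shot reduction to a computer search over $H^\Xi$, valid for all $d\ge 12$ at once. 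Third, for $n=4$ the paper needs an additional \emph{Hexagon Criterion} (a determinant identity for restricted Pascal equations) to eliminate the residual cases that survive both previous filters; your proposal has no analogue of this. Without these three devices, the enumeration you envisage is infinite and the argument does not close.
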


\rev{
The equivalence of Theorems~\ref{thm:main_models} and~\ref{thm:main_outcomes} will be proven in Proposition~\ref{are-equivalent}.} We will prove Theorem~\ref{thm:main_outcomes} in Sections~\ref{sec:supp+<=3}–\ref{sec:supp+==5} (See Theorems ~\ref{thm:valid-outcomes-n-three},~\ref{thm:possup=4} and~\ref{thm:pos_supp=5}).

\section{Fundamental models}\label{sec:models}

\rev{In this section we develop the statistical side of our paper and prove our main classification theorem  using Theorem~\ref{thm:main_models}.}

\mybreak
A \emph{one-dimensional \emph{(parametric, discrete)} algebraic statistical model}
is a
subset of 
$\Delta_n$
which is the image of a rational map $p\colon I \to \Delta_n$ whose components $p_0(t),\dotsc, p_n(t)$ are rational functions in $t$, where $I \subseteq \mathbb R$ is a union of closed intervals such that $p(\partial I) \subseteq \partial \Delta_n$.
Alternatively,
such a model can be described as the intersection of $\Delta_n$ with a parametrized curve $\{\gamma(t)\mid t\in\mathbb R\}\subseteq \mathbb R^{n+1}$ with rational entries in the $t$.

\mybreak
Let $\mathcal M\subseteq \Delta_n$ be a one-dimensional algebraic model which is parametrized by the rational functions $p_0(t),\dotsc, p_n(t).$ The equation $\sum_\nu p_\nu(t) = 1$ holds for infinitely many and thus for all $t\revml{\in \mathbb R}$.
We multiply it by the least common denominator of the $p_\nu(t)$ to obtain an equation of the form $\sum_\nu a_\nu(t) = b(t)$, where $a_0(t),\dotsc,a_n(t), b(t)$ are polynomials in $t$. Thus, $\mathcal M$ is determined by a collection $(a_0,\dotsc, a_n, b)$ of polynomials in $t$ satisfying $\sum_\nu a_\nu = b$. The parametrization of $\mathcal M$ is recovered by setting $p_\nu = a_\nu/b$, where we may assume that the polynomials $a_0,\dotsc,a_n,b$ share no factor common to all of them.

\mybreak
In maximum likelihood estimation, one seeks to maximize
\rrev{the likelihood
$\mathcal L_u(p) = \operatorname{const}(u)\prod_{\nu}p_\nu^{u_\nu}$ of observing a given empirical distribution $u\in\Delta_n$, over all $p\in \mathcal M$. The term $\operatorname{const}(u)$ is a multinomial coefficient that depends only on $u$, so it can be dropped. Then, the problem is reduced to maximizing the \emph{log-likelihood} $\ell_u(p) \coloneqq \sum_\nu u_\nu \log(p_\nu) \propto \log \mathcal L_u(p).$}
This can be accomplished by first finding all the critical points of $\mathcal \ell_u$. When $\mathcal M$ is one-dimensional, finding these critical points amounts to finding the zeros of the derivative $\ell_u(p(t))'$ with respect to $t$. In our notation, we have 
\[
\ell_u(p(t))' = \sum_\nu u_\nu \frac{a_\nu'}{a_\nu} - \sum_\nu u_\nu\frac{b'}{b},
\]
a rational expression in $t$ which we abbreviate as $\ell_u'$.
In algebraic statistics, the \emph{maximum likelihood degree} $\operatorname{mld}(\mathcal M)$ of $\mathcal M$ is the number of solutions over $\mathbb C$ to this equation for general $u\in \mathbb C^n$. In our case, this number can be determined in terms of the roots of the $a_\nu$ and $b$, as the next lemma shows.

\begin{lemma}\label{complex-factors}
Let $f$ be the product of all the distinct complex linear factors occurring among the polynomials $a_0,\dotsc, a_n, b.$ Then $\operatorname{mld}(\mathcal M) = \deg(f)-1$.
\end{lemma}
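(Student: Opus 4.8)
The plan is to convert the rational equation $\ell_u'(t)=0$ into a polynomial equation whose degree can be read off the root data of the $a_\nu$ and $b$, and then count its solutions for generic $u$. Write $f=\prod_{k=1}^{D}(t-r_k)$, so that $r_1,\dots,r_D$ are the distinct complex numbers occurring as a root of at least one of $a_0,\dots,a_n,b$ and $D=\deg f$. Let $m_{\nu,k}$ (resp.\ $n_k$) be the multiplicity of $r_k$ as a root of $a_\nu$ (resp.\ $b$). Taking logarithmic derivatives gives $a_\nu'/a_\nu=\sum_k m_{\nu,k}/(t-r_k)$ and $b'/b=\sum_k n_k/(t-r_k)$, hence
\[
\ell_u' \;=\; \sum_{k=1}^{D}\frac{\lambda_k(u)}{t-r_k},\qquad \lambda_k(u):=\sum_{\nu}u_\nu\,(m_{\nu,k}-n_k),
\]
a sum of simple poles at $r_1,\dots,r_D$ whose residues $\lambda_k$ are linear forms in $u$. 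Note that no $r_k$ can divide all of $a_0,\dots,a_n,b$ (they share no common factor), so each $\lambda_k$ is a \emph{nonzero} linear form.

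Multiplying through by $f$, the equation $\ell_u'=0$ becomes $N_u(t)=0$, where $N_u(t)=\sum_{k}\lambda_k(u)\prod_{j\neq k}(t-r_j)$ is a polynomial in $t$ of degree at most $D-1$. Thus the number of complex solutions of $\ell_u'=0$ equals the number of roots of $N_u$ that are not among $r_1,\dots,r_D$, and I would show this number is exactly $D-1$ for general $u$ by establishing three genericity statements: (i) $N_u$ has degree exactly $D-1$, i.e.\ its leading coefficient $\sum_{k=1}^D\lambda_k(u)$ does not vanish; (ii) the roots of $N_u$ are simple; (iii) none of them equals any $r_k$. Statements (ii) and (iii) are routine, since each is an open condition on $u$: for (iii) one checks that $N_u(r_k)=\lambda_k(u)\prod_{j\neq k}(r_k-r_j)$ is not identically zero in $u$ (it isn't, because $\lambda_k\not\equiv 0$), and for (ii) one checks that the discriminant of $N_u$ is not identically zero. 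So both hold on a dense open set of $u$, provided (i) does.

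The heart of the matter is (i). The leading coefficient of $N_u$ is
\[
\sum_{k=1}^{D}\lambda_k(u)\;=\;\sum_{\nu}u_\nu\sum_{k}(m_{\nu,k}-n_k)\;=\;\sum_{\nu}u_\nu\bigl(\deg a_\nu-\deg b\bigr),
\]
so I must show this is a nonzero linear form in $u$, i.e.\ that not every $a_\nu$ has the same degree as $b$. This is the step where the hypothesis that $\mathcal M$ is a \emph{model} must be used rather than just the coprimality normalization: passing to the projective closure $\overline{\mathcal M}\subseteq\mathbb P^n$, the point parametrized by $t=\infty$ is $[c_0:\dots:c_n]$ with $c_\nu$ the coefficient of $t^{\deg}$ in $a_\nu$, and ``all $\deg a_\nu=\deg b$'' is exactly the statement that this point lies neither on a coordinate hyperplane nor on the hyperplane at infinity. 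Since $\overline{\mathcal M}$ is a curve it meets all of these hyperplanes, so after an affine reparametrization sending such an intersection point to $t=\infty$ one may assume $\deg b<\max_\nu\deg a_\nu$ or some $\deg a_\nu<\deg b$; in either case $\sum_\nu u_\nu(\deg a_\nu-\deg b)\not\equiv0$. Granting this, all three genericity conditions hold on a common dense open subset of $u$, so $\operatorname{mld}(\mathcal M)=D-1=\deg(f)-1$.

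I expect (i) — excluding the degenerate situation in which every $a_\nu$ shares the degree of $b$ — to be the only real obstacle; the partial-fraction rewriting is formal, and conditions (ii)–(iii) are standard ``general position'' bookkeeping once the top-degree term of $N_u$ is known to survive.
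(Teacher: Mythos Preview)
Your partial-fraction computation is essentially the paper's argument in different notation: the paper writes $f\ell_u'=\sum_\nu u_\nu\,fa_\nu'/a_\nu-(\sum_\nu u_\nu)\,fb'/b$ directly, asserts it is a polynomial of degree $\deg f-1$, and then checks (exactly your step~(iii)) that for each linear factor $\zeta=t-r_k$ one has $f\ell_u'\equiv \lambda_k(u)\prod_{j\ne k}(r_k-r_j)\pmod\zeta$, nonzero for generic $u$ since $\lambda_k\not\equiv0$. Your step~(ii) is not needed if one counts roots with multiplicity, as the paper implicitly does.

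Where you go further is step~(i), and here your worry is justified but your fix is not. An ``affine'' reparametrization $t\mapsto\alpha t+\beta$ cannot send any point to $\infty$; a M\"obius map can, but it changes the root set and hence $\deg f$, so you would be proving the formula for a \emph{different}~$f$. In fact the bad case is a genuine counterexample to the lemma as stated: take $n=1$, $a_0=2t$, $a_1=2-t$, $b=t+2$ (a parametrization of $\Delta_1$ over $I=[0,2]$). All three have degree $1$, the distinct linear factors are $t,\,t-2,\,t+2$, so $\deg f=3$, yet
\[
f\ell_u'=u_0(t^2-4)+u_1(t^2+2t)-(u_0{+}u_1)(t^2-2t)=(2u_0+4u_1)t-4u_0
\]
has degree $1$, and the model has ML degree $1$, not $2$. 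The paper simply asserts ``degree $\deg f-1$'' and thus shares this gap; the honest statement is that $\operatorname{mld}$ is one less than the number of distinct roots of the $a_\nu,b$ on $\mathbb P^1$ \emph{including} $\infty$, which coincides with $\deg f-1$ precisely when not all $\deg a_\nu$ equal $\deg b$.
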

\begin{proof}
Every factor of a polynomial $g$ with \rev{multiplicity} $k$ occurs in $g'$ with multiplicity $k-1$. So the expression
\[
f \ell_u' = \sum_\nu u_\nu \frac{fa_\nu'}{a_\nu} - \sum_\nu u_\nu\frac{fb'}{b}
\]
is a polynomial in $t$ of degree $\deg(f)-1$. All roots of the rational function $\ell_u'$ are roots of $f\ell'_u$. It remains to show that no new roots were introduced. That is, that no root of $f$ is also a root of $f\ell'_u$. Thus, let $\zeta$ be a complex linear factor of $f$ and $\zeta_0\in \mathbb C$ its derivative. Rewrite $f\ell_u'$ as
\[
\sum_{\nu=0}^{n+1}u_\nu\frac{fa_\nu'}{a_\nu}
\]  
with $a_{\rev{n+1}}:=b$ and $u_{n+1} := -\sum_{\nu=0}^n u_\nu.$ For $\nu = 0,\dotsc, n+1,$ write $a_\nu = \zeta^{k_\nu}r_\nu$ and $f = \zeta r$ such that $\zeta\nmid r_\nu, r$. Then for all $\nu$ we have $fa_\nu'/a_\nu = \zeta r k_\nu \zeta_0/\zeta + \zeta r r_\nu'/r_\nu \equiv \zeta_0 k_\nu r \pmod \zeta$. Consequently,
\[
f\ell_u' \equiv \zeta_0 r \sum_{\nu=0}^{n+1}u_\nu k_\nu \equiv \zeta_0 r \sum_{\nu=0}^{n}u_\nu (k_{\nu} - k_{n+1}) \pmod \zeta.
\]
Not all the $(k_\nu - k_{n+1})$ for $\nu=0,\dotsc,n$ can be zero since $\zeta$ is a factor of some $a_\nu$ for $\nu=0,\dotsc, n+1,$ but not all of them \rev{since by assumption the $a_0,\dotsc, a_{n+1}$ share no factor common to all of them.} Hence, because the $u_\nu$ are generic we may assume that $\sum_\nu u_\nu (k_\nu - k_{n+1}) \neq 0$. \rev{Since $\zeta$ divides $f$ only once, we have} $\zeta_0 r\not\equiv 0 \pmod \zeta$. \rev{Therefore}, $f\ell_u' \not\equiv 0 \pmod \zeta$, so $\zeta \nmid f\ell_u'$.
\end{proof}
\newpar
In this paper we are interested in classifying one-dimensional models of ML degree \emph{one}. The next proposition is the first step in our classification.

\begin{proposition}\label{first-step} Every one-dimensional discrete model $\mathcal M$ of ML degree one has a parametrization of the form
\[
p\colon [0,1]\to \Delta_n,\quad t \mapsto (w_\nu t^{i_\nu}(1-t)^{j_\nu})_{\nu=0}^n 
\]
for some nonnegative exponents $i_\nu,j_\nu$ and positive real coefficients $w_\nu$ for $\nu = 0,\dotsc,n$. 
\end{proposition}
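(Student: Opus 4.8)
The plan is to start from an arbitrary parametrization $p_\nu = a_\nu/b$ with $\sum_\nu a_\nu = b$ and no common factor, and to use Lemma~\ref{complex-factors} together with the ML degree one hypothesis to pin down the shape of the $a_\nu$ and $b$. By Lemma~\ref{complex-factors}, if $f$ is the product of the distinct complex linear factors occurring among $a_0,\dotsc,a_n,b$, then $\deg(f) = \operatorname{mld}(\mathcal M) + 1 = 2$. So there are exactly two distinct linear factors in play (they cannot coincide: if there were only one, then after removing the common factor all the $a_\nu$ and $b$ would be constants, forcing $\mathcal M$ to be a point, not one-dimensional). Call these two linear forms $\ell_1(t)$ and $\ell_2(t)$; then each $a_\nu$ and $b$ is, up to a scalar, a monomial $\ell_1^{i}\ell_2^{j}$.

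Next I would normalize the two linear forms. Since the model is one-dimensional and real, and the boundary condition $p(\partial I)\subseteq \partial\Delta_n$ must be satisfiable, both $\ell_1$ and $\ell_2$ have real roots; reparametrizing $t$ by an affine change of coordinates on $\mathbb{R}$ (which does not change the model as a subset of $\Delta_n$) we may send these two roots to $0$ and $1$, so that $\ell_1 = t$ and $\ell_2 = 1-t$ up to nonzero real scalars that can be absorbed into the coefficients. Thus $a_\nu = c_\nu\, t^{i_\nu}(1-t)^{j_\nu}$ and $b = c\, t^{i}(1-t)^{j}$ for real constants $c_\nu, c$ and nonnegative integers. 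Because $a_0,\dotsc,a_n,b$ share no common factor, $\min_\nu i_\nu = 0$ after accounting for $b$'s exponent as well; the relation $\sum_\nu a_\nu = b$ then forces $b$ itself to have exponents $i = j = 0$, i.e. $b$ is a nonzero constant, which we may scale to $1$. Hence $p_\nu = a_\nu = w_\nu t^{i_\nu}(1-t)^{j_\nu}$ with $w_\nu\in\mathbb{R}$.

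It remains to show the $w_\nu$ can be taken positive and that the domain $[0,1]$ works. On $(0,1)$ each monomial $t^{i_\nu}(1-t)^{j_\nu}$ is strictly positive, and since $p(t)\in\Delta_n$ we need $p_\nu(t)\geq 0$ there; if some $i_\nu = j_\nu = 0$ then that $p_\nu \equiv w_\nu$ is a nonzero constant, and nonnegativity forces $w_\nu > 0$, while if $(i_\nu,j_\nu)\neq(0,0)$ then $w_\nu t^{i_\nu}(1-t)^{j_\nu}\to 0$ at an endpoint and stays one-signed on $(0,1)$, so again $w_\nu \geq 0$; a coordinate that were identically zero could be dropped, contradicting that $p$ is a genuine parametrization into $\Delta_n$ (or handled by the convention that all coefficients are positive). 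Finally $\sum_\nu p_\nu(t) = b = 1$ holds identically, so $p$ indeed maps into $\Delta_n$, and $p(0), p(1)\in\partial\Delta_n$ since at $t=0$ and $t=1$ some coordinate vanishes, so $[0,1]$ is a valid domain.

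The main obstacle I expect is the bookkeeping around the no-common-factor assumption and the boundary condition: one must argue carefully that the two linear factors guaranteed by Lemma~\ref{complex-factors} are real and distinct (ruling out a complex-conjugate pair or a repeated factor using that $\mathcal M$ is a one-dimensional \emph{real} model meeting the simplex in its relative interior), and that after clearing denominators $b$ genuinely becomes constant rather than a nontrivial monomial. The positivity of the $w_\nu$ is then a short sign analysis, and the affine reparametrization sending the two roots to $0$ and $1$ is routine.
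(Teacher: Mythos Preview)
Your proposal is correct and follows essentially the same route as the paper: invoke Lemma~\ref{complex-factors} to get exactly two linear factors, use the boundary condition $p(\partial I)\subseteq\partial\Delta_n$ to see that these roots are real and coincide with the endpoints of $I$, reparametrize to $[0,1]$, write $a_\nu = w_\nu t^{i_\nu}(1-t)^{j_\nu}$ and $b = w\, t^i(1-t)^j$, and then force $i=j=0$ via the relation $\sum_\nu a_\nu = b$. One small ordering point: the paper extracts the positivity $w_\nu>0$ \emph{before} arguing $i=j=0$ (it comes for free from $p_\nu\ge 0$ on $(0,1)$ once $b$ has fixed sign there), and this is what makes the evaluation at $t=0$ yield a genuine contradiction; in your write-up you postpone the sign analysis until after, which leaves the step ``$\sum_\nu a_\nu = b$ forces $i=j=0$'' slightly underjustified, since without knowing the $w_\nu$ are all positive the constant terms could in principle cancel. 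Swapping the order fixes this immediately.
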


\begin{proof}
Let $\mathcal M$ be defined by the polynomials $a_0,\dotsc, a_n, b$ with $\sum_\nu a_\nu = b$.
By Lemma~\ref{complex-factors}, these polynomials split as products of the same two complex factors.
The $n+1$ faces of $\Delta_n$ lie on the $n+1$ coordinate hyperplanes of $\mathbb R^n$. Thus, the set $I$ in the parametrization $p\colon I\to \mathcal M$ is a single closed interval because $p(\partial I) \subseteq \partial \Delta_n$ and the $a_\nu,b$ have exactly two zeros among them. In particular, these zeros are real and coincide with the endpoints of $I$. Without changing $\mathcal M$, we may reparametrize and assume that $I=[0,1]$. 
We may write
\begin{align*}
a_\nu(t) &= w_\nu t^{i_\nu}(1-t)^{j_\nu} \\
b(t) &= w t^i(1-t)^j,
\end{align*}
for $w_\nu, w\in \mathbb R_{> 0}$ and $i_\nu, j_\nu,i,j\in \mathbb Z_{\geq 0}$ for all $\nu$. 
If $i>0$, then $i_\nu=0$ for some $\nu$ and we arrive at a contradiction by evaluating
the equation $\sum_\nu a_\nu = b$ at $t=0$.
So $i=0$. Similarly, we must have $j=0$.
By dividing by $w$ we now arrive at the required form for $p$.
\end{proof}

Thus, our goal is to provide a classification of the parametrizations of models specified by Proposition~\ref{first-step}, \rrev{i.e.\ \ratmodels{}}. We will show how these models can be built up from progressively simpler models, the simplest of which we will call `fundamental models'.

\mybreak
Proposition~\ref{first-step} shows that every \ratmodel{} $\mathcal M\subseteq \Delta_n$ can be represented by a finite sequence $(w_\nu, i_\nu, j_\nu)_{\nu=0}^n$ for some nonnegative exponents $i_\nu, j_\nu$ and positive real coefficients $w_\nu$. The degree of $\mathcal M$ as an algebraic variety, denoted by $\deg(\mathcal M)$, is $\max\{\deg(i_\nu,j_\nu)\mid \nu\in \{0,\dotsc,n\}\}$ where $\deg(i,j):=i+j$.

\mybreak
We consider two \ratmodels{} in $\Delta_n$ equivalent if they differ only by a relabeling of the coordinates on $\Delta_n$. \rev{Nevertheless, we shall attempt to maintain some consistency when indicating points of $\Delta_n$ indexed by pairs $(i,j)$ by ordering the coordinates of these points lexicographically.} Although $(w_\nu, i_\nu, j_\nu)_{\nu = 0}^n$ and $(w_\nu, j_\nu, i_\nu)_{\nu = 0}^n$ represent the same subset of $\Delta_n$, we shall count these two representations as distinct \ratmodels{} unless they are equal up to reordering. These two models differ by the reparametrization $t\mapsto p(t-1)$.

\mybreak
We now define our first simpler subclass of the class of \ratmodels{}.

\begin{definition}
A \ratmodel{} represented by $(w_\nu, i_\nu, j_\nu)_{\nu = 0}^n$ is \emph{reduced} if the exponent pairs $(i_\nu, j_\nu)$ are not equal to $(0,0)$ and pairwise distinct. \rev{For brevity we call such a model a \emph{reduced R1d model}.}
\end{definition}

\begin{proposition} \label{reduced-to-model}
Every one-dimensional discrete model of ML degree one is the image of a reduced \redratmodel{} under a chain of linear embeddings
of the form
\begin{align}\label{embedding-one}
\Delta_{n-1}\to \Delta_n,\quad
(p_0,\dotsc,\hat p_\nu, \dotsc, p_n)
&\mapsto
(\lambda p_0,
\dotsc,
1-\lambda,
\dotsc,\lambda p_n),
\quad \lambda\in[0,1]
\end{align}
or
\begin{align}\label{embedding-two}
\Delta_{n-1}\to \Delta_n,\quad
(p_0,\dotsc, p_\nu,\dotsc,\hat p_\mu,\dotsc, p_n)
&\mapsto
\left(p_0,\dotsc,
\lambda p_\nu,
\dotsc,
(1-\lambda)p_\nu,
\dotsc,
p_n\right),
\quad \lambda\in[0,1].
\end{align}
\end{proposition}

\begin{proof}
Let $(w_\nu, i_\nu, j_\nu)_{\nu = 0}^n$ represent a \ratmodel{} $\mathcal M$. If $(i_\nu,j_\nu) = (0,0)$ for some $\nu$ then $w_\nu < 1$. Let $\lambda := 1-w_\nu$. Then $\mathcal M$ is the image under the linear embedding~\eqref{embedding-one}
of the \rev{reduced} \redratmodel{} represented by
\[
(w_\iota/(1-w_\nu),i_\iota,j_\iota)_{\iota = 0, \iota\neq\nu}^n.
\]
Similarly, suppose that $(i_\nu, j_\nu)=(i_\mu,j_\mu)$ for some $\nu\neq\mu$ and let $\lambda:=w_\nu/(w_\nu + w_\mu)$. Then $\mathcal M$ is the image under the linear embedding~\eqref{embedding-two}
of the \rev{reduced} \redratmodel{} represented by
\[(w_\iota + \delta_{\iota\nu}w_\mu,i_\iota,j_\iota)_{\iota = 0, \iota \neq \mu}^n.\qedhere\]
\end{proof}

\begin{re}\label{re:reduced_models_suffice}
If $\Delta_n$ contains a \ratmodel{} of degree $d$, then $\Delta_{n'}$ must contain a reduced \redratmodel{} of degree $d$ for some $n'\leq n$. Therefore, to prove Theorem~\ref{thm:main_models} it is enough to consider reduced \redratmodels{} only.
\end{re}

\begin{definition}\label{def:fundamental}
A reduced \redratmodel{} represented by $(w_\nu,i_\nu,j_\nu)_{\nu=0}^n$ is a \emph{fundamental model} if, given the exponents $(i_\nu, j_\nu)$, the weights $(w_\nu)$ are uniquely determined by the constraint $\sum_\nu p_\nu = 1$.
\end{definition}

Thus, for any given set of exponents $(i_\nu, j_\nu)$, we can check whether there is a fundamental model with these exponents by solving a system of affine-linear equations in the weights $w_\nu$. Similarly, the set of reduced \redratmodels{} with these fixed exponents is always an affine-linear half space of dimension at most $n+1$.

\begin{example}
Consider the sequence of exponents $((2,0), (1,1), (0,2))$. The polynomial constraint $w_0 t^2 + w_1 t (1-t) + w_2 (1-t)^2 = 1$ leads to the affine-linear system $w_0 - w_1 + w_2 = 0$, $w_1 -2w_2 = 0$, $w_2 - 1 = 0$. The unique solution $(1,2,1)$ defines the fundamental model $t\mapsto (t^2,2t (1-t),(1-t)^2)$.
\end{example}

We shall now see that every reduced \redratmodel{} can be constructed from finitely many fundamental models in a finite number of steps. For this, we represent a reduced \redratmodel{} by the function $f\colon\mathbb Z^2\to \mathbb R_{\geq 0}$ that sends an exponent pair $(i_\nu,j_\nu)$ to its associated coefficient $w_\nu$. We call the set of exponent pairs $(i_\nu, j_\nu)$ the \emph{support} of $\mathcal M$. It equals $\supp(f)$.

\begin{definition}\label{composite}
Let $\mathcal M_1$ and $\mathcal M_2$ be reduced \redratmodels{} represented by the functions $f_1, f_2 : \mathbb Z^2\to \mathbb R_{\geq 0}$. Let $0<\mu<1$. The \emph{composite} $\mathcal M_1 *_{\mu}\mathcal M_2$ of $\mathcal M_1$ and $\mathcal M_2$ is the reduced \redratmodel{} represented by
\[
g:\mathbb Z^2\to \mathbb R_{\geq 0}, \quad g(i,j) \coloneqq \mu f_1(i,j) + (1-\mu) f_2(i,j).
\]
\end{definition}

\begin{proposition} \label{fundamental-to-reduced}
Every reduced \redratmodel{} is the composite of finitely many fundamental models.
\end{proposition}
\begin{proof}
Let $\mathcal M$ be a reduced \redratmodel{} represented by $(w_\nu, i_\nu, j_\nu)_{\nu=0}^n$.
If $n\leq 1$ then $\mathcal M$ is fundamental. So, let $n\geq 2$ and $\mathcal M$ not fundamental. It suffices to show that $\mathcal M$ is the composite of two reduced \redratmodels{} whose supports are proper subsets of $S$.
Since $\mathcal M$ is not fundamental, there exist $x_0,\dotsc,x_n\in\RR$, not all zero, such that $\sum_{\nu=0}^n x_\nu t^{i_\nu}(1-t)^{j_\nu} = 0$. Since this equality holds for all $t\in(0,1)$, we have at least one positive and one negative $x_\nu$. Let
\[
\lambda := \min\{w_\nu/|x_\nu| \mid\nu\in\{0,\ldots,n\},x_\nu < 0\},\quad u_\nu := w_\nu + \lambda x_\nu\mbox{ for }\nu\in\{0,\ldots,n\},
\]
and $S_1 := \{(i_\nu, j_\nu) \mid\nu\in\{0,\ldots,n\}, u_\nu \neq 0\}$. Then we have $\lambda > 0$ and 
$u_\nu \geq 0$ for all $\nu\in\{0,\ldots,n\}$, the latter of which we verify by distinguishing between the cases $x_\nu \geq 0$ and $x_\nu < 0$.
For all $\nu$ we have $u_\nu=0$ if and only if $x_\nu<0$ and $\lambda=w/|x_\nu|$. Thus $S_1$ is a nonempty proper subset of $S$.
Since $\sum_{\nu=0}^n u_\nu s_\nu = 1,$  the coefficients $u_\nu$ for $(i_\nu,j_\nu)\in S_1$ define a reduced \redratmodel{} $\mathcal M_1$ with support $S_1$. Let
\[
\mu := \min\{w_\nu/u_\nu \mid \nu\in\{0,\ldots,n\},u_\nu \neq 0\},\quad v_\nu := (w_\nu - \mu u_\nu)/(1-\mu)\text{ for } \nu\in\{0,\ldots,n\},
\]
and $S_2 := \{(i_\nu, j_\nu)\mid \nu\in\{0,\ldots,n\}, v_\nu\neq 0\}$.
Then $\mu>0$. Since at least one of the $x_\nu$ is positive, we have $u_\nu > w_\nu$ for some $\nu$, and thus $\mu<1.$ 
We have $v_\nu \geq 0$ by the definition of $\mu$
and $v_\nu=0$
if and only if $u_\nu\neq 0$ and $\mu = w_\nu/u_\nu$.
Thus $S_2$ is a nonempty proper subset of $S$ and we have $S_1\cup S_2 = S$.
Since $\sum_{\nu=0}^n v_\nu x_\nu = 1$, the coefficients $v_\nu$ for $(i_\nu, j_\nu)\in S_2$ define a reduced \redratmodel{} $\mathcal M_2$ with support $S_2$.
We conclude by noting that $w_\nu = \mu u_\nu + (1-\mu) v_\nu$ for all $\nu\in\{0,\dotsc,n\}$. Thus, $\mathcal M = \mathcal M_1 *_\mu \mathcal M_2$.
\end{proof}

\begin{remark}\label{re:fundamental_models_suffice}
If a reduced \redratmodel{} $\mathcal M \subseteq \Delta_n$ is not fundamental, then by Proposition~\ref{fundamental-to-reduced} there exists $n'<n$ and a fundamental model in $\Delta_{n'}$ of the same degree as $\mathcal M$. Thus, it suffices to prove Theorem~\ref{thm:main_models} for fundamental $\mathcal M$. In turn, Theorem~\ref{thm:main_models} implies that there are only finitely many fundamental models in $\Delta_n$ for $n\leq 4$. This is because for all $d$ there are only finitely many subsets $S\subseteq \mathbb Z^2$ that can be the support of a fundamental model $\mathcal M$ of degree $d$, and $S$ determines $\mathcal M$ uniquely.
\end{remark}

Our classification of one-dimensional discrete models of ML degree one is now complete. We summarize it in Theorem~\ref{classification-theorem}, all elements of which we already established in this section. Part (c) uses Theorem~\ref{thm:main_models}, which we will prove in Sections~\ref{sec:supp+<=3}--\ref{sec:supp+==5}. We visualize our classification in Figure~\ref{classification-figure}.

\begin{theorem}\label{classification-theorem}\phantom{text}
\begin{enumerate}
\item Every one-dimensional discrete model of ML degree one $\mathcal M\subseteq \Delta_n$ is the image of a reduced \redratmodel{} $\mathcal M'\subseteq \Delta_{n'}$ under a linear embedding $\Delta_{n'}\to \Delta_n$ for some $n'\leq n$.

\item  Every reduced \redratmodel{} $\mathcal M'\subseteq \Delta_{n'}$
can be written as the composite
\[
\mathcal M' = \mathcal M_1 *_{\mu_1} (\dotsb *_{\mu_{m-1}} (\mathcal M_{m-1} 
*_{\mu_m} \mathcal M_m)\dotsc)
\]
of finitely many
fundamental models $\mathcal M_1,\dotsc, \mathcal M_m$.
\item For $n\leq 4$, there are only finitely many fundamental models in $\Delta_n$.\qed
\end{enumerate}
\end{theorem}

\begin{figure}[h!]
\[\begin{tikzcd}
|[draw=black, rectangle]|
\parbox{2cm}{\centering Fundamental models}
&
|[draw=black, rectangle]|
\parbox{2cm}{\centering Reduced \redratmodels}
&
|[draw=black, rectangle]|
\parbox{4cm}{\centering\Ratmodels}
	\arrow["\subseteq"{description}, draw=none, from=1-1, to=1-2]
	\arrow["\subseteq"{description}, draw=none, from=1-2, to=1-3]
	\arrow["{\text{(Prop.\ \ref{fundamental-to-reduced})}}"', shift left = 2, bend right = 45, distance = 1cm, looseness=1, dashed, shorten <=11pt, shorten >=11pt, from=1-2, to=1-1]
	\arrow["{\text{(Prop.\ \ref{reduced-to-model})}}"', shift left = 2, bend right = 45, distance = 1cm, looseness=1, dashed, shorten <=11pt, shorten >=11pt, from=1-3, to=1-2]
\end{tikzcd}\]
\caption{A classification of one-dimensional discrete models of ML degree one \rev{(right box)}.
}
\label{classification-figure}
\end{figure}

\begin{example}
Let us classify all one-dimensional models $\mathcal M$ of ML degree one in the triangle $\Delta_2$, up to coordinate permutations. The unique \ratmodel{} $\mathcal M_0$ in $\Delta_1$ is parametrized by $t\mapsto (t, (1-t))$. Since $\mathcal M_0 *_{\mu} \mathcal M_0 = \mathcal M_0$, all \ratmodels{} in $\Delta_2$ are either fundamental or non-reduced. Theorem~\ref{thm:main_models} gives a bound for the algebraic degree of $\mathcal M$: we have $\deg(\mathcal M) \leq 3$. Hence, to find all fundamental models we check all possible sets of exponent pairs (or \emph{supports}) $S\subseteq \{(i,j)\mid 0<i+j\leq 3\}$ of size $n+1 = 3.$ We report the results in Figure~\ref{fundamental-n-3-figure}.

\mybreak
As for non-reduced \redratmodels{}, there are up to coordinate permutations only two linear embeddings $\Delta_1\to \Delta_2$ of the form~\eqref{embedding-one} or~\eqref{embedding-two} that can be used to construct $\mathcal M$ from $\mathcal M_0$. These can vary with the parameter $\lambda$ and are reported in Figure~\ref{nonreduced-n-3-figure} for $\lambda = 1/3$.
\end{example}

\begin{figure}[h]
\includegraphics[scale=0.3]{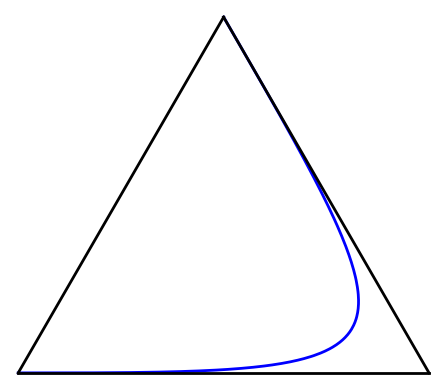}
\includegraphics[scale=0.3]{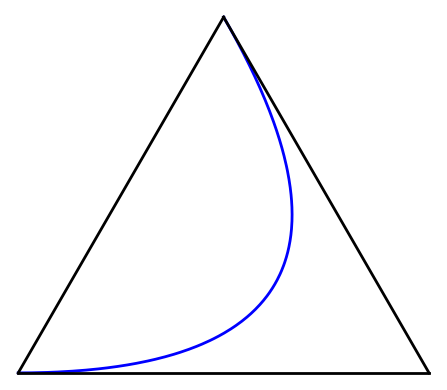}
\includegraphics[scale=0.3]{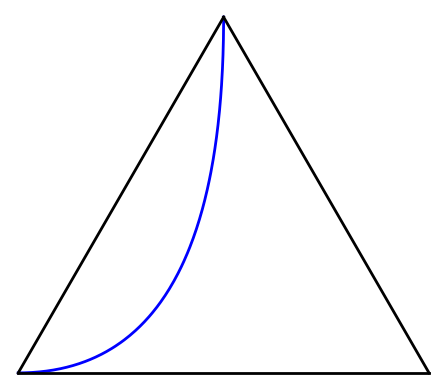}
\caption{Fundamental models in $\Delta_2$. These correspond to the parametrizations $t\mapsto ((1-t)^3, 3t(1-t), t^3)$, $t\mapsto ((1-t)^2, 2(1-t)t, t^2)$, and $t\mapsto ((1-t),t(1-t), t^2),$ from left to right. Their supports are \{(0,3), (1,1), (3,0)\}, \{(0,2), (1,1), (2,0)\}, and \{(0,1), (1,1), (2,0)\}, respectively. In $\Delta_2$ there is a further fundamental model with support \{(0,2),(1,0),(1,1)\}, but it is identical to the third model in this picture after a permutation of the coordinates of $\Delta_n$ and the reparametrization $t\mapsto 1-t$.}
\label{fundamental-n-3-figure}
\end{figure}

\begin{figure}[h]
\includegraphics[scale=0.3]{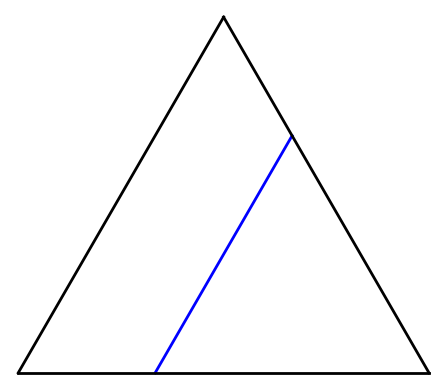}
\includegraphics[scale=0.3]{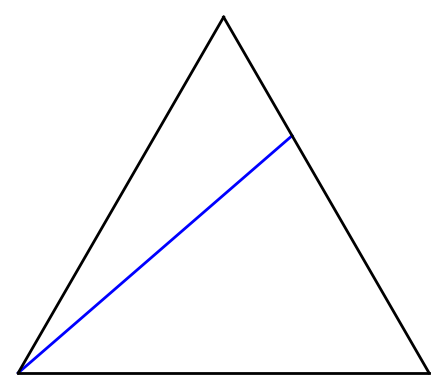}
\caption{Non-reduced \redratmodels{} in $\Delta_2.$ These arise from linear embeddings $\Delta_1\to \Delta_2$ of type \eqref{embedding-one} and \eqref{embedding-two}, respectively. They are given by $t\mapsto ((1-\lambda)t, \lambda, (1-\lambda)(1-t))$ and $t\mapsto ((1-t), \lambda t, (1-\lambda)t)$, where $\lambda:=1/3.$ All other non-reduced one-dimensional models of ML degree one in $\Delta_2$ arise from these two by varying $\lambda$ and permuting the coordinates of $\Delta_2$.}
\label{nonreduced-n-3-figure}
\end{figure}

\section{Chipsplitting games}\label{sec:chipsplitting}

In this section we lay some groundwork for proving Theorem~\ref{thm:main_outcomes}. \rev{In particular, \emph{symmetry} structures will help us cut down the number of cases considered, while \emph{Pascal equations} will help us distinguish chipsplitting outcomes from non-outcomes}.

\subsection{Symmetry}\label{subsec:symmetry}
For every $d\in\NN\cup\{\infty\}$, define an action of the group $S_2=\langle(12)\rangle$ on $\ZZ^{V_d}$ by setting
\[
(12)\cdot (w_{i,j})_{(i,j)\in V_d}:=(w_{j,i})_{(i,j)\in V_d},
\]
where clearly $(12)\cdot((12)\cdot w))=w$ for all $w\in\ZZ^{V_d}$. We also let $S_2$ act on $V_d$ by $(12)\cdot (i,j):=(j,i)$.

\mybreak
The initial configuration is fixed by $S_2$. Let $w\in\ZZ^{V_d}$, $p\in V_{d-1}$, and let $\widetilde{w}$ be the result of applying an (un)splitting move at $p$ to $w$. Then $(12)\cdot\widetilde{w}$ is the result of applying an (un)splitting move at $(12)\cdot p$ to $(12)\cdot w$. So we see that if $w$ is the outcome of an \irredundant{} chipsplitting game $f$, then $(12)\cdot w$ is the outcome of the chipsplitting game $(i,j)\mapsto f(j,i)$. Hence the space of outcomes is closed under the action of $S_2$. Let $w\in\ZZ^{V_d}$ be a chip configuration. Then
\begin{align*}
\supp^+((12)\cdot w)&=(12)\cdot\supp^+(w),& \supp^-((12)\cdot w)&=(12)\cdot\supp^-(w),\\
\supp((12)\cdot w)&=(12)\cdot\supp(w),&\deg((12)\cdot w)&=\deg(w).
\end{align*}
Furthermore, $w$ is (weakly) valid if and only if $(12)\cdot w$ is (weakly) valid.

\begin{example}
\rev{The following two valid outcomes are mapped to each other by the element $(12)$.}

\medskip
\begin{center}
\begin{BVerbatim}
 1           ·               
 · 1         1 ·       
 · 2 ·       · 2 1    
-1 · 1 ·    -1 · · 1
\end{BVerbatim}
\end{center}

\rev{The configuration corresponding to the model $\mathcal M_{\indep}$ from Section~\ref{sec:intro} is invariant under the $S_2$-action.}

\end{example}

\subsection{Pascal equations}
Another way to study the space of outcomes is via the set of linear forms that vanish on it. A {\em linear form} on $\ZZ^{V_d}$ is a function $\ZZ^{V_d}\to\ZZ$ of the form 
\[
(w_{i,j})_{(i,j)\in V_d}\mapsto\sum_{(i,j)\in V_d}c_{i,j}w_{i,j},
\]
which we will denote by $\sum_{(i,j)\in V_d}c_{i,j}x_{i,j}$. The group $S_2$ acts on the space of linear forms on $\ZZ^{V_d}$ via 
\[
(12)\cdot \sum_{(i,j)\in V_d}c_{i,j}x_{i,j}:=\sum_{(i,j)\in V_d}c_{j,i}x_{i,j}.
\]

\begin{de}
We say that a linear form $\sum_{(i,j)\in V_d}c_{i,j}x_{i,j}$ is a {\em Pascal equation} when 
\[
c_{i,j}=c_{i+1,j}+c_{i,j+1}
\]
for all $(i,j)\in V_{d-1}$. 
\end{de}

This terminology is inspired by the Pascal triangle, whose entries satisfy the same condition. The space of Pascal equations is closed under the action of $S_2$.

\mybreak

\rev{Pascal equations will help us throughout the rest of this article to distinguish chip configurations which are outcomes from those which are not. In particular, these equations will play an essential role in formulating and proving the Invertibility Criterion (Proposition~\ref{invertibility-criterion}), Hyperfield Criterion (Proposition~\ref{hyperfield-criterion}), and Hexagon Criterion (Proposition~\ref{hexagon-criterion}). }

\begin{prop}\label{prop:basis_via_1st_row/1st_column}
\revml{Let $(a_0,\dotsc, a_d)$ be any vector of $d+1$ integers.}
\begin{enumerate}
\item There exists a unique Pascal equation $\sum_{(i,j)\in V_d}c_{i,j}x_{i,j}$ such that $c_{0,j}=a_j$ for all $0\leq j\leq d$.
\item There exists a unique Pascal equation $\sum_{(i,j)\in V_d}c_{i,j}x_{i,j}$ such that $c_{i,0}=a_i$ for all $0\leq i\leq d$.
\end{enumerate}
\end{prop}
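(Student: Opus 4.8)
The two parts are related by the $S_2$-action: applying $(12)$ to a Pascal equation gives a Pascal equation, and it swaps the roles of the first row and the first column. So it suffices to prove part (a) and then transport it through the symmetry; I will concentrate on (a).

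The plan is to show existence and uniqueness simultaneously by exhibiting the recursion that reconstructs all the $c_{i,j}$ from the prescribed data $c_{0,j}=a_j$. The defining relation $c_{i,j}=c_{i+1,j}+c_{i,j+1}$ for $(i,j)\in V_{d-1}$ rearranges to
\[
c_{i+1,j}=c_{i,j}-c_{i,j+1},
\]
which expresses the entry at $(i+1,j)$ in terms of two entries in column $i$ that both have degree $\le d-1 < \deg(i+1,j)$ is false in general, so more carefully: they have degrees $i+j$ and $i+j+1$, both at most... one must check this stays inside $V_d$. Indeed if $(i+1,j)\in V_d$ then $i+j+1\le d$, so $(i,j)$ and $(i,j+1)$ both lie in $V_d$, and $(i,j)\in V_{d-1}$ so the Pascal relation there is available. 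Thus I would induct on the column index $i$: column $0$ is given by the $a_j$; assuming column $i$ is determined for all relevant $j$, the displayed formula determines every entry $c_{i+1,j}$ with $(i+1,j)\in V_d$ uniquely, and conversely any Pascal equation must satisfy it, giving uniqueness. This shows the map (Pascal equations on $V_d$) $\to \ZZ^{\NN_{\le d}}$, $c\mapsto (c_{0,j})_j$, is a bijection.

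For part (b), I would simply note that $c\mapsto (12)\cdot c$ is a bijection from Pascal equations to Pascal equations with $((12)\cdot c)_{i,0}=c_{0,i}$, so prescribing the first row $c_{i,0}=a_i$ is the same as prescribing the first column of $(12)\cdot c$, and part (a) applies. Alternatively one runs the same downward induction on the row index $j$ using $c_{i,j+1}=c_{i,j}-c_{i+1,j}$.

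I do not expect a serious obstacle here; the only thing requiring a moment's care is checking that the recursion stays within $V_d$ and only ever invokes the Pascal relation at points of $V_{d-1}$, where it is assumed to hold — that is exactly the degree bookkeeping indicated above. Everything else is a routine finite induction.
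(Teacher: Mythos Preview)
Your proposal is correct and follows essentially the same approach as the paper: for part (a) you use the rearranged recursion $c_{i+1,j}=c_{i,j}-c_{i,j+1}$ to build the coefficients column by column from the prescribed first column, and for part (b) you reduce to (a) via the $S_2$-symmetry $(12)$. The paper's proof is terser but identical in substance; if anything, you are slightly more careful than the paper in verifying that the recursion only invokes points inside $V_d$.
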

\begin{proof}
(a)
Set $c_{0,j}:=a_j$ for all integers $0\leq j\leq d$ and define
\[
c_{i+1,j}:=c_{i,j}-c_{i,j+1}
\]
for all $(i,j)\in V_d$ via recursion on $i>0$. Then $\sum_{(i,j)\in V_d}c_{i,j}x_{i,j}$ is a Pascal equation such that $c_{0,j}=a_j$ for all integers $0\leq j\leq d$. Clearly, it is the only Pascal equation with this property.

\itembreak
(b)
\rev{Let $d_{i,j}\coloneqq c_{j,i}$, so that}
\[
(12)\cdot \sum_{(i,j)\in V_d}c_{i,j}x_{i,j}=\sum_{(i,j)\in V_d}d_{i,j}x_{i,j}.
\]

Then $c_{k,0}=a_k$ if and only if $d_{0,k}=a_k$ and hence the statement follows from (a). 
\end{proof}

Our next goal is to prove that a chip configuration is an outcome if and only if all Pascal equations vanish at it. 

\begin{prop}\label{prop:pascal-equations-vanish-at-outcomes}
Let $w\in\ZZ^{V_d}$ be a chip configuration. Then the value at $w$ of any given Pascal equation on $\ZZ^{V_d}$ is invariant under (un)splitting moves. In particular, all Pascal equations on $\ZZ^{V_d}$ vanish at all outcomes.
\end{prop}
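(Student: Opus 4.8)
The plan is to verify directly that the value of a Pascal equation does not change under a single splitting move, since unsplitting moves are just splitting moves in reverse and a general game is a composition of such moves. Fix a Pascal equation $\sum_{(i,j)\in V_d}c_{i,j}x_{i,j}$ and let $p=(a,b)\in V_{d-1}$ be the point at which we split. Passing from $w$ to $\widetilde w$ changes exactly three entries: $w_{a,b}$ decreases by $1$, while $w_{a+1,b}$ and $w_{a,b+1}$ each increase by $1$. Hence the change in the value of the linear form is $-c_{a,b}+c_{a+1,b}+c_{a,b+1}$, which is precisely $0$ by the defining relation of a Pascal equation (valid because $p\in V_{d-1}$, so both $(a+1,b)$ and $(a,b+1)$ lie in $V_d$ and the relation applies at $p$). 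An unsplitting move negates this change, so it too leaves the value invariant.

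For the ``in particular'' clause, I would note that the initial configuration is the zero vector, at which every linear form evaluates to $0$. Since the outcome of any chipsplitting game is obtained from the initial configuration by a finite sequence of (un)splitting moves, and each such move preserves the value of the given Pascal equation, the value at any outcome equals the value at the initial configuration, namely $0$. Thus every Pascal equation vanishes at every outcome.

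I do not expect a genuine obstacle here; the only thing to be a little careful about is the bookkeeping of which entries change under a splitting move and the observation that $p\in V_{d-1}$ is exactly what guarantees the Pascal relation is available at $p$ (the relation is only imposed for $(i,j)\in V_{d-1}$, matching the fact that splitting moves are only performed at vertices of $V_{d-1}=V'$). One could alternatively phrase the argument via the identification of a reduced game $f$ with its function $V'\to\ZZ$ and the closed formula $w_v=-f(v)+\sum_{(p,v)\in E}f(p)$, computing $\sum_{(i,j)}c_{i,j}w_{i,j}$ and reindexing the double sum so that the coefficient of each $f(p)$ becomes $-c_p+c_{p+(1,0)}+c_{p+(0,1)}=0$; but the move-by-move argument is cleaner and avoids reindexing subtleties.
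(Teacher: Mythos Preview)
Your argument is correct and matches the paper's own proof essentially line for line: both compute the change in the linear form under a single splitting move at $(a,b)\in V_{d-1}$ as $-c_{a,b}+c_{a+1,b}+c_{a,b+1}=0$ and then observe that every Pascal equation vanishes at the initial (zero) configuration. The extra care you take in noting that $p\in V_{d-1}$ is exactly what makes the Pascal relation available is a nice touch, but there is no substantive difference in approach.
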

\begin{proof}
Let $w=(w_{i,j})_{(i,j)\in V_d}$ be a chip configuration and suppose we obtain $\widetilde{w}=(\widetilde{w}_{i,j})_{(i,j)\in V_d}$ from $w$ by applying a chipsplitting move at $(i',j')\in V_{d-1}$. Let $\sum_{(i,j)\in V_d}c_{i,j}x_{i,j}$ be a Pascal equation. Then we see that
\[
\sum_{(i,j)\in V_d}c_{i,j}\widetilde{w}_{i,j}=\sum_{(i,j)\in V_d}c_{i,j}\left\{\begin{array}{cl}w_{i,j}-1&\mbox{if $(i,j)=(i',j')$,}\\w_{i,j}+1&\mbox{if $(i,j)=(i'+1,j)$,}\\w_{i,j}+1&\mbox{if $(i,j)=(i',j'+1)$,}\\w_{i,j}&\mbox{otherwise}\end{array}\right\}=\sum_{(i,j)\in V_d}c_{i,j}w_{i,j}
\]
since $c_{i'+1,j'}+c_{i',j'+1}-c_{i',j'}=0$, which proves the first claim. For the second claim it suffices to note that
all Pascal equations vanish at the initial configuration.
\end{proof}

Let $w=(w_{i,j})_{(i,j)\in V_d}$ be a degree-$e$ chip configuration. Then there exists a unique \irredundant{} chipsplitting game that uses only moves at $(i,j)\in V_{d-1}$ with $\deg(i,j)=e-1$ and that sets the values $w_{0,e},w_{1,e-1},\ldots,w_{e-1,1}$ to $0$. Note that these moves do not alter the alternating sum $\sum_{k=0}^e (-1)^kw_{k,e-k}$. So, if $\sum_{k=0}^e (-1)^kw_{k,e-k}=0$, this chipsplitting game also sets $w_{e,0}$ to $0$. This motivates the following definition.

\begin{de}
Let $w=(w_{i,j})_{(i,j)\in V_d}$ be a degree-$e$ chip configuration such that 
\[
\sum_{k=0}^e (-1)^kw_{k,e-k}=0.
\]
The {\em retraction} of $w$ is the unique chip configuration obtained from $w$ using moves at \rev{vertices} $(i,j)\in V_{d-1}$ with $\deg(i,j)=e-1$ such that $\deg(w)<e$.
\end{de}

\begin{example} \rev{In the following picture, the rightmost chip configuration is the retraction of the leftmost one. The retraction is obtained by a sequence of three chipsplitting moves on the second ($\deg(i,j) = 2$) diagonal of the grid: one unsplitting move followed by two splitting moves. This works because in the leftmost configuration, the alternating sum of the entries in the outermost diagonal ($\deg(i,j) = 3$) is zero, therefore it is possible to set that diagonal to zero via chipsplitting moves.}

\medskip
\begin{center}
\begin{BVerbatim}
 1            ·             ·            ·
 · ·          1 -1          1 ·          1 ·
 · 3 ·        ·  3 ·        · 2 1        · 2 ·
-1 · · 1     -1  · · 1     -1 · · 1     -1 · 1 ·
\end{BVerbatim}
\end{center}
\end{example}

\begin{prop}\label{prop:retraction}
Let $w=(w_{i,j})_{(i,j)\in V_d}$ be a degree-$e$ chip configuration. Then $w$ is an outcome if and only if $\sum_{k=0}^e (-1)^kw_{k,e-k}=0$ and the retraction of $w$ is an outcome.
\end{prop}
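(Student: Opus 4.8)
The plan is to prove both implications using the machinery of Pascal equations and the structure theory already developed. For the forward direction, suppose $w$ is an outcome. Since all Pascal equations vanish at outcomes, I would like to conclude $\sum_{k=0}^e(-1)^kw_{k,e-k}=0$ by exhibiting a single Pascal equation whose restriction to the degree-$e$ diagonal is the alternating sum and which vanishes on all lower-degree entries. Concretely, using Proposition~\ref{prop:basis_via_1st_row/1st_column}(a) with the vector $a_j$ chosen so that the induced coefficients $c_{i,j}$ vanish for $\deg(i,j)<e$ and alternate in sign on $\deg(i,j)=e$: one checks that taking $c_{i,j}=(-1)^j\binom{e-i-j}{?}$-type coefficients, or more cleanly, the Pascal equation determined by $c_{0,j}=0$ for $j<e$ and $c_{0,e}=1$, forces $c_{i,e-i}=(-1)^i$ and $c_{i,j}=0$ whenever $\deg(i,j)<e$. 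Evaluating this equation at $w$ gives exactly $\sum_{k=0}^e(-1)^kw_{k,e-k}=0$. Once this vanishing holds, the retraction of $w$ is well-defined, and since it is obtained from the outcome $w$ by a chipsplitting game (the explicit sequence of moves at degree-$(e-1)$ points), it is itself an outcome.

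For the reverse direction, suppose $\sum_{k=0}^e(-1)^kw_{k,e-k}=0$ and the retraction $w'$ of $w$ is an outcome. By definition of retraction, $w$ is obtained from $w'$ by running, in reverse, the unique reduced chipsplitting game at degree-$(e-1)$ points that takes $w$ to $w'$; since $w'$ is an outcome and chipsplitting moves (and their inverses) transform outcomes into outcomes, $w$ is an outcome as well. The only subtlety here is making sure the retraction is genuinely defined under the stated hypothesis — but this is exactly the remark preceding the definition of retraction: the game killing $w_{0,e},\dots,w_{e-1,1}$ leaves the alternating sum invariant, so when that sum is zero the same game also kills $w_{e,0}$, yielding $\deg(w')<e$.

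The main obstacle I anticipate is the bookkeeping in the forward direction: namely, verifying cleanly that there is a Pascal equation supported (in the relevant sense) on the degree-$e$ diagonal with the alternating signs, and that it has no contribution from entries of degree $<e$. This is a finite linear-algebra check via the recursion $c_{i+1,j}=c_{i,j}-c_{i,j+1}$ in Proposition~\ref{prop:basis_via_1st_row/1st_column}(a), but one must confirm the coefficients $c_{i,j}$ for $\deg(i,j)<e$ actually vanish rather than merely being determined — a short induction on $i$ starting from $c_{0,j}=0$ for $j<e$ handles it. Everything else reduces to the already-established facts that Pascal equations are splitting-invariant and that the retraction is realized by an explicit chipsplitting game, so the two implications are essentially immediate once the vanishing statement $\sum_k(-1)^kw_{k,e-k}=0$ is tied to a concrete Pascal equation.
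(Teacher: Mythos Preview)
Your proposal is correct. The reverse direction is handled identically to the paper: once the alternating sum vanishes, $w$ and its retraction differ by an explicit chipsplitting game, so one is an outcome if and only if the other is.

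For the forward direction you take a different route from the paper. The paper argues combinatorially: an outcome $w$ of degree $e$ must come from a reduced game whose moves all occur at points of degree $\leq e-1$, and any such move manifestly preserves the alternating sum $\sum_{k=0}^e(-1)^kw_{k,e-k}$; since the initial configuration has alternating sum zero, so does $w$. You instead invoke the already-established fact that Pascal equations vanish on outcomes (Proposition~\ref{prop:basis_via_1st_row/1st_column} together with the invariance proposition) and exhibit the specific Pascal equation with $c_{0,j}=\delta_{je}$, whose coefficients vanish below degree $e$ and equal $(-1)^k$ on the degree-$e$ diagonal. This is exactly the equation $\phi^{(e)}$ the paper constructs in the proof of the \emph{next} proposition (Proposition~\ref{prop:if-pascal-vanish-then-outcome}), so your argument effectively imports that construction one step earlier. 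Both approaches are short; the paper's is self-contained at the level of moves, while yours leans on the Pascal-equation machinery and thereby makes the logical dependence on the earlier invariance result explicit. One small point: you should specify $c_{0,j}$ for $j>e$ as well (any choice works, e.g.\ zero), and note that those coefficients are irrelevant because $w_{i,j}=0$ for $\deg(i,j)>e$.
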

\begin{proof}
If $\sum_{k=0}^e (-1)^kw_{k,e-k}=0$, then $w$ and its retraction are obtained from each other using finite sequences of moves. So it suffices to prove that $\sum_{k=0}^e (-1)^kw_{k,e-k}=0$ holds when $w$ is an outcome. Assume that $w$ is the outcome of an \irredundant{} chipsplitting game $f$. Then $e-1$ is the maximal degree of a \rev{vertex} in $V_{d-1}$ at which a move in $f$ occured. As moves at $(i,j)$ preserve the value of $\sum_{k=0}^e (-1)^kw_{k,e-k}$ for all $(i,j)\in V_{d-1}$ with $\deg(i,j)\leq e-1$, we see that $\sum_{k=0}^e (-1)^kw_{k,e-k}=0$.
\end{proof}

\begin{prop}\label{prop:if-pascal-vanish-then-outcome}
Let $w\in\ZZ^{V_d}$ be a chip configuration and suppose that all Pascal \rev{equations} on $\ZZ^{V_d}$ vanish at $w$. Then $w$ is an outcome.
\end{prop}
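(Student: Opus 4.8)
The plan is to prove the converse of the previous proposition by induction on the degree $e = \deg(w)$, using the retraction machinery of Proposition~\ref{prop:retraction}. The base case $e = 0$ is immediate: a chip configuration supported only at $(0,0)$ on which every Pascal equation vanishes must be the zero configuration (the constant linear form $x_{0,0}$ is a Pascal equation), which is the initial configuration and hence an outcome. For the inductive step, suppose $w$ has degree $e \geq 1$ and that every Pascal equation on $\ZZ^{V_d}$ vanishes at $w$. The key observation is that the alternating sum $\sum_{k=0}^e (-1)^k w_{k,e-k}$ is, up to sign, the value at $w$ of a particular Pascal equation: namely, take the vector $(a_j)_{j=0}^d$ with $a_j = (-1)^j$ for $j \le e$ and $a_j = 0$ for $j > e$... actually one must be slightly careful here, so the cleaner route is to invoke Proposition~\ref{prop:basis_via_1st_row/1st_column} to produce the Pascal equation whose top antidiagonal of coefficients is $(1, -1, 1, \dots, (-1)^e)$ and all of whose other coefficients on higher antidiagonals vanish, and to check that this forces $\sum_{k=0}^e (-1)^k w_{k,e-k} = 0$ since that Pascal equation vanishes at $w$.

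Once we know $\sum_{k=0}^e (-1)^k w_{k,e-k}=0$, the retraction $w'$ of $w$ exists and is obtained from $w$ by a finite sequence of splitting moves at points of degree $e-1$; in particular $w$ is an outcome if and only if $w'$ is. It remains to show that every Pascal equation vanishes at $w'$, so that the inductive hypothesis applies to $w'$ (which has degree $< e$, viewed inside $V_d$). But this is exactly the content of the earlier proposition stating that the value of any Pascal equation is invariant under (un)splitting moves: since $w'$ is obtained from $w$ by splitting moves, and every Pascal equation vanishes at $w$, every Pascal equation vanishes at $w'$ as well. By induction $w'$ is an outcome, hence so is $w$.

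The only real subtlety — and the step I would be most careful about — is the claim that the vanishing of all Pascal equations at $w$ forces the alternating sum along the top antidiagonal to vanish. This needs the existence of a Pascal equation supported entirely on antidiagonals of degree $\geq e$, with prescribed alternating signs on the degree-$e$ antidiagonal. One obtains this by reverse-engineering from Proposition~\ref{prop:basis_via_1st_row/1st_column}: choose the first-column data $(a_j)$ appropriately so that the recursion $c_{i+1,j} = c_{i,j} - c_{i,j+1}$ produces a form with $c_{i,j} = 0$ whenever $\deg(i,j) < e$, and $c_{k,e-k} = (-1)^k$ (or a nonzero scalar multiple thereof). Concretely one can take the form $\sum_{(i,j) \in V_d} c_{i,j} x_{i,j}$ whose degree-$e$ coefficients are the binomial-type antidiagonal, but the slickest justification is simply: the linear map sending a Pascal equation to its vector of degree-$e$ coefficients $(c_{e,0}, c_{e-1,1}, \dots, c_{0,e})$ is well-defined, and the subspace of Pascal equations vanishing identically below degree $e$ surjects onto the line spanned by $((-1)^e, \dots, -1, 1)$ — indeed any Pascal equation with $c_{i,j}=0$ for $\deg(i,j)<e$ must have $c_{i,j+1} = c_{i,j} - c_{i+1,j}$ forcing its degree-$e$ antidiagonal to be a multiple of the alternating vector. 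Feeding such a form into the hypothesis gives the desired relation, and the rest of the argument is the routine induction sketched above.
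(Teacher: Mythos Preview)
Your proposal is correct and follows essentially the same route as the paper: construct a Pascal equation vanishing below degree $e$ with alternating coefficients on the degree-$e$ antidiagonal (the paper does this explicitly via Proposition~\ref{prop:basis_via_1st_row/1st_column} by setting $c_{0,j}=0$ for $j<e$ and $c_{0,e}=1$), deduce that the alternating sum vanishes, retract, and induct.

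One small slip: the linear form $x_{0,0}$ is \emph{not} a Pascal equation when $d\geq 1$, since the Pascal condition at $(0,0)$ forces $c_{0,0}=c_{1,0}+c_{0,1}$. For the base case you can instead use any Pascal equation with $c_{0,0}\neq 0$ (e.g.\ $\psi_0$, which evaluates to $w_{0,0}$ on a configuration supported at the origin), or simply anchor the induction at the empty configuration $w=0$ as the paper effectively does by retracting down to degree $<0$.
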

\begin{proof}
By Proposition~\ref{prop:basis_via_1st_row/1st_column}, for every integer $0\leq e\leq d$ there exists a Pascal equation
\[
\phi^{(e)}:=\sum_{(i,j)\in V_d}c^{(e)}_{i,j}x_{i,j}
\]
with $c^{(e)}_{0,j}=0$ for $j<e$ and $c^{(e)}_{0,e}=1$. Note that $c^{(e)}_{i,j}=0$ for all $(i,j)\in V_d$ with $\deg(i,j)<e$ and $c^{(e)}_{k,e-k}=(-1)^k$ for $k\in\{0,\ldots,e\}$. Next, note that for $e=\deg(w)$ we have
\[
\sum_{k=0}^e(-1)^kw_{i,j}=\phi^{(e)}(w)=0
\]
and hence $w$ has a retraction $w'$, at which all Pascal equations also vanish. Repeating the same argument, we see that $w'$ also has a retraction $w''$, at which all Pascal equations again vanish. After repeating this $e+1$ times, we arrive at a chip configuration of degree $<0$, which must be the initial configuration. Hence by Proposition~\ref{prop:retraction}, we see that $w$ is an outcome.
\end{proof}

\begin{example}\rev{
Let $w_{\indep}$ be the chip configuration associated to the model $\mathcal M_{\indep}$ from Section~\ref{sec:intro}. A general Pascal equation evaluated at $w_{\indep}$ gives}
\[
\rev{c_{20}+2c_{11}+c_{02} - c_{00} = c_{10} + c_{01} - c_{00} = 0,}
\]
\rev{therefore $w_{\indep}$ is an outcome. The first equality above corresponds to passing to the retraction of $w_{\indep}$.}
\end{example}

A chip configuration $w\in \mathbb Z^{V_d}$ is an outcome if and only if all Pascal equations vanish at $w$. In particular using a larger or smaller \rev{$V_{d'}$} for the same $w$, provided $d'\geq \deg(w)$, does not change the fact that $w$ is a chipsplitting outcome. Later in this section we see however that fixing a finite $d$ is useful as it provides an additional basis to the space of Pascal equations.

\begin{de}
Let $0\leq k\leq d$ be an integer.
\begin{enumerate}
\item We write $\psi_k$ for the unique Pascal equation $\sum_{(i,j)\in V_d}c_{i,j}x_{i,j}$ such that $c_{0,j}=\delta_{jk}$ 
\item We write $\overline{\psi}_k:=(12)\cdot\psi_k$.
\end{enumerate}
\end{de}

\begin{prop}\label{psi-equations}~
\begin{enumerate}
\item We have
\[
\psi_k=(-1)^k\sum_{(i,j)\in V_d}(-1)^j\binom{i}{k-j}x_{i,j}\quad\text{and}\quad\overline{\psi}_k=(-1)^k\sum_{(i,j)\in V_d}(-1)^i\binom{j}{k-i}x_{i,j}
\]
for all integers $0\leq k\leq d$.
\itembreak
\item Every Pascal equation can be written uniquely as 
\[
\sum_{k=0}^da_k\psi_k\quad \rev{(a_k\in\mathbb Z)} \quad\text{\rev{as well as}}\quad \sum_{k=0}^d b_k\overline{\psi}_k\quad \rev{(b_k\in\mathbb Z)}.
\]
When $d<\infty$, the $\psi_k$ and $\overline{\psi}_k$ form two bases of the space of Pascal equations.
\end{enumerate}

\end{prop}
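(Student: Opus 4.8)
The plan is to prove part~(a) by writing down the coefficients of $\psi_k$ explicitly and verifying the two defining properties, and to prove part~(b) by transporting everything through the ``first column'' bijection supplied by Proposition~\ref{prop:basis_via_1st_row/1st_column}.

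For~(a), I would set $c_{i,j}:=(-1)^{k+j}\binom{i}{k-j}$ on $V_d$, with the usual convention $\binom{i}{m}=0$ when $m<0$ or $m>i$, and check two things. First, $c_{0,j}=(-1)^{k+j}\binom{0}{k-j}$ equals $1$ if $j=k$ and $0$ otherwise, so $c_{0,j}=\delta_{jk}$. Second, after cancelling the sign $(-1)^{k+j}$, the Pascal condition $c_{i,j}=c_{i+1,j}+c_{i,j+1}$ becomes $\binom{i}{k-j}=\binom{i+1}{k-j}-\binom{i}{k-j-1}$, which is a rearrangement of Pascal's rule. Since a Pascal equation is determined by its first column (Proposition~\ref{prop:basis_via_1st_row/1st_column}(a)), these two facts identify $\sum_{(i,j)}c_{i,j}x_{i,j}$ with $\psi_k$, which gives the stated formula; applying the action of $(12)$, which replaces $c_{i,j}$ by $c_{j,i}$, then yields the formula for $\overline{\psi}_k=(12)\cdot\psi_k$.

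For~(b), the central point is that the map $\Phi$ sending a Pascal equation $\sum_{(i,j)\in V_d}c_{i,j}x_{i,j}$ to its first column $(c_{0,k})_{k=0}^d$ is a $\ZZ$-linear bijection onto $\ZZ^{\NN_{\leq d}}$: bijectivity is precisely Proposition~\ref{prop:basis_via_1st_row/1st_column}(a), and $\ZZ$-linearity is clear since the Pascal condition cuts out a submodule. By~(a) we have $\Phi(\psi_k)=e_k$, the vector with a $1$ in position $k$. Given a Pascal equation $\phi$ with $\Phi(\phi)=(a_k)_{k=0}^d$, the combination $\sum_{k=0}^d a_k\psi_k$ is meaningful even when $d=\infty$, since the coefficient of $\psi_k$ at a fixed $(i,j)$ vanishes unless $j\leq k\leq i+j$ and hence each coefficient of the sum is finite; this combination is again a Pascal equation, with first column $\sum_k a_k e_k=\Phi(\phi)$, so it equals $\phi$ by injectivity of $\Phi$, and the $a_k$ are unique for the same reason. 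The analogous representation in the $\overline{\psi}_k$ follows by symmetry: since the space of Pascal equations is $S_2$-stable, writing $(12)\cdot\phi=\sum_k b_k\psi_k$ and applying $(12)$ again gives $\phi=\sum_k b_k\overline{\psi}_k$ with the $b_k$ unique. Finally, when $d<\infty$ the module $\ZZ^{\NN_{\leq d}}$ is free of rank $d+1$ with basis $(e_k)_{k=0}^d$, and $\Phi$ is a module isomorphism carrying $(\psi_k)_{k=0}^d$ to $(e_k)_{k=0}^d$, so $(\psi_k)_{k=0}^d$ is a basis of the space of Pascal equations; applying the automorphism $(12)$ shows $(\overline{\psi}_k)_{k=0}^d$ is a basis as well.

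I do not expect a genuine obstacle here: part~(a) is a single binomial identity, and part~(b) is bookkeeping around a bijection that has already been established. The only point calling for a little care is the $d=\infty$ case of the representation statement, where one must note that $\sum_k a_k\psi_k$ is a \emph{locally finite} sum --- hence a well-defined linear form and Pascal equation --- rather than invoking a basis, which would be false since infinitely many $a_k$ may be nonzero.
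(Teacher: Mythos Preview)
Your proposal is correct and follows essentially the same approach as the paper: for (a) you verify the boundary condition $c_{0,j}=\delta_{jk}$ and the Pascal recurrence via the standard binomial identity, exactly as the paper does; for (b) both you and the paper reduce to Proposition~\ref{prop:basis_via_1st_row/1st_column} by reading off the first column (resp.\ first row), and both note that each coefficient $c_{i,j}$ receives contributions from only finitely many $\psi_k$ so that the $d=\infty$ case goes through. The only cosmetic difference is that you phrase (b) via the bijection $\Phi$ and obtain the $\overline{\psi}_k$ statements by applying $(12)$, whereas the paper writes out the explicit finite-sum formula for $c_{i,j}$ and treats the two cases in parallel.
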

\begin{proof}
(a)
We have $(-1)^{k+j}\binom{0}{k-j}=\delta_{jk}$ and so it suffices to prove that 
\[
\sum_{(i,j)\in V_d}(-1)^j\binom{i}{k-j}x_{i,j}
\]
is in fact a Pascal equation. Indeed, we have
\[
(-1)^j\binom{i}{k-j}=(-1)^j\binom{i+1}{k-j}+(-1)^{j+1}\binom{i}{k-(j+1)}
\]
for all $(i,j)\in V_d$ as $\binom{a+1}{b+1}=\binom{a}{b+1}+\binom{a}{b}$ for all integers $a,b$.

\itembreak
(b)
Write
\[
\sum_{(i,j)\in V_d}c_{i,j}x_{i,j}=\sum_{k=0}^da_k\psi_k=\sum_{k=0}^d b_k\overline{\psi}_k.
\]
Then we see that
\[
c_{i,j}=\sum_{k=j}^{\deg(i,j)}a_k(-1)^{k+j}\binom{i}{k-j}=\sum_{k=i}^{\deg(i,j)}b_k(-1)^{k+i}\binom{j}{k-i}
\]
for all $(i,j)\in V_d$. We see that each $c_{i,j}$ is a finite sum. We also see that $c_{0,j}=a_j$ and $c_{i,0}=b_i$ for all $i,j\in\NN_{\leq d}$. So now the statement follows from Proposition~\ref{prop:basis_via_1st_row/1st_column}.
\end{proof}

\begin{ex}
For $d=7$ and $k=3$, the Pascal equation $\psi_k$ can be visualised by writing the coefficients $c_{i,j}$ on the grid $V_{d}$ as follows:
\medskip
\begin{center}
\begin{BVerbatim}
·
·  ·
·  ·  ·
·  ·  ·  ·
1  1  1  1  1
· -1 -2 -3 -4  -5
·  ·  1  3  6  10  15
·  ·  · -1 -4 -10 -20 -35
\end{BVerbatim}
\end{center}
\medskip
We note \rev{that the resulting picture is a rotated Pascal's triangle with minus signs on even rows.}
\end{ex}

\subsection{Additional structure for \texorpdfstring{$d<\infty$}{d<oo}}\label{additional-structure}
In this subsection, we consider a $V_d$ with $d<\infty$. \rev{Since chipsplitting outcomes are characterized by vanishing at all Pascal equations (Propositions~\ref{prop:pascal-equations-vanish-at-outcomes} and~\ref{prop:if-pascal-vanish-then-outcome}), it will be useful to have multiple parametrizations of the space of all Pascal equations.} By Proposition~\ref{psi-equations}, we know that the $\psi_k$ and $\overline{\psi}_k$ form two bases of the space of Pascal equations on $\ZZ^{V_d}$. When $d<\infty$, we also have another natural basis \rev{which will be of further help in Sections~\ref{sec:supp+<=3}--\ref{sec:supp+==5}}.

\mybreak

\rev{To introduce the new basis, we first prove that Pascal equations are characterized by their coefficients on the $d$-th diagonal.}

\begin{prop}
For every vector $(a_0,\ldots,a_d)\in\ZZ^{d+1}$, there exists a unique Pascal equation 
\[
\sum_{(i,j)\in V_d}c_{i,j}x_{i,j}
\]
such that $c_{k,d-k}=a_k$ for all integers $0\leq k\leq d$.
\end{prop}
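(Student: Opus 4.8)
The plan is to reduce the statement to Proposition~\ref{prop:basis_via_1st_row/1st_column} by the same device used in the proof of Proposition~\ref{psi-equations}(b), namely a change of variables on the triangle $V_d$ that interchanges the ``hypotenuse'' row $\{(k,d-k)\}$ with the ``first row'' $\{(i,0)\}$ (or the first column). Concretely, consider the map $\sigma\colon V_d\to V_d$ defined by $\sigma(i,j):=(i,d-i-j)$, which is an involution swapping each degree-$e$ slice with the degree-$e$ slice in reverse and, in particular, sending the hypotenuse $\{(k,d-k)\mid 0\le k\le d\}$ bijectively to the first row $\{(k,0)\mid 0\le k\le d\}$. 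I would first record that $\sigma$ is a well-defined bijection (for $(i,j)\in V_d$ we have $i\ge 0$, $d-i-j\ge 0$ since $i+j\le d$, and $i+(d-i-j)=d-j\le d$), and that it is an involution.

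Next I would transport the defining Pascal relation through $\sigma$. Given a linear form $\sum_{(i,j)\in V_d}c_{i,j}x_{i,j}$, define $d_{i,j}:=c_{\sigma(i,j)}=c_{i,d-i-j}$, so that $\sum d_{i,j}x_{i,j}$ is the ``$\sigma$-pullback'' form. The key computation is that $\sum c_{i,j}x_{i,j}$ is a Pascal equation, i.e.\ $c_{i,j}=c_{i+1,j}+c_{i,j+1}$ for all $(i,j)\in V_{d-1}$, if and only if the pullback form satisfies the transformed three-term relation $d_{i,j+1}=d_{i+1,j+1}+d_{i,j}$ for all relevant $(i,j)$; this is just the substitution $j\mapsto d-i-j$ in the index (decreasing $j$ by $1$ on the left corresponds to the $(i,j+1)$ neighbour, and increasing $i$ corresponds to moving along the same degree slice). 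I would state this as: the pullback of a Pascal equation is exactly a form whose coefficients satisfy a Pascal-type recursion that is ``solved'' by prescribing the values on the first row. Rather than re-deriving a bespoke recursion, the cleanest route is to observe that $\sigma$ composed with the coordinate swap $(12)$ gives $(i,j)\mapsto(d-i-j,i)$, and chase the identities so that prescribing $c_{k,d-k}$ is equivalent, after applying $(12)$ and $\sigma$, to prescribing the first-row (or first-column) coefficients of a genuine Pascal equation, to which Proposition~\ref{prop:basis_via_1st_row/1st_column} applies verbatim, yielding both existence and uniqueness.

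With the transfer set up, the proof is a three-line argument: the condition ``$c_{k,d-k}=a_k$ for all $0\le k\le d$'' becomes, under the bijection, the condition ``first-row coefficients equal $a_0,\dots,a_d$'' for the transformed Pascal equation; Proposition~\ref{prop:basis_via_1st_row/1st_column}(b) gives a unique such Pascal equation; applying the inverse of the transfer (which is again $\sigma$, up to the $(12)$ action already shown to preserve the space of Pascal equations in Subsection~\ref{subsec:symmetry}) produces a unique Pascal equation on $V_d$ with the prescribed hypotenuse values. Uniqueness is immediate because the transfer map is a bijection on the (finite-dimensional, since $d<\infty$) space of Pascal equations.

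The main obstacle I anticipate is bookkeeping the index substitution cleanly: one must check that $\sigma$ (and its interaction with $(12)$) really does convert the Pascal recursion $c_{i,j}=c_{i+1,j}+c_{i,j+1}$ into a recursion of the same shape rather than a spurious variant, and that it is exactly the first row (equivalently first column), not some other boundary line, that ends up carrying the free data. It is worth double-checking a small case, say $d=2$ or $d=3$, to confirm the orientation of the substitution before committing to it; once the substitution is pinned down the rest is formal. An alternative, avoiding $\sigma$ entirely, is to mimic the recursive construction in the proof of Proposition~\ref{prop:basis_via_1st_row/1st_column}(a) directly: set $c_{k,d-k}:=a_k$ and use $c_{i,j+1}=c_{i,j}-c_{i+1,j}$ to propagate inward from the hypotenuse toward $(0,0)$ by downward recursion on the degree $i+j$; finiteness of $d$ guarantees the recursion terminates and determines every $c_{i,j}$ uniquely, giving the same conclusion with no change of variables at all. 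I would likely present this direct version, as it is the shortest and least error-prone.
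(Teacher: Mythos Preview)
Your ``alternative'' at the end---set $c_{k,d-k}:=a_k$ on the hypotenuse and fill in lower-degree entries via the Pascal relation---is exactly the paper's proof, which simply writes $c_{k,e-k}=c_{k+1,e-k}+c_{k,e-k+1}$ for $e=d-1,\ldots,0$. (Note your formula $c_{i,j+1}=c_{i,j}-c_{i+1,j}$ is stated in the wrong direction for propagating \emph{inward}; you want $c_{i,j}=c_{i+1,j}+c_{i,j+1}$, solving for the lower-degree entry on the left.)

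The bulk of your proposal, the transfer via $\sigma(i,j)=(i,d-i-j)$, is more elaborate than necessary and your own caveat is well-founded: the substitution does \emph{not} turn the Pascal relation into one of the same shape. If $d_{i,j}:=c_{i,d-i-j}$, the condition $c_{i,j}=c_{i+1,j}+c_{i,j+1}$ becomes $d_{i,j'}=d_{i+1,j'-1}+d_{i,j'-1}$, which is not the Pascal recursion, so Proposition~\ref{prop:basis_via_1st_row/1st_column} does not apply directly. You could repair this by invoking the full $S_3$-action and checking it preserves the space of Pascal equations (the paper only states this for $S_2$ at that point), but that is more work than the two-line direct recursion you already have. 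Go with the direct version.
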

\begin{proof}
Let $(a_0,\ldots,a_d)\in\ZZ^{d+1}$, set $c_{k,d-k}:=a_k$ for $k\in\{0,\ldots,d\}$ and, for $e=d-1,\ldots,0$, set $c_{k,e-k}=c_{k+1,e-k}+c_{k,e-k+1}$ for $k\in\{0,\ldots,e\}$ recursively. Then 
\[
\sum_{(i,j)\in V_d}c_{i,j}x_{i,j}
\]
is a Pascal equation such that $c_{k,d-k}=a_k$ for all integers $0\leq k\leq d$. Clearly, this Pascal equation is unique with this property.
\end{proof}

\begin{de}
Let $(a,b)\in V_d$ with $\deg(a,b)=d$. We write $\varphi_{a,b}$ for the unique Pascal equation $\sum_{(i,j)\in V_d}c_{i,j}x_{i,j}$ such that $c_{i,j}=\delta_{ia}$ (or equivalently $c_{i,j}=\delta_{jb}$) for all $(i,j)\in V_d$ with $\deg(i,j)=d$.
\end{de}

\begin{samepage}
\begin{prop}\label{phi-equations}~
\begin{enumerate}
\item We have
\[
\varphi_{a,b}=\sum_{(i,j)\in V_d} \binom{d-(i+j)}{a-i}x_{i,j}=\sum_{(i,j)\in V_d} \binom{d-(i+j)}{b-j}x_{i,j}
\]
for all $(a,b)\in V_d$ with $\deg(a,b)=d$.
\itembreak
\item The $\varphi_{a,b}$ form a basis for the space of all Pascal equations. 
\end{enumerate}
\end{prop}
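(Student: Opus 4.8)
The plan is to prove (a) first by a direct verification that each proposed linear form is a Pascal equation with the correct top-degree coefficients, and then deduce (b) from the uniqueness statement proved just before the proposition together with a dimension count.

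For part (a), I would start from the closed formula and check the Pascal condition $c_{i,j}=c_{i+1,j}+c_{i,j+1}$ pointwise. Setting $c_{i,j}=\binom{d-(i+j)}{a-i}$, the identity to verify is
\[
\binom{d-(i+j)}{a-i}=\binom{d-(i+j)-1}{a-i-1}+\binom{d-(i+j)-1}{a-i},
\]
which is exactly Pascal's rule $\binom{m}{k}=\binom{m-1}{k-1}+\binom{m-1}{k}$ applied with $m=d-(i+j)$ and $k=a-i$; this holds for all $(i,j)\in V_{d-1}$ (where $m\geq 1$). Next I would check the boundary values: when $\deg(i,j)=d$ we have $d-(i+j)=0$, so $c_{i,j}=\binom{0}{a-i}=\delta_{ia}$, which is the defining condition for $\varphi_{a,b}$. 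The two expressions (in terms of $a-i$ and in terms of $b-j$) agree because on the line $i+j=d$ we have $a+b=d$ as well, so $a-i=-(b-j)$ and $\binom{0}{a-i}=\delta_{ia}=\delta_{jb}=\binom{0}{b-j}$; more globally the equality of the two sums follows from uniqueness of the Pascal equation with prescribed top row. Since the proposition preceding this one guarantees a unique Pascal equation with any prescribed values on $\{\deg(i,j)=d\}$, the closed formula must be that equation, proving (a).

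For part (b), the $\varphi_{a,b}$ for $(a,b)\in V_d$ with $\deg(a,b)=d$ are $d+1$ linear forms. By construction their restrictions to the top-degree coordinates $(x_{i,j})_{\deg(i,j)=d}$ are the $d+1$ standard basis covectors, hence the $\varphi_{a,b}$ are linearly independent. Conversely, by the preceding proposition every Pascal equation is determined by its $d+1$ values $c_{k,d-k}$, and given any target values $(a_0,\dots,a_d)$ the combination $\sum_{k=0}^d a_k\varphi_{k,d-k}$ realizes them; so the $\varphi_{a,b}$ span the space of Pascal equations. Therefore they form a basis.

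I do not anticipate a serious obstacle here: both parts are essentially bookkeeping built on Pascal's rule and the uniqueness result already in hand. The one point requiring a little care is making the two binomial expressions in (a) visibly equal without re-deriving everything twice — the cleanest route is to observe that both are Pascal equations (by the same Pascal's-rule computation, now in the $j$-direction) with the same top row $\delta_{ia}=\delta_{jb}$, and invoke uniqueness. Symmetry under $S_2$ also gives this for free: applying $(12)$ interchanges the two formulas and fixes $\varphi_{a,b}$ up to swapping $a$ and $b$.
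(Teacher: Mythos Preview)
Your proposal is correct and follows essentially the same route as the paper: verify the Pascal recursion $c_{i,j}=c_{i+1,j}+c_{i,j+1}$ via Pascal's rule, check the top-degree values give the Kronecker delta, and for (b) use that the $\varphi_{a,b}$ restrict to the standard covectors on the top row together with the uniqueness result. One minor simplification you might note: the two binomial expressions in (a) are literally equal coefficient-by-coefficient, since $(a-i)+(b-j)=d-(i+j)$ and $\binom{n}{k}=\binom{n}{n-k}$, so no separate uniqueness or symmetry argument is needed to identify them.
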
 
\end{samepage}
\begin{proof}
(a)
We have
\[
\delta_{ia}=\binom{d-(i+j)}{a-i}=\binom{d-(i+j)}{b-j}=\delta_{jb}
\]
for all $(i,j)\in V_d$ with $\deg(i,j)=d$. So it suffices to show that 
\[
\sum_{(i,j)\in V_d} \binom{d-(i+j)}{a-i}x_{i,j}
\]
is a Pascal equation. Indeed, we have
\[
\binom{d-(i+j)}{a-i}=\binom{d-(i+1+j)}{a-(i+1)}+\binom{d-(i+j+1)}{a-i}
\]
for all $(i,j)\in V_{d-1}$ as $\binom{a+1}{b+1}=\binom{a}{b+1}+\binom{a}{b}$ for all integers $a,b$.

\itembreak
(b)
Every Pascal equation can be uniquely written as
\[
\sum_{(i,j)\in V_d}c_{i,j}x_{i,j}=\sum_{(a,b)\in V_d\setminus V_{d-1}}c_{a,b}\varphi_{a,b}.
\]
So we see that the $\varphi_{a,b}$ form a basis for the space of all Pascal equations. 
\end{proof}

\begin{ex}
For $d=7$ and $(a,b)=(3,4)$, the Pascal equation $\varphi_{a,b}$ can be visualised by writing the coefficients $c_{i,j}$ on the grid $V_{d}$ as follows:
\medskip
\begin{center}
\begin{BVerbatim}
 ·
 ·  ·
 ·  ·  ·
 1  1  1  1
 4  3  2  1  ·
10  6  3  1  ·  ·
20 10  4  1  ·  ·  ·
35 15  5  1  ·  ·  ·  ·
\end{BVerbatim}
\end{center}
\medskip
We note that the coefficients form a Pascal triangle.
\end{ex}

\moved Next, we define an action of $S_3$ on $V_d$. For $(i,j)\in V_d$ we set
\begin{align*}
(12)\cdot(i,j)&:=(j,i),&(\rev{132})\cdot(i,j)&:=(d-\deg(i,j),i),\\
(13)\cdot(i,j)&:=(d-\deg(i,j),j),&(\rev{123})\cdot(i,j)&:=(j,d-\deg(i,j)),\\
(23)\cdot(i,j)&:=(i,d-\deg(i,j)).
\end{align*}

\rev{We use this action to define an action of $S_3$ on $\mathbb Z^{V_d}$ by setting}
\begin{eqnarray*}
(12)\cdot(w_{i,j})_{(i,j)\in V_d}&:=&(w_{j,i})_{(i,j)\in V_d}\\
(123)\cdot(w_{i,j})_{(i,j)\in V_d}&:=&((-1)^{d-j}w_{j,d-\deg(i,j)})_{(i,j)\in V_d}
\end{eqnarray*}
for all $w=(w_{i,j})_{(i,j)\in V_d}\in\ZZ^{V_d}$. It is a routine computation to verify that this determines a well-defined action of $S^3$. Under this action, we have
\begin{eqnarray*}
(13)\cdot (w_{i,j})_{(i,j)\in V_d}&=&((-1)^{d-j}w_{d-\deg(i,j),j})_{(i,j)\in V_d}\\
(23)\cdot (w_{i,j})_{(i,j)\in V_d}&=&((-1)^{d-i}w_{i,d-\deg(i,j)})_{(i,j)\in V_d},\\
(132)\cdot (w_{i,j})_{(i,j)\in V_d}&=&((-1)^{d-i}w_{d-\deg(i,j),i})_{(i,j)\in V_d}.
\end{eqnarray*}

\moved We have $\sigma\cdot \supp(w)=\supp(\sigma\cdot w)$ for all $w\in\ZZ^{V_d}$ and $\sigma\in S_3$.

\mybreak

The way $(12)$, $(13)$ and $(23)$ act \rev{on $\mathbb Z^{V_d}$} is vizualized below. The permutation $(12)$ switches the order of all entries of the same degree. The permutation $(13)$ switches the order of all entries of the same row and changes the signs of alternating rows. The permutation $(23)$ acts similarly on columns. 

\begin{center}
\raisebox{3.5pt}
{\begin{tikzpicture}[scale=0.5]
\filldraw (0.1,0.1) circle (3pt);
\draw [stealth-stealth](1,0) -- (0,1);
\draw [stealth-stealth](2,0) -- (0,2);
\draw [stealth-stealth](3,0) -- (0,3);
\draw [stealth-stealth](4,0) -- (0,4);
\draw [stealth-stealth](5,0) -- (0,5);
\draw [stealth-stealth](6,0) -- (0,6);
\draw [stealth-stealth](7,0) -- (0,7);
\end{tikzpicture}}\quad\quad\quad
\begin{tikzpicture}[scale=0.5]
\node at (-.5,3) {\tiny $-$};
\node at (-.5,4) {\tiny +};
\node at (-.5,5) {\tiny $-$};
\node at (-.5,6) {\tiny +};
\node at (-.5,7) {\tiny $-$};
\node at (-.5,8) {\tiny +};
\node at (-.5,9) {\tiny $-$};
\node at (-.5,10) {\tiny +};
\filldraw (0.1,10) circle (3pt);
\draw [stealth-stealth](7,3) -- (0,3);
\draw [stealth-stealth](6,4) -- (0,4);
\draw [stealth-stealth](5,5) -- (0,5);
\draw [stealth-stealth](4,6) -- (0,6);
\draw [stealth-stealth](3,7) -- (0,7);
\draw [stealth-stealth](2,8) -- (0,8);
\draw [stealth-stealth](1,9) -- (0,9);
\end{tikzpicture}\quad\quad\quad
\raisebox{3.5pt}
{\begin{tikzpicture}[scale=0.5]
\node at (3,7.8) {\tiny $-$};
\node at (4,6.8) {\tiny +};
\node at (5,5.8) {\tiny $-$};
\node at (6,4.8) {\tiny +};
\node at (7,3.8) {\tiny $-$};
\node at (8,2.8) {\tiny +};
\node at (9,1.8) {\tiny $-$};
\node at (10,.8) {\tiny +};
\filldraw (10,.1) circle (3pt);
\draw [stealth-stealth](3,7) -- (3,0);
\draw [stealth-stealth](4,6) -- (4,0);
\draw [stealth-stealth](5,5) -- (5,0);
\draw [stealth-stealth](6,4) -- (6,0);
\draw [stealth-stealth](7,3) -- (7,0);
\draw [stealth-stealth](8,2) -- (8,0);
\draw [stealth-stealth](9,1) -- (9,0);
\end{tikzpicture}}
\end{center}

\begin{prop}
The space of outcomes is closed under the action of $S_3$.
\end{prop}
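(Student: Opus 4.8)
The plan is to reduce the claim to the already-established fact that the space of outcomes is characterized by the vanishing of all Pascal equations (Proposition~\ref{prop:if-pascal-vanish-then-outcome}, together with the converse that Pascal equations vanish on all outcomes). Since $S_3$ is generated by $(12)$ and $(123)$, it suffices to show that if $w$ is an outcome, then so are $(12)\cdot w$ and $(123)\cdot w$. The case of $(12)$ was already handled in Subsection~\ref{subsec:symmetry}, so the only new work is the transposition-free generator $(123)$.

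First I would show that the space of Pascal equations on $\ZZ^{V_d}$ is closed under the contragredient action of $(123)$: for a linear form $\phi=\sum_{(i,j)\in V_d}c_{i,j}x_{i,j}$, define $(123)\cdot\phi$ to be the linear form whose value on $w$ equals $\phi((123)^{-1}\cdot w)$, and check that if $\phi$ is a Pascal equation then so is $(123)\cdot\phi$. The cleanest way to see this is to use the basis $\{\varphi_{a,b}\}$ from Proposition~\ref{phi-equations}, or the basis $\{\psi_k\}$ from Proposition~\ref{psi-equations}, and verify that $(123)$ permutes one such basis up to sign; concretely, one checks that $(123)\cdot\psi_k$ is again a Pascal equation by writing out its coefficients using the binomial formula in Proposition~\ref{psi-equations}(a) and the identity $\binom{a+1}{b+1}=\binom{a}{b+1}+\binom{a}{b}$, exactly as in the proofs of Propositions~\ref{psi-equations} and~\ref{phi-equations}. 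Once we know $(123)$ preserves the space of Pascal equations, the argument closes formally: if $w$ is an outcome and $\phi$ is any Pascal equation, then $\phi((123)\cdot w)=\big((123)^{-1}\cdot\phi\big)(w)=0$ because $(123)^{-1}\cdot\phi$ is again a Pascal equation and all Pascal equations vanish on the outcome $w$; hence by Proposition~\ref{prop:if-pascal-vanish-then-outcome}, $(123)\cdot w$ is an outcome.

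An alternative, more hands-on route avoids the contragredient action entirely: one can instead exhibit, for each generator $\sigma\in S_3$ and each chipsplitting game $f$ with outcome $w$, an explicit chipsplitting game whose outcome is $\sigma\cdot w$, mimicking what was done for $(12)$ in Subsection~\ref{subsec:symmetry}. For $(123)$ this amounts to tracking how the signs $(-1)^{d-j}$ and the index shuffle $(i,j)\mapsto(j,d-\deg(i,j))$ interact with a single splitting move at a point $p\in V_{d-1}$, and checking that the image of that move is again a legal (un)splitting move — possibly with a sign, which is why it is convenient to work over an abelian group $A$ as in Remark~\ref{a-valued} and then specialize. This is essentially a bookkeeping computation.

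The main obstacle is the sign bookkeeping in the $(123)$ action: unlike $(12)$, which is a pure relabeling of coordinates, $(123)$ introduces the alternating signs $(-1)^{d-j}$, so one must be careful that the image of a splitting move is still of the form "subtract one chip at a vertex, add one chip at each out-neighbor" rather than something with mismatched signs. Verifying that these signs are consistent across all of $V_d$ — equivalently, that the formula for $(123)$ genuinely defines an $S_3$-action and sends Pascal equations to Pascal equations — is the only place real care is needed; everything else is a direct appeal to Propositions~\ref{prop:if-pascal-vanish-then-outcome}, \ref{psi-equations}, and~\ref{phi-equations}. I would therefore adopt the Pascal-equation route, since the needed closure statement for $(123)$ on the space of Pascal equations is a short computation with the binomial recurrence that parallels proofs already present in the paper.
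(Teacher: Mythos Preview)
Your proposal is correct and follows essentially the same route as the paper: reduce to the generator $(123)$, and show that every Pascal equation vanishes on $(123)\cdot w$ by checking on the basis $\{\psi_k\}$. The paper makes your ``short computation with the binomial recurrence'' explicit, obtaining the clean identity $\psi_k((123)\cdot w)=(-1)^{d-k}\varphi_{k,d-k}(w)=0$, which is precisely your statement that the contragredient $(123)$-action sends the basis $\{\psi_k\}$ into the span of the $\{\varphi_{a,b}\}$ up to sign.
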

\begin{proof}

Let $w=(w_{i,j})_{(i,j)\in V_d}$ be an outcome. We already know that $(12)\cdot w$ is again an outcome. So it suffices to prove that $(123)\cdot w$ is an outcome as well. This is indeed the case since

\begin{eqnarray*}
\psi_k((123)\cdot w)&=&(-1)^k\sum_{(i,j)\in V_d}(-1)^j\binom{i}{k-j}(-1)^{d-j}w_{j,d-\deg(i,j)}\\
&=&(-1)^{d-k}\sum_{(i',j')\in V_d}\binom{d-(i'+j')}{k-i'}w_{i',j'}\\
&=&(-1)^{d-k}\varphi_{k,d-k}(w)=0
\end{eqnarray*}

for all integers $0\leq k\leq d$.
\end{proof}

\begin{example}[\rrev{Example~\ref{expl:indep} continued}]
\rev{Let $w_{\indep}$ be the chip configuration associated to $\mathcal M_{\indep}$ as in \rrev{Example~\ref{expl:indep2}.} The orbit of $(123)\in S_3$ acting on $\supp(w_{\indep})$ is the following sequence of supports.}

\medskip
\begin{center}
\begin{BVerbatim}
*            *            *
· *          · ·          * ·
* · *        * * *        * · *
\end{BVerbatim}
\end{center}
\end{example}

\subsection{Valid outcomes}

In this paper, we are mostly interested in valid outcomes, since they correspond to reduced \redratmodels{} as explained in Section~\ref{sec:model2chipsplitting}. 

\begin{lm}\label{lm:supp-=empty}
Let $w=(w_{i,j})_{(i,j)\in V_d}\in\ZZ^{V_d}$ be an outcome and suppose that $\supp^-(w)=\emptyset$. Then $w$ is the initial configuration.
\end{lm}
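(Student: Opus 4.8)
The statement says: if $w$ is an outcome with $\supp^-(w)=\emptyset$, i.e.\ $w$ is nonnegative everywhere, then $w=0$. The plan is to use the basis of Pascal equations $\psi_k$ (or $\varphi_{a,b}$) together with the sign patterns of their coefficients. Recall from Proposition~\ref{psi-equations}(a) that, up to the overall sign $(-1)^k$, the coefficients of $\psi_k$ in row $j$ are $(-1)^j\binom{i}{k-j}$, so the coefficients alternate in sign from row to row. More usefully, for the purpose of this lemma I would work degree-by-degree using the retraction machinery of Proposition~\ref{prop:retraction}, or equivalently apply the equations $\phi^{(e)}$ constructed in the proof of Proposition~\ref{prop:if-pascal-vanish-then-outcome}, whose coefficients vanish below degree $e$ and equal $(-1)^k$ on the degree-$e$ antidiagonal entry $(k,e-k)$.

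\textbf{Key steps.} First, suppose for contradiction that $w\neq 0$ and let $e=\deg(w)\geq 0$. Since $w$ is an outcome, Proposition~\ref{prop:retraction} gives $\sum_{k=0}^e(-1)^kw_{k,e-k}=0$. Second, observe that every entry $w_{k,e-k}$ on the top antidiagonal is $\geq 0$ by hypothesis ($\supp^-(w)=\emptyset$), so the alternating sum being zero forces $w_{k,e-k}=0$ for \emph{all} $k$ with $k$ even (and separately, looking at the equation, one sees all terms must cancel). Actually the cleanest argument: an alternating sum of nonnegative integers equals zero only if all of them are zero, provided... no — that is false in general (e.g.\ $1-1=0$). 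So instead I would use a finer equation. The right tool is $\varphi_{a,b}$ with $\deg(a,b)=d$: by Proposition~\ref{phi-equations}(a), $\varphi_{a,b}=\sum \binom{d-(i+j)}{a-i}x_{i,j}$, and \emph{all} its coefficients $\binom{d-i-j}{a-i}$ are nonnegative. Since $\varphi_{a,b}(w)=0$ and every $w_{i,j}\geq 0$, every term $\binom{d-i-j}{a-i}w_{i,j}$ must vanish. For a fixed $(i,j)\in\supp(w)$ with $i\le a\le i+(d-i-j)=d-j$, the binomial coefficient $\binom{d-i-j}{a-i}$ is strictly positive, hence $w_{i,j}=0$.

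\textbf{Finishing.} Choosing $(a,b)$ appropriately (e.g.\ for any target point $(i_0,j_0)\in\supp(w)$, pick $a$ with $i_0\le a\le d-j_0$, which is possible since $i_0+j_0\le d$ — indeed take $a=i_0$, $b=d-i_0$), we conclude $w_{i_0,j_0}=0$, contradicting $(i_0,j_0)\in\supp(w)$. Hence $\supp(w)=\emptyset$ and $w$ is the initial configuration. (One must make sure $d$ is finite here, which is fine: by the discussion after Proposition~\ref{prop:if-pascal-vanish-then-outcome} we may take $d=\deg(w)$, or any $d\geq\deg(w)$, so $\varphi_{a,b}$ is available.)

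\textbf{Main obstacle.} The only delicate point is organizing the argument so that a \emph{single} Pascal equation with all-nonnegative coefficients kills each nonzero entry; the $\varphi_{a,b}$ basis supplies exactly this, so once one remembers Proposition~\ref{phi-equations}(a) the proof is short. If one preferred to avoid $d<\infty$ and the $\varphi_{a,b}$, one could instead argue by descending induction on degree using $\psi_e$ restricted suitably, but the sign alternation makes the positivity bookkeeping more annoying there, so the $\varphi_{a,b}$ route is the clean one.
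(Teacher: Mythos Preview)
Your proposal is correct and takes essentially the same approach as the paper: reduce to $d<\infty$, use that the Pascal equations $\varphi_{a,b}$ have all coefficients $\binom{d-i-j}{a-i}\geq 0$, and conclude from $\varphi_{a,b}(w)=0$ and $w_{i,j}\geq 0$ that every entry in the support of $\varphi_{a,b}$ vanishes. The paper phrases this by ranging over all $(a,b)$ of degree $d$ (so that the rectangles $\{0,\ldots,a\}\times\{0,\ldots,b\}$ cover $V_d$), while you equivalently pick, for each $(i_0,j_0)\in\supp(w)$, a single $(a,b)$ with $i_0\leq a\leq d-j_0$; the false start with the alternating-sum retraction is unnecessary but correctly diagnosed as insufficient.
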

\begin{proof}
We may assume that $d<\infty$. We have $w_{i,j}\geq0$ for all $(i,j)\in V_d$. For every $(a,b)\in V_d$ of degree $d$, the equation $\varphi_{a,b}(w)=0$ shows that $w_{i,j}=0$ for all $i\in\{0,\ldots,a\}$ and $j\in\{0,\ldots,b\}$. Combined, this shows that $w_{i,j}=0$ for all $(i,j)\in V_d$.
\end{proof}

\rev{In particular, a valid outcome $w$ with $w_{0,0}=0$ is the initial configuration.}

\begin{prop}
Let $w=(w_{i,j})_{(i,j)\in V_d}\in\ZZ^{V_d}$ be an outcome and suppose that $\#\supp^-(w)=1$. Write $c_0=\min\{i\mid (i,j)\in V_d\mid w_{i,j}\neq0\}$, $r_0=\min\{j\mid (i,j)\in V_d\mid w_{i,j}\neq0\}$ and $d'=d-c_0-r_0$. Then
\[
(w_{c_0+i,r_0+j})_{(i,j)\in V_{d'}}\in\ZZ^{V_{d'}}
\]
is a valid outcome. In particular, if $c_0=r_0=0$, then $w$ is a valid outcome. 
\end{prop}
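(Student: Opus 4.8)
The plan is to reduce the claim to Lemma~\ref{lm:supp-=empty}-style reasoning by first locating the single negative chip and then translating it to the origin. First I would observe that since $w$ is an outcome with $\#\supp^-(w)=1$, the unique point $(a,b)\in\supp^-(w)$ must satisfy $a=c_0$ and $b=r_0$: indeed $c_0$ and $r_0$ are the minimal column and row indices occurring in $\supp(w)$, so it suffices to show that the negative chip sits in the leftmost nonempty column and the bottommost nonempty row. For the column claim, consider the restriction of $w$ to the vertical line $i=c_0$; all entries of $w$ with $i<c_0$ vanish, so applying the Pascal equations $\overline{\psi}_k$ (which express each coefficient in a column in terms of entries further to the left) forces the entries in column $c_0$ to behave like the outcome of a one-dimensional chipsplitting game, whose only negative entry must be at the bottom of that column. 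A cleaner way to run this: the diagonal Pascal equations $\varphi_{e,0},\ldots,\varphi_{0,e}$ of the appropriate degree, applied to the sub-triangle of $w$ supported on $\{i\geq c_0,\ j\geq r_0\}$, show that if the negative chip were not at $(c_0,r_0)$ then some $\varphi$ would be violated; here I would use the argument already used in the proof of Lemma~\ref{lm:supp-=empty}, namely that $\varphi_{a',b'}(w)=0$ with all relevant entries nonnegative except possibly one forces that one to be at the corner.

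Once it is established that the negative chip is at $(c_0,r_0)$, I would set $w':=(w_{c_0+i,\,r_0+j})_{(i,j)\in V_{d'}}$ with $d'=d-c_0-r_0$ and check two things: that $w'$ is an outcome on $V_{d'}$, and that it is valid. Validity is immediate by construction, since the only negative entry of $w'$ is the one inherited from $(c_0,r_0)$, which now sits at $(0,0)$, so $\supp^-(w')\subseteq\{(0,0)\}$. For the outcome property I would invoke Proposition~\ref{prop:if-pascal-vanish-then-outcome}: it suffices to show every Pascal equation on $\ZZ^{V_{d'}}$ vanishes at $w'$. Using the basis $\{\psi_k\}_{k=0}^{d'}$ of Pascal equations on $V_{d'}$ and the explicit formula $\psi_k=(-1)^k\sum_{(i,j)}(-1)^j\binom{i}{k-j}x_{i,j}$ from Proposition~\ref{psi-equations}, one computes $\psi_k(w')$ as a signed binomial sum over the shifted coordinates; a Vandermonde/Pascal identity reindexes this (up to an overall sign and a shift in the lower parameter by $c_0$) to $\psi_{k+c_0}$ evaluated on the original $w$ restricted to $\{i\geq c_0,\ j\geq r_0\}$, which agrees with $\psi_{k+c_0}(w)$ because $w$ vanishes on $\{i<c_0\}\cup\{j<r_0\}$, and that value is $0$ since $w$ is an outcome. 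Hence all $\psi_k(w')=0$ and $w'$ is an outcome.

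For the final sentence of the statement, if $c_0=r_0=0$ then $d'=d$ and $w'=w$, so $w$ itself is a valid outcome; this is just the special case of the general claim.

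\textbf{Main obstacle.} The step I expect to be the crux is proving that the negative chip lies at $(c_0,r_0)$ rather than merely somewhere in $\supp(w)$. The shifting and the Pascal-equation bookkeeping in the second paragraph are routine once that is known, but ruling out a negative entry that is, say, in column $c_0$ but not the bottom row requires genuinely using the outcome structure: one has to argue that within the leftmost nonempty column (or bottom nonempty row) the configuration looks like a valid one-dimensional outcome, and a valid one-dimensional outcome has its negative chip at the endpoint. I would handle this by combining the column restriction with the relevant $\varphi_{a,b}$ or $\overline\psi_k$ equations exactly as in Lemma~\ref{lm:supp-=empty}, where nonnegativity of all-but-one entries plus a vanishing Pascal equation pins the exceptional entry to a corner.
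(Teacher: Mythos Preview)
Your approach is correct in outline but differs from the paper's, and one step is stated imprecisely.

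The paper reverses your order: it first shows the shifted configuration $w'=(w_{c_0+i,r_0+j})_{(i,j)\in V_{d'}}$ is an outcome, and only then deduces validity by applying the special case $c_0=r_0=0$ to $w'$. The payoff is in the choice of Pascal basis: the paper uses the $\varphi_{a,b}$ rather than the $\psi_k$. Under the translation $(i,j)\mapsto(i-c_0,j-r_0)$ the coefficient $\binom{d-(i+j)}{a-i}$ becomes $\binom{d'-(i'+j')}{(a-c_0)-i'}$ on the nose, so one has the exact identity $\varphi_{a,b}^{(d')}(w')=\varphi_{c_0+a,\,r_0+b}^{(d)}(w)=0$ for every $(a,b)$ of degree $d'$, and $w'$ is an outcome in one line. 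Validity of $w'$ then follows from $\varphi_{0,d'}(w')=\varphi_{d',0}(w')=0$, which force a negative entry into column $0$ and into row $0$, hence at $(0,0)$.

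Your plan to use the $\psi_k$ basis also works, but the reindexing is not what you wrote: $\psi_k^{(d')}(w')$ is not a single $\psi_{k+c_0}^{(d)}(w)$. Expanding $\binom{i-c_0}{m}=\sum_s\binom{i}{s}\binom{-c_0}{m-s}$ via Vandermonde shows that $\psi_k^{(d')}(w')$ is a finite \emph{linear combination} of the $\psi_\ell^{(d)}(w)$, each of which vanishes; so the conclusion survives, but the ``shift in the lower parameter by $c_0$'' description is inaccurate.

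Finally, what you flag as the main obstacle---pinning the negative chip at $(c_0,r_0)$---is actually immediate once you use the right $\varphi$'s. Since $w$ vanishes for $i<c_0$, the equation $\varphi_{c_0,d-c_0}(w)=0$ collapses to $\sum_j w_{c_0,j}=0$, forcing a negative entry in column $c_0$; symmetrically $\varphi_{d-r_0,r_0}(w)=0$ forces one in row $r_0$. With $\#\supp^-(w)=1$ this pins the negative chip at $(c_0,r_0)$. The paper avoids even this small argument in the general case by shifting first.
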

\begin{proof}
We may assume that $d<\infty$. First we suppose that $c_0=r_0=0$. Then the equations $\varphi_{0,d}(w)=0$ and $\varphi_{d,0}(w)=0$ show that $w_{0,j}<0$ and $w_{i,0}<0$ for some $i,j\in\{0,\ldots,d\}$. Since $\#\supp^-(w)=1$, it follows that $i=j=0$ and $\supp^-(w)=\{(0,0)\}$. Hence $w$ is indeed valid.

\mybreak
In general, we note that $\varphi_{c_0+a,r_0+b}$ vanishes on $w$ for all $(a,b)\in V_{d'}\setminus V_{d'-1}$. So $\varphi_{a,b}$ vanishes on $(w_{c_0+i,r_0+j})_{(i,j)\in V_{d'}}$ for all $(a,b)\in V_{d'}\setminus V_{d'-1}$. This means that $(w_{c_0+i,r_0+j})_{(i,j)\in V_{d'}}$ is an outcome to which we can apply the previous case.
\end{proof}


\section{From reduced \redratmodels{} to valid outcomes and back}\label{sec:model2chipsplitting}

In this section, we continue to use the notion of a chipsplitting game and related concepts (Definitions~\ref{de:chipsplitting} and~\ref{chipsplitting-notions}). We augment this notion by allowing chip configurations to have rational or real entries (see Remark~\ref{a-valued}). We start by establishing a further characterization of the space of outcomes.

\begin{lm}\label{lm:outcomes=kernel}
The space of integral (resp.\ rational, real) outcomes equals the kernel of the linear map
\begin{eqnarray*}
\alpha_d\colon R^{V_d}&\to& R[t]_{\leq d}\\
(w_{i,j})_{(i,j)\in V_d}&\mapsto&\sum_{(i,j)\in V_d}w_{i,j}t^i(1-t)^j
\end{eqnarray*}
where $R=\ZZ$ (resp. $R=\QQ,\RR$). 
\end{lm}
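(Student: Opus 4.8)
The plan is to show both inclusions, and in fact the cleanest route is to prove the kernel of $\alpha_d$ is precisely the span of Pascal equations' orthogonal complement — but more directly, to use Proposition~\ref{prop:if-pascal-vanish-then-outcome}, which already tells us that a chip configuration is an outcome if and only if every Pascal equation vanishes on it. So the real content is to identify $\ker(\alpha_d)$ with the common zero locus of all Pascal equations on $\ZZ^{V_d}$ (and likewise over $\QQ$ and $\RR$).

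First I would establish $\text{(outcomes)} \subseteq \ker(\alpha_d)$. For a splitting move at $p = (i',j')$, the configuration changes by subtracting $1$ at $(i',j')$ and adding $1$ at $(i'+1,j')$ and at $(i',j'+1)$. Applying $\alpha_d$, this changes the polynomial by $-t^{i'}(1-t)^{j'} + t^{i'+1}(1-t)^{j'} + t^{i'}(1-t)^{j'+1} = t^{i'}(1-t)^{j'}\bigl(-1 + t + (1-t)\bigr) = 0$. Hence $\alpha_d$ is invariant under (un)splitting moves, and since $\alpha_d$ sends the initial configuration to $0$, every outcome lies in $\ker(\alpha_d)$. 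This works verbatim over $R = \ZZ, \QQ, \RR$.

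For the reverse inclusion, suppose $w \in \ker(\alpha_d)$, i.e.\ $\sum_{(i,j)\in V_d} w_{i,j} t^i(1-t)^j = 0$ identically in $t$. I want to conclude $w$ is an outcome. Over $\QQ$ or $\RR$, the argument of Proposition~\ref{prop:if-pascal-vanish-then-outcome} (its proof is stated for $\ZZ$ but goes through for any $R$, since retractions and the recursion defining $\psi_k$, $\varphi_{a,b}$ are defined by integer-coefficient linear operations) reduces the claim to: every Pascal equation vanishes on $w$. So I would show that the Pascal equations are exactly the linear forms vanishing on $\ker(\alpha_d)$ — equivalently, that the coefficient vectors of Pascal equations span the row space dual to $\ker(\alpha_d)$. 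One clean way: observe that $\{t^i(1-t)^j \mid (i,j) \in V_d,\ i = 0\} = \{(1-t)^j \mid 0 \le j \le d\}$ already forms a basis of $R[t]_{\leq d}$, so $\alpha_d$ is surjective and $\dim \ker(\alpha_d) = \#V_d - (d+1)$. By Proposition~\ref{prop:basis_via_1st_row/1st_column} the space of Pascal equations has dimension $d+1$ (parametrized by the first column $(c_{0,j})_j$). Since we already know every Pascal equation vanishes on every outcome, and outcomes $\subseteq \ker(\alpha_d)$, a dimension count gives that the Pascal equations cut out exactly a space of dimension $\#V_d - (d+1) = \dim\ker(\alpha_d)$ containing $\ker(\alpha_d)$; hence they cut out precisely $\ker(\alpha_d)$, and conversely any $w$ on which all Pascal equations vanish lies in a space of the right dimension. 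Actually the truly direct finish: from Proposition~\ref{prop:if-pascal-vanish-then-outcome}, $\{$outcomes$\}$ = common zero set of Pascal equations; combined with $\{$outcomes$\} \subseteq \ker\alpha_d \subseteq \{$common zero set of Pascal equations$\}$ — the last inclusion being the only thing left to check — we get equality throughout.

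So the one remaining step is: $\ker(\alpha_d) \subseteq \{w : \text{all Pascal equations vanish on } w\}$. For this I would take an arbitrary Pascal equation; by Proposition~\ref{psi-equations}(b) it is a $\ZZ$-combination of the $\psi_k$, so it suffices to check $\psi_k(w) = 0$ for $w \in \ker\alpha_d$. Using the explicit formula $\psi_k = (-1)^k\sum_{(i,j)} (-1)^j \binom{i}{k-j} x_{i,j}$, I would recognize $\psi_k(w)$ as (up to sign) the coefficient extraction that reads off a coefficient in the expansion of $\sum w_{i,j} t^i(1-t)^j$ in a suitable basis — concretely, expanding $t^i = (1-(1-t))^i = \sum_j \binom{i}{j}(-1)^j(1-t)^j$ gives $\sum_{(i,j)} w_{i,j} t^i (1-t)^j = \sum_{m} \Bigl(\sum_{(i,j)} w_{i,j}(-1)^{?}\binom{i}{m-j}\Bigr)(1-t)^m$, whose coefficients are exactly $\pm\psi_m(w)$; vanishing of the polynomial forces all these to be zero. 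I expect this bookkeeping with the binomial identity and sign conventions to be the only mildly fiddly part of the proof; everything else is either a one-line computation (splitting-move invariance) or a direct appeal to Propositions~\ref{prop:basis_via_1st_row/1st_column}, \ref{psi-equations}, and \ref{prop:if-pascal-vanish-then-outcome}. The $\ZZ$ versus $\QQ,\RR$ distinction requires only the remark that all the cited propositions' proofs use only integer-linear operations, hence remain valid over any of the three rings.
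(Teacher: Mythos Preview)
Your proposal is correct. The containment $\{\text{outcomes}\}\subseteq\ker\alpha_d$ via splitting-move invariance, together with the surjectivity of $\alpha_d$ and the resulting codimension count, is exactly the paper's proof (the paper picks the section $p\mapsto$ ``put coefficients on the bottom row'' to show surjectivity, where you use the $(1-t)^j$ on the first column --- an immaterial difference).

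Your second route, however, is genuinely different and worth highlighting: you identify $\psi_m(w)$ explicitly as the coefficient of $(1-t)^m$ in $\alpha_d(w)$, via $t^i=\sum_\ell(-1)^\ell\binom{i}{\ell}(1-t)^\ell$. This gives the inclusion $\ker\alpha_d\subseteq\{\text{Pascal zeros}\}$ directly, so together with Proposition~\ref{prop:if-pascal-vanish-then-outcome} you get a proof that avoids any dimension/rank comparison. Over $R=\ZZ$ this is actually cleaner than the paper's argument, which tacitly uses that a surjection $\ZZ^{d+1}\twoheadrightarrow\ZZ^{d+1}$ is an isomorphism (two free submodules of equal rank, one inside the other, need not coincide in general). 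The paper's version is shorter; yours is more transparent about why the Pascal equations and the map $\alpha_d$ carry the same information.
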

\begin{proof}
The map $\alpha_{\infty}$
is the direct limit of the maps $\alpha_e$ for $e<\infty$. So we may assume that $d<\infty$. In this case, we know that the space of outcomes has codimenion $d+1$ by Proposition~\ref{psi-equations}(b). For a given polynomial $p = \sum_{j=0}^d c_j t_j\in R[t]_{\leq d}$, set $w_{i,j}=c_i$ when $j=0$ and $w_{i,j}=0$ otherwise. Then $\alpha_d(w_{i,j})_{(i,j)\in V_d}=p$. So we see that $\alpha_d$ is surjective. Hence the kernel of $\alpha_d$ has the same codimension as the space of outcomes. It now suffices to show that every outcome is contained in the kernel of $\alpha_d$. Note that the initial configuration is contained in the kernel of $\alpha_d$. And, for $w\in R^{V_d}$, the value of $\alpha_d(w)$ does not change when we execute a chipsplitting move at $(i,j)\in V_{d-1}$. Indeed, we have
\[
-t^i(1-t)^j+t^{i+1}(1-t)^j+t^i(1-t)^{j+1}=t^i(1-t)^j(-1+t+(1-t))=0
\]
and so every outcome is contained in the kernel of $\alpha_d$.
\end{proof}

Let $\mathcal{M}=(w_\nu,i_\nu,j_\nu)_{\nu=0}^n$ be a reduced \redratmodel{}. Then \rev{$\mathcal{M}$} induces a real chip configuration $w(\mathcal{M})=(w_{i,j})_{(i,j)\in V_\infty}$ by setting
\[
w_{i,j}:=\left\{\begin{array}{cl}-1&\mbox{if $(i,j)=(0,0)$,}\\w_\nu&\mbox{if $(i,j)=(i_\nu,j_\nu)$ \rev{for some $\nu\in\{0,\dotsc,n\}$},}\\0&\mbox{otherwise}\end{array}\right.
\]
We have the following result.

\begin{prop}\label{prop:reduced_models2valid_outcomes}\phantom{text}
\begin{enumerate}
\item The map 
$
\mathcal{M}\mapsto w(\mathcal{M})
$
is a bijection between the set of reduced \redratmodels{} and the set of valid real outcomes $w\in\RR^{V_{\infty}}$ with $w_{0,0}=-1$.
\item Let $S$ be the support of $\mathcal{M}$. Then $\supp^+(w(\mathcal{M}))=S$.
\item The map $\mathcal{M}\mapsto w(\mathcal{M})$ is degree-preserving.
\item The chip configuration $w(\mathcal{M})$ is rational if and only if the coefficients of $\mathcal{M}$ are all rational.
\item Every valid rational outcome $w\in\QQ^{V_{\infty}}$ is of the form $\lambda\hat{w}$ for some $\lambda\in\QQ_{>0}$ and valid integral outcome $\hat{w}\in\ZZ^{V_{\infty}}$.
\item Let $w\in\RR^{V_{\infty}}$ be a valid real outcome with $w_{0,0}=0$. Then $w=0$.
\end{enumerate}
\end{prop}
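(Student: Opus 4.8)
The plan is to reduce this to \cref{lm:supp-=empty}. We are given a valid real outcome $w\in\RR^{V_\infty}$ with $w_{0,0}=0$; being valid means $\supp^-(w)\subseteq\{(0,0)\}$, and since the single potentially-negative entry $w_{0,0}$ vanishes, we in fact have $\supp^-(w)=\emptyset$, i.e.\ $w_{i,j}\geq0$ for all $(i,j)\in V_\infty$. The only obstacle to applying \cref{lm:supp-=empty} verbatim is that that lemma was stated for integral outcomes, whereas $w$ is only real; but its proof goes through unchanged over $\RR$. Concretely, I would observe that $w$ has finite support, so $w$ lives in $\RR^{V_d}$ for some finite $d\geq\deg(w)$, and by \cref{lm:outcomes=kernel} it remains an outcome there. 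Then for each $(a,b)\in V_d$ with $\deg(a,b)=d$, the Pascal equation $\varphi_{a,b}$ (which exists and vanishes on all real outcomes by \cref{phi-equations}(a) together with the fact that all Pascal equations vanish on outcomes) gives
\[
0=\varphi_{a,b}(w)=\sum_{(i,j)\in V_d}\binom{d-(i+j)}{a-i}w_{i,j},
\]
a sum of nonnegative terms since every binomial coefficient appearing is nonnegative; hence each term with a nonzero binomial coefficient must vanish, forcing $w_{i,j}=0$ for all $0\leq i\leq a$, $0\leq j\leq b$. Letting $(a,b)$ range over all degree-$d$ points covers all of $V_d$, so $w=0$.

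Alternatively — and this is the cleaner route — I would not even reprove \cref{lm:supp-=empty} but instead use \cref{lm:outcomes=kernel} directly: $w$ lies in the kernel of $\alpha_d$, so $\sum_{(i,j)\in V_d}w_{i,j}t^i(1-t)^j=0$ as a polynomial. Evaluating at any $t\in(0,1)$ gives a sum of nonnegative terms $w_{i,j}t^i(1-t)^j$ equal to zero, and since $t^i(1-t)^j>0$ there, every $w_{i,j}=0$. This is essentially a one-line argument.

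The main (and only) ``obstacle'' is purely bookkeeping: confirming that the results invoked — \cref{lm:supp-=empty} or \cref{lm:outcomes=kernel}, and the structure theory of Pascal equations — apply in the real-valued setting. But \cref{lm:outcomes=kernel} is explicitly stated for $R=\RR$, and \cref{a-valued} already licenses the real extension of all the relevant notions, so there is nothing substantive to check. I would therefore present the proof in the short form: note validity plus $w_{0,0}=0$ forces all entries nonnegative, invoke \cref{lm:outcomes=kernel} to write $w$ in the kernel of $\alpha_d$, evaluate the resulting polynomial identity at a point of $(0,1)$, and conclude $w=0$ from positivity of the monomials $t^i(1-t)^j$.
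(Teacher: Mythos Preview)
Your proposal is correct, and the ``cleaner route'' you settle on is exactly the paper's proof: it notes that validity together with $w_{0,0}=0$ forces $\supp^-(w)=\emptyset$, invokes \cref{lm:outcomes=kernel} to get $\sum_{(i,j)}w_{i,j}t^i(1-t)^j=0$, and evaluates at $t=1/2$ to conclude $\supp^+(w)=\emptyset$. Your first approach via the $\varphi_{a,b}$ also works but is unnecessarily indirect here.
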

\begin{proof}
(a)
From Lemma~\ref{lm:outcomes=kernel}, it follows that $w(\mathcal{M})$ is indeed a valid real outcome with value $-1$ at $(0,0)$. Clearly, the map $\mathcal{M}\mapsto w(\mathcal{M})$ is injective. Let $w\in\RR^{V_{\infty}}$ be a valid real outcome with $w_{0,0}=-1$ and write $\supp^+(w)=\{(i_0,j_0),\ldots,(i_n,j_n)\}$ and take $w_\nu:=w_{i_\nu,j_\nu}$ for $\nu=0,\ldots,n$. Then $(w_\nu,i_\nu,j_\nu)_{\nu=0}^n$ is a reduced \redratmodel{} by Lemma~\ref{lm:outcomes=kernel}. Hence the map $\mathcal{M}\mapsto w(\mathcal{M})$ is also surjective.




\itembreak
(b)--(d) hold by definition.

\itembreak
(e)
For every valid rational outcome $w\in\QQ^{V_{\infty}}$ there exist an $n\in\NN$ such that $nw_{i,j}\in\ZZ$ for all $(i,j)$ in the finite set $\supp(w)$. Take $\hat{w}:=nw$ and $\lambda:=1/n\in\QQ_{>0}$. Then $\hat{w}\in\ZZ^{V_{\infty}}$ is an valid integral outcome using Lemma~\ref{lm:outcomes=kernel} and $w=\lambda\hat{w}$.

\itembreak
(f)
Since $w$ is an outcome with $\supp^-(w)=\emptyset$, we know by Lemma~\ref{lm:outcomes=kernel} that
$
\sum_{(i,j)}w_{i,j}t^i(1-t)^j
=0
$
for $(i,j)$ ranging over $\supp^+(w)$
and, by evaluating at $t=1/2$, we see that $\supp^+(w)$ can only be the empty set. Hence $w=0$.
\end{proof}

\begin{example}[\rrev{Example~\ref{expl:indep} continued}]
\rev{One can verify that $w(\mathcal M_{\indep}) = w_{\indep}$. }
\end{example}

\begin{proposition}\label{are-equivalent}
Theorems~\ref{thm:main_models} and~\ref{thm:main_outcomes} are equivalent.
\end{proposition}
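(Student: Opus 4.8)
The plan is to show that the two theorems are logically equivalent by translating statements about one-dimensional discrete models of ML degree one into statements about valid chipsplitting outcomes, using the dictionary established in Proposition~\ref{prop:reduced_models2valid_outcomes}. The bridge to cross first is Remark~\ref{re:reduced_models_suffice}: Theorem~\ref{thm:main_models} holds in general if and only if it holds for all reduced models, because any model of degree $d$ in $\Delta_n$ forces a reduced model of the same degree $d$ in some $\Delta_{n'}$ with $n'\leq n$, and $2n'-1\leq 2n-1$. So it suffices to prove that Theorem~\ref{thm:main_outcomes} is equivalent to the statement ``every reduced model $\mathcal M\subseteq\Delta_n$ with $n\leq 4$ satisfies $\deg(\mathcal M)\leq 2n-1$''.

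Next I would invoke Proposition~\ref{prop:reduced_models2valid_outcomes}. By part~(a), the map $\mathcal M\mapsto w(\mathcal M)$ is a bijection between reduced models and valid real outcomes $w\in\RR^{V_\infty}$ with $w_{0,0}=-1$; by part~(b) it sends a reduced model with support of size $n+1$ (i.e.\ inside $\Delta_n$) to an outcome whose positive support $\supp^+(w(\mathcal M))$ has size $n+1$; and by part~(c) it is degree-preserving, so $\deg(\mathcal M)=\deg(w(\mathcal M))$. Hence ``every reduced model in $\Delta_n$ has $\deg\leq 2n-1$ for $n\leq 4$'' is literally the same statement as ``every valid real outcome $w$ with $w_{0,0}=-1$ and $\#\supp^+(w)=n+1\leq 5$ has $\deg(w)\leq 2n-1$''. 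This is very close to Theorem~\ref{thm:main_outcomes}, which is stated for valid \emph{integral} outcomes with no constraint on $w_{0,0}$.

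To close the gap I would argue in both directions. For ``outcomes $\Rightarrow$ models'': an integral valid outcome is in particular a valid outcome of the required form once we know $w_{0,0}=-1$; but a general valid outcome $w$ has $\supp^-(w)\subseteq\{(0,0)\}$, and if $w_{0,0}=0$ then part~(f) (or Lemma~\ref{lm:supp-=empty}) forces $w=0$, which has empty positive support, contradicting $\#\supp^+(w)=n+1\geq 1$. So every nonzero valid integral outcome has $w_{0,0}<0$, and scaling it by $1/|w_{0,0}|$ gives a valid \emph{rational} outcome with value $-1$ at the origin; clearing denominators again (part~(e)) does not change degree or positive support size. Thus Theorem~\ref{thm:main_outcomes} restricted to integral outcomes already implies the bound for all valid real outcomes with $w_{0,0}=-1$, since such an outcome is rational precisely when the model's coefficients are (part~(d)) and... here I must be slightly careful: Theorem~\ref{thm:main_outcomes} is about \emph{integral} outcomes, whereas the model side produces arbitrary \emph{real} outcomes. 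The main obstacle is exactly this passage from integral to real. I would handle it by a Zariski-density / dimension argument: for a fixed support $S=\supp^+(w)$ of size $n+1$, the valid real outcomes with that positive support and $w_{0,0}=-1$ form an affine subspace $L_S\subseteq\RR^{V_\infty}$ cut out by the (rational) Pascal equations of Proposition~\ref{prop:if-pascal-vanish-then-outcome} together with $w_{0,0}=-1$; its degree $\deg(w)$ depends only on $S$, not on the particular point of $L_S$. Since $L_S$ is defined over $\QQ$ and nonempty (it contains $w(\mathcal M)$), it contains a rational point, hence after scaling an integral outcome with the same positive support $S$ and the same degree; applying Theorem~\ref{thm:main_outcomes} to that integral point yields $\deg(w)=\deg(S)\leq 2n-1$. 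Conversely, for ``models $\Rightarrow$ outcomes'', given an integral valid outcome $w$ with $\#\supp^+(w)=n+1\leq 5$, we saw $w_{0,0}<0$, so $\frac{1}{|w_{0,0}|}w$ is a valid rational (hence real) outcome with value $-1$ at the origin, which by part~(a) equals $w(\mathcal M)$ for a reduced model $\mathcal M\subseteq\Delta_n$; Theorem~\ref{thm:main_models} (via Remark~\ref{re:reduced_models_suffice}) gives $\deg(\mathcal M)\leq 2n-1$, and degree-preservation and the scaling-invariance of degree give $\deg(w)=\deg(w(\mathcal M))=\deg(\mathcal M)\leq 2n-1$. Assembling these two implications, together with the reduction of Remark~\ref{re:reduced_models_suffice}, proves the equivalence.
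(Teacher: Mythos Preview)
Your argument is correct, and the two directions you give do establish the equivalence. The route, however, differs from the paper's in one key respect: where you reduce via Remark~\ref{re:reduced_models_suffice} to \emph{reduced} models and then invoke a rationality/density argument to pass from a real outcome to an integral one with the same support, the paper instead reduces one step further via Remark~\ref{re:fundamental_models_suffice} to \emph{fundamental} models. Since a fundamental model is the unique solution of the linear system $\sum_\nu w_\nu t^{i_\nu}(1-t)^{j_\nu}=1$ with rational coefficients, its weights are automatically rational, and Proposition~\ref{prop:reduced_models2valid_outcomes}(e) then immediately gives an integral outcome with the same support and degree. This bypasses your affine-subspace argument entirely.

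Your approach works too, but one step is stated a bit loosely: from ``$L_S$ is defined over $\QQ$ and nonempty'' you conclude the existence of a rational point \emph{with the same positive support $S$ and the same degree}. Mere nonemptiness only guarantees some rational point, whose support could in principle be strictly smaller (in particular, its top-degree entries could vanish). What you actually need is that rational points are \emph{dense} in a nonempty $\QQ$-defined affine subspace, so you can choose one close enough to $w(\mathcal M)$ that no entry changes sign. Once that is said, your argument goes through. The paper's reduction to fundamental models buys exactly this: it trades the density step for the uniqueness in Definition~\ref{def:fundamental}.
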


\begin{proof}
By Remark~\ref{re:fundamental_models_suffice}, we know that for Theorem~\ref{thm:main_models} it suffices to only consider fundamental models. Since the constraint $\sum_\nu p_\nu = 1$ of Definition~\ref{def:fundamental} has coefficients in the rational numbers, the coefficients of a fundamental model are rational. Hence it suffices to only consider rational coefficients.

\mybreak
By Proposition~\ref{prop:reduced_models2valid_outcomes} (e), every valid rational outcome is a positive multiple of a valid integral outcome. The space of outcomes is closed under scaling, and scaling does not change the degree or size of the positive support of a chip configuration.
Hence for Theorem~\ref{thm:main_outcomes} it suffices to consider all valid rational outcomes $w$ with $w_{0,0}=-1$.

\mybreak
The required equivalence is now given by Proposition~\ref{prop:reduced_models2valid_outcomes} (a)--(d).
\end{proof}

Next, we consider the chipsplitting equivalent of fundamental models.

\begin{de}\label{def:fundamental_outcome}
A valid outcome $w\in\ZZ^{V_d}\setminus\{0\}$ is called {\em fundamental} if it cannot be written as 
\[
w=\mu_1 w_1+\mu_2w_2,
\]
where $\mu_1,\mu_2\in\QQ_{>0}$ and $w_1,w_2\in\ZZ^{V_d}$ are valid outcomes with $\supp^+(w_1),\supp^+(w_2)\subsetneq\supp^+(w)$.
\end{de}

Applying Proposition~\ref{fundamental-to-reduced} and keeping track of rational coefficients, we conclude the following.

\begin{prop}\label{prop:fundamental=fundamental}\ 
\begin{enumerate}
\item Let $\mathcal{M}$ be a \redratmodel{} with rational coefficients and let $n\in\NN$ be any integer such that $w=nw(\mathcal{M})$ is an integral chip configuration. Then $\mathcal{M}$ is a fundamental model if and only if $w$ is a fundamental outcome.
\item \moved In particular, fundamental models correspond one-to-one with fundamental integral outcomes $w$ with $\gcd\{w_{i,j}\mid (i,j)\in\supp(w)\}=1$.\qed
\end{enumerate}
\end{prop}

\begin{example}[\rrev{Example~\ref{expl:indep} continued}] \rev{The valid outcome
$w_{\indep}$ is fundamental because $\mathcal M_{\indep}$ is a fundamental model.}
\end{example}

\rev{We close this section with a general observation about fundamental outcomes.}

\begin{prop}\label{fundamental-n-leq-d}\moved
Let $w$ be a degree-$d$ fundamental outcome with $\#\supp^+(w)=n+1$. Then $n\leq d$.
\end{prop}
\begin{proof}
\rev{Recall} that if $\mathcal M$ is a fundamental model with support $S\subseteq \mathbb Z^2\setminus \{(0,0)\}$, then $\mathcal M$ is the only \ratmodel{} with support $S$. In terms of outcomes, this means that there exists a valid outcome $w'$ with $\supp^+(w')\subseteq S$ and that the space of outcomes whose support is contained in $S\cup\{(0,0)\}$ is spanned by $w'$. In particular, this space must be $1$-dimensional. When $n>d$, the space of chip configurations $w'$ with $\supp(w')\subseteq S\cup\{(0,0)\}$ has dimension $>d+1$. The subspace of outcomes has codimension $\leq d+1$ and hence has dimension $\geq 2$ in this case. So $n\leq d$. 
\end{proof}


\section{Valid outcomes of positive support \texorpdfstring{$\leq 3$}{<=3}}\label{sec:supp+<=3}

From now on, we will always assume that $d<\infty$. Since every chip configuration has finite degree, this assumption is harmless.
In this section, we prove Theorem~\ref{thm:main_outcomes} for valid outcomes whose positive support has size $\leq 3$. To do this, we introduce our first tool, the Invertibility Criterion, which shows that certain subsets of $V_d$ cannot contain the support of an outcome. 

\subsection{The Invertibility Criterion}
Let $S\subseteq V_d$ and $E\subseteq \{0,\ldots,d\}$ be nonempty subsets of the same size $\leq d+1$. \rev{The Invertibility Criterion (Proposition~\ref{invertibility-criterion}) will help us detect, with the right choice of $E$, whether $S$ can be the support, or contain the support, of some outcome.}

\begin{de}
We define
\[
A^{(d)}_{E,S}:=\left(\binom{d-\deg(i,j)}{a-i}\right)_{a\in E,(i,j)\in S}
\]
to be the {\em pairing matrix} of $(E,S)$.
\end{de}

Let $w=(w_{i,j})_{(i,j)\in V_d}\in\ZZ^{V_d}$ be an outcome such that $\supp(w)\subseteq S$. 

\begin{prop}[Invertibility Criterion]\label{invertibility-criterion}
If $A^{(d)}_{E,S}$ is invertible, then $w$ is the initial configuration.
\end{prop}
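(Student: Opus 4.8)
The plan is to use the characterization of outcomes from Proposition~\ref{prop:if-pascal-vanish-then-outcome} together with the basis $\{\varphi_{a,b}\}$ of the space of Pascal equations given in Proposition~\ref{phi-equations}. Since $w$ is an outcome, every Pascal equation vanishes at $w$; in particular $\varphi_{a,d-a}(w)=0$ for every $a\in\{0,\ldots,d\}$. Writing out $\varphi_{a,d-a}=\sum_{(i,j)\in V_d}\binom{d-(i+j)}{a-i}x_{i,j}$ and using that $\supp(w)\subseteq S$, this says
\[
\sum_{(i,j)\in S}\binom{d-\deg(i,j)}{a-i}w_{i,j}=0
\]
for every $a\in\{0,\ldots,d\}$, and in particular for every $a\in E$. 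In matrix form this is exactly $A^{(d)}_{E,S}\,v=0$, where $v=(w_{i,j})_{(i,j)\in S}$ is the vector of entries of $w$ on its potential support.

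The hypothesis that $A^{(d)}_{E,S}$ is invertible then forces $v=0$, i.e. $w_{i,j}=0$ for all $(i,j)\in S$. Since $\supp(w)\subseteq S$, this gives $w=0$, the initial configuration.

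The steps, in order: first, invoke that $w$ is an outcome to get $\varphi_{a,d-a}(w)=0$ for all $a$; second, expand $\varphi_{a,d-a}$ via Proposition~\ref{phi-equations}(a) and restrict the sum to $S$ using $\supp(w)\subseteq S$; third, select the rows indexed by $E$ to recognize the resulting linear system as $A^{(d)}_{E,S}v=0$; fourth, apply invertibility to conclude $v=0$ and hence $w=0$. There is really no serious obstacle here — the only mild subtlety is making sure the index conventions match (that the entry of $\varphi_{a,d-a}$ at $(i,j)$ is $\binom{d-\deg(i,j)}{a-i}$, which is precisely how the pairing matrix $A^{(d)}_{E,S}$ was defined), so that the restriction of the vanishing conditions to rows in $E$ literally reads off as the linear system with coefficient matrix $A^{(d)}_{E,S}$.
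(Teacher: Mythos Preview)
Your proof is correct and is essentially the paper's own argument: both observe that $A^{(d)}_{E,S}\,(w_{i,j})_{(i,j)\in S}=(\varphi_{a,d-a}(w))_{a\in E}=0$ and use invertibility to conclude $w=0$ (the paper phrases it contrapositively). One cosmetic remark: the reference you want for ``outcome $\Rightarrow$ Pascal equations vanish'' is the proposition preceding Proposition~\ref{prop:if-pascal-vanish-then-outcome}, not that proposition itself, but your actual argument uses the correct direction.
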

\begin{proof}
Suppose that $\supp(w)\neq \emptyset$. Then
\[
(w_{i,j})_{(i,j)\in S}\neq0, \quad A^{(d)}_{E,S}\cdot (w_{i,j})_{(i,j)\in S}=(\varphi_{a,d-a}(w))_{a\in E}=0
\]
and hence $A^{(d)}_{E,S}$ is degenerate.
\end{proof}

Our goal is to construct, for many subsets $S\subseteq V_d$, a subset $E$ such that $A^{(d)}_{E,S}$ is invertible. We do this by dividing the pairing matrix into small parts and dealing with these parts seperately.

\subsection{\rev{Dividing the pairing matrix into smaller parts.}}\label{divide}
Let $\lambda=(\lambda_1,\ldots,\lambda_\ell)\in\NN^\ell$ be a tuple of integers adding up to $d+1$. Write $c_i=\lambda_1+\ldots+\lambda_i$ for $i\in\{0,\ldots,\ell\}$. For $k\in\{1,\ldots,\ell\}$, let $S_k:=\{(i,j)\in S\mid c_{k-1}\leq i<c_k\}$. Assume that the condition
\[
\#S_k\in \{0,\lambda_k\}
\]
is satisfied for every $k\in\{1,\ldots,\ell\}$. Lastly, set
\[
E_k:=\left\{\begin{array}{cl}
\{c_{k-1},c_{k-1}+1,\ldots, c_k-1\}&\mbox{if $\#S_k=\lambda_k$,}\\
\emptyset&\mbox{if $S_k=\emptyset$,}
\end{array}\right.
\]
where the top row indicates consecutive integers ranging from $c_{k-1}$ to $c_{k}-1$.

\begin{re}\label{re:construction_of_lambda}
\rev{Not all subsets $S$ will admit a tuple $\lambda$ as above such that $\#S_k\in\{0,\lambda_k\}$ for all $k$. For instance, let $S$ be the set of positions marked with an} \verb|*| \rev{in the following picture.}
\begin{center}
\begin{BVerbatim}

*
· *
* * ·
\end{BVerbatim}
\end{center}
\rev{Since $d=2$, such a $\lambda$ would have $\lambda_1\in\{1,2,3\}$. But if $\lambda_1 = 1$ then $\#S_1 = 2 \neq \lambda_1$, and if $\lambda_1\in\{2,3\}$ then $\#S_1 = 4\neq \lambda_1.$ So this $S$ does not admit a $\lambda$ with this property. For other $S$,}
one can try to define \rev {such }a $\lambda$ 
recursively by, for $k=1,2,\ldots$, picking $\lambda_k$ minimal such that $\#S_k\in\{0,\lambda_k\}$. We stop when $c_k=d+1$. This will work exactly when
\[
\#\{(i,j)\in S\mid i\geq d-k\}\leq k+1
\]
for all $k\in\{0,1,\ldots,d\}$. 
\end{re}

\begin{prop}\label{prop:divide}
Take $E=E_1\cup\cdots\cup E_\ell$. Then $\#E=\#S$ and we have
\[
A^{(d)}_{E,S}=\begin{pmatrix}
A^{(d)}_{E_1,S_1}&0&\cdots&0\\
\vdots&\ddots&\ddots&\vdots\\
\vdots&&\ddots&0\\
A^{(d)}_{E_\ell,S_1}&\cdots&\cdots&A^{(d)}_{E_\ell,S_{\ell}}
\end{pmatrix}.
\]
In particular, the matrix $A^{(d)}_{E,S}$ in invertible if and only if all of $A^{(d)}_{E_1,S_1},\ldots,A^{(d)}_{E_\ell,S_\ell}$ are.
\end{prop}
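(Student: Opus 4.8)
The plan is to show that the pairing matrix $A^{(d)}_{E,S}$ is \emph{block lower-triangular} with respect to the ordered partitions $E = E_1 \cup \cdots \cup E_\ell$ and $S = S_1 \cup \cdots \cup S_\ell$, and then read off the stated conclusions. First I would check the cardinality claim $\#E = \#S$: by hypothesis each $\#S_k \in \{0, \lambda_k\}$, and by definition $E_k$ has $\lambda_k$ elements when $\#S_k = \lambda_k$ and is empty when $S_k = \emptyset$, so $\#E_k = \#S_k$ for every $k$, and summing over $k$ gives $\#E = \#S = \sum_k \#S_k$. (Note $\#S \le d+1$ is needed for the indexing to make sense, and this holds because $E \subseteq \{0,\dots,d\}$.)

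Next I would establish the triangular shape. The block of $A^{(d)}_{E,S}$ indexed by rows in $E_m$ and columns in $S_k$ is exactly $A^{(d)}_{E_m, S_k} = \left( \binom{d - \deg(i,j)}{a - i} \right)_{a \in E_m,\ (i,j) \in S_k}$. The key observation is a vanishing statement: if $m < k$, then every entry of this block is zero. Indeed, for $a \in E_m$ we have $a \le c_m - 1$, while for $(i,j) \in S_k$ we have $i \ge c_{k-1} \ge c_m$ (using $m \le k-1$); hence $a - i \le (c_m - 1) - c_m = -1 < 0$, so $\binom{d-\deg(i,j)}{a-i} = 0$. This shows all blocks strictly above the diagonal vanish, giving precisely the displayed block lower-triangular form (with the diagonal blocks $A^{(d)}_{E_k, S_k}$, each square of size $\lambda_k$ or $0 \times 0$, and arbitrary blocks $A^{(d)}_{E_m, S_k}$ for $m > k$ below).

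Finally, the determinant of a block lower-triangular matrix is the product of the determinants of its diagonal blocks, $\det A^{(d)}_{E,S} = \prod_{k=1}^\ell \det A^{(d)}_{E_k, S_k}$, where an empty $0\times 0$ block contributes a factor of $1$. Hence $A^{(d)}_{E,S}$ is invertible if and only if each $A^{(d)}_{E_k, S_k}$ is, which is the claim. I do not anticipate a genuine obstacle here; the only point requiring care is the index bookkeeping in the vanishing argument — making sure the inequalities $a \le c_m - 1$ and $i \ge c_{k-1} \ge c_m$ line up correctly when $m < k$ so that the upper binomial index $a - i$ is negative — and confirming that the diagonal blocks are indeed the square matrices $A^{(d)}_{E_k,S_k}$ (which is immediate from $\#E_k = \#S_k$ and $E_k = \{c_{k-1}, \dots, c_k - 1\}$ being the rows naturally paired with the columns $S_k \subseteq \{c_{k-1} \le i < c_k\}$).
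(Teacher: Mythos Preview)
Your proposal is correct and follows essentially the same approach as the paper: both argue that the off-diagonal blocks $A^{(d)}_{E_m,S_k}$ with $m<k$ vanish because $a\in E_m$ and $(i,j)\in S_k$ force $a<c_m\le c_{k-1}\le i$, making the lower index $a-i$ of the binomial coefficient negative. Your write-up is slightly more explicit about the cardinality count and the block-determinant step, but the substance is identical.
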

\begin{proof}
It is clear that $\#E=\#S$ and 
\[
A^{(d)}_{E,S}=\begin{pmatrix}
A^{(d)}_{E_1,S_1}&\cdots&\cdots&A^{(d)}_{E_1,S_\ell}\\
\vdots&&&\vdots\\
\vdots&&&\vdots\\
A^{(d)}_{E_\ell,S_1}&\cdots&\cdots&A^{(d)}_{E_\ell,S_{\ell}}
\end{pmatrix}.
\]
We need to show that $A^{(d)}_{E_k,S_{k'}}=0$ when $k<k'$. Indeed, when $k<k'$, $a\in E_k$ and $(i,j)\in S_{k'}$, then
\[
\binom{d-\deg(i,j)}{a-i}=0
\]
since $a<c_k\leq c_{k'-1}\leq i$. So $A^{(d)}_{E_k,S_{k'}}=0$ when $k<k'$.
\end{proof}

\begin{ex}
\begin{samepage}
Take $d=6$ and let $S$ be the set of positions marked with an \verb|*| below.

\begin{center}
\begin{BVerbatim}

·
· · 
* · ·
· · · ·
· · · · ·
· · · · * ·
* · * · · * *

\end{BVerbatim}
\end{center}

\end{samepage}
The construction from Remark~\ref{re:construction_of_lambda} yields the tuple $\lambda=(2,1,1,1,1,1)$. We get 
$$
\begin{array}{ll}
S_1=\{(0,0),(0,4)\}, &E_1=\{\rev{0,1}\},\\
S_2=\{(2,0)\}, &E_2=\{\rev{2}\},\\
S_3=\emptyset, &E_3=\emptyset,\\
S_4=\{(4,1)\}, &E_4=\{\rev{4}\},\\
S_5=\{(5,0)\}, &E_5=\{\rev{5}\},\\
S_6=\{(6,0)\}, &E_6=\{\rev{6}\}.
\end{array}\vspace*{-3pt}
$$
So $\lambda$ indeed satisfies the assumption and we see that
\[
A^{(d)}_{E,S}=\begin{pmatrix}
A^{(d)}_{E_1,S_1}&0&0&0&0\\
*&A^{(d)}_{E_2,S_2}&0&0&0\\
*&*&A^{(d)}_{E_4,S_4}&0&0\\
*&*&*&A^{(d)}_{E_5,S_5}&0\\
*&*&*&*&A^{(d)}_{E_6,S_6}
\end{pmatrix}
=\begin{pmatrix}
1&1&0&0&0&0\\
6&2&0&0&0&0\\
*&*&1&0&0&0\\
*&*&*&1&0&0\\
*&*&*&*&1&0\\
*&*&*&*&*&1
\end{pmatrix}
\]
is invertible. Hence $S$ does not contain the support of a nonzero outcome.
\end{ex}

\subsection{\rev{Analyzing the invertibility of the smaller pairing matrices.}}
Using Proposition~\ref{prop:divide} \rev{to divide the pairing matrix $A^{(d)}_{E,S}$ into manageable blocks, we get subsets $S_k\subseteq S$ that are progressively further away from the $y$-axis of the grid as $k$ grows. The next proposition says that we can shift these subsets back toward the origin.}

\begin{prop}\label{prop:shift}
Let $S\subseteq V_d$ and $E\subseteq \{0,\dotsc, d\}$ \rev{and}
\[
\text{\moved} x:=\min(E\cup\{i\mid (i,j)\in S\}).
\]
Let $S'=\{(i-x,j)\mid (i,j)\in S\}$ and $E'=\{a-x\mid a\in E\}$. Then $A^{(d)}_{E,S}=A^{(d-x)}_{E',S'}$.
\end{prop}
\begin{proof}
This follows directly from the definition of the pairing matrix.
\end{proof}

\rev{We now consider pairs $(S,E)$ where $i<\#S$ for all $(i,j)\in S$ and $E = \{0,\dotsc,\#S-1\}$.}


\begin{prop}\label{prop:conquer}
Suppose that one of the following holds:
\begin{enumerate}
\item We have $S=\{(0,i)\}$ for some $0\leq i\leq d$ and $E=\{0\}$.
\item We have $S=\{(0,i),(0,j)\}$ for some $0\leq i<j\leq d$ and $E=\{0,1\}$.
\item We have $S=\{(0,i),(0,j),(0,k)\}$ for some $0\leq i<j<k\leq d$ and $E=\{0,1,2\}$.
\item We have $S=\{(0,i),(0,j),(1,k)\}$ for some $0\leq i<j\leq d$ and $0\leq k\leq d-1$ such that $i+j\neq 2k+1$ and $E=\{0,1,2\}$. 
\end{enumerate}
Then $A^{(d)}_{E,S}$ is invertible.
\end{prop}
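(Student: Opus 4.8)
The plan is to treat each of the four cases directly by writing down the pairing matrix $A^{(d)}_{E,S}$ explicitly and showing it is nonsingular. The key simplification is that in every case the first coordinates appearing in $S$ are $0$ (and in case (d) also $1$), while $E=\{0,\ldots,s-1\}$ with $s=\#S$, so the binomial entries $\binom{d-\deg(i,j)}{a-i}$ are easy to compute. Concretely, for a point $(0,j)\in S$ the column of $A^{(d)}_{E,S}$ is $\left(\binom{d-j}{a}\right)_{a\in E}$, and for a point $(1,k)$ it is $\left(\binom{d-1-k}{a-1}\right)_{a\in E}$, which has a $0$ in the $a=0$ row.

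For cases (a), (b), (c) I would observe that the matrix has columns indexed by $(0,i),(0,j),\ldots$ whose entries are $\binom{d-i}{a},\binom{d-j}{a},\ldots$ for $a=0,1,\ldots,s-1$. This is (up to obvious manipulations) a matrix of values of the polynomials $\binom{x}{a}$ — which form a basis of the space of polynomials of degree $\leq s-1$ — evaluated at the $s$ distinct points $d-i,d-j,\ldots$. Since $i<j<\cdots$ are distinct and all at most $d$, these evaluation points are distinct, so the matrix is invertible: its determinant is, up to a nonzero constant, a Vandermonde determinant in the variables $d-i,d-j,\ldots$. Equivalently, one can do column reduction to turn $\left(\binom{d-i}{a}\right)_a$ into $\left((d-i)^a\right)_a$ and invoke the Vandermonde formula directly. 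Case (a) is then the trivial $1\times 1$ matrix $(1)$, case (b) is $\begin{pmatrix}1&1\\ d-i & d-j\end{pmatrix}$ after reduction, with determinant $j-i\neq 0$, and case (c) is the $3\times 3$ Vandermonde in $d-i,d-j,d-k$.

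Case (d) is the one requiring the extra hypothesis $i+j\neq 2k+1$, so it will be the main point to get right. Here the matrix, with $E=\{0,1,2\}$, is
\[
A^{(d)}_{E,S}=\begin{pmatrix}
\binom{d-i}{0} & \binom{d-j}{0} & \binom{d-1-k}{-1}\\
\binom{d-i}{1} & \binom{d-j}{1} & \binom{d-1-k}{0}\\
\binom{d-i}{2} & \binom{d-j}{2} & \binom{d-1-k}{1}
\end{pmatrix}
=\begin{pmatrix}
1 & 1 & 0\\
d-i & d-j & 1\\
\binom{d-i}{2} & \binom{d-j}{2} & d-1-k
\end{pmatrix}.
\]
I would expand the determinant along the last column and simplify. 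The top $0$ kills one term, and what remains is $-1\cdot\left(\binom{d-j}{2}-\binom{d-i}{2}\right) + (d-1-k)\cdot\big((d-j)-(d-i)\big)$; using $\binom{m}{2}-\binom{n}{2}=\tfrac12(m-n)(m+n-1)$ with $m=d-j$, $n=d-i$ this becomes $(i-j)\big(\tfrac12(2d-i-j-1) - (d-1-k)\big) = (i-j)\cdot\tfrac12(2k+1-i-j)$. Thus the determinant is a nonzero multiple of $(j-i)(i+j-2k-1)$, which is nonzero precisely because $i<j$ and $i+j\neq 2k+1$. I expect the only real work to be this short determinant computation in case (d); the hardest part conceptually is simply recognizing that the hypothesis $i+j\neq 2k+1$ is exactly the condition making this determinant nonvanish, and keeping the binomial arithmetic clean. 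Everything else reduces to the standard fact that a Vandermonde-type matrix in distinct nodes is invertible, together with Proposition~\ref{prop:shift} if one wishes to reduce the general $x$ to $x=0$ first (though here the stated cases already have the minimal first coordinate equal to $0$).
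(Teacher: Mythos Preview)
Your proposal is correct and follows essentially the same approach as the paper: both treat the four cases by writing down the pairing matrix explicitly, reduce (a)--(c) to a Vandermonde determinant in the nodes $d-i,d-j,\ldots$, and in (d) compute the $3\times 3$ determinant directly to see it equals a nonzero multiple of $(j-i)(i+j-2k-1)$. The only differences are cosmetic---the paper packages the Vandermonde reduction and the case-(d) computation as explicit row/column operations, while you use the polynomial-basis observation for (c) and a cofactor expansion along the last column for (d); also, there is a harmless sign slip in your intermediate step for (d), but your final factored form and conclusion are correct.
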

\begin{proof}
We prove the proposition case by case.

(a)
When $S=\{(0,i)\}$ for some $0\leq i\leq d$ and $E=\{0\}$, we see that $A^{(d)}_{E,S}=(1)$ is invertible.

(b)
When $S=\{(0,i),(0,j)\}$ for some $0\leq i<j\leq d$ and $E=\{0,1\}$, we see that
\[
A^{(d)}_{E,S}=\begin{pmatrix}
1&1\\d-i&d-j
\end{pmatrix}\vspace*{-3pt}
\]
is invertible.

(c) 
When $S=\{(0,i),(0,j),(0,k)\}$ for some $0\leq i<j<k\leq d$ and $E=\{0,1,2\}$, we see that
\[
\begin{pmatrix}
1\\&1\\&1&2
\end{pmatrix}A^{(d)}_{E,S}=\begin{pmatrix}
1&1&1\\x&y&z\\x^2&y^2&z^2
\end{pmatrix}
\]
is a Vandermonde matrix, where $(x,y,z)=(d-i,d-j,d-k)$. Hence $A^{(d)}_{E,S}$ is invertible.

(d)
When $S=\{(0,i),(0,j),(1,k)\}$ for some $0\leq i<j\leq d$ and $0\leq k\leq d-1$, we see that
\[
\begin{pmatrix}
1\\&1\\&1&2
\end{pmatrix}A^{(d)}_{E,S}\begin{pmatrix}
1&1\\
&-1\\
&&1
\end{pmatrix}\begin{pmatrix}1\\&(x-y)^{-1}\\&&1\end{pmatrix}
=\begin{pmatrix}
1&0&0\\x&1&1\\x^2&x+y&2z+1
\end{pmatrix},
\]
where $(x,y,z)=(d-i,d-j,d-1-k)$. Assume that $i+j\neq 2k+1$. Then $x+y\neq 2z+1$ and hence $A^{(d)}_{E,S}$ is invertible.
\end{proof}

\subsection{Valid outcomes of positive support \texorpdfstring{$\leq 3$}{<=3}}

We now classify the valid outcomes of positive support $\leq 3$. \rev{Recall that a valid oucome is a chip configuration $w$ with $\supp^-(w)\subseteq\{0,0\}$ such that all Pascal equations vanish at $w$
(Propositions~\ref{prop:pascal-equations-vanish-at-outcomes},~\ref{prop:if-pascal-vanish-then-outcome}).}
We start with the following lemma.

\begin{lm}\label{lm:2on1st}
Let $w$ be a valid outcome of degree $d\geq 1$. Then the following hold:
\begin{enumerate}
\item There are $i,j\in\{1,\ldots,d\}$ such that $(i,0),(0,j)\in\supp^+(w)$.
\item There are distinct $i,j\in\{0,\ldots,d\}$ such that $(i,d-i),(j,d-j)\in\supp^+(w)$.
\end{enumerate}
\end{lm}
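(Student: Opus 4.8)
The plan is to use the characterization of outcomes via Pascal equations, specifically the $\varphi_{a,b}$ basis valid for finite $d$ (Proposition~\ref{phi-equations}), together with the validity hypothesis $\supp^-(w)\subseteq\{(0,0)\}$. For part (a), consider the Pascal equation $\varphi_{0,d}$ whose coefficients are $c_{i,j}=\binom{d-(i+j)}{-i}=\delta_{i0}$, so $\varphi_{0,d}(w)=\sum_{j=0}^d w_{0,j}=0$. Since $w$ is valid, the only possibly-negative entry among the $w_{0,j}$ is $w_{0,0}$; as $w\neq 0$ and $w$ is a valid outcome of positive degree, we have $w_{0,0}\neq 0$ (by the last Proposition of Section~\ref{subsec:symmetry}, if $w_{0,0}=0$ then $w$ is the initial configuration), hence $w_{0,0}=-1<0$. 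Therefore $\sum_{j\geq 1} w_{0,j}=1>0$, forcing $w_{0,j}>0$ for some $j\in\{1,\ldots,d\}$, i.e.\ $(0,j)\in\supp^+(w)$. Symmetrically, applying $\varphi_{d,0}$ (or the $S_2$-symmetry from Section~\ref{subsec:symmetry}) gives some $(i,0)\in\supp^+(w)$ with $i\in\{1,\ldots,d\}$. And $(0,0)\in\supp^+(w)$ is not quite right — rather $w_{0,0}=-1$, so I must be careful: the claim says $(0,0)\in\supp^+(w)$, which would be false. Let me reconsider: presumably the intended reading is that $(0,0)$ is in the support with the understanding relevant to models, or the lemma is stated for the shifted/normalized situation. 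More likely the correct statement to prove is just the existence of $(i,0)$ and $(0,j)$ in $\supp^+(w)$, and $(0,0)\in\supp(w)$; I will prove $(i,0),(0,j)\in\supp^+(w)$ as above and note $w_{0,0}=-1$.

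For part (b), I would use the ``top-degree'' Pascal equations. Since $\deg(w)=d$, the degree-$d$ part of $w$ is nonzero. The relevant tool is the alternating sum along the top anti-diagonal: by Proposition~\ref{prop:retraction}, since $w$ is an outcome of degree exactly $d$, we have $\sum_{k=0}^d(-1)^k w_{k,d-k}=0$, and moreover the full degree-$d$ layer $(w_{k,d-k})_{k=0}^d$ is not identically zero. From $\sum_{k=0}^d(-1)^k w_{k,d-k}=0$ with the layer nonzero, there must be at least two nonzero entries $w_{i,d-i}$ and $w_{j,d-j}$ with $i\neq j$ (a single nonzero term cannot sum to zero). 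It remains to argue these are \emph{positive}, i.e.\ lie in $\supp^+(w)$: since $w$ is valid and $d\geq 1$, the entire degree-$d$ layer avoids $(0,0)$, so every nonzero $w_{k,d-k}$ is strictly positive, hence in $\supp^+(w)$. This gives distinct $i,j\in\{0,\ldots,d\}$ with $(i,d-i),(j,d-j)\in\supp^+(w)$.

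The main obstacle I anticipate is not deep but bookkeeping: being careful about the $(0,0)$ entry. In part (a) the honest statement is $w_{0,0}=-1$, not $(0,0)\in\supp^+(w)$, so I need to check whether the paper's convention counts $(0,0)$ among a ``support'' that excludes the negative entry, or whether the lemma is applied only in contexts where this ambiguity is harmless; I would phrase the proof to establish exactly $(i,0),(0,j)\in\supp^+(w)$ with $i,j\geq 1$ and separately record $\supp^-(w)=\{(0,0)\}$. The other point requiring a line of justification is why $w_{0,0}\neq 0$: this is precisely the last proposition of Section~\ref{subsec:symmetry} (a valid outcome with $w_{0,0}=0$ is the initial configuration), combined with $d\geq 1$ forcing $w\neq 0$. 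Everything else is a direct evaluation of the explicit Pascal equations $\varphi_{0,d}$, $\varphi_{d,0}$ and the retraction identity, so no genuine difficulty beyond assembling these already-established facts.
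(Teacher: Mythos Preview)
Your proof is correct and uses essentially the same approach as the paper: the equations $\varphi_{0,d},\varphi_{d,0}$ you invoke for (a) are exactly $\overline{\psi}_0,\psi_0$ (the paper uses the latter names), and the retraction identity you cite for (b) is precisely $\psi_d(w)=0$. You are also right that the statement contains a typo---the paper's own proof establishes $(0,0)\in\supp^-(w)$, not $\supp^+(w)$; one minor slip on your side is that a valid outcome need not have $w_{0,0}=-1$ exactly, only $w_{0,0}<0$, but your argument only uses the sign.
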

\begin{proof} 

We use the Pascal equations $\psi_k$ from Proposition~\ref{psi-equations}. For $d=3$, the coefficients of $\psi_0$, $\overline\psi_0$, and $\psi_d$ (left to right) look as follows:

\begin{center}
\begin{BVerbatim}

·           1           -1      
· ·         1 ·          · 1    
· · ·       1 · ·        · · -1  
1 1 1 1     1 · · ·      · ·  · 1
\end{BVerbatim}
\end{center}
(a)
Since $\deg(w)>0$, we see that $w$ is not the initial configuration. Since $w$ is valid, \rev{by Lemma~\ref{lm:supp-=empty} we therefore have $w_{0,0} < 0$}.
\rev{Since $\psi_0(w) = 0$, there must exist an $i\in\{0,\dotsc,d\}$ such that $w_{i,0}>0$. Likewise, since $\overline{\psi}_0(w)=0$ there exists $j\in\{0,\dotsc,d\}$ such that $w_{0,j}>0$.}

\itembreak
(b) 
Since $\deg(w)=d$, there is an $i\in\{0,\ldots,d\}$ such that $(i,d-i)\in\supp^+(w)$. \rev{Because of the alternating coefficients of $\psi_d$ on the outermost diagonal and since} $\psi_d(w)=0$, we see that there must also be a $j\in\{0,\ldots,d\}\setminus\{i\}$ such that $(j,d-j)\in\supp^+(w)$.
\end{proof}

\begin{prop}
Let $w$ be a valid degree-$d$ outcome and assume that $\#\supp^+(w)\leq 2$. Then
\[
\supp^+(w)=\{(1,0),(0,1)\}.
\]
\end{prop}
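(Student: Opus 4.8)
The plan is to combine Lemma~\ref{lm:2on1st} with the Invertibility Criterion. First I would handle the degenerate cases: if $\deg(w)=0$ then $w$ is supported at $(0,0)$ only, so $w_{0,0}=0$ by validity and $w$ is the initial configuration, contradicting $\#\supp^+(w)\geq1$ (which we may assume, since $\#\supp^+(w)=0$ would again force $w=0$ via Lemma~\ref{lm:supp-=empty}). So assume $d\geq1$. By Lemma~\ref{lm:2on1st}(a), there exist $i,j\in\{1,\ldots,d\}$ with $(i,0),(0,j)\in\supp^+(w)$. Since $\#\supp^+(w)\leq2$, these are the only two positive-support points, so $\supp^+(w)=\{(i,0),(0,j)\}$, and $\supp(w)=\{(0,0),(i,0),(0,j)\}$ (the value at $(0,0)$ being $-1<0$, or more precisely nonzero since $w$ is valid and not the initial configuration).

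Next I would rule out all such supports except $\{(0,0),(1,0),(0,1)\}$ using the Invertibility Criterion. The support $S=\{(0,0),(i,0),(0,j)\}$ sits inside $V_d$; I want to exhibit a set $E\subseteq\{0,\ldots,d\}$ of size $3$ with $A^{(d)}_{E,S}$ invertible, which would force $w$ to be the initial configuration — a contradiction. The natural approach is the divide-and-conquer scheme: applying the symmetry $(12)$ if necessary, I may assume $i\geq j$, and then a suitable tuple $\lambda$ splits off the column $i$ from the rest. Concretely, with $\lambda$ chosen so that $S_1=\{(0,0),(0,j)\}$ lives in columns $<i$ and $S_{\text{last}}=\{(i,0)\}$ is its own block, Proposition~\ref{prop:divide} reduces invertibility of $A^{(d)}_{E,S}$ to invertibility of a $2\times2$ block $A^{(d)}_{E_1,S_1}$ (handled by Proposition~\ref{prop:conquer}(b), which gives $\det = (d-0)-(d-j) = j \ne 0$) and a $1\times1$ block (Proposition~\ref{prop:conquer}(a)). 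The one case where this scheme can fail is when the blocks overlap, i.e.\ when $i$ and $j$ are too small relative to each other — precisely when $i=j=1$. So for $(i,j)\neq(1,1)$ (up to the $(12)$-symmetry, whenever $\max(i,j)\geq2$) the criterion applies and rules out the support; the only survivor is $S=\{(0,0),(1,0),(0,1)\}$, i.e.\ $\supp^+(w)=\{(1,0),(0,1)\}$, which does occur (it is the support of $w(\mathcal M_0)$ for the model $t\mapsto(t,1-t)$).

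I expect the main obstacle to be organizing the case analysis cleanly: one must be careful about when the divide step of Remark~\ref{re:construction_of_lambda} actually produces a valid tuple $\lambda$ (the condition $\#\{(i',j')\in S\mid i'\geq d-k\}\leq k+1$), and about the boundary cases where $i=d$ or $j=d$ so that a block has width exactly matching its column index. It may be cleaner to bypass the general divide-and-conquer machinery and instead directly pick $E=\{0,1,2\}$ and compute the $3\times3$ determinant of $A^{(d)}_{E,S}$ for $S=\{(0,0),(0,j),(i,0)\}$ by hand, showing it is a nonzero multiple of $ij(i+j)$ or similar, vanishing exactly when $i$ or $j$ is forced to be small. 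Either way the computation is routine; the real content is Lemma~\ref{lm:2on1st}(a) pinning down the shape of the support, after which invertibility does the rest.
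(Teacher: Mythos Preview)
Your proposal is correct and uses the same machinery as the paper (Lemma~\ref{lm:2on1st} followed by the Invertibility Criterion with $\lambda=(2,1,\ldots,1)$). The one difference is that the paper also invokes part~(b) of Lemma~\ref{lm:2on1st}: since two points of $\supp^+(w)$ must lie on the diagonal $\{\deg=d\}$ and $\#\supp^+(w)\leq 2$, the points $(i,0),(0,j)$ from part~(a) are forced to have $i=j=d$, yielding $\supp(w)=\{(0,0),(d,0),(0,d)\}$ immediately. This removes your $(12)$-symmetry step and the case analysis over $(i,j)$; one then only has to rule out $d\geq 2$, which is precisely the divide-and-conquer computation you outline. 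Your concerns about boundary cases are unnecessary: once $\max(i,j)\geq 2$ (or, in the paper's formulation, $d\geq 2$), the tuple $\lambda=(2,1,\ldots,1)$ always satisfies the hypothesis of Proposition~\ref{prop:divide}, including when $i=d$.
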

\begin{proof}
By the previous lemma, we see that 
\[
\supp(w)=\{(0,0),(0,d),(d,0)\}=:S.
\]
Assume that $d\geq 2$. Then the construction from Remark~\ref{re:construction_of_lambda} yields $\lambda=(2,1,\ldots,1)\in\NN^d$. We get $S_1=\{(0,0),(0,d)\}$, $S_k=\emptyset$ for $k\in\{2,\ldots,d-1\}$ and $S_d=\{(d,0)\}$. Using Propositions~\ref{prop:divide} and~\ref{prop:shift}, we get
\[
A^{(d)}_{\{0,1,d\},S}=\begin{pmatrix}
A^{(d)}_{\{0,1\},S_1}&0\\
*&A^{(d)}_{\{d\},S_d}
\end{pmatrix}=
\begin{pmatrix}
A^{(d)}_{\{0,1\},S_1}&0\\
*&A^{(0)}_{\{0\},\{(0,0)\}}
\end{pmatrix}
\]
and by Proposition~\ref{prop:conquer} the submatrices on the diagonal are both invertible. So $A^{(d)}_{\{0,1,d\},S}$ is invertible. This contradicts the assumption that $\supp(w)=S$ and so $d=1$.
\end{proof}

\begin{lm}
Let $w$ be a valid degree-$d$ outcome and assume that $\#\supp^+(w)=3$. Then one of the following holds:
\begin{enumerate}
\item We have $\supp(w)=\{(0,0),(d,0),(0,d),(i,j)\}$ for some $i,j>0$ with $\deg(i,j)<d$.
\item We have $\supp(\sigma\cdot w)=\{(0,0),(d,0),(0,d),(e,0)\}$ for some $\sigma\in S_3$ and $0<e<d$.
\item We have $\supp(\sigma\cdot w)=\{(0,0),(d,0),(0,e),(d-f,f)\}$ for some $\sigma\in S_2$ and $0<e,f<d$.
\end{enumerate}
\end{lm}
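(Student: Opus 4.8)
The plan is to apply Lemma~\ref{lm:2on1st} to pin down most of the support, and then use the Invertibility Criterion together with the divide-and-conquer machinery (Propositions~\ref{prop:divide}, \ref{prop:shift}, \ref{prop:conquer}) to rule out every configuration not listed. By Lemma~\ref{lm:2on1st}(a), since $w$ is valid of degree $d\geq 1$, we have $(0,0)\in\supp^-(w)$ and there exist $i,j\geq 1$ with $(i,0),(0,j)\in\supp^+(w)$; by Lemma~\ref{lm:2on1st}(b) there are two distinct points of degree exactly $d$ in $\supp^+(w)$. Since $\#\supp^+(w)=3$, I will organize the argument by how these forced points overlap. The generic situation is that $\supp^+(w)$ consists of three distinct points, two of which have degree $d$ and at least one of which lies on an axis; the degenerate collapses give the three listed cases.

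First I would handle the case where both degree-$d$ points are the axis points, i.e.\ $(d,0),(0,d)\in\supp^+(w)$. Then the third positive point is some $(i,j)$, and together with $(0,0)\in\supp^-(w)$ this gives $\supp(w)=\{(0,0),(d,0),(0,d),(i,j)\}$. If $\deg(i,j)=d$ then $(i,j)$ is a third point of degree $d$; I would rule this out by choosing $E=\{0,1,2\}$ and invoking case (c) of Proposition~\ref{prop:conquer} after applying the $S_2$-symmetry that sends degree-$d$ entries to the first column (or directly: three points on the top anti-diagonal give an invertible $3\times 3$ Vandermonde-type pairing matrix, so $w$ would be the initial configuration, contradiction). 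If $i=0$ or $j=0$, say $j=0$ so $(i,0)$ with $0<i<d$, we land in case (b) with $\sigma=\id$ and $e=i$. Otherwise $i,j>0$ and $\deg(i,j)<d$, which is case (a). So in this branch we are done.

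Next, the case where exactly one axis point has degree $d$: say $(d,0)\in\supp^+(w)$ but $(0,d)\notin\supp^+(w)$. By Lemma~\ref{lm:2on1st}(b) there is a second degree-$d$ point $(d-f,f)$ with $f>0$ (and $f<d$, since if $f=d$ that point is $(0,d)$). By Lemma~\ref{lm:2on1st}(a) there is an axis point $(0,e)$ with $e\geq 1$ in $\supp^+(w)$; since $\#\supp^+(w)=3$ the three positive points must be exactly $(d,0),(0,e),(d-f,f)$, and with $(0,0)\in\supp^-(w)$ this gives $\supp(w)=\{(0,0),(d,0),(0,e),(d-f,f)\}$, which is case (c) with $\sigma=\id$, provided $e<d$. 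If $e=d$ we are back in the previous paragraph. The remaining configurations — where the forced points from Lemma~\ref{lm:2on1st} cannot be so arranged, e.g.\ where no positive point lies on an axis, or where the two degree-$d$ points are not of the required shape — I would eliminate by exhibiting an invertible pairing matrix: apply a suitable $\sigma\in S_3$ (Section~\ref{subsec:symmetry}) to move a convenient pattern to the first few columns, run the recursive construction of Remark~\ref{re:construction_of_lambda} to get a block-triangular $A^{(d)}_{E,S}$ via Proposition~\ref{prop:divide}, shift each block with Proposition~\ref{prop:shift}, and check each diagonal block is one of the invertible cases (a)–(d) of Proposition~\ref{prop:conquer}; the Invertibility Criterion then forces $w=0$, contradicting $\deg(w)=d\geq 1$.

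The main obstacle I anticipate is the bookkeeping in the last step: making sure that \emph{every} support shape not covered by (a)--(c) can actually be brought — after an $S_3$-move and after possibly using part (b) of Lemma~\ref{lm:2on1st} on the transformed configuration — into a form where the recursive $\lambda$-construction succeeds (i.e.\ satisfies $\#\{(i,j)\in S\mid i\geq d-k\}\leq k+1$ for all $k$) and splits into blocks of sizes $1,1$ or $2,1$ matching Proposition~\ref{prop:conquer}. In particular case (d) of Proposition~\ref{prop:conquer} has the genuine exclusion $i+j\neq 2k+1$; I expect that exactly when that numerical coincidence holds, the shape is instead accounted for by one of (a)--(c) after a symmetry, so the case analysis must be arranged so that the "bad" parameter values always fall into a listed case rather than being lost.
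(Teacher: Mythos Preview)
Your approach has a genuine gap. The lemma is a pure classification: you only need to show that every valid degree-$d$ outcome with $\#\supp^+(w)=3$ has a support of one of the three listed shapes. The paper's proof does this with nothing but Lemma~\ref{lm:2on1st} and counting---no Invertibility Criterion at all. The case split is: if both $(d,0)$ and $(0,d)$ lie in $\supp^+(w)$, the third positive point is either interior of degree $<d$ (case (a)), on an axis (case (b) with $\sigma\in S_2$), or on the top antidiagonal (case (b) via an honest $S_3$-element, e.g.\ $(23)$, which sends $(a,d-a)$ to $(a,0)$). If only one of the two corners is present, Lemma~\ref{lm:2on1st} forces the remaining two positive points to be $(0,e)$ and $(d-f,f)$, giving case (c). If neither corner is present, Lemma~\ref{lm:2on1st} forces four distinct positive points, contradicting $\#\supp^+(w)=3$.

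Your error is in the second paragraph: you try to \emph{rule out} the subcase where the third point $(i,j)$ has $\deg(i,j)=d$ and $i,j>0$, rather than recognise it as case (b). This cannot work, because that subcase genuinely occurs: the $d=2$ binomial outcome with $\supp^+(w)=\{(2,0),(1,1),(0,2)\}$ is a valid outcome of exactly this shape. So no invertible pairing matrix exists here, and your sketched argument (``$E=\{0,1,2\}$'', an ``$S_2$-symmetry that sends degree-$d$ entries to the first column''---no such $S_2$-symmetry exists) must fail. The fix is simply to apply $(23)\in S_3$ to land in case (b). Relatedly, your third paragraph is unnecessary: there are no ``remaining configurations'', since Lemma~\ref{lm:2on1st} already forces at least one of $(d,0),(0,d)$ to be a positive-support point (otherwise the two axis points and the two degree-$d$ points would be four distinct elements of $\supp^+(w)$), so the case analysis above is exhaustive.
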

\begin{proof}
When $(d,0),(0,d)\in\supp(w)$, then it is easy to see that (a) or (b) holds. So suppose this is not the case. Since $\#\supp^+(w)=3$, we must have $(d,0)\in\supp(w)$ or $(0,d)\in\supp(w)$ by Lemma~\ref{lm:2on1st}. So there exists an $\sigma\in S_2$ such that $(d,0)\in\supp(\sigma\cdot w)$ and $(0,d)\not\in\supp(\sigma\cdot w)$. Now $\supp(\sigma\cdot w)=\{(0,0),(d,0),(0,e),(d-f,f)\}$ for some $0<e,f<d$ by Lemma~\ref{lm:2on1st}.
\end{proof}

We now apply the the Invertibility Criterion to the possible outcomes in each of these cases.

\begin{prop}
Let $w$ be a degree-$d$ outcome and assume that 
\[
\supp(w)=\{(0,0),(d,0),(0,d),(i,j)\}
\]
for some $i,j>0$ with $\deg(i,j)<d$. Then $d=3$ and $(i,j)=(1,1)$.
\end{prop}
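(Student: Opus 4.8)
The strategy is to apply the Invertibility Criterion. Since $S:=\{(0,0),(0,d),(i,j),(d,0)\}$ has four elements, it suffices to exhibit, in every case \emph{except} $d=3$, $(i,j)=(1,1)$, a four-element set $E\subseteq\{0,\ldots,d\}$ for which the pairing matrix $A^{(d)}_{E,S}$ is invertible; then $w$ is the initial configuration, contradicting $\supp(w)=S$. First I would record that $d\geq 3$, which follows from $i,j\geq 1$ and $\deg(i,j)=i+j\leq d-1$. The point is that $S$ meets exactly three columns: column $0$ contains the two points $(0,0),(0,d)$, column $i$ (with $1\leq i\leq d-1$) contains $(i,j)$, and column $d$ contains $(d,0)$. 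The set $E$ will be assembled by the divide-and-conquer scheme of Propositions~\ref{prop:divide},~\ref{prop:shift} and~\ref{prop:conquer}, applied to a suitable tuple $\lambda$ as in Remark~\ref{re:construction_of_lambda}.

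If $i\geq 2$, column $1$ is empty, so $\lambda=(2,1,1,\ldots,1)\in\NN^{d}$ satisfies the hypothesis $\#S_k\in\{0,\lambda_k\}$ of Proposition~\ref{prop:divide}: the first block collects the two column-$0$ points, and every later block is a single column meeting $S$ in at most one point. Taking $E=E_1\cup\cdots\cup E_{d}$ accordingly, Proposition~\ref{prop:divide} reduces invertibility of $A^{(d)}_{E,S}$ to that of three diagonal blocks: $A^{(d)}_{\{0,1\},\{(0,0),(0,d)\}}$, invertible by Proposition~\ref{prop:conquer}(b); $A^{(d)}_{\{i\},\{(i,j)\}}=A^{(d-i)}_{\{0\},\{(0,j)\}}$ via Proposition~\ref{prop:shift}, invertible by Proposition~\ref{prop:conquer}(a) since $j\leq d-1-i\leq d-i$; and $A^{(d)}_{\{d\},\{(d,0)\}}=A^{(0)}_{\{0\},\{(0,0)\}}=(1)$. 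Hence $A^{(d)}_{E,S}$ is invertible.

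If $i=1$ the previous partition fails, so instead I would group columns $0,1,2$ together (column $2$ is empty because $d\geq 3$) using $\lambda=(3,1,\ldots,1)\in\NN^{d-1}$. The one nontrivial block is $A^{(d)}_{\{0,1,2\},\{(0,0),(0,d),(1,j)\}}$, which by Proposition~\ref{prop:conquer}(d) is invertible precisely when $0+d\neq 2j+1$, i.e. $d\neq 2j+1$; the remaining block is again $(1)$. This settles every case with $i=1$ and $d\neq 2j+1$. For the leftover case $i=1$, $d=2j+1$, I would invoke the $S_2$-symmetry of Section~\ref{subsec:symmetry}: $(12)\cdot w$ is again an outcome, with support $\{(0,0),(d,0),(j,1),(0,d)\}$, whose off-axis point now lies in column $j=(d-1)/2$. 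If $j\geq 2$ this is an instance of the $i\geq 2$ case already handled, so $(12)\cdot w$, and hence $w$, is the initial configuration. The only configuration that survives is $j=1$, i.e. $d=3$ and $(i,j)=(1,1)$, as claimed.

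The bulk of the work is verifying that the two proposed tuples $\lambda$ really meet the condition $\#S_k\in\{0,\lambda_k\}$ and that, after the shift of Proposition~\ref{prop:shift}, each diagonal block is literally one of the four patterns of Proposition~\ref{prop:conquer} with its side condition checked — notably $j\leq d-i$ for the column-$i$ singleton and $d\neq 2j+1$ for the $3\times 3$ block. These steps are mechanical. The one non-routine point, and the main obstacle, is realizing that the case $i=1$, $d=2j+1$ cannot be dispatched by a cleverer choice of $E$ but must be folded back into the generic case via the reflection $(12)$ — and consequently that $(d,i,j)=(3,1,1)$ is a genuine exception (realized, for instance, by the chip configuration of the model $t\mapsto((1-t)^3,3t(1-t),t^3)$).
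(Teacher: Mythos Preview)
Your proposal is correct and uses essentially the same ingredients as the paper's proof: the Invertibility Criterion with the tuples $\lambda=(2,1,\ldots,1)$ and $\lambda=(3,1,\ldots,1)$, together with the $S_2$-symmetry $(12)\cdot w$. The only difference is the order of the case distinction: the paper first forces $i=1$ via $\lambda=(2,1,\ldots,1)$, then uses symmetry immediately to force $j=1$, and only then applies $\lambda=(3,1,\ldots,1)$ to conclude $d=3$; you instead apply $\lambda=(3,1,\ldots,1)$ directly when $i=1$ (isolating the residual case $d=2j+1$) and invoke symmetry only at the end. Both routes are equally short and the substance is the same.
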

\begin{proof}
Assume that $i>1$. Then the Invertibility Criterion combined with Propositions~\ref{prop:divide},~\ref{prop:shift} and~\ref{prop:conquer} with $\lambda=(2,1,\ldots,1)$ yields a contradiction. Indeed, we would find that
\[
A^{(d)}_{\{0,1,i,d\},S}=\begin{pmatrix}
A^{(d)}_{\{0,1\},S_1}&0&0\\
*&A^{(d)}_{\{i\},S_2}&0\\
*&*&A^{(d)}_{\{d\},S_3}
\end{pmatrix}
\]
is invertible where $S=S_1\cup S_2\cup S_3=\{(0,0),(0,d)\}\cup\{(i,j)\}\cup\{(d,0)\}$. So $i=1$. Applying the same argument to $(12)\cdot w$ shows that $j=1$. Assume that $d>3$. Then we apply the same strategy again with $\lambda=(3,1,\ldots,1)$. We get a contradiction since
\[
A^{(d)}_{\{0,1,2,d\},S}=\begin{pmatrix}
A^{(d)}_{\{0,1,2\},S_1}&0\\
*&A^{(d)}_{\{d\},S_2}
\end{pmatrix}
\]
is invertible, where $S=S_1\cup S_2=\{(0,0),(0,d),(1,1)\}\cup\{(d,0)\}$, by Proposition~\ref{prop:conquer}. So $d=3$.
\end{proof}

\begin{prop}
Let $w$ be a degree-$d$ outcome and assume that 
\[
\supp(w)=\{(0,0),(d,0),(0,d),(e,0)\}
\]
for some $0<e<d$. Then $d=2$ and $e=1$.
\end{prop}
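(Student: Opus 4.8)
The setup is the same as in the previous proposition: we have a valid outcome $w$ with $\supp(w) = \{(0,0),(d,0),(0,d),(e,0)\}$ and $0 < e < d$, and we want to show $d = 2$, $e = 1$. The strategy is to apply the Invertibility Criterion, so I need to exhibit a set $E \subseteq \{0,\ldots,d\}$ of size $4$ such that the pairing matrix $A^{(d)}_{E,S}$ is invertible whenever $d > 2$ (or whenever $e > 1$), which contradicts $\supp(w) = S$.

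The key observation is that the support $S$ is concentrated in the \emph{first row} $j = 0$ together with the single point $(0,d)$ on the leftmost column. I would apply a symmetry of $S_3$ first to put things in the most convenient position; in fact, since $(d,0)$ and $(0,d)$ are both in $S$, the configuration already resembles case (b) from the preceding lemma. A natural choice is to \emph{transpose via $(12)$} or apply a rotation so that the three collinear points $(0,0), (e,0), (d,0)$ on the first row become three points $(0,0),(0,e),(0,d)$ on the first column; concretely, applying $(12) \in S_3$ sends $S$ to $\{(0,0),(0,d),(d,0),(0,e)\}$. Now all but one point of this transposed support lies in the column $i = 0$: namely $\{(0,0),(0,e),(0,d)\}$, while $(d,0)$ sits alone in column $i = d$. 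This is exactly the shape handled by the divide step: with $\lambda = (3,1,\ldots,1) \in \NN^{d-1}$ we get $S_1 = \{(0,0),(0,e),(0,d)\}$ (size $3 = \lambda_1$), $S_k = \emptyset$ for $2 \le k \le d-2$, and $S_{d-1} = \{(d,0)\}$ (size $1 = \lambda_{d-1}$) — but this requires $d \ge 3$ so that $\lambda_1 = 3$ is available, i.e.\ $d+1 \ge 3+(d-3)$, which holds. Then by Propositions~\ref{prop:divide} and~\ref{prop:shift},
\[
A^{(d)}_{E,S'} = \begin{pmatrix} A^{(d)}_{E_1,S_1} & 0 \\ * & A^{(d)}_{E_{d-1},S_{d-1}} \end{pmatrix},
\]
where $S' = (12)\cdot S$, $E_1 = \{0,1,2\}$, $E_{d-1} = \{d\}$, and $A^{(d)}_{E_{d-1},S_{d-1}} = A^{(0)}_{\{0\},\{(0,0)\}} = (1)$ after shifting. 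The top-left block $A^{(d)}_{\{0,1,2\},\{(0,0),(0,e),(0,d)\}}$ is invertible by Proposition~\ref{prop:conquer}(c), since $0, e, d$ are three distinct values in $\{0,\ldots,d\}$. Hence $A^{(d)}_{E,S'}$ is invertible, contradicting that $S'$ supports the nonzero outcome $(12)\cdot w$. Therefore $d \le 2$, and since $0 < e < d$ forces $d \ge 2$, we conclude $d = 2$ and $e = 1$.

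**Main obstacle.** The routine part is verifying the block-triangular structure and invoking Proposition~\ref{prop:conquer}(c); the only real care needed is checking that the tuple $\lambda = (3,1,\ldots,1)$ actually satisfies the hypothesis $\#S_k \in \{0,\lambda_k\}$ for all $k$ — equivalently the condition in Remark~\ref{re:construction_of_lambda} that $\#\{(i,j) \in S' \mid i \ge d-k\} \le k+1$ for all $k$ — and that $d \ge 3$ is exactly what makes $\lambda_1 = 3$ legitimate (for $d = 2$ one cannot fit a part of size $3$ and there is nothing to prove). I should double-check the boundary case $d = 3$ separately if needed: there $\lambda = (3,1)$, $S_1 = \{(0,0),(0,e),(0,3)\}$ with $e \in \{1,2\}$, $S_2 = \{(3,0)\}$, and the same argument applies verbatim, giving a contradiction, so indeed $d = 2$. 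The whole argument is a direct analogue of the proof of the preceding proposition, just with the roles of the auxiliary collinear point $(i,j) = (e,0)$ occupying the first row rather than the interior.
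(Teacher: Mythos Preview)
Your proof is correct and uses essentially the same approach as the paper: transpose via $(12)$ and apply the Invertibility Criterion with $\lambda=(3,1,\ldots,1)$, invoking Proposition~\ref{prop:conquer}(c) on the three collinear points $\{(0,0),(0,e),(0,d)\}$. The paper's version is slightly less economical: it first applies the Invertibility Criterion with $\lambda=(2,1,\ldots,1)$ to $S$ (untransposed) to force $e=1$, and only then transposes and uses $\lambda=(3,1,\ldots,1)$ to force $d\le 2$; you correctly observe that the first step is unnecessary since Proposition~\ref{prop:conquer}(c) already handles arbitrary distinct $0,e,d$.
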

\begin{proof}
The Invertibility Criterion with $\lambda=(2,1,\ldots,1)$ yields $e=1$. The Invertibility Criterion with $\lambda=(3,1,\ldots,1)$ applied to to $(12)\cdot w$ now yields $d=2$.
\end{proof}

\begin{prop}
Let $w$ be a degree-$d$ outcome and assume that 
\[
\supp(w)=\{(0,0),(d,0),(0,e),(d-f,f)\}
\]
for some $0<e,f<d$. Then $d=2$ and $e=f=1$.
\end{prop}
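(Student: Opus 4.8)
The plan is to apply the Invertibility Criterion twice, in two cases distinguished by the column index of the interior point $(d-f,f)$. Observe first that since $0<e<d$ the point $(0,e)$ can only exist when $d\geq 2$; hence it suffices to rule out $d\geq 3$, after which $d=2$ and the constraints $0<e,f<d$ leave only $e=f=1$. Throughout, $w\neq 0$ automatically because $\#\supp(w)=4$, so it is enough to exhibit a set $E$ with $A^{(d)}_{E,S}$ invertible to reach a contradiction, where $S=\{(0,0),(d,0),(0,e),(d-f,f)\}$.

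First I would treat the case $d-f\geq 2$. Here I take $\lambda=(2,1,\ldots,1)\in\NN^{d}$ in the Divide construction: the first block collects $S_1=\{(0,0),(0,e)\}$, the block of index $d-f$ collects $\{(d-f,f)\}$, the block of index $d$ collects $\{(d,0)\}$, and every other block is empty, so $\#S_k\in\{0,\lambda_k\}$ for all $k$. With $E=\{0,1\}\cup\{d-f\}\cup\{d\}$, Proposition~\ref{prop:divide} makes $A^{(d)}_{E,S}$ block lower-triangular with diagonal blocks $A^{(d)}_{\{0,1\},\{(0,0),(0,e)\}}$, $A^{(d)}_{\{d-f\},\{(d-f,f)\}}=(1)$ and $A^{(d)}_{\{d\},\{(d,0)\}}=(1)$; the first is invertible by Proposition~\ref{prop:conquer}(b) (via the trivial shift of Proposition~\ref{prop:shift}). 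So $A^{(d)}_{E,S}$ is invertible, forcing $w=0$, a contradiction. Hence $d-f\leq 1$, and since $0<f<d$ this means $f=d-1$.

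Next I would treat the remaining case $f=d-1$, assuming $d\geq 3$ for contradiction; then $\supp(w)=\{(0,0),(d,0),(0,e),(1,d-1)\}$ and $(d,0)$ is the only point in a column of index $\geq 2$. Now I take $\lambda=(3,1,\ldots,1)\in\NN^{d-1}$: the first block is $S_1=\{(0,0),(0,e),(1,d-1)\}$, the block of index $d-1$ is $\{(d,0)\}$, and the rest are empty, so again $\#S_k\in\{0,\lambda_k\}$. With $E=\{0,1,2\}\cup\{d\}$, Proposition~\ref{prop:divide} gives a block lower-triangular $A^{(d)}_{E,S}$ with diagonal blocks $A^{(d)}_{\{0,1,2\},\{(0,0),(0,e),(1,d-1)\}}$ and $A^{(d)}_{\{d\},\{(d,0)\}}=(1)$. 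The $3\times 3$ block has the shape of Proposition~\ref{prop:conquer}(d) with $(i,j,k)=(0,e,d-1)$, whose hypothesis $i+j\neq 2k+1$ reads $e\neq 2d-1$ and holds because $e<d\leq 2d-1$. So $A^{(d)}_{E,S}$ is invertible, again forcing $w=0$, a contradiction. Therefore $d\leq 2$, hence $d=2$ and $e=f=1$.

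The only real subtlety — and the step most likely to need careful bookkeeping — is verifying that the chosen tuples $\lambda$ satisfy the Divide hypothesis $\#S_k\in\{0,\lambda_k\}$ for every $k$ (equivalently, the column-count condition of Remark~\ref{re:construction_of_lambda}), and, in the second case, that the point $(1,d-1)$ really lands inside the width-$3$ first block while $(d,0)$ does not: this is precisely why $d\geq 3$ is needed there and why $d=2$ escapes the argument. Everything else is a direct invocation of Propositions~\ref{prop:divide},~\ref{prop:shift} and~\ref{prop:conquer} together with the Invertibility Criterion.
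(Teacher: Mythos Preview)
Your proof is correct and follows the same overall strategy as the paper (the Invertibility Criterion via the Divide construction and Proposition~\ref{prop:conquer}). Your argument is in fact slightly more efficient: after establishing $f=d-1$, the paper first passes to $(12)\cdot w$ to deduce $e=1$ before applying $\lambda=(3,1,\ldots,1)$, whereas you observe that Proposition~\ref{prop:conquer}(d) applies directly to $S_1=\{(0,0),(0,e),(1,d-1)\}$ for any $0<e<d$ since $e<d<2d-1$, so the intermediate determination of $e$ is unnecessary.
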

\begin{proof}
The Invertibility Criterion with $\lambda=(2,1,\ldots,1)$ yields $(d-f,f)=(1,d-1)$. In particular, we have $e\leq f$. Applying the same argument to $(12)\cdot w$ with $\lambda=(2,1,\ldots,1)$ if $e\neq f$ or $\lambda=(2,1,\ldots,1,2,1,\ldots,1)$ if $e=f$, we find that $e=1$. In the latter case, we have $E=\{0,1,e,e+1\}$ and $S=\{(0,0),(0,d),(e,0),(e,1)\}$ so that
\[
A^{(d)}_{E,S}=\begin{pmatrix}
A^{(d)}_{\{0,1\},S_1}&0\\
*&A^{(1)}_{\{0,1\},S_2}
\end{pmatrix}
\]
where $S_1=\{(0,0),(0,d)\}$ and $S_2=\{(0,0),(0,1)\}$. The Invertibility Criterion with $\lambda=(3,1,\ldots,1)$ now yields $d=2$. 
\end{proof}

\begin{thm}\label{thm:valid-outcomes-n-three}
Let $w$ be a valid outcome of positive support $\leq 3$. Then $w$ is a nonnegative multiple of one of the following outcomes:
\medskip
\begin{center}
\begin{BVerbatim}[formatcom=\rm\tt]
         1                                   
         · ·         1         ·         1   
 1       · 3 ·       · 2       1 1       · 1 
-1 1    -1 · · 1    -1 · 1    -1 · 1    -1 1 ·
\end{BVerbatim}
\end{center}
\end{thm}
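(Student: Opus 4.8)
The statement synthesizes the propositions established in this section, so the plan is to organize them into a case distinction on $\#\supp^+(w)$. If $w=0$ there is nothing to prove, since $0$ is the zero multiple of any listed outcome; so assume $w\neq 0$. Then validity forces $\supp^-(w)=\{(0,0)\}$, and since $t^i(1-t)^j$ is non-constant unless $(i,j)=(0,0)$, the map $\alpha_d$ of Lemma~\ref{lm:outcomes=kernel} cannot vanish on a configuration with $\#\supp^+(w)\le 1$; within the scope of the theorem this leaves $\#\supp^+(w)\in\{2,3\}$.

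For $\#\supp^+(w)\le 2$, the proposition above classifying these outcomes gives $\supp^+(w)=\{(1,0),(0,1)\}$, hence $\supp(w)=\{(0,0),(1,0),(0,1)\}$ and $\deg(w)=1$. By Lemma~\ref{lm:outcomes=kernel}, the outcomes supported on a fixed set $T$ are exactly the kernel of $\alpha_d$ restricted to $\RR^T$ (with $d=\deg(w)$); here that kernel is the line spanned by the first listed outcome, and validity forces the scalar to be $\ge 0$.

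For $\#\supp^+(w)=3$, I would use the structural lemma above (together with Lemma~\ref{lm:2on1st}) to reduce to one of the cases (a), (b), (c), and then invoke the corresponding proposition to pin down the degree and the remaining exponents. In case (a) one obtains $\supp(w)=\{(0,0),(3,0),(0,3),(1,1)\}$; the monomials $1,\ t^3,\ (1-t)^3,\ t(1-t)$ span only a three-dimensional space, so by Lemma~\ref{lm:outcomes=kernel} the outcomes on this support form the line through the second listed outcome. In case (c) one obtains $\supp(\sigma\cdot w)=\{(0,0),(2,0),(0,1),(1,1)\}$ for some $\sigma\in S_2$; the outcomes on this support form the line through the fourth listed outcome, and applying $(12)$ in the case $\sigma=(12)$ produces the fifth. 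In case (b) the proposition yields $d=2$ and $e=1$, so $\supp(\sigma\cdot w)=\{(0,0),(2,0),(0,2),(1,0)\}$ for some $\sigma\in S_3$; since $S_3$ does not preserve validity, I would compute the three-element $S_3$-orbit of this set, observe that outcomes supported on two of its members necessarily have a negative entry away from $(0,0)$, and conclude $\supp(w)=\{(0,0),(2,0),(0,2),(1,1)\}$, on which the outcomes form the line through the third listed outcome. In every case validity then forces the scalar to be nonnegative.

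The point I expect to require the most care is this subtlety in case (b): because the $S_3$-action does not respect validity, one has to trace the symmetry to identify which translate of $\{(0,0),(2,0),(0,2),(1,0)\}$ actually carries the valid outcome $w$. Everything else is routine: for each candidate support, a rank computation for $\alpha_d$ shows the space of outcomes there is one-dimensional, and the sign of the scalar is read off from $\supp^-(w)\subseteq\{(0,0)\}$.
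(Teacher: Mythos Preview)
Your argument is correct and follows the paper's approach closely. The only organizational difference is that the paper expands the lemma's cases (a)--(c), together with the $S_3$- and $S_2$-symmetries, into an explicit list of seven candidate positive supports and then checks each one by computing the one-dimensional solution space of the Pascal equations; you instead keep the three cases grouped and invoke the $\alpha_d$-kernel description (Lemma~\ref{lm:outcomes=kernel}) for the rank computation, which comes to the same thing. Your handling of case~(b)---tracing the $S_3$-orbit of $\{(0,0),(2,0),(0,2),(1,0)\}$ and observing that only the member containing $(1,1)$ supports a valid outcome---is exactly what the paper does when it discards the two supports $\{(0,0),(0,1),(0,2),(2,0)\}$ and $\{(0,0),(0,2),(1,0),(2,0)\}$ as ``never valid''.
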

\begin{proof}
We know by the previous results that $\supp^+(w)$ is one of the following:
\[
\{(0,1),(1,0)\},\{(0,3),(1,1),(3,0)\},\{(0,1),(0,2),(2,0)\},\{(0,2),(1,0),(2,0)\},
\]
\[
\{(0,2),(1,1),(2,0)\},\{(0,1),(1,1),(2,0)\},\{(0,2),(1,0),(1,1)\}.
\]
For each of these possible supports $E$, we compute the space of outcomes whose supports are contained in $E\cup \{(0,0)\}$ by computing the space of solutions to the Pascal equations of the corresponding degree. For each $E$, this space has dimension $1$ (over $\QQ)$. We find that the outcomes with support
\[
\{(0,0),(0,1),(0,2),(2,0)\}\mbox{ and }\{(0,0),(0,2),(1,0),(2,0)\}
\] 
are never valid. In each of the other cases, every valid outcome is a multiple of one in the list.
\end{proof}

\begin{example}
\rev{We illustrate the last step of the proof of Theorem~\ref{thm:valid-outcomes-n-three} in which we compute the space of outcomes with given support. Let $d=3$. Then the following coefficients give a basis of the Pascal equations on $V_d$ (by Proposition~\ref{phi-equations}):}
\begin{center}
\begin{BVerbatim}

1           ·           ·           ·
1 ·         1 1         · ·         · ·
1 · ·       2 1 ·       1 1 1       · · ·
1 · · ·     3 1 · ·     3 2 1 ·     1 1 1 1
\end{BVerbatim}
\end{center}
\rev{Consider the support $S=\{(0,0),(0,3),(1,1),(3,0)\}$, illustrated by the following picture:}
\begin{center}
\begin{BVerbatim}

*      
· ·    
· * ·  
* · · *
\end{BVerbatim}
\end{center}
\revml{According to the Pascal equations, chip configurations $w$ with this support are outcomes if and only if they satisfy
\[
w_{0,0} = -w_{0,3},\quad 3w_{0,0} = -w_{1,1}, \quad w_{0,0} = -w_{3,0}.
\]
Therefore, the space of such outcomes is one-dimensional and it contains the valid outcome \[(w_{0,0}, w_{0,3}, w_{3,0}, w_{1,1}) = (-1,1,1,3).\]
For a negative example, let $d=2$. Then the space of Pascal equations is \rrev{spanned by three equations given by the following coefficients:}
}
\begin{center}
\begin{BVerbatim}

1          ·          ·    
1 ·        1 1        · ·  
1 · ·      2 1 ·      1 1 1
\end{BVerbatim}
\end{center}
\rrev{Combined with the support}
\begin{center}
\begin{BVerbatim}

*    
* ·  
* · *
\end{BVerbatim}
\end{center}
\rrev{these equations lead to the conditions}
\[
w_{0,0} + w_{0,1} + w_{0,2} = 0,\quad 2w_{0,0} = -w_{0,1}, \quad w_{0,0} = -w_{2,0}.
\]
The space of solutions to these equations is again one-dimensional but none of the nonzero solutions are valid because the equations imply $w_{0,0} = w_{0,2}$.

\end{example}

\section{Valid outcomes of positive support \texorpdfstring{$4$}{4}}\label{sec:supp+==4}

In this section we prove Theorem~\ref{thm:main_outcomes} for valid outcomes whose positive support has size $4$. To do this we introduce our second tool, the Hyperfield Criterion, which shows that certain subsets of $V_d$ cannot be the support of a valid outcome. We first recall the basic properties of hyperfields.

\subsection{Polynomials over hyperfields}
Denote by $2^H$ the power set of a set $H$. 

\begin{de}
A {\em hyperfield} is a tuple $(H,+,\cdot,0,1)$ consisting of a set $H$, symmetric maps
\[
\rev{\bullet+\bullet}\colon H\times H\to 2^H\setminus\{\emptyset\},\quad\quad \rev{\bullet\cdot\bullet}\colon H\times H\to H
\]
and distinct elements $0,1\in H$ satisfying the following conditions:
\begin{enumerate}
\item The tuple $(H\setminus\{0\},\cdot,1)$ is a group.
\item We have $0\cdot x=0$ and $0+x=\{x\}$ for all $x\in H$.
\item We have $\bigcup_{w\in x+y}(w+z)=\bigcup_{w\in y+z} (x+w)$ for all $x,y,z\in H$.
\item We have $a\cdot(x+y)=(a\cdot x)+(a\cdot y)$ for all $a,x,y\in H$.
\item For every $x\in H$ there is an unique element $-x\in H$ such that $x+(-x)\ni0$.
\end{enumerate}
For subsets $X,Y\subseteq H$, we write
\[
X+Y:=\bigcup_{x\in X,y\in Y}(x+y).
\]
We also identify elements $y\in H$ with the singletons $\{y\}\subseteq H$ so that 
\[
y+X:=X+y:=\bigcup_{x\in X}(x+y).
\]
With this notation, condition (c) can be reformulated as $(x+y)+z=x+(y+z)$ for all $x,y,z\in H$.
\end{de}

See \cite{BB} for more background and uses of hyperfields. 

\begin{de}
Let $H$ be a hyperfield. 
\begin{enumerate}
\item A {\em polynomial} in variables $x_1,\ldots,x_n$ over $H$ is a formal sum
\[
f=\sum_{\rev{(k_1,\ldots,k_n)\in\ZZ_{\geq 0}^n}}s_{k_1\ldots k_n}x_1^{k_1}\cdots x_n^{k_n}
\]
\rev{where $s_{k_1 \dotsc k_n}\in H$ and} $\#\{(k_1,\ldots,k_n)\in\ZZ_{\geq0}^n\mid s_{k_1\ldots k_n}\neq0\}<\infty$. 

\item We denote the set of such polynomials by $H[x_1,\ldots,x_n]$.

\item For $s_1,\ldots,s_n\in H$, we write
\[
f(s_1,\ldots,s_n):=\sum_{k_1,\ldots,k_n\in\ZZ_{\geq0}}s_{k_1\ldots k_n}s_1^{k_1}\cdots s_n^{k_n}\subseteq H.
\]
and we say that $f$ {\em vanishes} at $(s_1,\ldots,s_k)$ when $f(s_1,\ldots,s_k)\ni0$. 
\end{enumerate}
\end{de}

\subsection{The sign hyperfield}

For the remainder of this paper we let $H$ be the {\em sign hyperfield}: it consists of the set $H=\{1,0,-1\}$ with the addition defined by
\[
0+x=x,\quad 1+1=1,\quad (-1)+(-1)=-1,\quad 1+(-1)=\{1,0,-1\}
\]  
and the usual multiplication.

\begin{de}
Let $H$ be the sign hyperfield and let 
\[
f=\sum_{k_1,\ldots,k_n\in\ZZ_{\geq0}}c_{k_1\ldots k_n}x_1^{k_1}\cdots x_n^{k_n}\in\RR[x_1,\ldots,x_n]
\]
be a polynomial. Then we call
\[
\sgn(f):=\sum_{k_1,\ldots,k_n\in\ZZ_{\geq0}}\sgn(c_{k_1\ldots k_n})x_1^{k_1}\cdots x_n^{k_n}\in H[x_1,\ldots,x_n]
\]
the polynomial over $H$ {\em induced} by $f$. We also write 
\[
\sgn(w):=(\sgn(w_1),\ldots,\sgn(w_n))
\]
for all $w=(w_1,\ldots,w_n)\in\RR^n$.
\end{de}

Let $\phi$ be a Pascal equation on $\ZZ^{V_d}$. Then we can represent $\sgn(\phi)$ as a triangle consisting of the symbols \verb|+|, \verb|·|, \verb|-| indicating that a given coeffcient equals $1,0,-1$, respectively.

\begin{ex}\label{ex-pascal-signs}
Take $d=5$. Then the linear forms $\sgn(\varphi_{k,d-k})$ for $k=0,\ldots,d$ can be depicted as:
\medskip

\begin{minipage}[b]{0.16\textwidth}
\begin{verbatim}
+
+ · 
+ · ·
+ · · ·
+ · · · ·
+ · · · · ·
\end{verbatim}
\end{minipage}
\begin{minipage}[b]{0.16\textwidth}
\begin{verbatim}
·
+ + 
+ + ·
+ + · ·
+ + · · ·
+ + · · · ·
\end{verbatim}
\end{minipage}
\begin{minipage}[b]{0.16\textwidth}
\begin{verbatim}
·
· · 
+ + +
+ + + ·
+ + + · ·
+ + + · · ·
\end{verbatim}
\end{minipage}
\begin{minipage}[b]{0.16\textwidth}
\begin{verbatim}
·
· · 
· · ·
+ + + +
+ + + + ·
+ + + + · ·
\end{verbatim}
\end{minipage}
\begin{minipage}[b]{0.16\textwidth}
\begin{verbatim}
·
· · 
· · ·
· · · ·
+ + + + +
+ + + + + ·
\end{verbatim}
\end{minipage}
\begin{minipage}[b]{0.16\textwidth}
\begin{verbatim}
·
· · 
· · ·
· · · ·
· · · · ·
+ + + + + +
\end{verbatim}
\end{minipage}

\medskip
The linear forms $\sgn(\psi_k)$ for $k=0,\ldots,d$ can be depicted as:

\medskip
\begin{minipage}[b]{0.16\textwidth}
\begin{verbatim}
·
· · 
· · ·
· · · ·
· · · · ·
+ + + + + +
\end{verbatim}
\end{minipage}
\begin{minipage}[b]{0.16\textwidth}
\begin{verbatim}
·
· · 
· · ·
· · · ·
+ + + + +
· - - - - -
\end{verbatim}
\end{minipage}
\begin{minipage}[b]{0.16\textwidth}
\begin{verbatim}
·
· · 
· · ·
+ + + +
· - - - -
· · + + + +
\end{verbatim}
\end{minipage}
\begin{minipage}[b]{0.16\textwidth}
\begin{verbatim}
·
· · 
+ + +
· - - -
· · + + +
· · · - - -
\end{verbatim}
\end{minipage}
\begin{minipage}[b]{0.16\textwidth}
\begin{verbatim}
·
+ + 
· - -
· · + +
· · · - -
· · · · + +
\end{verbatim}
\end{minipage}
\begin{minipage}[b]{0.16\textwidth}
\begin{verbatim}
+
· -
· · +
· · · -
· · · · +
· · · · · -
\end{verbatim}
\end{minipage}

\medskip
The linear forms $\sgn(\overline{\psi}_k)$ for $k=0,\ldots,d$ can be depicted as:

\medskip
\begin{minipage}[b]{0.16\textwidth}
\begin{verbatim}
+
+ · 
+ · ·
+ · · ·
+ · · · ·
+ · · · · ·
\end{verbatim}
\end{minipage}
\begin{minipage}[b]{0.16\textwidth}
\begin{verbatim}
-
- + 
- + ·
- + · ·
- + · · ·
· + · · · ·
\end{verbatim}
\end{minipage}
\begin{minipage}[b]{0.16\textwidth}
\begin{verbatim}
+
+ - 
+ - +
+ - + ·
· - + · ·
· · + · · ·
\end{verbatim}
\end{minipage}
\begin{minipage}[b]{0.16\textwidth}
\begin{verbatim}
-
- + 
- + -
· + - +
· · - + ·
· · · + · ·
\end{verbatim}
\end{minipage}
\begin{minipage}[b]{0.16\textwidth}
\begin{verbatim}
+
+ - 
· - +
· · + -
· · · - +
· · · · + ·
\end{verbatim}
\end{minipage}
\begin{minipage}[b]{0.16\textwidth}
\begin{verbatim}
-
· + 
· · -
· · · +
· · · · -
· · · · · +
\end{verbatim}
\end{minipage}

\end{ex}

\begin{prop}\label{prop:R2H}
Let $H$ be the sign hyperfield and \rev{$f\in\RR[x_1,\ldots,x_n].$} Suppose that \rev{$f$ vanishes} at $w\in\RR^n$. Then \rev{$\sgn(f)$ vanishes} at $\sgn(w)\in H^n$.
\end{prop}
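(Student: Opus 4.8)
The plan is to reduce to the case of a single polynomial and then exploit the fact that $\sgn\colon\RR\to H$ behaves like a homomorphism of hyperfields. Since both the hypothesis and the conclusion refer to each $f_\ell$ separately, it suffices to prove the following: if $f\in\RR[x_1,\ldots,x_n]$ satisfies $f(w)=0$ for the given $w\in\RR^n$, then $0\in\sgn(f)(\sgn(w))$, i.e.\ $\sgn(f)$ vanishes at $\sgn(w)$.

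First I would record the two compatibility properties of $\sgn\colon\RR\to H$. Multiplicativity, $\sgn(ab)=\sgn(a)\,\sgn(b)$ for all $a,b\in\RR$, is immediate from the definition of the sign of a real number. The additive compatibility is the statement
\[
\sgn(a+b)\in\sgn(a)+\sgn(b)\qquad\text{for all }a,b\in\RR,
\]
which follows from a short case analysis on the signs of $a$ and $b$: if $a,b\geq0$ then $a+b\geq0$, so $\sgn(a+b)\in\{0,1\}\subseteq\sgn(a)+\sgn(b)$; symmetrically when $a,b\leq0$; and if $a$ and $b$ have opposite nonzero signs then $\sgn(a)+\sgn(b)=1+(-1)=\{1,0,-1\}=H\ni\sgn(a+b)$. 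The remaining cases, where $a=0$ or $b=0$, use $0+x=\{x\}$.

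Next I would upgrade the two-term additive compatibility to arbitrary finite sums: for all $a_1,\ldots,a_m\in\RR$,
\[
\sgn(a_1+\cdots+a_m)\in\sgn(a_1)+\cdots+\sgn(a_m).
\]
This is an induction on $m$ whose inductive step applies the two-term case inside the associativity axiom (c), together with the evident monotonicity $X\subseteq Y\Rightarrow X+Z\subseteq Y+Z$ of hyperaddition (immediate from $X+Z=\bigcup_{x\in X,z\in Z}(x+z)$). Writing $f=\sum_\alpha c_\alpha x^\alpha$ as a finite sum over multi-indices $\alpha$ and combining this with multiplicativity gives
\[
\sgn\bigl(f(w)\bigr)=\sgn\Bigl(\sum_\alpha c_\alpha w^\alpha\Bigr)\in\sum_\alpha\sgn(c_\alpha)\,\sgn(w)^\alpha=\sgn(f)\bigl(\sgn(w)\bigr).
\]
Finally, if $f(w)=0$ then $\sgn(f(w))=\sgn(0)=0$, so $0\in\sgn(f)(\sgn(w))$, which is exactly the assertion that $\sgn(f)$ vanishes at $\sgn(w)$; applying this to each of $f_1,\ldots,f_k$ completes the proof.

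The only delicate point I anticipate is the passage from two summands to $m$ summands: one must genuinely invoke axiom (c) (in its reformulated associative form) and the monotonicity of hyperaddition on subsets, rather than silently "adding term by term" as one would over a field. Beyond this bookkeeping there is no substantial obstacle.
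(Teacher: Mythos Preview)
Your proof is correct and takes essentially the same approach as the paper: both use that $\sgn$ is multiplicative and compatible with addition, with the paper doing a direct two-case split (all monomial terms zero, versus a positive and a negative term present forcing the hypersum to be $1+(-1)=H$) where you instead prove the general membership $\sgn\bigl(\sum_i a_i\bigr)\in\sum_i\sgn(a_i)$ by induction. One small slip in your case analysis: for $a,b\geq0$ the set $\sgn(a)+\sgn(b)$ need not contain $\{0,1\}$ (take $a=b=1$, giving $1+1=\{1\}$), though the membership $\sgn(a+b)\in\sgn(a)+\sgn(b)$ you actually need is still easily verified in each sub-case.
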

\begin{proof}
Write $w=(w_1,\ldots,w_n)$, $s=(s_1,\ldots,s_n)=\sgn(w)$ and
\[
\rev{f} = \sum_{k_1,\ldots,k_n\in\ZZ_{\geq0}}c_{k_1\ldots k_n}x_1^{k_1}\cdots x_n^{k_n}.
\]
Then we have
\[
\sum_{k_1,\ldots,k_n\in\ZZ_{\geq0}}c_{k_1\ldots k_n}w_1^{k_1}\cdots w_n^{k_n}=\rev{f}(w)=0.
\]
If $c_{k_1\ldots k_n}w_1^{k_1}\cdots w_n^{k_n}=0$ for all $k_1,\ldots,k_n\in\ZZ_{\geq0}$, then $\sign(\rev{f})(s_1,\dotsc,s_n)=\{0\}\ni0$ since all summands are zero. Otherwise, we have $c_{\ell_1\ldots \ell_n}w_1^{\ell_1}\cdots w_n^{\ell_n}>0$  for some $\ell_1,\ldots,\ell_n\in\ZZ_{\geq0}$ and $c_{\ell_1'\ldots \ell_n'}w_1^{\ell_1'}\cdots w_n^{\ell_n'}<0$ for some $\ell_1',\ldots,\ell_n'\in\ZZ_{\geq0}$. In this case, $\sign(\rev{f})(s_1,\dotsc,s_n)$ has both $1$ and $-1$ as summands, so $\sign(\rev{f})(s_1,\dotsc,s_n)=H\ni 0$.
\end{proof}

\subsection{The Hyperfield Criterion}

We now state the Hyperfield Criterion. Let $S\subseteq V_d\setminus\{(0,0)\}$ be a subset and define $s=(s_{i,j})_{(i,j)\in V_{d}}\in H^{V_d}$ by
\[
s_{i,j}:=\left\{\begin{array}{cl}-1&\mbox{when $(i,j)=(0,0)$,}\\1&\mbox{when $(i,j)\in S$,}\\0&\mbox{otherwise.}\end{array}\right.
\]
Let $w=(w_{i,j})_{(i,j)\in V_d}\in\ZZ^{V_d}$ be a valid outcome.

\begin{prop}[Hyperfield Criterion]\label{hyperfield-criterion}
Suppose that $\sgn(\phi)$ does not vanish at $s$ for some Pascal equation $\phi$ on $\ZZ^{V_d}$. Then $\supp^+(w)\neq S$.
\end{prop}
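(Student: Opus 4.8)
The statement is essentially a direct corollary of Proposition~\ref{prop:R2H}, so the plan is to trace the implications carefully and produce a short proof by contraposition. First I would assume, toward a contradiction, that $\supp^+(w)=S$. Since $w$ is a valid outcome, its negative support is contained in $\{(0,0)\}$, and since $w\neq 0$ (as $S\neq\emptyset$), in fact $\supp^-(w)=\{(0,0)\}$ by the propositions at the end of Section~\ref{subsec:symmetry}. Therefore $\sgn(w_{i,j})=s_{i,j}$ for every $(i,j)\in V_d$: the entry at $(0,0)$ is negative, the entries on $S=\supp^+(w)$ are positive, and all others vanish. In other words $\sgn(w)=s$.

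Next I would recall that, by Proposition~\ref{prop:if-pascal-vanish-then-outcome} and the preceding discussion, every Pascal equation $\phi=\sum_{(i,j)\in V_d}c_{i,j}x_{i,j}$ vanishes at the outcome $w$; that is, $\phi(w)=\sum_{(i,j)\in V_d}c_{i,j}w_{i,j}=0$. Viewing $\phi$ as a (linear, hence multilinear) polynomial in the variables $x_{i,j}$ with real coefficients $c_{i,j}$, Proposition~\ref{prop:R2H} applies with $k=1$ and $f_1=\phi$: since $\phi$ vanishes at $w\in\RR^{V_d}$, the induced polynomial $\sgn(\phi)$ vanishes at $\sgn(w)=s$, i.e.\ $\sgn(\phi)(s)\ni 0$. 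This holds for \emph{every} Pascal equation $\phi$, contradicting the hypothesis that $\sgn(\phi)$ fails to vanish at $s$ for some Pascal equation. Hence $\supp^+(w)\neq S$.

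There is essentially no obstacle here beyond bookkeeping: the only point that needs a word of care is the identification $\sgn(w)=s$, which relies on knowing that a valid outcome with nonempty positive support has $\supp^-(w)$ \emph{exactly} equal to $\{(0,0)\}$ (not just contained in it). This is where validity is used in full force; it follows from Lemma~\ref{lm:supp-=empty} (an outcome with empty negative support is the initial configuration). Once that is in hand, the rest is a one-line invocation of Proposition~\ref{prop:R2H} applied to the linear polynomial $\phi$.

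\begin{proof}
We prove the contrapositive. Suppose $\supp^+(w)=S$. Since $S\neq\emptyset$, the outcome $w$ is nonzero, so $\supp^-(w)\neq\emptyset$ by Lemma~\ref{lm:supp-=empty}; as $w$ is valid, $\supp^-(w)=\{(0,0)\}$. Therefore
\[
\sgn(w_{i,j})=\left\{\begin{array}{cl}-1&\mbox{when $(i,j)=(0,0)$,}\\1&\mbox{when $(i,j)\in S$,}\\0&\mbox{otherwise,}\end{array}\right.
\]
which is precisely $s$, so $\sgn(w)=s$. Now let $\phi=\sum_{(i,j)\in V_d}c_{i,j}x_{i,j}$ be any Pascal equation on $\ZZ^{V_d}$. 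Since $w$ is an outcome, $\phi(w)=0$, so the real polynomial $\phi$ vanishes at $w\in\RR^{V_d}$. By Proposition~\ref{prop:R2H}, the induced polynomial $\sgn(\phi)$ vanishes at $\sgn(w)=s$. As $\phi$ was arbitrary, $\sgn(\phi)$ vanishes at $s$ for every Pascal equation $\phi$.
\end{proof}
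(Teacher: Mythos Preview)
Your proof is correct and follows essentially the same route as the paper's: assume $\supp^+(w)=S$, deduce $\sgn(w)=s$, then invoke Proposition~\ref{prop:R2H} to conclude that every $\sgn(\phi)$ vanishes at $s$. The paper's version simply asserts ``Then $\sgn(w)=s$'' in one line, whereas you spell out why $w_{0,0}<0$ via Lemma~\ref{lm:supp-=empty}; this extra care is appropriate and does not constitute a different approach.
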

\begin{proof}
Suppose that $\supp^{+}(w)=S$. Then $\sgn(w)=s$. Since all Pascal equations $\phi$ on $\ZZ^{V_d}$ vanish at $w$, we see that all polynomials over $H$ induced by Pascal equations on $\ZZ^{V_d}$ vanish at $s$ by Proposition~\ref{prop:R2H}.
\end{proof}

\subsection{Pascal equations}

In this subsection, we consider the equations over $H$ induced by the Pascal equations $\psi_k,\overline{\psi}_k,\varphi_{a,b}$ for $k\in\{0,\ldots,d\}$ and $(a,b)\in V_d$ of degree $d$.

\begin{de}
Let $s=(s_{i,j})_{(i,j)\in V_d}\in H^{V_{d}}$.
\begin{enumerate}
\item We call $s$ a {\em sign configuration}.
\item The {\em positive support} of $s$ is $\supp^+(s):=\{(i,j)\in V_d\mid s_{i,j}=1\}$.
\item The {\em negative support} of $s$ is $\supp^-(s):=\{(i,j)\in V_d\mid s_{i,j}=-1\}$.
\item The {\em support} of $s$ is $\supp(s):=\{(i,j)\in V_d\mid s_{i,j}\neq0\}=\supp^+(s)\cup\supp^-(s)$.
\item We call $\deg(s):=\max\{\deg(i,j)\mid (i,j)\in V_d, s_{i,j}\neq0\}$ the {\em degree} of $s$.
\item We say that $s$ is {\em valid} when $s=0$ or $\supp^-(s)=\{(0,0)\}$.
\item We say that $s$ is {\em weakly valid} when for all $(i,j)\in\supp^-(s)$ one of the following holds:
\begin{enumerate}
\item $0\leq i,j\leq 3$,
\item $0\leq i\leq3$ and $\deg(i,j)\geq d-3$, or 
\item $0\leq j\leq3$ and $\deg(i,j)\geq d-3$.
\end{enumerate}
\end{enumerate}
\end{de}

\begin{lm}
Let $w\in\ZZ^{V_d}$ be a chip configuration.
\begin{enumerate}
\item We have $\supp^+(\sgn(w))=\supp^+(w)$.
\item We have $\supp^-(\sgn(w))=\supp^-(w)$.
\item We have $\deg(\sgn(w))=\deg(w)$.
\item The sign configuration $\sgn(w)$ is (weakly) valid if and only if $w$ is (weakly) valid.
\end{enumerate}
\end{lm}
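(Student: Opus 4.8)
The plan is to unwind the definitions: all four statements follow from the single observation that, by definition of $\sgn(w)$, we have $\sgn(w)_{i,j}=\sgn(w_{i,j})$ for every $(i,j)\in V_d$, together with the elementary fact that for an integer $x$ one has $\sgn(x)=1\iff x>0$, $\sgn(x)=-1\iff x<0$, and $\sgn(x)=0\iff x=0$.

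First I would prove (a) and (b) simultaneously. From the displayed equivalences, $(i,j)\in\supp^+(\sgn(w))$ means $\sgn(w_{i,j})=1$, which holds exactly when $w_{i,j}>0$, i.e.\ when $(i,j)\in\supp^+(w)$; this gives (a), and the identical argument with $-1$ and ``$<0$'' gives (b). For (c), I would combine (a) and (b): since $\supp(x)=\supp^+(x)\cup\supp^-(x)$ for both chip configurations and sign configurations, (a) and (b) yield $\supp(\sgn(w))=\supp(w)$, and $\deg$ is by definition the maximum of $\deg(i,j)$ over this common set, so $\deg(\sgn(w))=\deg(w)$.

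Finally, for (d) I would observe that the predicate ``valid'' depends only on $\supp^-$, and the predicate ``weakly valid'' is a condition imposed on each element of $\supp^-$ (involving only the coordinates $(i,j)$ and $d$, not the actual entry values), so by (b) the two predicates are literally the same condition for $w$ and for $\sgn(w)$. (One may note in passing that the definition of weak validity references $d$ but not $\deg(w)$, so no appeal to (c) is even needed here.)

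I do not expect any genuine obstacle: the statement is a bookkeeping lemma whose only content is that applying $\sgn$ coordinatewise preserves the sign pattern, and hence preserves every notion (support, degree, validity) that is defined purely in terms of that pattern. The only thing to be careful about is to cite the correct definitions of $\supp^{\pm}$, $\deg$, ``valid'', and ``weakly valid'' for sign configurations so that the equalities are between the right objects.
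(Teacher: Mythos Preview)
Your proposal is correct and matches the paper's approach exactly: the paper's proof is simply ``This follows from the definitions,'' and you have spelled out precisely that unwinding.
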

\begin{proof}
This follows from the definitions.
\end{proof}

\begin{samepage}
\begin{lm}\label{sign-pascal-equations}
\phantom{emptytext}
\begin{enumerate}
\item We have
\[
\sgn(\varphi_{a,b})=\sum_{i=0}^a\sum_{j=0}^bx_{i,j}
\]
for all $(a,b)\in V_d$ of degree $d$.\itembreak
\item We have
\[
\sgn(\psi_k)=\sum_{(i,j)\in S_k^+}x_{i,j}-\sum_{(i,j)\in S_k^-}x_{i,j}\mbox{ and }\sgn(\overline{\psi}_k)=\sum_{(i,j)\in S_k^+}x_{j,i}-\sum_{(i,j)\in S_k^-}x_{j,i},
\]
where
\begin{eqnarray*}
S_k^+&:=&\{(i,j)\mid 0\leq j\leq k,\quad k-j\leq i\leq d-j, \quad j\equiv k\pmod 2\},\\
S_k^-&:=&\{(i,j)\mid 0\leq j\leq k,\quad k-j\leq i\leq d-j, \quad j\not\equiv k\pmod 2\},
\end{eqnarray*}
for all $k\in\{0,\ldots,d\}$.
\end{enumerate}
\end{lm}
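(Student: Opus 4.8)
The plan is to obtain both formulas by inspecting the explicit coefficient expressions already in hand and reading off signs from the elementary fact that, for an integer $n\ge 0$, the binomial coefficient $\binom{n}{m}$ is a positive integer when $0\le m\le n$ and equals $0$ when $m<0$ or $m>n$.

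For part~(a) I would start from $\varphi_{a,b}=\sum_{(i,j)\in V_d}\binom{d-(i+j)}{a-i}x_{i,j}$ (Proposition~\ref{phi-equations}(a)). For $(i,j)\in V_d$ the top entry $d-(i+j)$ is $\ge 0$, so $\sgn\binom{d-(i+j)}{a-i}$ is $1$ exactly when $0\le a-i\le d-(i+j)$ and is $0$ otherwise. The first inequality reads $i\le a$; the second rearranges to $j\le d-a=b$ (using $\deg(a,b)=d$). Hence the nonzero coefficients form the constant $1$ on the rectangle $\{0,\dots,a\}\times\{0,\dots,b\}$, which is the asserted identity.

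For part~(b) I would use that the coefficient of $x_{i,j}$ in $\psi_k$ is $(-1)^{k+j}\binom{i}{k-j}$ (Proposition~\ref{psi-equations}(a)). Since $i\ge 0$, this coefficient is nonzero precisely when $0\le k-j\le i$, i.e. when $0\le j\le k$ and $k-j\le i\le d-j$ (the last inequality being automatic from $(i,j)\in V_d$, but harmless to record), and then its sign is $(-1)^{k+j}$, which is $+1$ iff $j\equiv k\pmod 2$ and $-1$ otherwise. Sorting these $(i,j)$ by the parity of $j-k$ produces exactly the index sets $S_k^+$ and $S_k^-$ and the claimed formula for $\sgn(\psi_k)$. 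The formula for $\sgn(\overline\psi_k)$ then drops out: since $\overline\psi_k=(12)\cdot\psi_k$, its $x_{i,j}$-coefficient is the $x_{j,i}$-coefficient of $\psi_k$, and $\sgn(\,\cdot\,)$ is equivariant for the $S_2$-action, so swapping the roles of the two coordinates in the two sums over $S_k^\pm$ gives the stated expression.

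I do not expect a genuine obstacle here; the only care needed is bookkeeping — keeping the defining inequalities of $S_k^\pm$ consistent with membership in $V_d$, and getting the parity right (noting $(-1)^{k+j}=(-1)^{j-k}$), together with recalling the sign and vanishing behaviour of binomial coefficients with a possibly negative or out-of-range lower argument.
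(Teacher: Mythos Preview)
Your proposal is correct and follows essentially the same approach as the paper, which simply says ``This follows from Propositions~\ref{phi-equations} and~\ref{psi-equations}.'' You have spelled out the bookkeeping that the paper leaves implicit --- reading off when the binomial coefficients are positive versus zero and tracking the parity factor $(-1)^{k+j}$ --- and your argument for $\sgn(\overline{\psi}_k)$ via the $S_2$-action is the natural way to deduce it from the $\psi_k$ case.
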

\end{samepage}
\begin{proof}
This follows from Propositions~\ref{phi-equations} and~\ref{psi-equations}.
\end{proof}

\begin{prop}\label{sign-pascal-valid-hyperfield}
Let $s\in H^{V_d}$ be a valid sign configuration of degree $d\geq 1$.
\begin{enumerate}
\item For $(a,b)\in V_d$ of degree $d$, if $\sgn(\varphi_{a,b})$ vanishes at $s$, then $\sgn(\varphi_{a,b})(s)=H$.
\item If $\sgn(\psi_0),\ldots,\sign(\psi_d)$ vanish at $s$, then $\sgn(\psi_0)(s)=\cdots=\sgn(\psi_d)(s)=H$.
\item If $\sgn(\overline{\psi}_0),\ldots,\sign(\overline{\psi}_d)$ vanish at $s$, then $\sgn(\overline{\psi}_0)(s)=\cdots=\sgn(\overline{\psi}_d)(s)=H$.
\end{enumerate}
\end{prop}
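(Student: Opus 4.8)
## Proof proposal for Proposition~\ref{sign-pascal-valid-hyperfield}

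The plan is to prove all three parts by a direct analysis of the supports of the relevant sign-Pascal equations computed in Lemma~\ref{sign-pascal-equations}, using the fact that the sign hyperfield has the feature that a sum evaluates to $H$ precisely when both a $+1$ summand and a $-1$ summand appear, and to $\{0\}$ or a singleton otherwise. Since $s$ is valid, we have $s_{0,0}=-1$ and $s_{i,j}\in\{0,1\}$ for all $(i,j)\neq(0,0)$. So when we plug $s$ into $\sgn(\phi)=\sum c_{i,j}x_{i,j}$ with $c_{i,j}\in\{1,0,-1\}$, the contribution of each term $(i,j)\neq(0,0)$ has sign $c_{i,j}$ (or $0$), while the term at $(0,0)$ contributes $-c_{0,0}$. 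Thus $\sgn(\phi)(s)=H$ as soon as the multiset $\{c_{i,j}\mid (i,j)\in\supp^+(s)\}\cup\{-c_{0,0}\}$ contains both signs; the hypothesis "$\sgn(\phi)$ vanishes at $s$" (i.e. $0\in\sgn(\phi)(s)$) gives us exactly this unless the whole sum is identically $\{0\}$, which happens only when every term vanishes.

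For part (a), by Lemma~\ref{sign-pascal-equations}(a) the form $\sgn(\varphi_{a,b})$ has all coefficients in $\{0,1\}$, the coefficient at $(i,j)$ being $1$ iff $0\le i\le a$ and $0\le j\le b$; in particular the coefficient at $(0,0)$ is $1$. So evaluating at $s$, the $(0,0)$-term contributes $-1$. For $0\in\sgn(\varphi_{a,b})(s)$ we then need at least one $+1$ summand, i.e. some $(i,j)\in\supp^+(s)$ with $i\le a$, $j\le b$; that $+1$ together with the $-1$ from $(0,0)$ forces $\sgn(\varphi_{a,b})(s)=H$. (If instead there were no such positive term the sum would be $\{-1\}\not\ni 0$, contradicting the hypothesis — the degenerate all-zero case cannot occur since the $(0,0)$-coefficient is nonzero.)

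For parts (b) and (c): by the $S_2$-symmetry (the action of $(12)$ sends $\psi_k$ to $\overline\psi_k$ and valid configurations to valid configurations), it suffices to treat (b). By Lemma~\ref{sign-pascal-equations}(b), $\sgn(\psi_k)=\sum_{S_k^+}x_{i,j}-\sum_{S_k^-}x_{i,j}$. Note $(0,0)\in S_0^+$, and for $k\ge 1$ one checks $(0,0)\notin S_k^+\cup S_k^-$ (since $(0,0)$ satisfies $k-j\le i$ only when $k=0$). Hence for $k=0$ the form $\sgn(\psi_0)=\sum_{(i,j):\, i+j\le d}x_{i,j}$ has positive coefficient at $(0,0)$, so the argument of part (a) applies verbatim: vanishing at $s$ forces a positive summand elsewhere and hence $\sgn(\psi_0)(s)=H$. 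For $k\ge 1$, the term at $(0,0)$ does not appear in $\sgn(\psi_k)$, so plugging in $s$ we only see the signs $c_{i,j}$ for $(i,j)\in\supp^+(s)\setminus\{(0,0)\}$. If $\sgn(\psi_k)(s)\ni 0$ with $\sgn(\psi_k)(s)\ne H$, then all these signs would be $0$ — i.e. $\supp^+(s)$ meets neither $S_k^+$ nor $S_k^-$ — so in fact $\sgn(\psi_k)(s)=\{0\}$ identically. The main obstacle is ruling this out: I would argue by induction downward (or use the relation $\sgn(\psi_0)+\sgn(\psi_2)+\cdots$ telescoping back to $\sgn(\varphi_{\cdot})$), but the cleanest route is to use that if $\supp^+(s)\cap(S_k^+\cup S_k^-)=\emptyset$ for some $k\ge 1$, then reading off which $(i,j)$ lie in $\bigcup_k(S_k^+\cup S_k^-)$ shows $\supp^+(s)$ is forced into a region incompatible with $\deg(s)=d$ together with $\sgn(\varphi_{0,d})(s)\ni 0$, $\sgn(\varphi_{d,0})(s)\ni 0$ — contradicting $s\ne 0$. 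Concretely, $\bigcup_{k\ge 1}(S_k^+\cup S_k^-)=\{(i,j)\in V_d: (i,j)\ne(0,0)\}$ except possibly... — here I would verify the exact set and conclude that some $\psi_k$ with $k\ge 1$ must be hit by any nonzero valid $s$ of degree $d$, after which the presence of a genuine $\pm1$ summand (combined, if $k=0$, with the $(0,0)$-term, or otherwise with another summand of opposite parity in $S_k^\pm$) gives $\sgn(\psi_k)(s)=H$ for every $k$ once all of them vanish at $s$. I expect the bookkeeping of the sets $S_k^{\pm}$ and their union to be the only real work; everything else is the two-line hyperfield observation above.
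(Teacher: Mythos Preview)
Your treatment of part (a) is correct and matches the paper's argument. The reduction in part (b) is also right: since $(0,0)\notin S_k^+\cup S_k^-$ for $k\ge1$, the hypothesis $0\in\sgn(\psi_k)(s)$ forces either $\supp^+(s)\cap(S_k^+\cup S_k^-)=\emptyset$ or $\sgn(\psi_k)(s)=H$, so the whole problem is to rule out the former for each $k\in\{1,\dots,d\}$. However, you do not actually close this gap. You mention downward induction and then abandon it for a ``cleanest route'' that is both incomplete and misdirected: knowing something about $\bigcup_{k\ge1}(S_k^+\cup S_k^-)$ would at best show that \emph{some} $k$ has a nonzero term, whereas you need this for \emph{every} $k$. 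Your closing sentence (``some $\psi_k$ with $k\ge1$ must be hit by any nonzero valid $s$'') makes the same slip. Also, your formula $\sgn(\psi_0)=\sum_{i+j\le d}x_{i,j}$ is wrong; in fact $\psi_0=\varphi_{d,0}=\sum_i x_{i,0}$, though your argument for $k=0$ survives since only the sign at $(0,0)$ matters.

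The paper carries out precisely the downward induction you discarded, and the key observation that makes it work is the nesting
\[
S_{k+1}^-\subseteq S_k^+,
\]
which follows immediately from the definitions (if $(i,j)\in S_{k+1}^-$ then $j\not\equiv k+1$, so $j\equiv k$, hence $j\le k$; and $i\ge(k+1)-j>k-j$). The base case $k=d$ uses that $S_d^+\cup S_d^-$ is exactly the set of degree-$d$ points, so $\deg(s)=d$ rules out $(\mathrm{a}_d)$, hence $(\mathrm{b}_d)$ holds. For the inductive step, $(\mathrm{b}_{k+1})$ provides a point of $\supp^+(s)$ in $S_{k+1}^-\subseteq S_k^+$, so $(\mathrm{a}_k)$ fails and $(\mathrm{b}_k)$ holds. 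This is the missing piece; once you have it, your framework finishes the proof.
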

\begin{proof}
Note that since $\deg(s)=d\geq 1$, we have $s_{0,0}=-1$, $s_{i,j}\geq0$ for all $(i,j)\in V_d\setminus\{(0,0)\}$ and $s_{k,d-k}=1$ for some $k\in\{0,\ldots,d\}$.

\itembreak
(a)
Let $(a,b)\in V_d$ have degree $d$ and suppose that
\[
\sum_{i=0}^a\sum_{j=0}^bs_{i,j}\ni0.
\]
Since $s_{0,0}=-1$, this is only possible when $s_{i,j}=1$ for some $i\in\{0,\ldots,a\}$ and $j\in\{0,\ldots,b\}$ and so $\sgn(\varphi_{a,b})(s)=H$.

\itembreak
(b)
Suppose that $\sgn({\psi}_0),\ldots,\sign({\psi}_d)$ vanish at $s$. We have 
\[
\sgn(\psi_k)=\sum_{(i,j)\in S_k^+}x_{i,j}-\sum_{(i,j)\in S_k^-}x_{i,j}
\]
where $S_k^+,S_k^-\subseteq V_d$ are as in Lemma~\ref{sign-pascal-equations}. We have $\psi_0=\varphi_{d,0}$ and so $\sgn(\psi_0)(s)=\sgn(\varphi_{d,0})(s)=H$. For $k>0$, note that $(0,0)\not\in S_k^+\cup S_k^-$ and in particular $s_{i,j}\geq0$ for all $(i,j)\in S_k^+\cup S_k^-$. So for each $k\in\{1,\ldots,d\}$, we see that either
\newcommand{\rma}{\mathrm{a}}
\newcommand{\rmb}{\mathrm{b}}
\begin{itemize}
\item[($\rma_k$)] $s_{i,j}=0$ for all $(i,j)\in S_k^+\cup S_k^-$; or
\item[($\rmb_k$)] $s_{i,j}=1$ for some $(i,j)\in S_k^+$ and $s_{i,j}=1$ for some $(i,j)\in S_k^-$. 
\end{itemize}
We prove that ($\rmb_k$) holds for $k=d,\ldots,1$ recursively, which implies that $\sgn(\psi_k)(s)=H$.\medskip

The union $S_d^+\cup S_d^-$ consists of all \rev{vertices} in $V_d$ of degree $d$. So ($\rma_d$) cannot hold. So ($\rmb_d$) holds. Next, let $k\in\{1,\ldots,d-1\}$ and suppose that ($\rmb_{k+1}$) holds. Then $s_{i,j}=1$ for some $(i,j)\in S_{k+1}^-$. We have $S_{k+1}^-\subseteq S_k^+$ and hence ($\rma_k$) cannot hold. Hence ($\rmb_k$) holds. So ($\rmb_k$) holds for all $k\in\{1,\ldots,d\}$.

\itembreak
(c)
The proof of this part is the same as that of the previous part.
\end{proof}

\begin{re}\label{re:hyperfield_conditions_on_place}
Let $w\in\ZZ^{V_d}$ be a valid outcome of degree $d$. Then $\sgn(\phi)$ vanishes at 
\[
s=(s_{i,j})_{(i,j)\in V_d}=\sgn(w)
\]
for all Pascal equations $\phi$ on $\ZZ^{V_d}$. Proposition~\ref{sign-pascal-valid-hyperfield} tells us that in this case, we have
\[
\sgn(\varphi_{0,d})(s),\ldots,\sgn(\varphi_{d,0})(s),\sgn(\psi_0)(s),\ldots,\sgn(\psi_d)(s),\sgn(\overline{\psi}_0)(s),\ldots,\sgn(\overline{\psi_d})(s)=H,
\]
which shows that the following hold:
\begin{enumerate}
\item for all $(a,b)\in V_d$ of degree $d$, there exist $i\in\{0,\ldots,a\}$ and $j\in\{0,\ldots,b\}$ with $s_{i,j}=1$;
\item for all $k\in\{1,\ldots,d\}$, there exist $(i,j)\in S_k^+$ with $s_{i,j}=1$ and $(i,j)\in S_k^-$ with $s_{i,j}=1$; and
\item for all $k\in\{1,\ldots,d\}$, there exist $(i,j)\in S_k^+$ with $s_{j,i}=1$ and $(i,j)\in S_k^-$ with $s_{j,i}=1$.
\end{enumerate}

Here we note that $s_{i,j}=1$ if and only if $(i,j)\in\supp^+(w)$. So we can view these conditions as restrictions on the set $\supp^+(w)$.
\end{re}

\subsection{Contractions of hyperfield solutions}\label{subsec:contraction-hyperfield}

\rev{In this subsection we make progress by reducing the set of things to be considered for our classification from infinite to finite. \rrev{For a weakly valid} $s\in H^{V_d}$ we define a hyperfield vector $\contr_d(s)\in H^{\Xi}$ where $\Xi$ is a finite set which is independent of $d$. The vector $\contr_d(s)$ is obtained from $s$ by considering only a subset of the entries of $s$, and by replacing certain sets of entries of $s$ (with cardinalities growing linearly with $d$) by their sum (with cardinality one). We call $\contr_d(s)$ the \emph{contraction} of $s$ because we think of this summation of entries of $s$ as a contraction of $s$. The elements of $H^{\Xi}$ have their own notion of being valid such that
\[
\text{$s$ valid}\Rightarrow \text{$\contr_d(s)$ valid.}
\]
This turns out to be enough to classify valid chipsplitting outcomes $w$ of positive support $4$ by passing to $\contr_d(\sgn(w))$ and analyzing the finitely many possibilities.}

\mybreak

We \rev{start} by considering the four-entries thick outer ring of the triangle $V_d$. We divide the outer ring into six areas as illustrated in Figure~\ref{fig:triangle-areas}. One of these, Area $D$, splits further into $D^{(0)}$ and $D^{(1)}$ according to the parity of the $i$-coordinate of its entries.

\definecolor{cornercolor}{HTML}{FFE194}
\newcommand{\cornercolor}{orange}
\definecolor{bordercolor}{HTML}{E8F6EF}
\definecolor{altbordercolor}{HTML}{B8DFD8}
\newcommand{\bordercolor}{dark blue}
\newcommand{\altbordercolor}{light blue}
\begin{figure}[ht]
\begin{tikzpicture}[scale=0.25]
\node (a) at (0,0) {};
\node (b) at (4,0) {};
\node (c) at (12.5,0) {};
\node (d) at (17,0) {};
\node (e) at (13,4) {};
\node (f) at (4,13) {};
\node (g) at (0,17) {};
\node (h) at (0,12.5) {};
\node (i) at (0,4) {};
\node (j) at (4,4) {};
\node (k) at (8.5,4) {};
\node (l) at (4,8.5) {};
\draw[fill=cornercolor]
  (a.center)--(b.center)--(j.center)--(i.center)--cycle;
\draw[fill=cornercolor]
  (c.center)--(d.center)--(e.center)--(k.center)--cycle;
\draw[fill=cornercolor]
  (g.center)--(h.center)--(l.center)--(f.center)--cycle;
\draw[fill=bordercolor]
  (b.center)--(c.center)--(k.center)--(j.center)--cycle;
\draw[fill=bordercolor]
  (l.center)--(k.center)--(e.center)--(f.center)--cycle;
\draw[fill=bordercolor]
  (i.center)--(j.center)--(l.center)--(h.center)--cycle;
\node (x) at (2,2) {$X$};
\node (r) at (7.375,2) {$B$};
\node (z) at (12.75,2) {$Z$};
\node (y) at (2,12.75) {$Y$};
\node (bigc) at (2,7.375) {$C$};
\fill[altbordercolor]
(5,12)--(6,11)--(6,6.5)--(5,7.5);
\fill[altbordercolor]
(7,10)--(8,9)--(8,4.5)--(7,5.5);
\fill[altbordercolor]
(9,8)--(10,7)--(10,4)--(9,4);
\fill[altbordercolor]
(11,6)--(12,5)--(12,4)--(11,4);
\draw (l)--(k)--(e)--(f)--cycle;
\node (bigd) at (7,7.75) {$D$};
\draw[decorate, decoration = {brace, mirror, raise = 2pt}, semithick] (a.center)--(b.center);
\draw[decorate, decoration = {brace, mirror, raise = 2pt}, semithick] (i.center)--(a.center);
\draw[decorate, decoration = {brace, mirror, raise = 2pt}, semithick] (c.center)--(d.center);
\draw[decorate, decoration = {brace, mirror, raise = 2pt}, semithick] (g.center)--(h.center);
\node at (2,-1.5) {$4$};
\node at (-1.5,2) {$4$};
\node at (14.75,-1.5) {$4$};
\node at (-1.5,14.75) {$4$};
\end{tikzpicture}
\caption{Dividing the outer ring of the triangle $V_d$ into six areas for Subsection~\ref{subsec:contraction-hyperfield}. The area $D$ splits into two parts $D^{(0)}$ and $D^{(1)}$ by alternating the columns.}\label{fig:triangle-areas}
\end{figure}
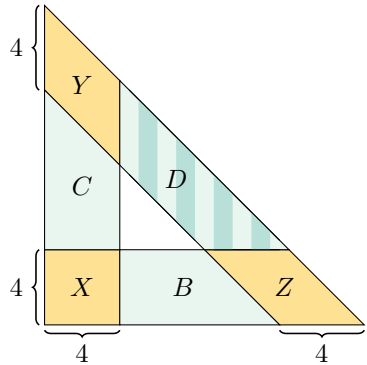

Let $x_{i,j}$ be the formal variables indexed by the elements of $V_d$. We rename and combine these variables according to their assigned area:
\begin{eqnarray*}
y_{i,j}&:=&x_{i,d-3-i+j}\quad\mbox{for }i,j\in\{0,\ldots,3\},\\
z_{i,j}&:=&x_{d-3-j+i,j}\quad\mbox{for }i,j\in\{0,\ldots,3\},\\
b_j&:=&\rev{\textstyle\sum_{i=4}^{d-4-j}x_{i,j}}\quad\mbox{for }j\in\{0,\ldots,3\},\\
c_i&:=&\rev{\textstyle\sum_{j=4}^{d-4-i} x_{i,j}}\quad\mbox{for }i\in\{0,\ldots,3\},\\
d_k^{(0)}&:=&\rev{\textstyle\sum_{\ell=2}^{\floor{(d-4-k)/2}} x_{2\ell, d-2\ell-k}}
\quad\mbox{for }k\in\{0,\ldots,3\},\\
d_k^{(1)}&:=&\rev{\textstyle\sum_{\ell=2}^{\floor{(d-5-k)/2}} x_{2\ell+1, d-(2\ell+1)-k}}
\quad\mbox{for }k\in\{0,\ldots,3\}.
\end{eqnarray*}
\rev{For the $b$s, $c$s, and $d$s, this results in compressing a number of variables that grows linearly with $d$ down into a single variable, whereas the renamings of the first two lines are just for convenience.}
\revml{Next, we consider two sets $\Phi_1$, $\Phi_2$ of Pascal equations on $V_d$ defined by
\begin{equation*}
\begin{split}
\Phi_1 = \{&\psi_1,\psi_2,\psi_3, \\
& \overline{\psi}_1, \overline{\psi}_2, \overline{\psi}_3, \\
& \varphi_{1,d-1}, \varphi_{2,d-2}, \varphi_{3,d-3}, \\
& \varphi_{d-1,1}, \varphi_{d-2,2}, \varphi_{d-3,3}\},
\end{split}
\quad\quad\quad
\begin{split}
\Phi_2 =  \{&\psi_{d-3}, \psi_{d-2}, \psi_{d-1}, \psi_d,\\
& \overline{\psi}_{d-3}, \overline{\psi}_{d-2}, \overline{\psi}_{d-1}, \overline{\psi}_{d}\},\\
\\
\\
\end{split}
\end{equation*}
and call their union $\Phi$.}
We \rev{want to} apply these Pascal equations to valid sign configurations $s\in H^{V_d}$. \rev{The next two lemmas show that this operation is governed by a finite set of linear forms over $H$ which is independent of $d$. \rrev{We assume that $d$ is large enough} to have each of the above variables defined. Specifically, the assumption $d\geq 11$ ensures that all $x_{i,j}$, $y_{i,j}$, $z_{i,j}$, $b_j$, $c_i$, and $d_k^{(0)}$ are defined, whereas $d\geq 12$ additionally ensures that all $d_k^{(1)}$ are defined.}

\begin{lm}\label{lm:Phi1}
Assume that $d\geq 11$ and let $\phi\in\Phi_1$. Then 
\[
\sgn(\phi)=\widehat{\phi}(x_{i,j},y_{i,j},z_{i,j},b_j,c_i,d^{(0)}_k,d^{(1)}_k)
\]
for some linear form
\[
\widehat{\phi}(x_{i,j},y_{i,j},z_{i,j},b_j,c_i,d^{(0)}_k,d^{(1)}_k)\in H[x_{i,j},y_{i,j},z_{i,j},b_j,c_i,d^{(0)}_k,d^{(1)}_k\mid i,j,k\in\{0,\ldots,3\}]
\]
Moreover, the linear form $\widehat{\phi}$ does not depend on $d$.
\end{lm}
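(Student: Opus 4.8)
The plan is to compute $\sgn(\phi)$ explicitly for each of the twelve equations in $\Phi_1$ using Lemma~\ref{sign-pascal-equations}, to check that its support lies in the outer ring of $V_d$ and in fact avoids the area $D$, and to check that its coefficients are constant along each row-segment $\{(i,j)\mid 4\le i\le d-4-j\}$ making up $b_j$ and along each column-segment $\{(i,j)\mid 4\le j\le d-4-i\}$ making up $c_i$. Since $d\ge 11$ guarantees that the six areas $X,B,Z,Y,C,D$ of Figure~\ref{fig:triangle-areas} are nonempty and partition the outer ring, this is exactly what is needed for $\sgn(\phi)$ to factor as $\widehat\phi(x_{i,j},y_{i,j},z_{i,j},b_j,c_i,d_k^{(0)},d_k^{(1)})$ for a well-defined linear form $\widehat\phi$ over $H$ in the renamed variables (the coefficients of $d_k^{(0)},d_k^{(1)}$ being $0$); the final point is then to observe that the coefficients of $\widehat\phi$ do not involve $d$.

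First I would halve the work using the $S_2$-symmetry: $\sgn$ commutes with the $(12)$-action, $(12)\cdot\psi_k=\overline\psi_k$ and $(12)\cdot\varphi_{a,d-a}=\varphi_{d-a,a}$ by Propositions~\ref{psi-equations} and~\ref{phi-equations}, and $(12)$ carries the aggregated variables into one another ($x_{i,j}\leftrightarrow x_{j,i}$, $y_{i,j}\leftrightarrow z_{j,i}$, $b_j\leftrightarrow c_j$, and it permutes the variables $d_k^{(0)},d_k^{(1)}$), so it suffices to handle $\psi_1,\psi_2,\psi_3$ and $\varphi_{1,d-1},\varphi_{2,d-2},\varphi_{3,d-3}$. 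For $\varphi_{a,d-a}$ with $a\le 3$, Lemma~\ref{sign-pascal-equations}(a) gives $\sgn(\varphi_{a,d-a})=\sum_{i=0}^a\sum_{j=0}^{d-a}x_{i,j}$: its support is columns $0,\dots,a$, hence lies in $X\cup C\cup Y$ and misses $B$, $Z$ and $D$; all coefficients equal $1$; each $c_i$-segment with $i\le a$ lies entirely in the support because $d-4-i\le d-a$; and in the $Y$-part of column $i$ the coefficient is $1$ exactly on the $y_{i,\ell}$ with $d-3-i+\ell\le d-a$, which is a condition on $(i,\ell,a)$ with no $d$ in it, and $0$ otherwise, while the $X$-part is $\sum_{i,j\le 3}x_{i,j}$. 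For $\psi_k$ with $k\le 3$, Lemma~\ref{sign-pascal-equations}(b) shows the support is rows $0,\dots,k$, hence lies in $X\cup B\cup Z$, with the coefficient of $x_{i,j}$ equal to $(-1)^{k-j}$ for $k-j\le i\le d-j$ and $0$ for $i<k-j$; since $k-j\le 3<4$, every entry of the $b_j$-segment and each of $z_{0,j},\dots,z_{3,j}$ in row $j\le k$ gets the constant value $(-1)^{k-j}$, giving the $B$-part $\sum_{j=0}^k(-1)^{k-j}b_j$ and the $Z$-part $\sum_{j=0}^k(-1)^{k-j}(z_{0,j}+\dots+z_{3,j})$, while the $X$-part contributes $(-1)^{k-j}x_{i,j}$ for $k-j\le i\le 3$ and $0$ otherwise. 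In all six cases the area $D$ is untouched, so the coefficients of $d_k^{(0)}$ and $d_k^{(1)}$ in $\widehat\phi$ vanish, and every other coefficient written above is manifestly independent of $d$.

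I expect the main obstacle to be the bookkeeping in the second paragraph: precisely locating where the supports of $\sgn(\varphi_{a,d-a})$ and $\sgn(\psi_k)$ are truncated near the corners $X$, $Y$, $Z$, and confirming that each truncation, once rewritten in the local coordinates $x_{i,j},y_{i,j},z_{i,j}$, is described by a pattern in $(i,j,k)$ or $(i,\ell,a)$ that does not shift with $d$. None of this is deep, but the index arithmetic near the corners is where a careless argument could go wrong. Once the forms $\widehat\phi$ for the six representatives $\phi$ have been written out, the $S_2$-symmetry produces the remaining six and the lemma follows.
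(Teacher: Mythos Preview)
Your proposal is correct and follows exactly the paper's approach, which is the one-paragraph observation immediately preceding the lemma: the equations in $\Phi_1$ split into those supported in $X\cup C\cup Y$ (namely $\overline\psi_1,\overline\psi_2,\overline\psi_3,\varphi_{1,d-1},\varphi_{2,d-2},\varphi_{3,d-3}$) with constant sign along each $C$-column, and those supported in $X\cup B\cup Z$ (namely $\psi_1,\psi_2,\psi_3,\varphi_{d-1,1},\varphi_{d-2,2},\varphi_{d-3,3}$) with constant sign along each $B$-row, so that $D$ is never touched and the aggregated form is independent of $d$. You have simply fleshed out this sketch with the $S_2$-reduction and the explicit corner patterns; the only slip is that for $a<3$ the $X$-part of $\widehat\varphi_{a,d-a}$ is $\sum_{i=0}^{a}\sum_{j=0}^{3}x_{i,j}$ rather than the full $\sum_{i,j\le 3}x_{i,j}$, which does not affect the argument.
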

\begin{proof}
\rev{As illustrated in Example~\ref{ex-pascal-signs}, the equations $\overline \psi_1$, $\overline\psi_2$, and $\overline\psi_3$ are supported in the sections $Y$, $C$, and $X$ from Figure~\ref{fig:triangle-areas}. Furthermore, the signs of their coefficients are constant along the columns of $C$. Therefore, the set of coefficients in column $i$ of $C$ may be replaced with a single coefficient of the variable $c_i$. This results in linear forms $\widehat{\overline \psi}_1$, $\widehat{\overline\psi}_2$, and $\widehat{\overline\psi}_3$ over $H$ where only the variables $x_{i,j}$, $y_{i,j}$ and $c_i$ appear. The coefficients of these linear forms are independent of $d$ because the constant sign of the $\overline\psi_i$ along the columns of $C$ does not change with $d$. The same argument applies to the equations $\varphi_{1,d-1}$, $\varphi_{2,d-2}$, and $\varphi_{3, d-3}$. The remaining Pascal equations in $\Phi_1$ are handled by a symmetric argument involving the sections $X$, $B$, and $Z$ and their associated variable, considering the rows of $B$ instead of the columns of $C$.}
\end{proof}

\begin{ex}
Take $\phi=\varphi_{3,d-3}$. We can depict $\sgn(\phi)$ as follows (for $d=11$):
\begin{center}
\includegraphics[width=0.35\textwidth]{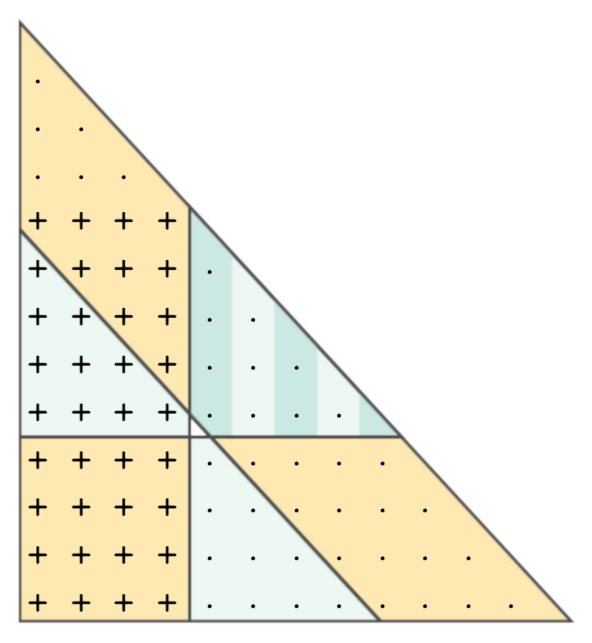}
\end{center}

Take\vspace*{-3pt}
\[
\widehat{\phi} =\widehat{\varphi}_{3,d-3}= \sum_{i=0}^3\sum_{j=0}^3x_{i,j}+\sum_{i=0}^3c_i+\sum_{i=0}^3\sum_{j=0}^iy_{i,j}.\vspace*{-3pt}
\]
Then we see that
\begin{eqnarray*}
\sgn(\varphi_{3,d-3})&=&\sum_{i=0}^3\sum_{j=0}^{d-3}x_{i,j}\\
&=&\sum_{i=0}^3\sum_{j=0}^3x_{i,j}+\sum_{i=0}^3(x_{i,4}+\ldots+x_{i,d-4-i})+\sum_{i=0}^3\sum_{j=0}^ix_{i,d-3-i+j}\\
&=&\widehat{\varphi}_{3,d-3}.
\end{eqnarray*}
Indeed, the linear form $\widehat{\phi}$ is the same for every $d\geq 11$.
\end{ex}

\begin{lm}\label{lm:Phi2}
Assume that $d\geq 12$ and let $\phi\in\Phi_2$. Then 
\[
\sgn(\phi)=\left\{\begin{array}{cl}\widehat{\phi}^{\mathrm{even}}(x_{i,j},y_{i,j},z_{i,j},b_j,c_i,d^{(0)}_k,d^{(1)}_k)&\mbox{when $d$ is even},\\\widehat{\phi}^{\mathrm{odd}}(x_{i,j},y_{i,j},z_{i,j},b_j,c_i,d^{(0)}_k,d^{(1)}_k)&\mbox{when $d$ is odd}\end{array}\right.
\]
for some linear forms
\[
\widehat{\phi}^{\mathrm{even}},\widehat{\phi}^{\mathrm{odd}}\in H[x_{i,j},y_{i,j},z_{i,j},b_j,c_i,d^{(0)}_k,d^{(1)}_k\mid i,j,k\in\{0,\ldots,3\}].
\]
Moreover, the linear forms $\widehat{\phi}^{\mathrm{even}},\widehat{\phi}^{\mathrm{odd}}$ do not depend on $d$.
\end{lm}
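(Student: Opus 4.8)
The plan is to prove the statement by an explicit computation, of the same flavour as the one preceding Lemma~\ref{lm:Phi1}, now carried out for the Pascal equations in $\Phi_2$. First I would reduce to the equations $\psi_{d-3+\ell}$ with $\ell\in\{0,1,2,3\}$: since $\overline{\psi}_{d-3+\ell}=(12)\cdot\psi_{d-3+\ell}$ and the $S_2$-action permutes the combined variables (it sends $x_{i,j}$ to $x_{j,i}$, hence $y_{i,j}$ to $z_{j,i}$, $b_j$ to $c_j$, and each of $d^{(0)}_k,d^{(1)}_k$ to the analogous diagonal sum, possibly swapping the two parity classes according to the parity of $d-k$), a formula of the required shape for $\sgn(\psi_{d-3+\ell})$ immediately yields one for $\sgn(\overline{\psi}_{d-3+\ell})$. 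So from now on I fix $k=d-3+\ell$.

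By Lemma~\ref{sign-pascal-equations}(b) we have $\sgn(\psi_k)=\sum_{(i,j)}(-1)^{k-j}x_{i,j}$, the sum running over those $(i,j)\in V_d$ with $0\le j\le k$ and $k-j\le i\le d-j$; equivalently, over the points of $V_d$ with $j\le k$ and $\deg(i,j)\ge k\ge d-3$. In particular every point in the support has degree at least $d-3$, hence lies in the four-entries wide outer ring, and for $d\ge 12$ it cannot lie in $X$, $B$ or $C$ (whose points have degree at most $6$, $d-4$, $d-4$ respectively, all strictly below $d-3$). Therefore $\supp(\sgn(\psi_k))\subseteq Y\cup Z\cup D$, and it remains to rewrite the three pieces.

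Next I would treat each area in turn. A support point $(i,j)$ with $j\le 3$ lies in $Z$ and equals $z_{i',j}$ with $i'=i-(d-3-j)$; tracing through the inequalities, $z_{i',j'}$ occurs exactly when $i'\ge\ell$, with coefficient $(-1)^{k-j'}=(-1)^{d-3+\ell-j'}$, which depends on $d$ only through its parity. A support point $(i,j)$ with $i\le 3$ lies in $Y$ and equals $y_{i,j'}$ with $j'=j-(d-3-i)$; it occurs exactly when $\ell\le j'\le i+\ell$, with coefficient $(-1)^{k-j}=(-1)^{\ell+i+j'}$, independent of $d$. The remaining support points have $i,j\ge 4$ and lie in $D$; writing $e=\deg(i,j)=d-k'$, one checks that whenever $d-k'\ge k$, i.e.\ $k'\le 3-\ell$, the support contains the \emph{entire} $D$-segment $\{(i,e-i):4\le i\le e-4\}$ of that diagonal, since there $j=e-i\le e-4\le d-4<k$ makes the condition $j\le k$ automatic. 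Along that diagonal $(-1)^{k-j}=(-1)^{k-e}(-1)^{i}$ alternates with the parity of $i$, so the $D$-contribution of the degree-$(d-k')$ diagonal equals $(-1)^{k-(d-k')}\bigl(d^{(0)}_{k'}-d^{(1)}_{k'}\bigr)=(-1)^{\ell+k'+1}\bigl(d^{(0)}_{k'}-d^{(1)}_{k'}\bigr)$, again with coefficient independent of $d$. Summing the three pieces exhibits $\sgn(\psi_{d-3+\ell})$ as a linear form in the $y_{i,j}$, $z_{i,j}$ and $d^{(0)}_k-d^{(1)}_k$ (with zero coefficients on the $x_{i,j}$, $b_j$, $c_i$) whose coefficients depend on $d$ at most through the parity of $d$; this yields the desired $\widehat{\phi}^{\mathrm{even}}$ and $\widehat{\phi}^{\mathrm{odd}}$.

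The argument is pure bookkeeping, so the only real difficulty is keeping the index conventions and the parities consistent. The two places needing care are: (i) verifying that the $D$-segment of each relevant diagonal is captured \emph{in full} and that the alternating-sign pattern along a diagonal lines up exactly with the splitting $D=D^{(0)}\sqcup D^{(1)}$ --- this is where $d\ge 12$ is used, to guarantee that the sums $d^{(0)}_k$, $d^{(1)}_k$ all sit in the interior of their diagonals and that the six outer-ring areas are pairwise disjoint; and (ii) the symmetry step of the first paragraph, checking that $(12)$ really does permute the combined variables and the parity roles as claimed. Once these are in hand the lemma follows.
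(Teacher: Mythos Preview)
Your proposal is correct and follows the same approach as the paper: the paper's argument (given in the paragraph immediately preceding the lemma) consists of the observation that the forms in $\Phi_2$ are supported in $Y\cup D\cup Z$ and that on $D$ the sign form depends only on diagonal-wise alternating sums, hence on $d^{(0)}_k-d^{(1)}_k$. You carry out exactly this computation with all the bookkeeping made explicit, including the correct identification of which $y_{i,j}$, $z_{i,j}$, and diagonals appear and with which signs, and you correctly isolate the $Z$-block as the only place where the parity of $d$ enters.
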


\begin{proof}
\rev{
Analogous to the proof of Lemma~\ref{lm:Phi1}. As Example~\ref{ex-pascal-signs} illustrates, the equations in $\Phi_2$ are supported in $Y$, $D$, and $Z$. The signs of their coefficients alternate along each northwest-to-southeast diagonal of $D$. Therefore, the coefficients of the variables associated to these diagonals may be replaced by a single coefficient in front of the difference $(d_k^{(0)} - d_k^{(1)})$. This results in linear forms over $H$ where only the $y_{i,j}$, $z_{i,j}$, $d_k^{(0)}$, and $d_k^{(1)}$ appear. These linear forms are independent of $d$ of the same parity, which can be seen most directly by applying the formulas for the $\psi$ and $\overline\psi$ in Proposition~\ref{phi-equations}.
}
\end{proof}

\begin{remark}
\rev{For different parities of $d$, the $\widehat\psi_{d-k}$ differ in $Z$ while the $\widehat{\overline \psi}_{d-k}$ differ in $Y$ and $D$.}
\end{remark}

\begin{ex}
Take $\phi=\psi_{d-3}$. We can depict $\sgn(\phi)$ (for $d=12$) as follows:
\begin{center}
\includegraphics[width=0.35\textwidth]{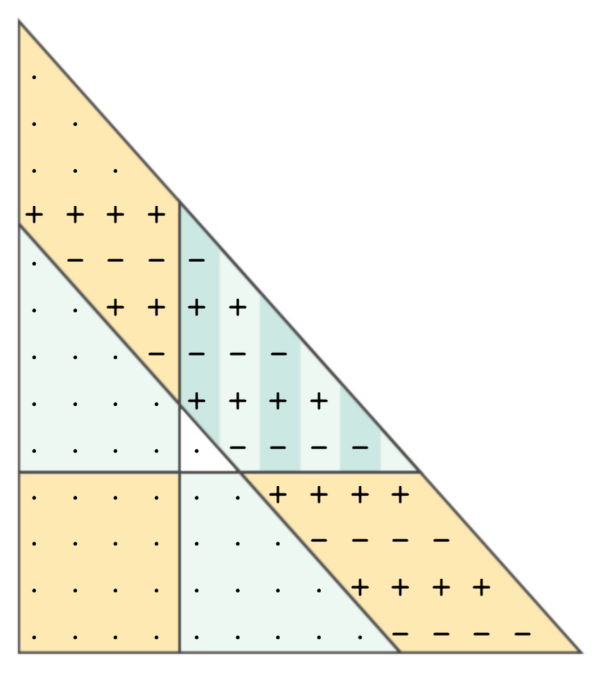}
\end{center}
Take
\begin{eqnarray*}
\widehat{\phi}^{\mathrm{even}}=\widehat{\psi}^{\mathrm{even}}_{d-3}&=&\sum_{i=0}^3\sum_{j=0}^i(-1)^{i+j}y_{i,j}-\sum_{i=0}^1\sum_{k=0}^3(-1)^{i+k}d^{(i)}_k -\sum_{j=0}^3\sum_{i=0}^3(-1)^jz_{i,j},\\
\widehat{\phi}^{\mathrm{odd}}=\widehat{\psi}^{\mathrm{odd}}_{d-3}&=&\sum_{i=0}^3\sum_{j=0}^i(-1)^{i+j}y_{i,j}-\sum_{i=0}^1\sum_{k=0}^3(-1)^{i+k}d^{(i)}_k +\sum_{j=0}^3\sum_{i=0}^3(-1)^jz_{i,j}.
\end{eqnarray*}
Then we see that
\begin{eqnarray*}
\sgn(\psi_{d-3})&=&(-1)^{d-3}\sum_{j=0}^{d-3}\sum_{i=0}^3(-1)^jx_{d-3-j+i,j}\\
&=&\sum_{i=0}^3\sum_{j=0}^i(-1)^{i+j}x_{i,d-3-i+j}-\sum_{i=0}^1\sum_{k=0}^3(-1)^{i+k}d^{(i)}_k +\sum_{j=0}^3\sum_{i=0}^3(-1)^{d-1+j}x_{d-3-j+i,j}\\
&=&\left\{\begin{array}{cl}\widehat{\psi}^{\mathrm{even}}_{d-3}&\mbox{when $d$ is even},\\\widehat{\psi}^{\mathrm{odd}}_{d-3}&\mbox{when $d$ is odd}.\end{array}\right.
\end{eqnarray*}
Indeed, the linear forms $\widehat{\phi}^{\mathrm{even}}$ and $\widehat{\phi}^{\mathrm{odd}}$ are \rev{independent of $d$ for $d\geq 12$.}
\end{ex}

Next we carry out the same subdivision as above but with the coordinates of the elements $s\in H^{V_d}$ instead of formal variables. We start by defining the index set
\[
\Xi = \{0,1,2,3\}^2\sqcup\{0,1,2,3\}^2\sqcup \{0,1,2,3\}^2\sqcup\{0,1,2,3\}\sqcup\{0,1,2,3\}\sqcup\{0,1,2,3\}\sqcup\{0,1,2,3\}.
\]
We write elements of $H^\Xi$ as  
\[
\theta=(s,r,t,\alpha,\beta,\gamma)=\left((s_{i,j})_{i,j=0}^3,(r_{i,j})_{i,j=0}^3,(t_{i,j})_{i,j=0}^3,(\alpha_i)_{i=0}^3,(\beta_j)_{j=0}^3,(\gamma^{(0)}_k)_{k=0}^3,(\gamma^{(1)}_k)_{k=0}^3\right).
\]

\begin{de}\phantom{emptytext}
\begin{enumerate}
\item We say that $\theta$ is {\em valid} when $\theta=0$ or when $s_{0,0}=-1$ and $r_{i,j},t_{i,j},\alpha_i,\beta_j,\gamma^{(0)}_k,\gamma^{(1)}_k\geq0$ for all $i,j,k\in\{0,\ldots,3\}$ and $s_{i,j}\geq0$ for all $(i,j)\in\{0,\ldots,3\}^2\setminus\{(0,0)\}$.
\item We say that $\theta$ is {\em weakly valid} when $\alpha_i,\beta_j,\gamma^{(0)}_k,\gamma^{(1)}_k\geq0$ for all $i,j,k\in\{0,\ldots,3\}$.
\end{enumerate}
\end{de}

Thus $\theta$ is weakly valid if and only if its negative support is contained in the areas $X,Y,Z$ of Figure~\ref{fig:triangle-areas}.

\mybreak
For $d\geq 11$ and $s=(s_{i,j})_{(i,j)\in V_d}\in H^{V_d}$ weakly valid, we write
\[
\contr_d(s):=\left((s_{i,j})_{i,j=0}^3,(r_{i,j})_{i,j=0}^3,(t_{i,j})_{i,j=0}^3,(\alpha_i)_{i=0}^3,(\beta_j)_{j=0}^3,(\gamma^{(0)}_k)_{k=0}^3,(\gamma^{(1)}_k)_{k=0}^3\right)\in H^{\Xi},
\]
where we have
\begin{eqnarray*}
r_{i,j}&:=&s_{i,d-3-i+j}\quad\mbox{for }i,j\in\{0,\ldots,3\},\\
t_{i,j}&:=&s_{d-3-j+i,j}\quad\mbox{for }i,j\in\{0,\ldots,3\},\\
\alpha_i&:=&\rev{\textstyle\sum_{j=4}^{d-4-i}s_{i,j}}
\quad\mbox{for }i\in\{0,\ldots,3\},\\
\beta_j&:=&\rev{\textstyle\sum_{i=4}^{d-4-j} s_{i,j}}\quad\mbox{for }j\in\{0,\ldots,3\},\\
\gamma_k^{(0)}&:=&\rev{\textstyle\sum_{\ell=2}^{\floor{(d-4-k)/2}} s_{2\ell, d-2\ell-k}}\quad\mbox{for }k\in\{0,\ldots,3\},\\
\gamma_k^{(1)}&:=&\rev{\textstyle\sum_{\ell=2}^{\floor{(d-5-k)/2}} s_{2\ell+1, d-(2\ell+1)-k}}\quad\mbox{for }k\in\{0,\ldots,3\}.
\end{eqnarray*}

Let $s_1,\ldots,s_k\in H\setminus\{-1\}$. Then $s_1+\ldots+s_k$ always consists of a single element, namely the element $\max(s_1,\ldots,s_k)$. So the weakly valid assumption ensures that the hyperfield sums in this definition evaluate to a single element of $H$. Note that when $s\in H^{V_d}$ is (weakly) valid, then $\contr_d(s)$ is (weakly) valid as well.

\mybreak
Let the coordinates $x_{i,j},y_{i,j},z_{i,j},b_j,c_i,d^{(0)}_k,d^{(1)}_k$ be dual to $s_{i,j},r_{i,j},t_{i,j},\alpha_i,\beta_j,\gamma^{(0)}_k,\gamma^{(1)}_k$. This allows us to view $\{\widehat{\phi}\mid\phi\in\Phi_1\}$, $\{\widehat{\phi}^{\mathrm{even}}\mid\phi\in\Phi_2\}$ and $\{\widehat{\phi}^{\mathrm{odd}}\mid\phi\in\Phi_2\}$ as sets of equations on $H^\Xi$. See Figure~\ref{fig:contr_d} for a visualisation of $\contr_d$.

\begin{figure}[h]
\[
\begin{array}{lllllllllllllllll}
y_{0,3}\\
y_{0,2}&y_{1,3}\\
y_{0,1}&y_{1,2}&y_{2,3}\\
y_{0,0}&y_{1,1}&y_{2,2}&y_{3,3}\\
c_0&y_{1,0}&y_{2,1}&y_{3,2}&d^{(0)}_0\\
c_0&c_1&y_{2,0}&y_{3,1}&d^{(0)}_1&d^{(1)}_0\\
c_0&c_1&c_2&y_{3,0}&d^{(0)}_2&d^{(1)}_1&d^{(0)}_0\\
c_0&c_1&c_2&c_3&d^{(0)}_3&d^{(1)}_2&d^{(0)}_1&d^{(1)}_0\\
c_0&c_1&c_2&c_3&&d^{(1)}_3&d^{(0)}_2&d^{(1)}_1&d^{(0)}_0\\
c_0&c_1&c_2&c_3&&&d^{(0)}_3&d^{(1)}_2&d^{(0)}_1&d^{(1)}_0\\
c_0&c_1&c_2&c_3&&&&d^{(1)}_3&d^{(0)}_2&d^{(1)}_1&d^{(0)}_0\\
c_0&c_1&c_2&c_3&&&&&d^{(0)}_3&d^{(1)}_2&d^{(0)}_1&d^{(1)}_0\\
c_0&c_1&c_2&c_3&&&&&&d^{(1)}_3&d^{(0)}_2&d^{(1)}_1&d^{(0)}_0\\
x_{0,3}&x_{1,3}&x_{2,3}&x_{3,3}&b_3&b_3&b_3&b_3&b_3&b_3&z_{0,3}&z_{1,3}&z_{2,3}&z_{3,3}\\
x_{0,2}&x_{1,2}&x_{2,2}&x_{3,2}&b_2&b_2&b_2&b_2&b_2&b_2&b_2&z_{0,2}&z_{1,2}&z_{2,2}&z_{3,2}\\
x_{0,1}&x_{1,1}&x_{2,1}&x_{3,1}&b_1&b_1&b_1&b_1&b_1&b_1&b_1&b_1&z_{0,1}&z_{1,1}&z_{2,1}&z_{3,1}\\
x_{0,0}&x_{1,0}&x_{2,0}&x_{3,0}&b_0&b_0&b_0&b_0&b_0&b_0&b_0&b_0&b_0&z_{0,0}&z_{1,0}&z_{2,0}&z_{3,0}
\end{array}
\]
\caption{A visualisation of $\contr_d$ for $d=16$.}\label{fig:contr_d}
\end{figure}

\begin{de}
Let $\theta\in H^\Xi$. We define the {\em positive support} of $\theta$ to be the set $\supp^+(\theta)$ of symbols $x_{i,j},y_{i,j},z_{i,j},b_j,c_i,d^{(0)}_k,d^{(1)}_k$ with $i,j,k\in\{0,\ldots,3\}$ such that the symbol evaluated at $\theta$ equals $1$.
\end{de}

\begin{ex}
Let
\[
\theta=\left((s_{i,j})_{i,j=0}^3,(r_{i,j})_{i,j=0}^3,(t_{i,j})_{i,j=0}^3,(\alpha_i)_{i=0}^3,(\beta_j)_{j=0}^3,(\gamma^{(0)}_k)_{k=0}^3,(\gamma^{(1)}_k)_{k=0}^3\right)\in H^\Xi
\]
be defined by $s_{0,0}=-1$, $s_{0,3}=s_{1,1}=s_{3,0}=\gamma^{(0)}_0=\gamma^{(1)}_0=1$ and by setting all other entries to $0$. Then $\theta$ is valid and $\supp^+(\theta)=\{x_{0,3},x_{1,1},x_{3,0},d^{(0)}_0,d^{(1)}_0\}$.
\end{ex}

\rev{For the next proposition} we use the following notation:
\begin{enumerate}
\item Denote by $\Gamma_d$ the set of valid $s\in H^{V_d}$ of degree $d$ such that $\sgn(\phi)(s)=H$ for all $\phi\in\Phi$. 
\item Denote by $\Gamma^{\mathrm{even}}$ the set of valid $\theta\in H^\Xi$ such that $\widehat{\phi}^{\mathrm{even}}(\theta)=H$ for all $\phi\in\Phi$. 
\item Denote by $\Gamma^{\mathrm{odd}}$ the set of valid $\theta\in H^\Xi$ such that $\widehat{\phi}^{\mathrm{odd}}(\theta)=H$ for all $\phi\in\Phi$.  
\end{enumerate}
Here we set $\widehat{\phi}^{\mathrm{even}}:=\widehat{\phi}^{\mathrm{odd}}:=\widehat{\phi}$ for all $\phi\in\Phi_1$. \rrev{For fixed $d$,} we view $\contr_d$ as a map from the space of weakly valid outcomes in $H^{V_d}$ to $H^{\Xi}$.

\begin{prop}\label{prop:hyperfield_contractions}
Assume that $d\geq 12$. Then 
\[
\Gamma_d=\left\{\begin{array}{cl}\contr_d^{-1}(\Gamma^{\mathrm{even}})&\mbox{when $d$ is even},\\\contr_d^{-1}(\Gamma^{\mathrm{odd}})&\mbox{when $d$ is odd}.\end{array}\right. 
\]
In particular, we have $\contr_d(\sgn(w))\in\Gamma^{\mathrm{even}}\cup\Gamma^{\mathrm{odd}}$ for all valid outcomes $w\in\ZZ^{V_d}$. 
\end{prop}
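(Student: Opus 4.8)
The plan is to identify $\contr_d$ with the operation of substituting a weakly valid sign configuration into the grouped variables $x_{i,j},y_{i,j},z_{i,j},b_j,c_i,d_k^{(0)},d_k^{(1)}$, so that $\Gamma_d$ and $\Gamma^{\mathrm{even}}$ (resp.\ $\Gamma^{\mathrm{odd}}$) end up being cut out by exactly the same equations. I would first record two elementary facts about $\contr_d$. A valid $s\in H^{V_d}$ is in particular weakly valid, so $\contr_d(s)$ is defined, and — as noted just before the proposition — $\contr_d$ sends (weakly) valid configurations to (weakly) valid ones. Conversely, since $\contr_d$ merely relabels the corner entries of $s$ and replaces each strip contained in the areas $B\cup C\cup D$ — on which weak validity already forces $s\geq0$ — by its (hence nonnegative) sum, validity of $\contr_d(s)$ forces $s$ itself to be valid. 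Moreover, because $\psi_d\in\Phi$, whenever $s$ is valid the value $\sgn(\psi_d)(s)$ is the value at $s$ of the alternating sum over the top row of $V_d$, whose entries are then nonnegative; if this value equals $H$ it is not a singleton, so at least two top-row entries of $s$ equal $1$ and $\deg(s)=d$. Thus the clause ``$\deg(s)=d$'' is automatic once $s$ is valid and the $\Phi$-equations hold.

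\mybreak

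The heart of the proof is the evaluation identity: for $d\geq12$, every $\phi\in\Phi$, and every weakly valid $s\in H^{V_d}$,
\[
\sgn(\phi)(s)=\widehat{\phi}^{\mathrm{even}}(\contr_d(s))\ \text{ if $d$ is even},\qquad \sgn(\phi)(s)=\widehat{\phi}^{\mathrm{odd}}(\contr_d(s))\ \text{ if $d$ is odd},
\]
with the convention $\widehat{\phi}^{\mathrm{even}}=\widehat{\phi}^{\mathrm{odd}}=\widehat{\phi}$ for $\phi\in\Phi_1$. By Lemmas~\ref{lm:Phi1} and~\ref{lm:Phi2} (both applicable when $d\geq12$), after regrouping the monomials $x_{i,j}$ into the blocks listed above, $\sgn(\phi)$ becomes the $d$-independent linear form $\widehat{\phi}^{\mathrm{even/odd}}$. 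Hence evaluating the left-hand side at $s$ and the right-hand side at $\contr_d(s)$ amounts to evaluating the same $H$-linear combination, the only difference being that on the right each block of entries has first been collapsed to its hyperfield sum. Equality of the two evaluations then follows from associativity and distributivity of hyperaddition (hyperfield axioms (c) and (d)): one re-brackets the ungrouped hypersum $\sum_{\nu}\varepsilon_\nu s_\nu$ block by block and pulls the constant coefficient out of each block, reaching $\sum_{\mathrm{blocks}}\varepsilon_B\bigl(\sum_{\nu\in B}s_\nu\bigr)$. This is exactly where weak validity is used: every $s_\nu$ inside a block lies in $H\setminus\{-1\}=\{0,1\}$, so the block sum $\sum_{\nu\in B}s_\nu$ is the single element $\max\{s_\nu\}$ — by definition the corresponding component of $\contr_d(s)$ — and nothing is lost in passing to this singleton.

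\mybreak

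Granting these, the set equality is immediate: for $d$ even, $s\in\Gamma_d$ iff $s$ is valid, $\deg(s)=d$, and $\sgn(\phi)(s)=H$ for all $\phi\in\Phi$; by the first paragraph (which also makes the degree clause redundant once the $\psi_d$-equation holds) together with the evaluation identity, this is equivalent to $\contr_d(s)$ being valid with $\widehat{\phi}^{\mathrm{even}}(\contr_d(s))=H$ for all $\phi\in\Phi$, i.e.\ to $\contr_d(s)\in\Gamma^{\mathrm{even}}$; the odd case is identical. For the ``in particular'' clause, take a valid outcome $w\in\ZZ^{V_d}$ of degree $d$: then $\sgn(w)$ is a valid degree-$d$ sign configuration, all Pascal equations vanish on the outcome $w$, so by Proposition~\ref{prop:R2H} each $\sgn(\phi)$ vanishes at $\sgn(w)$, and Proposition~\ref{sign-pascal-valid-hyperfield} upgrades this to $\sgn(\phi)(\sgn(w))=H$ for all $\phi\in\Phi$; hence $\sgn(w)\in\Gamma_d$ and therefore $\contr_d(\sgn(w))\in\Gamma^{\mathrm{even}}\cup\Gamma^{\mathrm{odd}}$. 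The one genuine obstacle is the evaluation identity — more precisely, making sure the multivaluedness of hyperaddition does not corrupt the substitution $s\mapsto\contr_d(s)$ — and the weak-validity hypothesis is precisely the device that rules this out; everything else is bookkeeping with the explicit forms supplied by Lemmas~\ref{lm:Phi1} and~\ref{lm:Phi2}.
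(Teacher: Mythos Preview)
Your proof is correct and follows essentially the same route as the paper: invoke Lemmas~\ref{lm:Phi1} and~\ref{lm:Phi2} to get the evaluation identity $\sgn(\phi)(s)=\widehat{\phi}^{\mathrm{even/odd}}(\contr_d(s))$, deduce the set equality, and then use Proposition~\ref{sign-pascal-valid-hyperfield} for the ``in particular'' clause. You are in fact more careful than the paper on several points it glosses over---why weak validity makes the block sums singletons so the substitution is lossless, why validity of $\contr_d(s)$ forces validity of $s$, and why the $\psi_d$-equation makes the degree clause in $\Gamma_d$ redundant---so this is the paper's argument with the bookkeeping filled in.
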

\begin{proof}
For $d$ even, Lemmas~\ref{lm:Phi1} and~\ref{lm:Phi2} imply $\widehat{\phi}^{\mathrm{even}}(\mathrm{contr}_d(s)) = \sign(\phi)(s)$ for all $s\in H^{V_d}$ and $\phi\in \Phi$. Thus $s\in \Gamma_d$ is equivalent to $\mathrm{contr}_d(s)\in \Gamma^{\mathrm{even}}$ for all $s\in H^{V_d}$. The same reasoning applies when $d$ is odd, which proves the first statement.
The second~ statement follows because $\sign(w)\in\Gamma_d$ by Proposition~\ref{sign-pascal-valid-hyperfield}.
\end{proof}

\subsection{Valid outcomes of positive support \texorpdfstring{$4$}{4}}

We now finally classify the valid outcomes $w\in\ZZ^{V_d}$ whose positive support has size $4$.

\begin{thm}\label{thm:possup=4}
Let $w\in\ZZ^{V_d}$ be a valid outcome and suppose that $\#\supp^+(w)=4$. Then $\deg(w)\leq 5$.
\end{thm}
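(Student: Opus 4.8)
The plan is to argue by contradiction. Suppose $w\in\ZZ^{V_d}$ is a valid outcome with $\#\supp^+(w)=4$ and $d:=\deg(w)\geq 6$, and derive a contradiction. I will use the transpose symmetry $(12)$, which preserves validity, degree, and $\#\supp^+$ (Subsection~\ref{subsec:symmetry}), and --- whenever only the support of an outcome matters, e.g.\ when applying the Invertibility Criterion --- the full $S_3$-symmetry of the space of outcomes. The argument splits into a large-degree case $d\geq 12$, handled by the contraction machinery of Subsection~\ref{subsec:contraction-hyperfield}, and a small-degree case $6\leq d\leq 11$, handled by a direct finite computation.

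For $d\geq 12$, I would apply Proposition~\ref{prop:hyperfield_contractions}: the contracted sign configuration $\theta:=\contr_d(\sgn(w))$ is a valid element of $\Gamma^{\mathrm{even}}\cup\Gamma^{\mathrm{odd}}$, and since $\contr_d$ only copies or adds nonnegative entries it cannot enlarge the positive support, so $\#\supp^+(\theta)\leq\#\supp^+(w)=4$. The first step is therefore to enumerate all valid $\theta\in\Gamma^{\mathrm{even}}\cup\Gamma^{\mathrm{odd}}$ with at most four positive coordinates; this is a finite search over the fixed finite set $H^{\Xi}$, and the conditions of Remark~\ref{re:hyperfield_conditions_on_place} (the positive support must meet every top-degree corner rectangle and must meet both $S_k^+$ and $S_k^-$ and their transposes for every $k$) already force the positive support to spread across all three corners and to contain at least two top-degree entries with first coordinates of opposite parity, which prunes the list drastically. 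For each surviving $\theta$, the block $((s_{i,j}),(r_{i,j}),(t_{i,j}))$ pins down $\supp^+(w)$ inside the width-four outer ring of $V_d$ exactly, and $((\alpha_i),(\beta_j),(\gamma^{(0)}_k),(\gamma^{(1)}_k))$ records which of the interior bands $B$, $C$, $D^{(0)}$, $D^{(1)}$ still contain a positive entry of $w$; since $w$ has only four positive entries in total (and possibly fewer outside the outer ring when $\#\supp^+(\theta)<4$, i.e.\ when two positive entries of $w$ got merged), $\supp(w)=\supp^+(w)\cup\{(0,0)\}$ is determined up to finitely many families, each a fixed set of outer-ring points together with at most one or two points ranging over an explicit interior band. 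For each such candidate support $S$ I would then produce, via Propositions~\ref{prop:divide},~\ref{prop:shift} and~\ref{prop:conquer}, a set $E$ of the same size with $A^{(d)}_{E,S}$ invertible uniformly in $d$; by the Invertibility Criterion this forces $w=0$, contradicting $\#\supp^+(w)=4$. For the few candidate supports not covered directly by Proposition~\ref{prop:conquer}, I would instead solve the Pascal equations of degree $d$ on the span of $S\cup\{(0,0)\}$ and check the solution space contains no nonzero valid configuration.

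For $6\leq d\leq 11$, I would treat each of the six values of $d$ by a direct finite search: up to the $S_3$-action, list the size-four subsets $S\subseteq V_d$ compatible with Remark~\ref{re:hyperfield_conditions_on_place}, and for each compute over $\QQ$ the space of chip configurations supported on $S\cup\{(0,0)\}$ lying in $\ker\alpha_d$ (Lemma~\ref{lm:outcomes=kernel}); in every case this space contains no valid vector other than $0$. This is the computation I would carry out with Sage. Combining the two cases rules out $d\geq 6$, so $\deg(w)\leq 5$.

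The step I expect to be the main obstacle is the bookkeeping in the $d\geq 12$ case: making the enumeration of small-support elements of $\Gamma^{\mathrm{even}}\cup\Gamma^{\mathrm{odd}}$ genuinely exhaustive, translating each such $\theta$ faithfully back into the family of possible supports of $w$ (including the merge sub-cases and, at the lower end of the range, the degenerate overlaps between the outer-ring areas $X$, $Y$, $Z$, $B$, $C$, $D$), and then, for every residual family, actually exhibiting a uniformly invertible pairing matrix instead of having to fall back on an ad hoc computation. The hyperfield conditions do most of the pruning; checking that the Invertibility Criterion really disposes of every surviving configuration is where the effort concentrates.
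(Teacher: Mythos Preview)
Your plan is essentially the paper's own proof: split at $d=12$, use the contraction $\contr_d$ and Proposition~\ref{prop:hyperfield_contractions} for $d\geq 12$, and do a finite hyperfield-plus-linear-algebra search for $6\leq d\leq 11$. The one thing you will discover when you actually run the enumeration is that it is much cleaner than you anticipate: there are \emph{no} valid elements of $\Gamma^{\mathrm{even}}\cup\Gamma^{\mathrm{odd}}$ with positive support of size $\leq 4$, so the contraction step alone disposes of all $d\geq 12$ with nothing to translate back; likewise the Hyperfield Criterion already empties $\Omega_d$ for $8\leq d\leq 11$, and for $d\in\{6,7\}$ it leaves exactly eight candidate positive supports, each of which the Invertibility Criterion kills directly.
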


Let $\Omega_d$ be the set of valid $s=(s_{i,j})_{(i,j)\in V_d}\in H^{V_d}$ of degree $d$ such that $\#\supp^+(s)=4$ and 
\[
\sgn(\psi_k)(s)=\sgn(\overline{\psi}_k)(s)=\sgn(\varphi_{a,b})(s)=H
\]
for all $k\in\{0,\ldots,d\}$ and $(a,b)\in V_d$ of degree $d$. We start with the following lemma.

\begin{lm}\label{lm:hyperfield_supp+=4_cases}
Let $s\in H^{V_d}$ be valid of degree $d$ such that $\#\supp^+(s)\leq 4$.
\begin{enumerate}
\item If $d=6$, then $s\in\Omega_d$ if and only if $\supp^+(s)$ is one of the following sets:
\[
\{(0, 3), (1, 5), (4, 1), (6, 0)\},
\{(0, 5), (1, 1), (3, 3), (6, 0)\},
\{(0, 6), (1, 1), (3, 3), (5, 0)\},
\]
\[
\{(0, 6), (1, 1), (3, 3), (6, 0)\},
\{(0, 6), (1, 4), (3, 0), (5, 1)\}.
\]
\item If $d=7$, then $s\in\Omega_d$ if and only if $\supp^+(s)$ is one of the following sets:
\[
\{(0, 7), (1, 1), (3, 3), (7, 0)\},
\{(0, 7), (1, 3), (5, 1), (7, 0)\},
\{(0, 7), (1, 5), (3, 1), (7, 0)\}.
\]
\item If $d\in\{8,\ldots,11\}$, then $s\not\in\Omega_d$.
\item If $d\geq 12$, then $s\not\in\Gamma_d$.
\end{enumerate}
\end{lm}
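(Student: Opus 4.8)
\emph{Overview and parts (a)--(c).} The plan is to settle parts (a)--(c) by a direct finite enumeration and part (d) by using the contraction $\contr_d$ to collapse the infinite family $d\geq 12$ into one $d$-independent finite check. For parts (a)--(c), fix $d\in\{6,\ldots,11\}$. If $\#\supp^+(s)<4$ then $s\notin\Omega_d$ by definition, so assume $\#\supp^+(s)=4$. Then $s$ is determined by the four-element set $T:=\supp^+(s)\subseteq V_d\setminus\{(0,0)\}$ together with $s_{0,0}=-1$ (a nonzero valid sign configuration has $s_{0,0}=-1$), and $\deg(s)=d$ forces $\max_{(i,j)\in T}\deg(i,j)=d$. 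By Proposition~\ref{sign-pascal-valid-hyperfield}, for such an $s$ the defining conditions of $\Omega_d$ --- that $\sgn(\psi_k)(s)$, $\sgn(\overline{\psi}_k)(s)$ and $\sgn(\varphi_{a,b})(s)$ all equal $H$ --- are equivalent to asking that each of these forms merely vanish at $s$, and by the description of $\sgn(\psi_k),\sgn(\overline{\psi}_k),\sgn(\varphi_{a,b})$ in Lemma~\ref{sign-pascal-equations} together with the reformulation in Remark~\ref{re:hyperfield_conditions_on_place}, this amounts to the purely combinatorial conditions (1)--(3) stated there, phrased in terms of $T$. I would therefore, for each $d\in\{6,\ldots,11\}$, run over the finitely many admissible $T$ and test conditions (1)--(3); this is a routine finite computation (which we carry out with the aid of Sage) and it returns exactly the five sets in (a), the three sets in (b), and nothing in (c).

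\emph{Part (d).} Suppose for contradiction that $s\in\Gamma_d$ for some $d\geq 12$ with $\#\supp^+(s)\leq 4$. Since $\deg(s)=d\geq 1$, the configuration $s$ is nonzero and valid, hence $s_{0,0}=-1$; being valid it is weakly valid, so $\theta:=\contr_d(s)$ is defined and valid, and its $s_{0,0}$-coordinate equals $s_{0,0}=-1$, so $\theta\neq 0$. By Proposition~\ref{prop:hyperfield_contractions} we have $\theta\in\Gamma^{\mathrm{even}}$ when $d$ is even and $\theta\in\Gamma^{\mathrm{odd}}$ when $d$ is odd. Moreover each coordinate of $\theta$ is a hyperfield sum of entries $s_{i,j}$ over index sets that are pairwise disjoint (the areas $X,Y,Z,B,C,D$ of Figure~\ref{fig:contr_d}) and whose entries all lie in $\{0,1\}$; hence a coordinate of $\theta$ can equal $1$ only if some $s_{i,j}$ in its index set equals $1$, so $\#\supp^+(\theta)\leq\#\supp^+(s)\leq 4$. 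It therefore suffices to show that neither $\Gamma^{\mathrm{even}}$ nor $\Gamma^{\mathrm{odd}}$ contains a nonzero valid $\theta\in H^\Xi$ with $\#\supp^+(\theta)\leq 4$. Since $\Xi$ is a fixed finite set and, by Lemmas~\ref{lm:Phi1} and~\ref{lm:Phi2}, the forms $\widehat{\phi}^{\mathrm{even}}$ and $\widehat{\phi}^{\mathrm{odd}}$ for $\phi\in\Phi$ do not depend on $d$, this is a single finite check independent of $d$: enumerate the finitely many candidate positive supports $\supp^+(\theta)\subseteq\Xi$ of size at most $4$ avoiding the $s_{0,0}$-coordinate, form the corresponding valid $\theta$, and verify that some $\widehat{\phi}^{\mathrm{even}}$ (respectively $\widehat{\phi}^{\mathrm{odd}}$) fails to evaluate to $H$ at it. We carry this out with Sage, and in every case the verification goes through, contradicting $\theta\in\Gamma^{\mathrm{even}}\cup\Gamma^{\mathrm{odd}}$.

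\emph{Main obstacle.} The mathematical content is light; the real obstacle is implementational accuracy --- generating the linear forms $\sgn(\psi_k),\sgn(\overline{\psi}_k),\sgn(\varphi_{a,b})$, their contractions $\widehat{\phi}^{\mathrm{even}},\widehat{\phi}^{\mathrm{odd}}$, and the map $\contr_d$ exactly as specified (in particular the parity-dependent splitting of area $D$ into $D^{(0)}$ and $D^{(1)}$), and making sure the enumerations in (a)--(c) impose the correct degree-$d$ constraint on $T$. Once these ingredients are in place, the enumerations in (a)--(d) are small enough to run and inspect directly.
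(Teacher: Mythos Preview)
Your proposal is correct and follows essentially the same approach as the paper: parts (a)--(c) are handled by a direct finite enumeration over admissible four-element supports $T\subseteq V_d\setminus\{(0,0)\}$ for each $d\in\{6,\ldots,11\}$, and part (d) is reduced via $\contr_d$ and Proposition~\ref{prop:hyperfield_contractions} to the $d$-independent finite check that $\Gamma^{\mathrm{even}}\cup\Gamma^{\mathrm{odd}}$ contains no valid element of positive support $\leq 4$. Your write-up in fact supplies more justification than the paper does (e.g.\ the inequality $\#\supp^+(\contr_d(s))\leq\#\supp^+(s)$ via disjointness of the index sets, and the appeal to Proposition~\ref{sign-pascal-valid-hyperfield} to reduce ``$=H$'' to mere vanishing), which is fine.
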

\begin{proof}
Parts (a)-(c) are verified by computer. For (d), we verify by computer that $\Gamma^{\mathrm{even}}$ and $\Gamma^{\mathrm{odd}}$ do not contain any \rev{vertices} whose positive support has size $\leq 4$. This is possible since the sets $H^{\Xi}$ and $\Phi$ are finite. Thus by Proposition~\ref{prop:hyperfield_contractions} we have $s\not\in\Gamma_d$.
\end{proof}

\begin{proof}[Proof of Theorem~\ref{thm:possup=4}]
Let $w\in\ZZ^{V_d}$ be a valid outcome and suppose that $\#\supp^+(w)=4$. We may assume that $\deg(w)=d$. Suppose that $\deg(w)\geq 6$. Take $s:=\sgn(w)$. Then $s\in\Omega_d\subseteq\Gamma_d$.
By Lemma~\ref{lm:hyperfield_supp+=4_cases}, $\deg(s)\in\{6,7\}$ and there are only $8$ possibilities for $\supp^+(w)$.
In every case, it is easy to verify that no valid $w$ with such a positive support exist using the Invertibility Criterion. Hence $\deg(w)\leq 5$.
\end{proof}

\section{Valid outcomes of positive support \texorpdfstring{$5$}{5}}\label{sec:supp+==5}

In this section we prove Conjecture~\ref{thm:main_outcomes} for valid outcomes whose positive support has size~$5$. To do this we introduce our third tool, the Hexagon Criterion, illustrated in Figure~\ref{fig:hexagon}. 
\definecolor{hexcolor}{HTML}{FFE194} 
\newcommand{\hexcolor}{orange} 
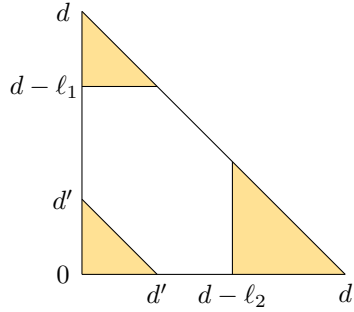
\begin{figure}[h]
\begin{tikzpicture}[scale=0.5]
\fill[hexcolor]
  (0,5) -- (2,5) -- (0,7);
\fill[hexcolor]
  (0,0) -- (0,2) -- (2,0);
\fill[hexcolor]
  (4,0) -- (4,3) -- (7,0);
\draw (0,0) -- (7,0);
\draw (0,0) -- (0,7);
\draw (7,0) -- (0,7);
\draw (0,5) -- (2,5);
\draw (0,2) -- (2,0);
\draw (4,0) -- (4,3);
\node at (-.5,0) {$0$};
\node at (-.5,7) {$d$};
\node at (7,-.5) {$d$};
\node at (-1,5) {$d-\ell_1$};
\node at (4,-.572) {$d-\ell_2$};
\node at (-.5,2) {$d'$};
\node at (2,-.5) {$d'$};
\end{tikzpicture}
\caption{Illustration of the Hexagon Criterion. If $w$ is an outcome whose support is contained in the \hexcolor{} area, then its restriction $w'$ to the bottom-left \hexcolor{} triangle is also an outcome. If in addition $w$ is valid, then $\supp(w)$ is entirely contained in the bottom-left \hexcolor{} triangle. \moved}\label{fig:hexagon}
\end{figure}

\subsection{The Hexagon Criterion}

Let $\ell_1,\ell_2\geq d'\geq1$ be integers such that $d'+\ell_1+\ell_2\leq d$. Let $w=(w_{i,j})_{(i,j)\in V_d}\in\ZZ^{V_d}$ be a chip configuration and write $w'=(w_{i,j})_{(i,j)\in V_{d'}}\in\ZZ^{V_{d'}}$.

\begin{prop}[Hexagon Criterion]\label{hexagon-criterion}
Suppose that
\[
\supp(w)\subseteq V_{d'}\cup\{(i,j)\in V_d\mid j> d-\ell_1\}\cup \{(i,j)\in V_d\mid i> d-\ell_2\}
\]
holds. Then the following statements hold:
\begin{enumerate}
\item If $w'$ is not an outcome, then $w$ is not an outcome.
\item If $w$ is a valid outcome, then $\deg(w)\leq d'$.
\end{enumerate}
\end{prop}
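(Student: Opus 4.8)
The plan is to deduce (b) from (a), and to prove (a) by translating to polynomials via Lemma~\ref{lm:outcomes=kernel}. For (a) it suffices to prove the contrapositive: if $w$ is an outcome then so is $w'$, i.e.\ if $\alpha_d(w)=0$ then $\alpha_{d'}(w')=0$. The support hypothesis partitions $\supp(w)$ into $A:=\supp(w)\cap V_{d'}$, $B:=\{(i,j)\in\supp(w)\mid j>d-\ell_1\}$ and $C:=\{(i,j)\in\supp(w)\mid i>d-\ell_2\}$; these are pairwise disjoint, since $A$ has degree $\le d'$ while $B$ and $C$ consist of points of degree $>d'$ (as $\ell_1,\ell_2<d-d'$), and $B\cap C=\emptyset$ because a point in both would satisfy $i+j>2d-\ell_1-\ell_2\ge d+d'>d$. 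Correspondingly $\alpha_d(w)=P_A+P_B+P_C$, where $P_A=\alpha_{d'}(w')$ has degree $\le d'$, and where $P_B=(1-t)^{d-\ell_1+1}Q_B$ and $P_C=t^{d-\ell_2+1}Q_C$ with $\deg Q_B\le\ell_1-1$ and $\deg Q_C\le\ell_2-1$ (each of $P_B,P_C$ is a combination of monomials $t^i(1-t)^j$ of total degree $\le d$ with $j\ge d-\ell_1+1$, resp.\ $i\ge d-\ell_2+1$).

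Now I would use $P_A=-(1-t)^{d-\ell_1+1}Q_B-t^{d-\ell_2+1}Q_C$ and expand $P_A/(1-t)^{d-\ell_1+1}$ as a power series at $t=0$: since $t^{d-\ell_2+1}Q_C/(1-t)^{d-\ell_1+1}$ vanishes to order $\ge d-\ell_2+1$, the coefficient of $t^k$ in $P_A/(1-t)^{d-\ell_1+1}$ equals $-[t^k]Q_B$ for $k\le d-\ell_2$, hence is $0$ for $\ell_1\le k\le d-\ell_2$. Writing $P_A=\sum_l p_l t^l$ (so $p_l=0$ for $l>d'$, and $d'\le m:=d-\ell_1-\ell_2$), these $m+1$ equations read
\[
\sum_{l=0}^{m}\binom{d+r-l}{d-\ell_1}\,p_l=0\qquad(r=0,\dots,m).
\]
Everything then comes down to the step I expect to be the \emph{main obstacle}: showing that the $(m+1)\times(m+1)$ matrix $M=\big(\binom{d+r-l}{d-\ell_1}\big)_{r,l=0}^m$ is invertible, which forces $p=0$, i.e.\ $P_A=\alpha_{d'}(w')=0$, proving (a). I would prove invertibility by a lattice-path argument: $\binom{d+r-l}{d-\ell_1}$ is the number of monotone lattice paths from $(0,l)$ to $(d-\ell_1,\ell_1+r)$, and by the Lindström–Gessel–Viennot lemma $\det M$ equals the number of families of pairwise vertex-disjoint monotone paths $\pi_l\colon(0,l)\to(d-\ell_1,\ell_1+l)$, $l=0,\dots,m$ (the sources $(0,l)$ and sinks $(d-\ell_1,\ell_1+l)$ are arranged so that only the identity permutation can contribute, since a monotone path from a lower source to a higher sink must meet one from a higher source to a lower sink). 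Such a family exists — e.g.\ let $\pi_l$ go right to $(m-l,l)$, then up to $(m-l,\ell_1+l)$, then right to $(d-\ell_1,\ell_1+l)$, which one checks are disjoint, using $m<d-\ell_1$ (as $\ell_2\ge1$) — so $\det M>0$. (Alternatively, the Vandermonde identity $\binom{d+r-l}{d-\ell_1}=\sum_j\binom{r}{j}\binom{d-l}{d-\ell_1-j}$ factors $M$ as a unitriangular matrix times $\big(\binom{d-l}{d-\ell_1-j}\big)_{j,l}$, reducing the question to that smaller matrix.)

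Finally, (b) follows from (a). If $w$ is a valid outcome then $w'$ is an outcome; extending $w'$ by zeros to $V_d$ keeps it an outcome, so $w-w'$ is an outcome, supported on $B\cup C$. Every point of $B\cup C$ has degree $>d'\ge1$, so is not $(0,0)$, whence validity of $w$ gives $(w-w')_{i,j}=w_{i,j}\ge0$ there; thus $\supp^-(w-w')=\emptyset$, and Lemma~\ref{lm:supp-=empty} gives $w=w'$, i.e.\ $\supp(w)\subseteq V_{d'}$ and $\deg(w)\le d'$.
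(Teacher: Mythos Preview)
Your proof is correct, and part (b) is essentially identical to the paper's argument. For part (a), however, you take a genuinely different route. The paper works entirely in the Pascal-equation framework: for $k=0,\dots,d'$ it restricts $\varphi_{\ell_1+k,\,d-\ell_1-k}$ to $V_{d'}$, observes that the support hypothesis kills all contributions outside $V_{d'}$, and then shows these $d'+1$ restricted forms are linearly independent by evaluating the $(d'+1)\times(d'+1)$ matrix $\bigl(\binom{d-d'}{\ell_1+k-a}\bigr)_{k,a}$ via a hyperfactorial determinant identity quoted from~\cite{Darij}. You instead pass through the polynomial characterisation of Lemma~\ref{lm:outcomes=kernel}, extract $m+1$ linear relations on the monomial coefficients of $P_A$ from the power-series expansion of $P_A/(1-t)^{d-\ell_1+1}$, and prove the resulting $(m+1)\times(m+1)$ binomial matrix is nonsingular by Lindstr\"om--Gessel--Viennot. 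Your argument is self-contained (the LGV step plus an explicit disjoint path family replaces the external citation), while the paper's approach has the advantage of staying in the Pascal-equation language already set up and of producing a smaller $(d'+1)\times(d'+1)$ system. Both reductions ultimately hinge on the positivity of a binomial-coefficient determinant, just packaged differently.
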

\begin{proof}
(a)
We suppose that $w$ is an outcome and prove $w'$ is also an outcome. For $k\in\{0,\ldots,d'\}$, let $\widehat{\varphi}_k$ be the linear form obtained from $\varphi_{\ell_1+k,d-\ell_1-k}$ by setting $x_{i,j}$ to $0$ for all $(i,j)\in V_d$ with $\deg(i,j)>d'$. Then $\widehat{\varphi}_0,\ldots,\widehat{\varphi}_{d'}$ are Pascal equations on $\ZZ^{V_{d'}}$ and we have 
\[
\widehat{\varphi}_k(w')=\varphi_{\ell_1+k,d-\ell_1-k}(w)=0
\]
for all $k\in\{0,\ldots,d'\}$. We next prove that these equations are linearly independent. For $a\in\{0,\ldots,d'\}$ define $e^{(a)}=(e^{(a)}_{i,j})_{(i,j)\in V_{d'}}\in\ZZ^{V_{d'}}$ by
\[
e^{(a)}_{i,j}:=\left\{\begin{array}{cl}1&\mbox{when $(i,j)=(a,d'-a)$},\\0&\mbox{otherwise}\end{array}\right.
\]
and consider the matrix
\[
A=\left(\widehat{\varphi}_k(e^{(a)})\right)_{k,a=0}^{d'}=\left(\binom{d-d'}{\ell_1+k-a}\right)_{k,a=0}^{d'}=\left(\binom{(d-d'-\ell_1)+\ell_1}{\ell_1+k-a}\right)_{k,a=0}^{d'}.
\]
If $A$ is invertible, then $\widehat{\varphi}_0,\ldots,\widehat{\varphi}_{d'}$ must be linearly independent.
Note that we have $0\leq\ell_1+k-a\leq d-d'$, so all entries of $A$ are nonzero. Also note that $d-d'-\ell_1 \geq \ell_2 \geq 0$. Applying Theorem~8 in the note~\cite{Darij} with $a:=d-d'-\ell_1, b:=\ell_1$ and $c:=d'+1$ yields
\[
\det(A)=\frac{H(\ell_1)H(d-d'-\ell_1)H(d'+1)H(d+1)}{H(d-d')H(d'+\ell_1+1)H(d-\ell_1+1)}\neq0,
\]
where $H(n)=1!2!\cdots n!$.
So $A$ is invertible and $\widehat{\varphi}_0,\ldots,\widehat{\varphi}_{d'}$ are $d'+1$ linearly independent Pascal equations on $\ZZ^{V_{d'}}$. These equations must be a basis of the space of all Pascal equations on $\ZZ^{V_{d'}}$. Since $\widehat{\varphi}_0(w')=\ldots=\widehat{\varphi}_{d'}(w')=0$, it follows that $w'$ is an outcome.

\itembreak
(b)
Suppose that $w$ is a valid outcome. Then $w'$ must also be an outcome by part (a). Extend $w'$ to an element $w''\in\ZZ^{V_d}$ by setting $w''_{i,j}=w'_{i,j}$ for $(i,j)\in V_{d'}$ and $w''_{i,j}=0$ for $(i,j)\in V_d$ with $\deg(i,j)>d'$. Then $w''$ is again an outcome. Now we see that $w-w''$ is an outcome with an empty negative support. So $w-w''$ must be the inital configuration by Lemma~\ref{lm:supp-=empty}. Hence $w=w''$ has degree $\leq d'$.
\end{proof}

\subsection{Valid outcomes of positive support \texorpdfstring{$5$}{5}}

In this subsection we use \rev{the techniques developed in the previous sections, plus the Hexagon Criterion, to prove that a valid outcome $w$ with positive support $5$ has degree $\leq 7$.} \rrev{We completed the computer-checked steps of the proof using the code published at \url{https://mathrepo.mis.mpg.de/ChipsplittingModels/}.}

\mybreak
\rev{Our strategy for this proof is to use the contraction map to reduce to finitely many cases, as we did in Section~\ref{sec:supp+==4}.}
We have $\#\supp^+(w)\leq 5$, \rev{therefore} $\supp^+(\contr_d(\sign(w)))$ also has size $\leq 5$. 
Recall that \rev{for $d$ large enough,} $\Gamma^{\mathrm{even}}$ and $\Gamma^{\mathrm{odd}}$ do not contain any elements with a positive support of size $\leq 4$ \rev{(Lemma~\ref{lm:hyperfield_supp+=4_cases})}. Therefore, $\contr_d(\sign(w))\in\Gamma^{\mathrm{even}}\cup \Gamma^{\mathrm{odd}}$ must in fact have a positive support of size exactly 5.
\rev{We} verify by computer that $\Gamma^{\mathrm{even}}$ contains $1283$ elements whose positive support has size $5$ and $\Gamma^{\mathrm{odd}}$ contains $1265$ such elements. Our strategy is to split into $1283+1265$ cases, and in each case assume that $\contr_d(\sgn(w))$ is some fixed element of $\Gamma^{\mathrm{even}}\cup \Gamma^{\mathrm{odd}}$. If we can show that none of these cases can occur we are done.

\mybreak
Before doing this, we make one simplification: write
\[
\Xi':= \{0,1,2,3\}^2\sqcup\{0,1,2,3\}^2\sqcup \{0,1,2,3\}^2\sqcup\{0,1,2,3\}\sqcup\{0,1,2,3\}\sqcup\{0,1,2,3\}
\]
and 
\[
\chi(s,r,t,\alpha,\beta,\gamma^{(0)},\gamma^{(1)}):=(s,r,t,\alpha,\beta,\gamma^{(0)}+\gamma^{(1)})
\]
for all weakly valid $(s,r,t,\alpha,\beta,\gamma^{(0)},\gamma^{(1)})\in H^{\Xi}$, where the addition of $\gamma^{(0)},\gamma^{(1)}$ is defined componentwise. The composition $\contr'_d:=\chi\circ\contr_d$ can be visualized is the same way as $\contr_d$. We again get Figure~\ref{fig:contr_d}, but now $d_k^{(0)}$ and $d_k^{(1)}$ are replaced by $d_k$.

\mybreak
Let $\Lambda\subseteq H^{\Xi'}$ be the set of elements $\chi(\theta)$ with $\theta\in \Gamma^{\mathrm{even}}\cup \Gamma^{\mathrm{even}}$ of positive support $5$. 

\begin{de}
Let $\theta'\in H^{\Xi'}$. We define the {\em positive support} of $\theta'$ to be the set $\supp^+(\theta')$ of symbols $x_{i,j},y_{i,j},z_{i,j},c_i,\rev{b}_j,d_k$ with $i,j,k\in\{0,\ldots,3\}$ such that the symbol evaluated at $\theta\rev{'}$ equals $1$.
\end{de}

Clearly, the elements of $\Lambda$ have a positive support of size $\leq 5$. 
It turns out that the positive support actually has size $5$ in all but one case.

\begin{lm}\label{lambda-positive-support-5}
Let $\theta'\in\Lambda$. Then exactly one of the following holds:
\begin{enumerate}
\item The element $\theta'$ has a positive support of size $5$.
\item We have $\theta'=\chi(\theta)$ where $\theta\in H^\Xi$ is valid with $\supp^+(\theta)=\{x_{0,3},x_{1,1},x_{3,0},d^{(0)}_0,d^{(1)}_0\}$.
\end{enumerate}
\end{lm}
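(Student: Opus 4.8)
The plan is to unwind the definition of $\Lambda$ and to track precisely how the map $\chi$ affects positive supports. By construction, every $\theta'\in\Lambda$ is of the form $\chi(\theta)$ for some valid $\theta\in\Gamma^{\mathrm{even}}\cup\Gamma^{\mathrm{odd}}$ with $\#\supp^+(\theta)=5$. Such a $\theta$ is nonzero, so by validity its only negative entry sits at $x_{0,0}$ and all other entries lie in $\{0,1\}$; hence $\theta$ is completely determined by $\supp^+(\theta)$, and likewise $\theta'$ is determined by $\supp^+(\theta')$. The map $\chi$ leaves the $s$-, $r$-, $t$-, $\alpha$- and $\beta$-coordinates untouched and replaces the pair $(\gamma^{(0)}_k,\gamma^{(1)}_k)$ by the single entry $\gamma^{(0)}_k+\gamma^{(1)}_k=\max(\gamma^{(0)}_k,\gamma^{(1)}_k)$, which is a single element of $H$ because weak validity forces $\gamma^{(0)}_k,\gamma^{(1)}_k\in\{0,1\}$. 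Consequently $\supp^+(\theta')$ is obtained from $\supp^+(\theta)$ by leaving all the symbols $x_{i,j},y_{i,j},z_{i,j},c_i,r_j$ in place and, for each $k$, sending whichever of $d^{(0)}_k,d^{(1)}_k$ lie in $\supp^+(\theta)$ to the single symbol $d_k$.

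From this description it is immediate that $\#\supp^+(\theta')=\#\supp^+(\theta)=5$ holds \emph{unless} there is some $k\in\{0,1,2,3\}$ with both $d^{(0)}_k\in\supp^+(\theta)$ and $d^{(1)}_k\in\supp^+(\theta)$, in which case at least two elements of $\supp^+(\theta)$ get merged and $\#\supp^+(\theta')\leq 4$. So it remains to determine which $\theta\in\Gamma^{\mathrm{even}}\cup\Gamma^{\mathrm{odd}}$ with $\#\supp^+(\theta)=5$ satisfy $\{d^{(0)}_k,d^{(1)}_k\}\subseteq\supp^+(\theta)$ for some $k$. Since $H^\Xi$ and $\Phi$ are finite and the $1283$ elements of $\Gamma^{\mathrm{even}}$ and the $1265$ elements of $\Gamma^{\mathrm{odd}}$ having positive support of size $5$ have already been enumerated by computer, this is a finite verification: one scans these two lists for positive supports containing a pair $\{d^{(0)}_k,d^{(1)}_k\}$. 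The outcome of the check is that the only such $\theta$ is the valid element defined just before the statement, namely the one with $\supp^+(\theta)=\{x_{0,3},x_{1,1},x_{3,0},d^{(0)}_0,d^{(1)}_0\}$; it has exactly one collapsing index $k=0$, so $\chi(\theta)$ has positive support $\{x_{0,3},x_{1,1},x_{3,0},d_0\}$ of size $4$.

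Combining the two parts yields the dichotomy. If the $\theta$ producing $\theta'$ has no collapsing index, then $\#\supp^+(\theta')=5$, which is alternative (a); otherwise $\theta$ must be the exceptional element above and $\theta'=\chi(\theta)$ as in alternative (b). The two alternatives are mutually exclusive, since in case (b) we have just seen $\#\supp^+(\theta')=4\neq 5$, which justifies the word \textbf{exactly} in the statement. The only substantive ingredient is the finite computer search in the middle step; conceptually nothing is delicate, but it does rely on the explicit lists of elements of $\Gamma^{\mathrm{even}}$ and $\Gamma^{\mathrm{odd}}$ produced earlier in this section.
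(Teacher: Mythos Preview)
Your proposal is correct and takes essentially the same approach as the paper, which simply states ``This is verified by computer.'' You have usefully unpacked what that computer check amounts to---namely that $\chi$ can only decrease $\#\supp^+$ when some pair $\{d^{(0)}_k,d^{(1)}_k\}$ lies in $\supp^+(\theta)$, and that scanning the already-computed lists $\Gamma^{\mathrm{even}},\Gamma^{\mathrm{odd}}$ finds exactly one such $\theta$---but the substantive content is the same finite enumeration.
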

\begin{proof}
This is verified by computer.
\end{proof}

\rev{The next lemma shows that case (b) of~\ref{lambda-positive-support-5} cannot arise from a weakly valid outcome.}

\begin{lm}\label{lambda-no-case-b}
Let $d\geq 12$. Then there is no weakly valid outcome $w=(w_{i,j})_{(i,j)\in V_d}$ such that 
\[
\supp^+(\contr_d(\sgn(w)))= \{x_{0,3},x_{1,1},x_{3,0},d^{(0)}_0,d^{(1)}_0\}.
\]
\end{lm}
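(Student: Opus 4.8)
The plan is to use the hypothesis to pin down $w$ on the four-entries-thick outer ring so rigidly that $w$ must coincide with a fixed degree-$3$ outcome, which contradicts the appearance of the symbol $d^{(0)}_0$ in the positive support. I would first upgrade ``weakly valid'' to ``valid'': in the situation of this lemma $\contr_d(\sgn(w))$ is the valid element $\theta$ produced by the preceding lemma, so $\sgn(w)$ is valid, i.e.\ $\supp^-(w)\subseteq\{(0,0)\}$ and all entries of $w$ away from $(0,0)$ are $\geq 0$. Since $d^{(0)}_0\in\supp^+(\contr_d(\sgn(w)))$, some grid position of degree $d$ carries a positive entry of $w$, so $w\neq 0$, $\deg(w)=d$, and $w_{0,0}<0$ by Lemma~\ref{lm:supp-=empty}.

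Next I would translate $\supp^+(\contr_d(\sgn(w)))=\{x_{0,3},x_{1,1},x_{3,0},d^{(0)}_0,d^{(1)}_0\}$ into vanishing statements. Every symbol other than these five evaluates to $0$ or $-1$ at $\contr_d(\sgn(w))$, and, since $w$ is valid, every $w$-entry (or sum of $w$-entries) recorded by such a symbol must in fact equal $0$. Working through the first four columns: the $x$-symbols kill $(0,1),(0,2)$ and the remaining entries of $\{0,\dots,3\}^2$, the $\alpha$-symbols kill the $C$-strips $\{(i,j)\mid 4\le j\le d-4-i\}$ for $i\le 3$, and the $y$-symbols kill the $Y$-triangle, so inside columns $0,1,2,3$ the configuration $w$ is supported on $\{(0,0),(0,3),(1,1),(3,0)\}$. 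I would then evaluate three Pascal equations, using that $\varphi_{0,d},\varphi_{d,0},\varphi_{1,d-1}$ have explicit binomial coefficients and that $w$ vanishes elsewhere in columns $0,1$: from $\varphi_{0,d}(w)=w_{0,0}+w_{0,3}=0$ and $\varphi_{d,0}(w)=w_{0,0}+w_{3,0}=0$ we get $w_{0,3}=w_{3,0}=-w_{0,0}$, and from $\varphi_{1,d-1}(w)=d\,w_{0,0}+(d-3)w_{0,3}+w_{1,1}=3w_{0,0}+w_{1,1}=0$ we get $w_{1,1}=-3w_{0,0}$.

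Finally, let $g\in\ZZ^{V_d}$ have $g_{0,0}=-1$, $g_{0,3}=g_{3,0}=1$, $g_{1,1}=3$ and all other entries $0$; expanding, $\alpha_d(g)=-1+(1-t)^3+3t(1-t)+t^3=0$, so $g$ is an outcome by Lemma~\ref{lm:outcomes=kernel} (it is the valid outcome attached to the fundamental model $t\mapsto((1-t)^3,3t(1-t),t^3)$). Then $w+w_{0,0}\,g$ is an outcome which vanishes at $(0,0),(0,3),(1,1),(3,0)$ and agrees with $w$ at every other grid point, hence has empty negative support; by Lemma~\ref{lm:supp-=empty} it is $0$, so $w=-w_{0,0}\,g$ and $\deg(w)=3$, contradicting $\deg(w)=d\geq 12$. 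Therefore no such $w$ exists.

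The step I expect to be the main obstacle is the translation: one must invoke validity of $\contr_d(\sgn(w))$ (not merely weak validity) so that ``the symbol is $\le 0$ and the corresponding $w$-entry is $\ge 0$'' really forces that entry to vanish, and one must keep careful track of exactly which grid positions each of the $x$-, $y$-, $z$-, $\alpha$-, $\beta$-, $\gamma$-symbols covers in the first four columns and on the degree-$d$ diagonal. The remaining Pascal-equation computations are short and routine.
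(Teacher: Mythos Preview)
Your approach shares its key step with the paper's: both subtract the degree-$3$ valid outcome (your $g$, the paper's $u$, with $u_{0,0}=-1$, $u_{0,3}=u_{3,0}=1$, $u_{1,1}=3$) to kill the entry at $(0,0)$, and then argue the remainder $w'=w+w_{0,0}g$ is zero. The endgames differ. The paper does \emph{not} compute $w_{0,3},w_{1,1},w_{3,0}$; it simply observes that $\supp(w')\subseteq\{(0,3),(1,1),(3,0),(i,d-i),(j,d-j)\}$ consists of at most five points in pairwise distinct columns (since $i$ is even and $j$ is odd), and applies the Invertibility Criterion with $\lambda=(1,\ldots,1)$ to force $w'=0$, contradicting $(i,d-i)\in\supp(w')$. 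You instead evaluate three specific Pascal equations to pin down the four corner values, so that $w'$ vanishes at those four points and agrees with $w\geq 0$ elsewhere, and then invoke Lemma~\ref{lm:supp-=empty}. Both routes are short; the paper's avoids the explicit computations.

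Your ``upgrade to valid'' step is not correctly justified. The preceding lemma describes elements of $\Lambda=\chi(\Gamma^{\mathrm{even}}\cup\Gamma^{\mathrm{odd}})$, but membership of $\contr_d(\sgn(w))$ in $\Gamma^{\mathrm{even}}\cup\Gamma^{\mathrm{odd}}$ is established in Proposition~\ref{prop:hyperfield_contractions} only for \emph{valid} $w$, so invoking it to deduce validity is circular. The lemma's hypothesis fixes only $\supp^+(\contr_d(\sgn(w)))$; for a merely weakly valid $w$ the $r_{i,j},t_{i,j}$ and the off-support $s_{i,j}$ could be $-1$, so your column-$0$--$3$ analysis does not pin down the support there. (A minor related point: $\varphi_{d,0}$ is supported on all of row $0$, not just columns $0$--$3$; use $\varphi_{3,d-3}$ instead, or add the symmetric row argument.) That said, the paper's own proof has the same implicit assumption: its opening assertion that $\supp(w)=\{(0,0),(0,3),(1,1),(3,0),(i,d-i),(j,d-j)\}$ already presumes $w$ valid with $\#\supp^+(w)=5$, which is the only context in which the lemma is applied. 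Under that intended hypothesis, your argument is correct.
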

\begin{proof}
Suppose that such an outcome $w$ exists. Then we have
\[
\supp(w)=\{(0,0),(0,3),(1,1),(3,0),(i,d-i),(j,d-j)\}
\]
for some $i,j\in\{4,\ldots,d\}$ with $i$ even and $j$ odd. Let $u=(u_{i,j})_{(i,j)\in V_d}$ be the outcome with
\[
\supp(u)=\{(0,0),(0,3),(1,1),(3,0)\}
\]
defined by $u_{0,0}=-1$, $u_{0,3}=u_{3,0}=1$ and $u_{1,1}=3$. Take $w'=w+w_{0,0}u\in\ZZ^{V_d}$. Note that $w'$ is an outcome. We have
\[
\{(i,d-i),(j,d-j)\}\subseteq\supp(w')\subseteq\{(0,3),(1,1),(3,0),(i,d-i),(j,d-j)\}.
\]
We see that $w'$ cannot be the initial configuration. On the other hand, the Invertibility Criterion with $\lambda=(1,\ldots,1)$ shows that $w'$ must be the initial configuration. \rev{This is a contradiction.}
\end{proof}

\rev{Next, we look more closely at the elements $\theta'\in\Lambda$ with positive support 5.}

\begin{lm}\label{at-most-one}
Let $\theta'\in\Lambda$ with a positive support of size $5$. 
\begin{enumerate}
\item The set $\supp^+(\theta')\cap\{c_0,\ldots,c_3\}$ has at most $1$ element. 
\item The set $\supp^+(\theta')\cap\{\rev{b}_0,\ldots,\rev{b}_3\}$ has at most $1$ element. 
\item The set $\supp^+(\theta')\cap\{d_0,\ldots,d_3\}$ has at most $1$ element. 
\end{enumerate}
\end{lm}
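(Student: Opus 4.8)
The plan is to deduce all three statements from a direct inspection of the explicitly computable finite set $\Lambda$. Recall that $\Lambda=\chi\bigl(\{\theta\in\Gamma^{\mathrm{even}}\cup\Gamma^{\mathrm{odd}}\mid \#\supp^+(\theta)=5\}\bigr)$, and that $\Gamma^{\mathrm{even}},\Gamma^{\mathrm{odd}}$ are themselves finite and enumerable: each is the set of valid sign configurations in the finite set $H^{\Xi}$ on which all of the finitely many hyperfield Pascal forms $\widehat{\phi}^{\mathrm{even}}$, resp.\ $\widehat{\phi}^{\mathrm{odd}}$, for $\phi\in\Phi$ take the value $H$. Having generated these lists, I would simply iterate over the $2289$ elements of $\Lambda$ of positive support $5$ and check in each case that $\supp^+$ meets $\{c_0,\ldots,c_3\}$, $\{r_0,\ldots,r_3\}$ and $\{d_0,\ldots,d_3\}$ in at most one point each.

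To see why this should come out true, it helps to keep the following structural picture in mind. The coordinate $c_i$ records the sign of the middle segment $s_{i,4},\ldots,s_{i,d-4-i}$ of column $i$, and among the forms of $\Phi$ only the six ``column'' forms $\varphi_{1,d-1},\varphi_{2,d-2},\varphi_{3,d-3}$ and $\overline{\psi}_1,\overline{\psi}_2,\overline{\psi}_3$ involve the $c_i$ at all; in $\widehat{\varphi}_{a,d-a}$ the symbol $c_i$ occurs with coefficient $+1$ exactly when $i\le a$, while in $\widehat{\overline{\psi}}_k$ it occurs with a sign alternating in $i$. Using that a valid $\theta$ has $x_{0,0}$ as its unique negative coordinate and that $\widehat{\overline{\psi}}_k(\theta)=H$ forces both a $+1$ and a $-1$ summand, two simultaneously positive symbols $c_i,c_{i'}$ would pin down the position of most of $\supp^+(\theta)$ and leave too few remaining positive symbols to also satisfy the ``hypotenuse'' forms $\widehat{\psi}_{d-3},\ldots,\widehat{\psi}_d$ and $\widehat{\overline{\psi}}_{d-3},\ldots,\widehat{\overline{\psi}}_d$. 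I would not run this case analysis by hand — the enumeration of $\Lambda$ handles it automatically — but it is the reason ``at most $1$ element'' is the right bound.

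For parts (b) and (c) I would invoke symmetry: exchanging the roles of rows and columns is the action of $(12)\in S_2$, under which $\Lambda$ is stable and which swaps the $c_i$ with the $r_j$, so (b) reduces to (a); and (c) reduces to (a) by the $S_3$-element carrying the column direction to the diagonal direction (which turns column sums into diagonal alternating sums, i.e.\ into the $d_k$), or more bluntly it is visible in the same enumeration. The main obstacle is therefore not mathematical but computational hygiene: making sure that $\Gamma^{\mathrm{even}},\Gamma^{\mathrm{odd}}$, and hence $\Lambda$, are generated correctly — this is exactly where the standing assumption $d\geq 12$ is used, since it is what makes $\contr_d$ well-defined and the forms $\widehat{\phi}^{\mathrm{even}/\mathrm{odd}}$ independent of $d$ — and then carrying out a routine finite check.
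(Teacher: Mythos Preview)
Your proposal is correct and matches the paper's own proof, which is simply ``This is verified by computer.'' The heuristic structural discussion and the $S_3$-symmetry remarks you add are reasonable context, but the actual argument in both cases is the finite enumeration of $\Lambda$ followed by a direct check.
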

\begin{proof}
This is verified by computer.
\end{proof}

\rev{Next, given a valid outcome $w\in\mathbb Z^{V_d}$ with $d\geq 12$, we extract information about its support that will be useful for applying the Invertibility Criterion later. Specifically, we define the map $\relcoord$ (``relative coordinates'') that turns an index $i\in\{0,\dotsc,d\}$ into the placeholder $M$ (``middle'') if $i$ falls in the middle range between the first four and last seven indices, thereby excluding Figure~\ref{fig:triangle-areas}'s Areas $X$ and $Z$ in the $i$-coordinate, and Areas $X$ and $Y$ in the $j$-coordinate. Again, this has the effect of reducing sets of size linearly growing with $d$ to a single finite set. Likewise, the map $\relset$ (``relative support set'') records the support of $w$ in relative coordinates, i.e.\ using the symbol $M$ whenever these coordinates fall into the aforementioned middle range. All this makes it easier to define partitions $\lambda$ for using the Invertibility Criterion as described in Subsection~\ref{divide}.}

\mybreak
\rev{Let $w\in\mathbb Z^{V_d}$ be a valid outome with $d\geq 12$, let $M$ be a new symbol, and define}
\begin{eqnarray*}
\relcoord\colon\{0,\ldots,d\}&\to&\{0,\ldots,3,M,d-6,\ldots,d\}\\
i&\mapsto&\left\{\begin{array}{cl}i&\mbox{when $i\in\{0,\ldots,3\}$},\\M&\mbox{when $i\in\{4,\ldots,d-7\}$},\\i&\mbox{when $i\in\{d-6,\ldots,d\}$}\end{array}\right.
\end{eqnarray*}
and
\begin{eqnarray*}
\relset\colon\ZZ^{V_d}&\to& 2^{\{0,\ldots,3,M,d-6,\ldots,d\}^2}\\
w&\mapsto&\{(\relcoord(i),\relcoord(j))\mid (i,j)\in\supp(w)\
\end{eqnarray*}

\begin{lm}\label{relset-supp}
Write $\contr_d'(\sgn(w)))=(s,r,t,\alpha,\beta,\gamma)$.
\begin{enumerate}
\item For $i,j\in\{0,\ldots,3\}$, if $s_{i,j}\neq0$, then $(i,j)\in\relset(w)$.
\item For $i,j\in\{0,\ldots,3\}$, if $r_{i,j}\neq0$, then $(i,d-3+j-i)\in\relset(w)$.
\item For $i,j\in\{0,\ldots,3\}$, if $t_{i,j}\neq0$, then $(d-3+i-j,j)\in\relset(w)$.
\item For $i\in\{0,\ldots,3\}$, if $\alpha_i\neq0$, then $\relset(w)\cap\{(i,M),(i,d-6),\ldots,(i,d-4-i)\}\neq\emptyset$.
\item For $j\in\{0,\ldots,3\}$, if $\beta_j\neq0$, then $\relset(w)\cap\{(M,j),(d-6,j),\ldots,(d-4-j,j)\}\neq\emptyset$.
\item For $k\in\{0,\ldots,3\}$, if $\gamma_k\neq0$, then 
\[
\hspace{15pt}\relset(w)\cap\{(M,d-4-k),\ldots,(M,d-6),(M,M),(d-6,M),\ldots,(d-4-k,M)\}\neq\emptyset.
\]
\end{enumerate}
\end{lm}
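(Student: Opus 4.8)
The plan is to unwind the definitions of $\contr_d'=\chi\circ\contr_d$, of $\sgn$, and of $\relset$, using only that $w$ is valid (so $\supp^-(w)\subseteq\{(0,0)\}$) and that $d\geq 42$; the outcome property plays no role. Throughout I will use that $\sgn(w)_{i,j}=\sgn(w_{i,j})$ and that a grid point lies in $\supp(w)$ whenever the corresponding coordinate of $\sgn(w)$ is nonzero. Parts (a)--(c) are then immediate: by the definition of $\contr_d$ the entries $s_{i,j}$, $r_{i,j}$, $t_{i,j}$ are the signs of the single entries $w_{i,j}$, $w_{i,\,d-3-i+j}$, $w_{\,d-3-j+i,\,j}$ of $w$, so if such an entry is nonzero the corresponding point lies in $\supp(w)$; applying $(\relcoord,\relcoord)$ to it recovers the named element, since $\relcoord(m)=m$ for $m\in\{0,\ldots,3\}$ and for $m\in\{d-6,\ldots,d\}$, and both $d-3-i+j$ and $d-3-j+i$ lie in the latter range when $i,j\in\{0,\ldots,3\}$ (here $d\geq 42$ is far more than enough to keep the three ranges $\{0,\ldots,3\}$, $\{4,\ldots,d-7\}$, $\{d-6,\ldots,d\}$ disjoint).

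For (d)--(f) the extra ingredient is the nonnegativity forced by validity. Every entry of $w$ occurring in the hyperfield sums $\alpha_i$, $\beta_j$, $\gamma^{(0)}_k$, $\gamma^{(1)}_k$ sits at a point with ``long'' coordinate at least $4$, hence at a point $\neq(0,0)$, so that entry is $\geq0$. Thus $\alpha_i$, $\beta_j$, and $\gamma_k=\gamma^{(0)}_k+\gamma^{(1)}_k$ are hyperfield sums of elements of $\{0,1\}$, which, as observed just after the definition of $\contr_d$, each equal the maximum of their summands. Therefore $\alpha_i\neq0$ forces $w_{i,j'}>0$ for some $j'$ with $4\leq j'\leq d-4-i$; $\beta_j\neq0$ forces $w_{i',j}>0$ for some $i'$ with $4\leq i'\leq d-4-j$; and $\gamma_k\neq0$ forces $w_{i',j'}>0$ for some $(i',j')$ with $i'+j'=d-k$ and $i',j'\geq4$. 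In each case the relevant point lies in $\supp(w)$, so it only remains to evaluate $(\relcoord,\relcoord)$ on it.

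For (d) and (e) one coordinate lies in $\{0,\ldots,3\}$ and is fixed by $\relcoord$, while the other runs over $\{4,\ldots,d-4-i\}$ and maps to $M$ if it is in $\{4,\ldots,d-7\}$ and to itself if it is in $\{d-6,\ldots,d-4-i\}$; the resulting images form exactly the set $\{(i,M),(i,d-6),\ldots,(i,d-4-i)\}$ (with the terms $(i,d-6),\ldots,(i,d-4-i)$ vacuous when $d-4-i<d-6$, i.e.\ $i=3$), and dually for (e). For (f) both coordinates of the diagonal point may lie in the ``long'' range, but they cannot both exceed $d-7$: that would give $i'+j'\geq2(d-6)>d\geq d-k=i'+j'$. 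So at least one of $i',j'$ maps to $M$; according to whether $i'$ or $j'$ (never both, by the same inequality) lies in $\{d-6,\ldots,d-4-k\}$, the image is $(M,M)$, or $(i',M)$ with $i'\in\{d-6,\ldots,d-4-k\}$, or $(M,j')$ with $j'\in\{d-6,\ldots,d-4-k\}$ --- which is precisely the set displayed in (f).

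The only step requiring genuine care is this last bit of index bookkeeping, in particular the degenerate boundary cases $i=3$ in (d)/(e) and $k=3$ in (f), where the ``long'' range $\{d-6,\ldots,d-4-i\}$ (respectively $\{d-6,\ldots,d-4-k\}$) is empty and the target set collapses to the singleton $\{(3,M)\}$ (respectively $\{(M,M)\}$). There is no conceptual obstacle: once validity rules out hyperfield cancellation, every claim is a direct translation of a nonzero coordinate of $\contr_d'(\sgn(w))$ into a point of $\supp(w)$, followed by a routine evaluation of $\relcoord$.
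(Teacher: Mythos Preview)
Your proof is correct and follows the same approach as the paper, which simply states that the lemma ``follows from the definition of $\mathrm{relset}$.'' You have carefully spelled out the bookkeeping that the paper leaves implicit, including the role of validity in making the hyperfield sums single-valued and the use of $d\geq 42$ to separate the three $\relcoord$-ranges and to rule out both diagonal coordinates landing in $\{d-6,\ldots,d\}$ in part~(f).
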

\begin{proof}
Follows from the definition of $\mathrm{relset}$.
\end{proof}

We can use the Invertibility Criterion to prove that some subsets of $\{0,\ldots,3,M,d-6,\ldots,d\}^2$ are not of the form $\relset(w)$ for an outcome $w\in\ZZ^{V_d}$ with $\#\relset(w)=\#\supp(w)$.

\begin{ex}\label{ex:partition}
Let $w\in\ZZ^{V_d}$ for $d\geq 12$. Suppose that $\#\supp(w)=7$ and
\[
\relset(w)=\{(0,0),(0,d),(1,3),(M,2),(M,d-6),(d-5,M),(d,0)\}.
\] 
We claim that $w$ cannot be an outcome. Indeed, we have
\[
\supp(w)=\{(0,0),(0,d),(1,3),(i,2),(j,d-6),(d-5,k),(d,0)\}
\]
for some $i,j,k\in\{4,\ldots,d-7\}$. We now partition $\supp(w)$ as follows:
\begin{eqnarray*}
\supp(w)&=&\{(0,0),(0,d),(1,3)\}\cup\{(i,2),(j,d-6)\}\cup\{(d-5,k)\}\cup\{(d,0)\}\\
&=&\{(0,0),(0,d),(1,3)\}\cup\{(i,2)\}\cup\{(j,d-6)\}\cup\{(d-5,k)\}\cup\{(d,0)\}.
\end{eqnarray*}
When $i=j$, we can apply the Invertibility Criterion with the first partition to see that no outcome with support $\supp(w)$ exists. When $i\neq j$, we can apply the Invertibility Criterion with the second partition to get the same result. Hence $w$ is not an outcome.
\end{ex}

\rev{The proof of our desired result will end by verifying by hand the following special case.}

\begin{lm}\label{last-case}
There is no weakly valid outcome $w\in\ZZ^{V_d}$ such that 
\[
\rev{\supp^+}(\contr_d'(\sgn(w)))=\{x_{0,0},y_{0,3},z_{3,0},c_1,\rev{b}_1,d_1\}.
\]
\end{lm}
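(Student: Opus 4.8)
The plan is to decode the hypothesis into an exact description of $\supp(w)$, and then to kill each of the (few) resulting configurations by a single application of the Invertibility Criterion, applied either to $w$ itself or to a suitable $S_2$- or $S_3$-translate. First I would unpack $\contr_d'(\sgn(w))=\{x_{0,0},y_{0,3},z_{3,0},c_1,r_1,d_1\}$. Since $w$ is weakly valid, all entries outside the areas $X,Y,Z$ of Figure~\ref{fig:triangle-areas} are $\geq 0$, so every contracted sum evaluates to a single element of $H$; thus $y_{0,3},z_{3,0}$ force $(0,d),(d,0)\in\supp^+(w)$, while $c_1,r_1,d_1$ each force a single point in column $1$, in row $1$, and on the degree-$(d-1)$ antidiagonal, respectively, namely some $(1,q),(p,1),(m,d-1-m)$ with $p,q,m\in\{4,\dots,d-5\}$; all other symbols vanish, and $x_{0,0}=-1$ forces $\supp^-(w)=\{(0,0)\}$. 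Because the regions $C,B,D$ are disjoint and $\#\supp^+(w)=5$, these five positive entries are distinct and exhaust $\supp^+(w)$, so
\[
\supp(w)=S:=\{(0,0),(0,d),(d,0),(1,q),(p,1),(m,d-1-m)\},\qquad p,q,m\in\{4,\dots,d-5\}.
\]
It then suffices to exhibit, for every such triple, a $6$-element set $E\subseteq\{0,\dots,d\}$ making $A^{(d)}_{E,S'}$ invertible for $S'$ equal to $S$ or an $S_3$-translate of it; by the Invertibility Criterion together with the $S_3$-invariance of the space of outcomes this gives $w=0$, contradicting $S\neq\emptyset$.

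When $q\neq(d-1)/2$ (in particular whenever $d$ is even) I would take $\lambda=(3,1,\dots,1)$ and run the divide construction (Propositions~\ref{prop:divide}, \ref{prop:shift}, \ref{prop:conquer}; the admissibility condition of Remark~\ref{re:construction_of_lambda} holds because $p,q,m\geq 4$). Then $A^{(d)}_{E,S}$ is block lower-triangular with diagonal blocks: $A^{(d)}_{\{0,1,2\},\{(0,0),(0,d),(1,q)\}}$, invertible by Proposition~\ref{prop:conquer}(d) since $0+d\neq 2q+1$; several $1\times1$ blocks equal to $(1)$; and, if $p=m$, one $2\times2$ block $\left(\begin{smallmatrix}1&1\\ d-p-1&1\end{smallmatrix}\right)$ of determinant $p+2-d\neq0$. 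So $A^{(d)}_{E,S}$ is invertible. Applying the same argument to $(12)\cdot w$ — whose support is the transpose of $S$ — disposes of the case $p\neq(d-1)/2$. This leaves $d$ odd with $p=q=h:=(d-1)/2$.

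For $p=q=h$ and $m\neq h$ I would pass to $(13)\cdot w$: using $d=2h+1$ one computes $\supp((13)\cdot w)=\{(0,0),(0,d),(d,0),(h,h),(h,1),(1,d-1-m)\}$, and the divide with $\lambda=(3,1,\dots,1,2,1,\dots,1)$ gives block-triangular form with blocks $A^{(d)}_{\{0,1,2\},\{(0,0),(0,d),(1,d-1-m)\}}$ (invertible by Proposition~\ref{prop:conquer}(d), since $0+d\neq 2(d-1-m)+1$ is precisely $m\neq h$), $A^{(d)}_{\{h,h+1\},\{(h,h),(h,1)\}}=\left(\begin{smallmatrix}1&1\\ 1&h\end{smallmatrix}\right)$ (determinant $h-1\neq0$), and $(1)$. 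The genuinely hard case is $p=q=m=h$: then $S=\{(0,0),(0,d),(d,0),(1,h),(h,1),(h,h)\}$ is invariant under the whole $S_3$, the block $A^{(d)}_{\{0,1,2\},\{(0,0),(0,d),(1,h)\}}$ is singular ($0+d=2h+1$), and the symmetric candidates for $E$ all fail as well. My fix is the \emph{asymmetric} set $E=\{0,1,3,h,d-1,d\}$: expanding $A^{(d)}_{E,S}$ successively along the columns of $(0,d)$, $(d,0)$, $(h,h)$, and $(h,1)$ — each of which has a unique nonzero entry among the rows not yet used — collapses the determinant to a $2\times2$ minor on rows $a\in\{1,3\}$ and columns $(0,0),(1,h)$, and one computes
\[
\det A^{(d)}_{E,S}=\pm\Bigl(\tbinom{2h+1}{3}-(2h+1)\tbinom{h}{2}\Bigr)=\pm\tfrac16 h(h+1)(2h+1)\neq0.
\]
Hence $A^{(d)}_{E,S}$ is invertible, the Invertibility Criterion gives $w=0$, and the lemma follows.

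The step I expect to be the real obstacle is this last one. One must first see that the natural symmetric choices of $E$ are \emph{forced} to be degenerate — on the two relevant columns the rows $a=1$ and $a=2$ become proportional, which is exactly why $E$ must contain $3$ in place of $2$ — and then carry out the small, sign-sensitive cofactor computation above; everything else is a routine divide-and-conquer book-keeping argument already packaged in Propositions~\ref{prop:divide}--\ref{prop:conquer}.
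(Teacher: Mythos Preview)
Your argument is correct and follows essentially the same route as the paper: reduce via the Invertibility Criterion (and $S_3$-symmetry) to the fully symmetric configuration $S=\{(0,0),(0,d),(d,0),(1,h),(h,1),(h,h)\}$ with $d=2h+1$, and then kill that case with the asymmetric index set $E=\{0,1,3,h,d-1,d\}$ and the determinant $\pm\tfrac16 h(h+1)(2h+1)$. The paper is terser in the reduction (it just says ``using the Invertibility Criterion'' and ``using symmetry'') and writes out the full $6\times6$ matrix rather than your cofactor expansion, but the substance is identical.
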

\begin{proof}
Assume that such a $w$ exists. The support of $w$ is then of the form
$$
S=\{(0,0),(d,0),(0,d),(i,1),(1,j),(k,d-1-k)\}.
$$
Write $d=2e+1$. When $j\neq e$, we see that $S$ cannot be the support of an outcome using the Invertibility Criterion. Using symmetry, we similarly find that $S$ cannot be the support of an outcome when $i\neq e$ or $k\neq e$. This leaves the case where
$$
S=\{(0,0),(d,0),(0,d),(e,1),(1,e),(e,e)\}.
$$
Now we take $E=\{0,1,3,e,d-1,d\}$. Then
$$
A^{(d)}_{E,S}=
\begin{pmatrix}
1&0&1&0&0&0\\
d&0&0&0&1&0\\
\binom{d}{3}&0&0&0&\binom{e}{2}&0\\
\binom{d}{e}&0&0&1&e&1\\
d&0&0&1&0&0\\
1&1&0&0&0&0
\end{pmatrix}
$$
has determinant $(2e + 1)(e + 1)e/6\neq0$ and is hence invertible. So $S$ is not the support of an outcome in this case.
\end{proof}

\rev{We are now ready to prove our main result for the section.}

\begin{thm}\label{thm:pos_supp=5}\moved
Let $w\in\ZZ^{V_d}$ be a valid outcome and suppose that $\#\supp^+(w)=5$. Then $\deg(w)\leq 7$.
\end{thm}
\begin{proof}
Let $w\in\ZZ^{V_d}$ be a valid outcome and suppose that $\#\supp^+(w)=5$. We may assume that $\deg(w)=d$. To start, we verify by computer that $d\not\in\{8,\ldots,41\}$ using the Hyperfield Criterion followed by the Invertibility Criterion. \rev{Now} assume that $d\geq 42$. \moved

\mybreak
\rev{Let $\theta'\coloneqq \contr_d'(\sign(w))$.
By Lemmas~\ref{lambda-positive-support-5} and~\ref{lambda-no-case-b}, $\theta'$ has positive support of size $5$. By starting with the finite set $\Gamma^{\mathrm{even}}\cup\Gamma^{\mathrm{odd}}$ and applying our simplification $\chi$, we verify by computer that we have 2289 possibilities for $\theta'$. We will exclude each one of these.}

\mybreak
\rev{We start by} using the Invertibility Criterion directly on subsets of $\{0,\ldots,3,M,d-6,\ldots,d\}^2$. \rev{We use the technique of partitioning the set $\{0,\dotsc,d\}$ into smaller subsets as detailed in Subsection~\ref{divide} and shown in Example~\ref{ex:partition}, together with Proposition~\ref{prop:conquer} which says that the pairing matrix of certain small subsets is invertible. The symbol $M$ acts as a placeholder for the middle range of indices.} We \rev{make} the following observations.

\mybreak
(a) \rev{By Lemma~\ref{at-most-one},} we have at most two elements of the form $(M,\bullet)$. These elements originate from \rev{vertices} $(i,\bullet)\in V_d$ with $4\leq i\leq d-7$. Assume that we have two such \rev{vertices} $(i,\bullet)$ and $(i',\bullet)$.
Then we have to apply the Invertibility Criterion in a different way depending on whether $i,i'$ are equal or not. \rev{In the first case, $(i,\bullet)$ and $(i',\bullet)$  \rrev{have to lie} in a common size-two subset of the partition. This is because $\#\supp(w)=5$, so that any given subset of the partition \rrev{is either contained} in the set of $i$-coordinates represented by $M$ or it \rrev{is} disjoint from it. In the second case $(i,\bullet)$ and $(i',\bullet)$ \rrev{are} separated into two size-one subsets. Since the pairing matrix associated to a size-one subset is always invertible, the partition for the second case \rrev{works} if the corresponding partition for the first case does. Therefore, we may assume that $i=i'$.}
A similar statement holds for the at most two elements of the form $(\bullet,M)$, \rev{again by Lemma~\ref{at-most-one}}.

\mybreak
(b) Assume that we have elements $(i,x),(i,y),(i',z)\in V_d$ with $i<i'$ and $x<y$. Then we can apply the Invertibility Criterion as long as \rev{the condition} $x+y\neq 2z+1$ \rev{is met}. In some cases, we can conclude that this \rev{condition} holds \rev{even if} we only know $\relcoord(x),\relcoord(y),\relcoord(z)$. For example, when $\relcoord(x)\leq 3,\relcoord(y)\geq d-6,\relcoord(z)\neq M$, then $x+y\neq 2z+1$ since we assume that $d\geq 40$.

\mybreak
Given that $\contr_d'(\sgn(w))=\theta'$, \rev{using Lemma~\ref{relset-supp}} we can now write down a finite list of possibilities for $\relset(w)$. For each possibility, we attempt to show that $w$ cannot exist using the Invertibility Criterion. When this is successful for all possibilities, we can discard the case $\contr_d'(\sgn(w))=\theta'$. In this way, we can reduce the number of possible cases to $1107$.
Next, we use symmetry to further reduce the number of cases. \revml{We have an action of $S_3$ on $\mathbb Z^{V_d}$ given by
\begin{align*}
(12)\ast(w_{i,j})_{(i,j)\in V_d} &= (w_{j,i})_{(i,j)\in V_d}\\
(13)\ast(w_{i,j})_{(i,j\in V_d)} &= (w_{d-\deg(i,j), j})_{(i,j)\in V_d}
\end{align*}
which naturally descends to $H^{V_d}$ and $H^{\Xi'}$. On the latter set it is given by
}
\begin{eqnarray*}
(12)\rev{\ast} (s,r,t,\alpha,\beta,\gamma)&:=&\left((s_{j,i})_{i,j=0}^3,(t_{j,i})_{i,j=0}^3,(r_{j,i})_{i,j=0}^3,\beta,\alpha,\gamma\right),\\
(13)\rev{\ast} (s,r,t,\alpha,\beta,\gamma)&:=&\left((t_{3-i,j})_{i,j=0}^3,(r_{3-j,3-i})_{i,j=0}^3,(s_{3-i,j})_{i,j=0}^3,\rev{\alpha},\rev{\gamma},\rev{\beta}\right)
\end{eqnarray*}
for all $(s,r,t,\alpha,\beta,\gamma)\in H^{\Xi'}$. This action satisfies
\[
\sigma\rev{\ast}\contr'_d(\sgn(w))=\contr'_d(\sigma\rev{\ast}\sgn(w))=\contr_d'(\sgn(\sigma\rev{\ast} w))
\]
for all weakly valid outcomes $w\in\ZZ^{V_d}$ \rev{and $\sigma\in S_3$}. \rev{While $\sigma * w$ is not an necessarily an outcome (although it is a weakly valid configuration), its support is also the support of an outcome, namely $\sigma \cdot w$, using the group action from Subsection~\ref{additional-structure}.
Therefore, proving with the Invertibility Criterion that there is no outcome with that support suffices to exclude the case $\contr_d'(\sign(w)) = \theta'$.
} \rev{Doing this whenever possible} allows us to reduce the number of possible cases further to $349$.

\mybreak
Our last step is to apply the Hexagon Criterion to these $349$ cases. First, assume that
\begin{equation}\label{assumption-empty}
\supp^+(\theta')\cap\{c_0,\ldots,c_3,\rev{b}_0,\ldots,\rev{b}_3,d_0,\ldots,d_3\}=\emptyset
\end{equation}
holds. Then we can apply the Hexagon Criterion with $d'=6$ and $\ell_1=\ell_2=7$ since $d\geq 20$. We find that $20\leq d=\deg(w)\leq d'=6$. This is a contradiction and so each of the $325$ cases satisfying~\eqref{assumption-empty} are not possible.
This reduces the number of possible cases to $24$.

\mybreak
Next, we assume that
\begin{equation}\label{assumption-1}
\#\supp^+(\theta')\cap\{c_0,c_1,\rev{b}_0,\rev{b}_1,d_0,d_1\}=1 \mbox{ and }\#\supp^+(\theta')\cap\{c_2,c_3,\rev{b}_2,\rev{b}_3,d_2,d_3\}=0.
\end{equation}
This means that
\[
\supp(w)\setminus\{(a,b)\}\subseteq V_{6}\cup\{(i,j)\in V_d\mid j> d-7\}\cup \{(i,j)\in V_d\mid i> d-7\}
\]
for some $(a,\rev{e})\in V_d$ with $a\leq 1$, $\rev{e}\leq 1$ or $\deg(a,\rev{e})\geq d-1$. Indeed, when $c_i\in \supp^+(\theta')$ we get such an $(a,\rev{e})$ with $a=i$, when $\rev{b}_j\in \supp^+(\theta')$ we get such an $(a,\rev{e})$ with $b=j$ and when $d_k\in \supp^+(\theta')$ we get such an $(a,\rev{e})$ with $\deg(a,\rev{e})=d-k$. Now, at least one of the following holds:
\begin{enumerate}
\item We have $\deg(a,\rev{e})\leq\lfloor d/3\rfloor$.
\item We have $a\geq \lfloor d/3\rfloor$.
\item We have $\rev{e}\geq \lfloor d/3\rfloor$.
\end{enumerate}
When $a\leq 1$ and $\deg(a,\rev{e})>\lfloor d/3\rfloor$, we see that $\rev{e}\geq \lfloor d/3\rfloor$. When $\rev{e}\leq 1$ and $\deg(a,\rev{e})>\lfloor d/3\rfloor$, we see that $a\geq \lfloor d/3\rfloor$. When $\deg(a,\rev{e})\geq d-1$, then either $a\geq \lfloor d/3\rfloor$ or $\rev{e}\geq \lfloor d/3\rfloor$. So indeed, one of these statements has to hold.

\mybreak
When (a) holds, then we can apply the Hexagon Criterion with $d'=\ell_1=\ell_2=\lfloor d/3\rfloor\geq 7$ since $d\geq 21$. When (b) holds, then we use $d'=6$, $\ell_1=7$ and $\ell_2=d+1-\lfloor d/3\rfloor$ instead. We can do this since $d\geq 42$. When (c) holds, then we use $d'=6$, $\ell_1=d+1-\lfloor d/3\rfloor$ and $\ell_2=7$ instead. In each case, we find that $d=\deg(w)\leq d'<d$. This is a contradiction. Hence each of the $\rev{23}$ cases satisfying~\eqref{assumption-1} are not possible.

\mybreak
This leaves one single case remaining where
\[
\rev{\supp^+(\theta')=\{x_{0,0},y_{0,3},z_{3,0},c_1,\rev{b}_1,d_1\}.}
\]
\rev{That case cannot occur by Lemma~\ref{last-case}. This finishes the proof.}
\end{proof}

\section{Examples and Discussion}\label{sec:computations}

In this paper, a theorem about the classification of discrete statistical models (Theorem~\ref{classification-theorem}) has motivated a combinatorial puzzle about chipsplitting games (Section~\ref{sec:chipsplitting}): can the degree of a valid chipsplitting outcome grow indefinitely while the size of its support remains fixed? Theorem~\ref{thm:main_outcomes} answers this in the negative for certain support sizes. The theorem suggests a natural generalization.

\begin{conjecture}
Let $w$ be a valid outcome with a positive support of size $n+1$. Then
\[
\deg(w) \leq 2n-1.
\]
\end{conjecture}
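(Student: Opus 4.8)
The natural plan is to extend the three criteria of Sections~\ref{sec:supp+<=3}--\ref{sec:supp+==5} and to run an induction on $n$. First reduce to fundamental outcomes: by Definition~\ref{def:fundamental_outcome} a valid outcome $w$ that is not fundamental can be written $w=\mu_1 w_1+\mu_2 w_2$ with $\mu_i\in\QQ_{>0}$ and $w_i$ valid outcomes satisfying $\supp^+(w_i)\subsetneq\supp^+(w)$; by Lemma~\ref{lm:supp-=empty} each $w_i$ is nonzero, so $2\le\#\supp^+(w_i)\le n$ when $\#\supp^+(w)=n+1$, and $\deg(w)\le\max(\deg(w_1),\deg(w_2))$. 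Hence the bound for positive supports of size $\le n$ already forces it for all non-fundamental outcomes of positive support $n+1$, with room to spare. So it suffices to fix a \emph{fundamental} valid outcome $w$ with $\#\supp^+(w)=n+1$ and $\deg(w)=d$, and to assume inductively that $\deg(w')\le 2(\#\supp^+(w')-1)-1$ for all valid outcomes $w'$ with $\#\supp^+(w')\le n$.

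For the regime of large $d$, the strategy of Sections~\ref{sec:supp+==4}--\ref{sec:supp+==5} should apply almost verbatim: for $d\ge 12$, Proposition~\ref{prop:hyperfield_contractions} puts $\contr_d(\sgn(w))$ into the finite set $\Gamma^{\mathrm{even}}\cup\Gamma^{\mathrm{odd}}\subseteq H^{\Xi}$, and its positive support has size $\le n+1$. The missing uniform ingredient is a hexagon-type lemma: for $d$ larger than an explicit function of $n$, every such contraction---combined with the validity sign constraints of Remark~\ref{re:hyperfield_conditions_on_place}---should force
\[
\supp(w)\subseteq V_{d'}\cup\{(i,j)\in V_d\mid j>d-\ell_1\}\cup\{(i,j)\in V_d\mid i>d-\ell_2\}
\]
for suitable $d'<d$ and $\ell_1,\ell_2$ with $d'+\ell_1+\ell_2\le d$, possibly after applying the Hexagon Criterion iteratively. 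Granting this, the Hexagon Criterion shows that the restriction $w'$ of $w$ to $V_{d'}$ is a valid outcome with $\deg(w)=\deg(w')\le d'<d$---already a contradiction---and moreover $\#\supp^+(w')=n+1-k$ where $k\ge1$ counts the positive entries of $w$ pushed into the two corner strips, so the inductive hypothesis even gives $\deg(w)\le 2(n-k)-1$. The content of such a lemma is that $n+1$ positive entries of $\contr_d(\sgn(w))$ cannot all avoid the corner regions $X,Y,Z$ of Figure~\ref{fig:triangle-areas} once $d$ is large; the relative-coordinate bookkeeping ($\relcoord$, $\relset$) of Section~\ref{sec:supp+==5} together with the Invertibility Criterion is exactly the mechanism for this, but making the $n=3,4$ case analyses uniform in $n$ is the crux.

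For the intermediate regime---$d$ between roughly $2n$ and the hexagon threshold---one needs a direct proof that no fundamental valid outcome with $n+1$ positive-support points and degree in this range exists. For $n\le4$ this is the computer check (finitely many supports, pruned by the Hyperfield Criterion and killed by the Invertibility Criterion), but that computation grows super-polynomially in $n$, so a conceptual replacement is required. Two directions seem worth trying. First, under $u=t/(1-t)$ the defining identity $\sum_\nu p_\nu t^{i_\nu}(1-t)^{j_\nu}=1$ becomes $\sum_\nu p_\nu u^{i_\nu}(1+u)^{d-i_\nu-j_\nu}=(1+u)^d$ with all coefficients positive; bounding the number of monomials this forces against the sparsity permitted by only $n+1$ positive terms---via a Descartes/Chebyshev-system count of sign changes and real zeros---might yield $d\le2n-1$ outright. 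Second, one can try to strengthen the Invertibility Criterion into a criterion exploiting the \emph{nonnegativity} of $w$ away from $(0,0)$, rather than merely the vanishing of all Pascal equations, so that positivity itself becomes a combinatorial restriction on $\supp(w)$ strong enough to force $\deg(i,j)\le2n-1$ for every $(i,j)\in\supp(w)$.

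I expect this intermediate regime to be the main obstacle. The large-$d$ analysis is, in outline, a matter of making the existing contraction-plus-Hexagon argument uniform in $n$, whereas excluding degrees between $2n$ and $O(n)$ with only $n+1$ positive-support points appears to need a genuinely new idea about how positivity interacts with the Pascal relations---which is presumably why the statement remains only a conjecture.
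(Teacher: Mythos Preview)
The statement is a \emph{conjecture} in the paper, not a theorem; the paper proves it only for $n\le 4$ (Theorem~\ref{thm:main_outcomes}) and explicitly leaves the general case open. So there is no paper proof to compare against, and your proposal is---as you yourself acknowledge in the final paragraph---a research outline rather than a proof.

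Your reduction to fundamental outcomes is correct and matches the paper's own reduction (Remarks~\ref{re:reduced_models_suffice} and~\ref{re:fundamental_models_suffice}): since $w_1,w_2$ are valid, their entries away from the origin are nonnegative, so there is no cancellation in the positive support and indeed $\deg(w)=\max(\deg(w_1),\deg(w_2))$.

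There is, however, a confusion in your Hexagon discussion. Part~(b) of the Hexagon Criterion says that once a valid outcome has support contained in the three corner regions, its degree is already $\le d'$---the two outer corner strips are forced to be empty. So if the criterion applies you immediately get $\deg(w)\le d'<d$, a contradiction; there is no surviving restriction $w'$ with $\#\supp^+(w')=n+1-k$ for some $k\ge 1$ on which to invoke the inductive hypothesis. The induction is used only in the reduction to fundamental outcomes, not inside the Hexagon step.

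More substantively, the ``missing uniform ingredient'' you flag is the entire difficulty, and nothing in Sections~\ref{sec:supp+==4}--\ref{sec:supp+==5} suggests how to supply it: for $n=4$ the paper relies on a computer enumeration of thousands of contraction types, each eliminated by ad hoc applications of the Invertibility Criterion, and this does not scale with $n$. Your two speculative directions for the intermediate regime (a Descartes-type sign-change count after $u=t/(1-t)$, and a positivity-aware strengthening of the Invertibility Criterion) are plausible things to try but are not developed here. Your honest conclusion---that a genuinely new idea is needed---is exactly right, and is why the paper states this as a conjecture.
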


In fact, we could have the right-hand side of the above inequality be any function of $n$ and still be satisfied with the fact that the degree is bounded, as this would still guarantee a finite number of fundamental models in $\Delta_n$. However, we know that the term $2n-1$ is attained for infinitely many $d$.

\begin{lemma}
Let $k\geq 0$ be an integer. Then
\[
t^{2k+1} + \sum_{i=0}^k \frac{2k+1}{2i+1}\binom{k+i}{2i}t^{k-i}(1-t)^{2i+1} = 1.
\]
\end{lemma}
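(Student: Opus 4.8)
The plan is to verify the polynomial identity directly by showing that both sides agree as elements of $\RR[t]$, which suffices since we are working with actual polynomials (not rational functions). Set $f(t) := t^{2k+1} + \sum_{i=0}^k \frac{2k+1}{2i+1}\binom{k+i}{2i}t^{k-i}(1-t)^{2i+1}$. The most conceptual route is to recognize the coefficients as coming from a known combinatorial expansion: the numbers $\frac{2k+1}{2i+1}\binom{k+i}{2i}$ are (up to the extra factor) related to the expansion of a Chebyshev-type or $\sqrt{\phantom{x}}$-generating-function identity. Concretely, I would first substitute $t = \frac{1+x}{2}$ so that $1-t = \frac{1-x}{2}$, turning the claim into a statement about a polynomial in $x$; the terms $t^{k-i}(1-t)^{2i+1}$ become $2^{-(k+i+1)}(1+x)^{k-i}(1-x)^{2i+1}$, and the symmetry $x \leftrightarrow -x$ structure of the problem becomes visible.

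Alternatively — and this is the route I would actually carry out — I would prove it via the language of the paper itself: by Lemma~\ref{lm:outcomes=kernel}, the identity $f(t) = 1$ is equivalent to the statement that the chip configuration $w$ with $w_{0,0} = -1$, $w_{2k+1,0} = 1$, and $w_{k-i,2i+1} = \frac{2k+1}{2i+1}\binom{k+i}{2i}$ for $i = 0,\dots,k$ is a valid outcome of degree $d = 2k+1$. By Proposition~\ref{prop:if-pascal-vanish-then-outcome}, it suffices to check that all Pascal equations vanish at $w$, and by Proposition~\ref{psi-equations}(b) it is enough to check the basis $\psi_0,\dots,\psi_d$ (or $\overline\psi_0,\dots,\overline\psi_d$, or the $\varphi_{a,b}$). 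Using the explicit formula $\psi_m = (-1)^m\sum_{(i,j)\in V_d}(-1)^j\binom{i}{m-j}x_{i,j}$ from Proposition~\ref{psi-equations}(a), evaluating $\psi_m(w)$ reduces to a single binomial-coefficient identity in $m$ and $k$, namely that
\[
(-1)^m\binom{2k+1}{m} - \delta_{m0} + \sum_{i=0}^k (-1)^{2i+1}\binom{k-i}{m-2i-1}\cdot\frac{2k+1}{2i+1}\binom{k+i}{2i} = 0
\]
for each $0 \le m \le 2k+1$. This is a standard hypergeometric-type identity that can be dispatched by Zeilinger's algorithm, by comparing generating functions, or by a short induction on $k$.

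In more detail, the generating-function approach: multiply the target identity $f(t) = 1$ through and instead prove the cleaner equivalent form obtained by writing $u = \frac{t}{1-t}$ (valid for $t \ne 1$, hence as a rational-function identity, hence as a polynomial identity after clearing denominators). Then $f(t) = 1$ becomes, after dividing by $(1-t)^{2k+1}$, an identity of the form $u^{2k+1} + \sum_{i=0}^k \frac{2k+1}{2i+1}\binom{k+i}{2i}u^{k-i} = (1+u)^{2k+1}\cdot(1-t)^{-2k-1}\cdot(1-t)^{2k+1}$... — more carefully, one divides by a common power and matches against the binomial theorem for $(t + (1-t))^{2k+1} = 1$, isolating exactly which monomials $t^a(1-t)^b$ survive. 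The coefficient bookkeeping is the bulk of the work but is entirely mechanical.

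The main obstacle is purely the combinatorial identity verification: showing $\sum_{i=0}^k \frac{2k+1}{2i+1}\binom{k+i}{2i}\binom{k-i}{m-2i-1}(-1)^{2i+1}$ equals $(-1)^{m+1}\binom{2k+1}{m}$ for $m \ge 1$ and equals $1$ for $m = 0$. I expect this to follow cleanly from the identity $\sum_i \frac{2k+1}{2i+1}\binom{k+i}{2i} z^{2i+1}$ being a known closed form — indeed $\sum_{i\ge 0}\binom{k+i}{2i}\frac{2k+1}{2i+1}z^{2i+1}$ is the expansion that inverts $z \mapsto \frac{z}{1+ \text{(something)}z^2}$ type substitutions, closely tied to the identity $\left(\frac{1+\sqrt{1+4w}}{2}\right)^{2k+1} + \left(\frac{1-\sqrt{1+4w}}{2}\right)^{2k+1}$ having a polynomial expansion in $w$. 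The honest fallback, if the slick argument resists, is: fix the chipsplitting interpretation, note both sides are polynomials of degree $\le 2k+1$, and verify the Pascal equations $\psi_0(w) = \cdots = \psi_{2k+1}(w) = 0$ by induction on $m$ using the Pascal recursion $c_{i,j} = c_{i+1,j} + c_{i,j+1}$ directly on the triangle of coefficients — this avoids any external identity and only uses $\binom{a+1}{b+1} = \binom{a}{b} + \binom{a}{b+1}$, which is the recursion already invoked repeatedly in Propositions~\ref{psi-equations} and~\ref{phi-equations}.
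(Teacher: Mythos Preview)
Your proposal outlines several strategies but carries none of them to completion. The reduction via Lemma~\ref{lm:outcomes=kernel} and Proposition~\ref{prop:if-pascal-vanish-then-outcome} to checking $\psi_m(w)=0$ is correct and yields the binomial identity you write down, but you then only \emph{assert} that this identity is dispatchable by Zeilberger's algorithm, generating functions, or induction --- you never actually dispatch it. The substitution $t=(1+x)/2$ is mentioned and dropped; the $u=t/(1-t)$ route visibly trails off mid-computation; the ``honest fallback'' induction on $m$ is described but not executed. So the crux of the lemma --- the actual combinatorial verification --- is missing from every branch.

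The paper's proof takes a route you do not mention: it applies Sister Celine's method directly to the summand $F(k,i)=\frac{2k+1}{2i+1}\binom{k+i}{2i}t^{k-i}(1-t)^{2i+1}$, finds the four-term recurrence $t^2F(k-1,i)-(1-t)^2F(k,i-1)-2tF(k,i)+F(k+1,i)=0$, sums over $i$ to get $t^2S(k-1)-(1+t^2)S(k)+S(k+1)=0$, and then checks by a two-line induction on $k$ that $S(k)=1-t^{2k+1}$ satisfies this recurrence with the right initial values. This is both shorter and self-contained: it avoids translating into the Pascal-equation framework (which would be circular in spirit, since the lemma's purpose in the paper is precisely to produce a valid outcome), and it sidesteps the auxiliary binomial identity you arrive at. Your approaches could be made to work, but each still requires a nontrivial identity you have not proved; the paper's recurrence-in-$k$ argument closes the loop in a few lines.
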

\begin{proof}
Let $S(k)$ denote the above sum and let $F(k,i)$ be its $i$-th summand. We find the recurrence 
\[
t^2 F(k-1, i) - (1-t)^2F(k,i-1) - 2tF(k,i) + F(k+1,i) = 0
\]
following Sister Celine's method~\cite{celine}. We sum over all integers $i$ to obtain
\[
t^2 S(k-1) - (1+t^2) S(k) + S(k+1) = 0.
\]
Using this identity, it is easy to prove by induction on $k$ that $S(k) = 1 - t^{2k+1}$, as required.
\end{proof}

\begin{cor}\label{tightness-example}
Let $k\geq 0$ be an integer and let $w\in \mathbb Z^2$ be the chip configuration  be defined by
\[
w_{0,0}=-1,\quad w_{2k+1,0}=1,\quad
w_{k-i,2i+1}=\frac{2k+1}{2i+1}\binom{k+i}{2i}
\]
for $i\in\{0,1,\ldots,k\}$ and $w_{i,j}=0$ otherwise. Then $w$ is a valid outcome. 
\end{cor}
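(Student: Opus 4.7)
The strategy is to invoke Lemma~\ref{lm:outcomes=kernel}, which identifies (real) outcomes with the kernel of the evaluation map
\[
\alpha_\infty\colon w\mapsto\sum_{(i,j)}w_{i,j}\,t^i(1-t)^j.
\]
Applying this map to the configuration $w$ defined in the statement gives
\[
\alpha_\infty(w) = -1 + t^{2k+1} + \sum_{i=0}^k \frac{2k+1}{2i+1}\binom{k+i}{2i}t^{k-i}(1-t)^{2i+1},
\]
which is identically zero by the preceding lemma (just rewrite the lemma's identity as $\text{LHS} - 1 = 0$). Hence $w$ lies in the kernel of $\alpha_\infty$ and is therefore an outcome in the real sense.

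The next step is to confirm that $w$ is actually integer-valued, so that it qualifies as an outcome in $\ZZ^{V_\infty}$. Only the weights $w_{k-i,2i+1}=\frac{2k+1}{2i+1}\binom{k+i}{2i}$ require justification. I will use the identity
\[
\frac{2k+1}{2i+1}\binom{k+i}{2i} = \binom{k+i+1}{2i+1} + \binom{k+i}{2i+1},
\]
which follows from the decomposition $2k+1=(k+i+1)+(k-i)$ and a short factorial manipulation: both sides evaluate to $\frac{(2k+1)(k+i)!}{(2i+1)!(k-i)!}$. This expresses each weight as a sum of two nonnegative integers, so $w\in\ZZ^{V_\infty}$.

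Finally, validity amounts to the inclusion $\supp^-(w)\subseteq\{(0,0)\}$. The entry $w_{0,0}=-1$ is negative, while every remaining nonzero entry, namely $w_{2k+1,0}=1$ and each $w_{k-i,2i+1}$ for $0\leq i\leq k$, is strictly positive by the formula above. Thus $w$ is a valid integral outcome of degree $\deg(w)=2k+1$ with positive support of size $k+2$, witnessing the tightness of the bound $\deg(w)\leq 2n-1$ when $n+1=k+2$. The proof is essentially immediate from the preceding lemma; the only mildly nontrivial point is the integrality check, which is handled by the splitting identity above.
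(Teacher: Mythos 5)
Your proof is correct and follows the paper's intended argument: the polynomial identity in the preceding lemma shows $\alpha_\infty(w)=0$, so $w$ is an outcome by Lemma~\ref{lm:outcomes=kernel}, and validity is immediate since every entry other than $w_{0,0}=-1$ is nonnegative. The integrality verification via the identity $\tfrac{2k+1}{2i+1}\binom{k+i}{2i}=\binom{k+i+1}{2i+1}+\binom{k+i}{2i+1}$ is a welcome addition that the paper leaves implicit.
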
	
\begin{figure}[h]
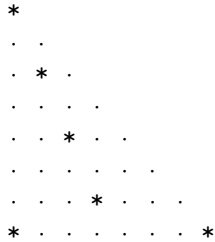

\begin{BVerbatim}
*
· ·
· * ·
· · · ·
· · * · ·
· · · · · ·
· · · * · · ·
* · · · · · · *
\end{BVerbatim}
\caption{Support of the valid outcome defined in Corollary~\ref{tightness-example} for $k = 3$.}\label{support-example}
\end{figure}

We conclude this paper with a discussion of computational results. Fixing a degree $d$, there are only finitely many fundamental outcomes of degree $\leq d$. It would be desirable to explicitly determine all of these and check that $d\leq 2n-1$ holds for every computed outcome $w$ with $\supp^+(w)\eqqcolon n+1$.
In principle one could check every possible subset $S\subseteq\{(i,j)\mid i+j \leq d\}$ for fundamental outcomes of support $S$, but this is computationally untractable. We were nevertheless able to carry out this computation for $d\leq 9$ and positive support size $\leq 6$ using an optimization.
The computer code for this is presented at \url{https://mathrepo.mis.mpg.de/ChipsplittingModels} along with a proof of its correctness. Table~\ref{table:fundamental_outcomes} shows an overview of our results. Thus, by the results of Sections~\ref{sec:supp+<=3}–\ref{sec:supp+==5}, we now know that there are exactly $1, 4, 18, 134$ fundamental models in $\Delta_1,\Delta_2,\Delta_3,\Delta_4$, respectively. We confirm that $d\leq 2n-1$ holds for every computed outcome. We also \rev{see} that $n\leq d$ for all fundamental models, \rev{as shown in Proposition~\ref{fundamental-n-leq-d}.}

\begin{table}
\[
\begin{array}{|c|lllllllll|}
\hline
n\diagdown d&1&2&3&4&5&6&7&8&9\\\hline
1&1&&&&&&&&\\
2&&3&1&&&&&&\\
3&&&12&4&2&&&&\\
4&&&&82&38&10&4&&\\
5&&&&&602&254&88&24&2\\\hline
\end{array}
\]
\caption{Number of fundamental outcomes of degree $d$ with $\#\supp^+(w)=n+1$.}\label{table:fundamental_outcomes}
\end{table}

\mybreak
Our computations show that for $n=1,2,3,4,5$ there are $1,1,2,4,2$ fundamental outcomes $w$ with $\#\supp^+(w)=n+1$ and $\deg(w)=2n-1$, respectively. Taking into account that if $w$ is a fundamental outcome then so is $(12)\cdot w$, most of these examples were already constructed in Corollary~\ref{tightness-example}. The exceptions are the following two degree-$7$ fundamental outcomes.

\mybreak
\begin{center}
\begin{BVerbatim}

 2                      1
 · ·                    · · 
 · 7 ·                  · · · 
 · · · ·                · · · · 
 · · · · ·              · 7 · 7 · 
 · · · · · ·            · · · · · · 
 · 7 · · · 7 ·          · · · 7 · · · 
-2 · · · · · · 2       -1 · · · · · · 1
\end{BVerbatim}
\end{center}

\end{document}